\newcommand{\red}[1]{{\color{red} #1}}
\setlist[itemize]{leftmargin=18pt}
\setlist[enumerate]{leftmargin=18pt}
\theoremstyle{plain}
 \numberwithin{equation}{section}
\newtheorem{theorem}{Theorem}[section]
\newtheorem{proposition}[theorem]{Proposition}
\newtheorem{lemma}[theorem]{Lemma}
\newtheorem{lemdef}[theorem]{Lemma-Definition}
\newtheorem{corollary}[theorem]{Corollary}
\theoremstyle{definition}
\newcommand{\appsection}[1]{\let\oldthesection\thesection
\renewcommand{\thesection}{Appendix \oldthesection}
\section{#1}\let\thesection\oldthesection}
\newtheorem{definition}[theorem]{Definition}
\newtheorem{notation}[theorem]{Notation}
\newtheorem{Assumption}[theorem]{Assumption}
\newtheorem{remark}[theorem]{Remark}
\newtheorem{example}[theorem]{Example}
\DeclareMathOperator{\RHom}{RHom}
\DeclareMathOperator{\Qcoh}{Qcoh}
\DeclareMathOperator{\Mod}{Mod}
\DeclareMathOperator{\mmod}{mod}
\DeclareMathOperator{\Cl}{Cl}
\DeclareMathOperator{\Pic}{Pic}
\DeclareMathOperator{\ch}{ch}
\DeclareMathOperator{\perf}{perf}
\DeclareMathOperator{\ext}{ext}
\DeclareMathOperator{\depth}{depth}
\DeclareMathOperator{\End}{End}
\DeclareMathOperator{\Mat}{Mat}
\DeclareMathOperator{\Coker}{Coker}
\DeclareMathOperator{\rk}{rk}
\DeclareMathOperator{\rank}{rank}
\def\bB{{\Bbb B}}
\def\bR{{\Bbb R}}
\def\D{{\mathbb{D}}}
\def\Z{{\mathbb{Z}}}
\def\bE{{\mathbb{E}}}
\def\F{{\mathbb{F}}}
\def\Q{{\mathbb{Q}}}
\def\C{{\mathbb{C}}}
\def\P{{\mathbb{P}}}
\def\B{{\mathbb{B}}}
\def\cB{{\mathcal{B}}}
\def\cA{{\mathcal{A}}}
\def\cO{{\mathcal{O}}}
\def\cD{{\mathcal{D}}}
\def\cY{{\mathcal{Y}}}
\def\cHom{{\mathcal{H}om}}
\def\RcHom{R{\mathcal{H}om}}
\def\cExt{{\mathcal{E}xt}}
\def\cC{{\mathcal{C}}}
\def\cE{{\mathcal{E}}}
\def\cF{{\mathcal{F}}}
\def\cZ{{\mathcal{Z}}}
\def\O{{\mathcal{O}}}
\def\cX{{\mathcal{X}}}
\def\cT{{\mathcal{T}}}
\def\cW{{\mathcal{W}}}
\def\ocW{\overline{\mathcal{W}}}
\def\W{{\mathcal{W}}}
\def\Z{{\mathbb{Z}}}
\def\oR{{\overline{R}}}
\def\oY{{\overline{Y}}}
\def\oW{{\overline{W}}}
\def\epsilon{\varepsilon}
\DeclareMathOperator{\Def}{Def}
\DeclareMathOperator{\Bl}{Bl}
\DeclareMathOperator{\Hom}{Hom}
\DeclareMathOperator{\Tor}{Tor}
\DeclareMathOperator{\Ext}{Ext}
\DeclareMathOperator{\Spec}{Spec}
\providecommand{\leftsquigarrow}{%
  \mathrel{\mathpalette\reflect@squig\relax}%
}
\newcommand{\reflect@squig}[2]{%
  \reflectbox{$\m@th#1\rightsquigarrow$}%
}
\title[Categorical aspects of the Koll\'ar--Shepherd-Barron correspondence]{Categorical aspects of\\ the Koll\'ar--Shepherd-Barron correspondence}
\author{Jenia Tevelev}
\address{Department of Mathematics \& Statistics, University of Massachusetts, Amherst, USA. 
}
\email{tevelev@cns.umass.edu}
\author{Giancarlo Urz\'ua}
\address{Facultad de Matem\'aticas,
Pontificia Universidad Cat\'olica de Chile, Santiago, Chile.}
\email{urzua@mat.uc.cl}
\date{\today}
\begin{document}


\begin{abstract}
It is well known that a $2$-dimensional cyclic quotient singularity $\oW$ has the same singularity category as a finite dimensional associative algebra $\oR$ introduced by Kalck and Karmazyn. We study the deformations of the algebra $\oR$ induced by the deformations of the surface $\oW$ to a smooth surface. We show that they are Morita--equivalent to path algebras $\hat R$ of  acyclic  quivers for general smoothings within each irreducible component of the versal deformation space of $\oW$ (as described by Koll\'ar and Shepherd-Barron). Furthermore, $\hat R$ is semi-simple if and only if the smoothing is $\Q$-Gorenstein (one direction is due to Kawamata).
We provide many applications. For example, we describe strong exceptional collections of length $10$ on all Dolgachev surfaces and classify admissible embeddings of derived categories of quivers into derived categories of rational surfaces. 
\end{abstract}

\dedicatory{To Professor Yujiro Kawamata on the occasion of his 70th birthday}

\maketitle

\section{Introduction}
In the classical papers from 1974, Pinkham \cite{Pi74} and Gabriel \cite{Gab74} studied deformations of varieties with ${\mathbb G}_m$-action and finite dimensional associative algebras, respectively. These papers included famous examples of reducible versal deformation spaces of the cone $\oW$ over a rational normal curve in $\P^4$ (Figure~\ref{1974figs}, left) and of the $4$-dimensional algebra $\oR=\C[x,y,z]/(x,y,z)^2$ (Figure~\ref{1974figs}, right), respectively.

\begin{figure}[h]
\center
\includegraphics[width=0.8\textwidth]{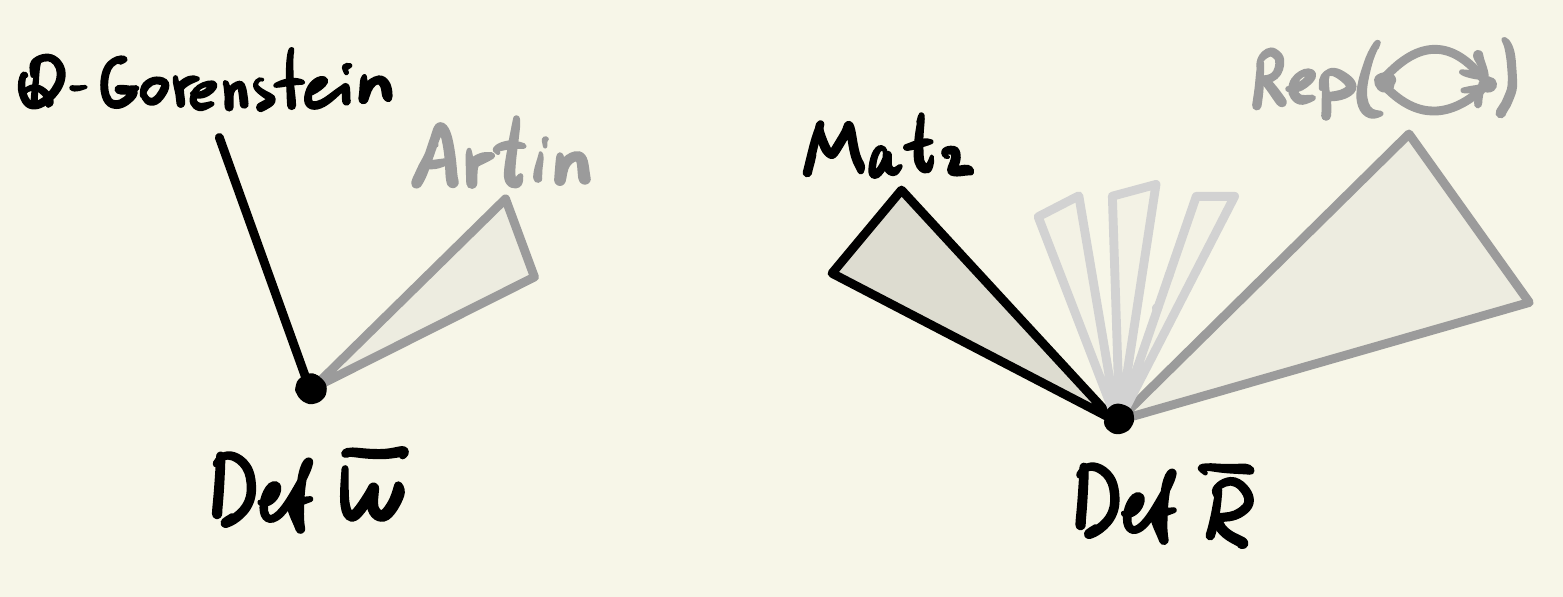}
\caption{Versal deformation spaces of $\oW$ and $\oR$}
\label{1974figs}
\end{figure}

We observe that there is a remarkable embedding $\psi \colon \Def\oW\to\Def\oR$.
The~Artin component of $\Def\oW$ (that parametrizes deformations induced by deformations of the minimal resolution of $\oW$) maps to the deformations of $\oR$ to the path algebra of the Kronecker quiver, whereas the $\Q$-Gorenstein component  
(that parametrizes deformations of $\oW$ such that the relative canonical divisor is $\Q$-Cartier) maps to the deformations of $\oR$ to the matrix algebra $\Mat_2(\C)$. 
We will construct the map~ $\psi$ 
for all cyclic quotient singularities $\oW$ and show that $\psi$ sends each irreducible component of $\Def\oW$  to a uniquely determined irreducible component of $\Def\oR$.

Throughout the paper 
we fix a cyclic quotient singularity (c.q.s.) of type $\frac{1}{\Delta}(1,\Omega)$. This is a surface germ  $P \in \overline{W}$
\'etale-locally isomorphic to the germ $(0\in \C^2/\mu_\Delta)$, where a generator $\zeta\in\mu_\Delta$ acts on $\C^2$ with weights $\zeta, \zeta^\Omega$ for coprime $0<\Omega<\Delta$. Equivalently, a c.q.s.~ is a $2$-dimensional toroidal singularity, where the toric boundary divisors are 
the images of the coordinate axes in~$\C^2$.
 The exceptional divisor of the minimal resolution of $\oW$ is a chain of rational curves with self-intersections $-e_1,\ldots, -e_l$ determined by the Hirzebruch--Jung continued fraction of $\Delta/\Omega$.

It is  possible
to embed the surface germ $P\in\overline W$ into a projective surface $\overline W$ that satisfies several technical assumptions~\ref{assume}.
The~surface $\overline W$ carries a vector bundle $\bar F$ of rank~$\Delta$, see Section~\ref{s2} or  \cite[Prop.~6.7]{KKS}. 
We call $\bar F$ a {\em Kawamata vector bundle}. It is the maximal iterated extension \cite{Kaw} of the ideal sheaf $\cO_{\overline{W}}(-\bar A)$ of one of the toric boundaries of $\oW$. Concretely, we choose $\bar A$ so that its proper transform in the minimal resolution of $\overline W$ intersects the exceptional curve of self intersection $-e_1$. 

The {\em Kalck-Karmazyn algebra} $\oR$ of the algebra $R:=\C[[x,y]]^{\mu_{\Delta}}=\hat{\O}_{P,\oW}$ is a $\Delta$-dimensional associative algebra which induces an equivalence between the singularity categories of $R$ and $\oR$ \cite{KK17}. It was originally introduced by explicit generators and relations. We will use its description as 
the endomorphism algebra $$\oR=\End_{\overline W}(\bar F)$$
of the Kawamata bundle.
It is  non-commutative  unless  $\Omega=1$ or $\Delta-1$.
For example, if $\oW$ 
the cone over the rational normal curve in $\P^4$ then 
$\oR=\C[x,y,z]/(x,y,z)^2$.
In this case $\oW$ is of type $\frac{1}{4}(1,1)$. The first non-commutative Kalck--Karmazyn algebra is $\oR=\C\langle x,y\rangle/(x^2,y^3,xy,y^2x)$ for the singularity$\oW$ of type $\frac{1}{5}(1,2)$. 

Our results can be informally summarized as follows.

\begin{corollary}[\em Theorem~\ref{main}]
There is a natural map of versal deformation spaces 
$$\psi:\,\Def_{P \in \oW}\to\Def_{\oR}.$$
A general deformation of $\oW$ within a given irreducible component of $ \Def_{P \in \oW}$ induces a deformation of $\oR$ to a $\Delta$--dimensional hereditary algebra,
which is Morita--equivalent to the path algebra $\hat R$ of an acyclic quiver without relations.
\end{corollary}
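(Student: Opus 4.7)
The plan is to build $\psi$ directly from the description $\oR = \End_{\oW}(\bar F)$ of the Kalck--Karmazyn algebra as the endomorphism algebra of the Kawamata bundle. Given a deformation of the germ $P\in \oW$, I first spread it out to a deformation $\cW\to B$ of the projective surface $\oW$ satisfying the assumptions in \ref{assume} (this is possible because the obstruction to extending a local deformation globally is controlled by $H^2$ of the tangent sheaf of $\oW$, which the assumptions force to vanish, at least locally around the component of interest). Next, I would deform $\bar F$ to a locally free sheaf $\bar F_B$ on $\cW$ by successively lifting the iterated extension description of Kawamata: a line bundle lifts uniquely (up to the Picard group), and at each stage the obstruction to lifting an extension lies in a $\operatorname{Ext}^2$ group that vanishes under \ref{assume}. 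Because $\bar F$ is constructed as a maximal such iterated extension of $\cO_{\oW}(-\bar A)$, the lift is essentially canonical.

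With $\bar F_B$ in hand, I define $\oR_B := \End_{\cW}(\bar F_B)$ and check, via flat base change and the vanishing of $\operatorname{Ext}^{\geq 1}(\bar F, \bar F)$ on fibres, that $\oR_B$ is a $B$-flat algebra specializing to $\oR$. This assignment is functorial and pro-representable, and the versal property of $\Def_{\oR}$ then yields the map $\psi\colon \Def_{P\in\oW}\to \Def_{\oR}$. At this point I would also verify that the construction depends only on the formal germ $P\in\oW$ and not on the chosen compactification, so that $\psi$ is intrinsic; the two Morita--equivalent choices of compactification give isomorphic $\oR_B$ after Morita reduction.

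For the second half of the statement, fix an irreducible component $Z\subset\Def_{P\in\oW}$ and a very general $t\in Z$. The smoothing $\cW_t$ is smooth near the image of $P$, and $\bar F_t$ is a locally free sheaf of rank $\Delta$ on $\cW_t$. I would argue that for generic $t\in Z$ the summands of $\bar F_t$ (after a Morita reduction, i.e. after replacing $\bar F_t$ by its reduced basic version) form an exceptional-like collection: pairwise $\operatorname{Ext}^2$'s vanish by Serre duality plus the positivity hypothesis on $\bar A$ in \ref{assume}, forcing $\operatorname{gl.dim}\End(\bar F_t)\leq 1$. By Gabriel's theorem, any basic finite-dimensional hereditary $\C$-algebra is isomorphic to the path algebra $\hat R$ of an acyclic quiver without relations, giving the conclusion. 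The $\Delta$-dimensionality of $\hat R$ (as opposed to only its Morita--equivalence class) is then matched by a rank count: $\dim_\C\hat R = \sum_{i,j}\dim\Hom(\bar F_t^{(i)},\bar F_t^{(j)})$, which is preserved under deformation from $\oR$.

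The main obstacle I expect is establishing the vanishing of $\operatorname{Ext}^2(\bar F_t,\bar F_t)$ on the generic smoothing within each component. For the Artin component this should follow from the fact that $\bar F_t$ comes by pullback from a deformation of the minimal resolution, where exceptional collections on Hirzebruch--Jung chains are well understood; for the other (Koll\'ar--Shepherd-Barron) components, the argument must use the specific birational geometry of $\Q$-Gorenstein smoothings, probably via a P-resolution and careful tracking of the extensions along the $\Q$-Gorenstein family. A secondary obstacle is verifying, component by component, that the deformed algebra is actually a flat deformation of $\oR$ rather than only an algebra degenerating to it; this amounts to checking that $\dim_\C \oR_B$ is constant along $B$, which reduces to semicontinuity together with the uniform lower bound provided by the lift of the idempotents of $\oR$.
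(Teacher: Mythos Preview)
Your construction of $\psi$ is essentially the paper's: deform the Kawamata bundle $\bar F$ (Lemma-Definition~\ref{sFsefwetwet}, Lemma~\ref{qrgsrhr}), and take $\End$. The vanishing of $\Ext^{>0}(\bar F,\bar F)$ on $\oW$ (Lemma~\ref{asfhsrHs}) gives unobstructedness and flatness of $\End$, so that part of your outline is fine.

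The genuine gap is in the hereditary claim. You write ``the summands of $\bar F_t$ (after a Morita reduction) form an exceptional-like collection'' and then try to deduce $\Ext^2$-vanishing from Serre duality plus positivity of $\bar A$. Two problems. First, on $\oW$ the Kawamata bundle $\bar F$ is a highly non-split iterated extension of a single rank-one sheaf; there is no a~priori reason its deformation $F=\bar F_t$ should decompose at all, let alone into exceptional pieces. The splitting $F\simeq\bigoplus_i \bar E_i^{\,n_{r-i}}$ is one of the main results of the paper (Lemma~\ref{qergawgwrg}) and is proved only \emph{after} the exceptional bundles $\bar E_i$ have been constructed by other means. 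Second, even granting a splitting, its indecomposable summands would correspond to indecomposable \emph{projectives} of $\End(F)$, whereas hereditariness is a statement about $\Ext^2$ between \emph{simples}; your Serre-duality sketch does not bridge this.

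The paper's route is substantially different from yours and supplies precisely the missing geometry. For each irreducible component of $\Def_{P\in\oW}$ one has the M-resolution $W^+\to\oW$ of Koll\'ar--Shepherd-Barron/Behnke--Christophersen; the paper introduces the associated \emph{N-resolution} $W^-$ (Definition~\ref{nres}, Section~\ref{s1}) and connects $W^+$ to $W^-$ by a sequence of explicit threefold antiflips (Theorem~\ref{nresflips}). These antiflips are shown to match, on the smooth fibre $Y$, with mutations of the Hacking exceptional collections (Theorem~\ref{main3}). This yields two dual collections: $E_r,\ldots,E_0$ from $W^+$ with only $\Ext^1$ in the forward direction, and a \emph{strong} collection $\bar E_r,\ldots,\bar E_0$ from $W^-$ (Lemma~\ref{ssgsGsrhSR}). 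The duality identifies the $E_i$ with the simple $\hat R$-modules, and the $\Ext^2$-vanishing among the $E_i$ (proved via the antiflip/mutation correspondence, not Serre duality) then gives hereditariness by Gabriel. None of this machinery is visible in your proposal; the obstacle you flag at the end (``for the other components, the argument must use the specific birational geometry of $\Q$-Gorenstein smoothings'') is exactly where the bulk of the paper lives.
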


Finite-dimensional hereditary  algebras are rigid,  and  each of them corresponds to a  dense open subset of an irreducible component  of $\Def_{\oR}$ \cite{Gab74}. It follows that  $\psi$ induces a map between the sets of irreducible components of 
$\Def_{P \in \oW}$ and $\Def_{\oR}$.

In order to study how the Kawamata vector bundle $\bar F$ and the Kalck--Karmazyn algebra $\oR$
deform under deformations of  $\overline W$ to a smooth surface $Y$, we use an interplay of two techniques, one coming from the study of semi-orthogonal decompositions (s.o.d.) of derived categories and another from birational geometry.
By \cite{KKS}, the surface $\oW$ admits a s.o.d. $D^b(\overline{W})=\langle \cA^{\overline{W}},\cB^{\overline{W}}\rangle$, where $\cA^{\overline{W}}\simeq D^b(\oR\hbox{\rm -mod})$.  We~show that this s.o.d. deforms to 
$D^b(Y)=\langle \cA^{Y},\cB^{Y}\rangle$,
where $\cA^{Y}\simeq D^b(\hat R\hbox{\rm -mod})$.

Incidentally, this gives a large amount of admissible embeddings of derived categories $D^b(\hat R\hbox{\rm -mod})$ of  acyclic quivers without relations into derived categories of smooth projective surfaces $Y$ (which one can choose to be rational). 
While Orlov proved \cite{Or16} that the embedding always exists if $\dim Y$ is sufficiently large, there are strong restrictions in the case of surfaces. In fact, very few examples were known before our work.
In particular, Belmans and Raedschelders \cite[Sect.4]{BR} ask whether there are bounds on the lengths of paths of realizable quivers and which acyclic quivers 
$Q_{a,b,c}$ with 3 vertices, where $a,b,c$ are the number of arrows between them, are realizable. Our results show that lengths of paths are unbounded, and we have the following partial answer for the 3 vertices quiver's question.

\begin{corollary} (Prop.~ \ref{Q4answered})
The quiver $Q_{a,b,c}$ is realizable by the algebra $\hat R$ if and only if there exists an extremal P-resolution 
(see Definition~\ref{srbsbsr})
with Wahl singularities of indices $a$ and $b$ and with $\delta=c$. See~Lemma~\ref{pun} for the list of possible $c$ for fixed values of $a,b$. 
\end{corollary}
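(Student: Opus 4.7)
The plan is to combine the main correspondence of Theorem~\ref{main} with the Koll\'ar--Shepherd-Barron description of irreducible components of $\Def_{P\in\oW}$ via P-resolutions. The first move is to establish that the number of vertices of the acyclic quiver associated to a given component equals one plus the number of Wahl singularities of the corresponding P-resolution: each Wahl singularity of index $n$, by the $\Q$-Gorenstein direction of Theorem~\ref{main}, contributes a matrix block $\Mat_n(\C)$ after its Milnor fiber smoothing, while one additional vertex arises from the contracting curve relating them. In particular, $3$-vertex quivers arise precisely from extremal P-resolutions, which by Definition~\ref{srbsbsr} contain a central $(-1)$-curve passing through exactly two Wahl singularities.

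Given such an extremal P-resolution $\cY\to\oW$ with Wahl singularities of indices $a$ and $b$, I would identify the three vertices of the quiver as follows: two of them, say $v_a$ and $v_b$, correspond to the two Wahl singularities, and the third vertex $v_0$ corresponds to the central $(-1)$-curve. Using the s.o.d.\ $D^b(Y)=\langle \cA^Y,\cB^Y\rangle$ with $\cA^Y\simeq D^b(\hat R\hbox{-mod})$ arising from a general smoothing $Y$ of $\oW$ within this component, the number of arrows between two vertices equals $\dim_\C\Ext^1_{\hat R}$ between the associated simple modules. A local analysis near each Wahl singularity, obtained by restricting the Kawamata bundle $\bar F$ to a neighborhood of the singularity, shows that the number of arrows $v_0\to v_a$ (resp.\ $v_0\to v_b$) equals the index $a$ (resp.\ $b$), reproducing the Kronecker-type quiver known for isolated Wahl $\Q$-Gorenstein smoothings. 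The number of arrows between $v_a$ and $v_b$ is then identified with the combinatorial invariant $\delta$ of the extremal P-resolution.

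The main obstacle will be this last Ext-theoretic identification of the off-diagonal arrow number with $\delta$. The strategy is to track the Kawamata bundle $\bar F$ through a two-stage deformation: first a partial smoothing of $\oW$ to the intermediate surface $\cY$ carrying the two Wahl singularities, and then the simultaneous $\Q$-Gorenstein smoothings of those singularities. On $\cY$ the endomorphism algebra of the deformed bundle is determined by the local structure at the two Wahl points together with the global intersection data of the $(-1)$-curve, and the continued-fraction recipe defining $\delta$ in Definition~\ref{srbsbsr} should match the resulting $\Ext^1$ count between simples at $v_a$ and $v_b$ after the second smoothing step. Once this matching is established, both directions of the ``if and only if'' follow from Theorem~\ref{main}, and Lemma~\ref{pun} immediately provides the explicit list of admissible $c$ for given $a$ and $b$.
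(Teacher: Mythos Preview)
Your proposal misidentifies the basic dictionary between Wahl resolutions and quivers. In Theorem~\ref{main} the vertices of the quiver correspond to the Wahl singularities $P_0,\dots,P_r$ (equivalently, to the Hacking bundles $\bar E_0,\dots,\bar E_r$), and the number of arrows between consecutive vertices is $\delta_i$. Curves $\Gamma_i$ do not create vertices. In particular, an \emph{extremal} P-resolution has two singularities and one curve and therefore yields a $2$-vertex Kronecker quiver, not a $3$-vertex quiver. Your claim that ``$3$-vertex quivers arise precisely from extremal P-resolutions, which contain a central $(-1)$-curve passing through exactly two Wahl singularities'' is simply false, and everything built on it (the identification of $v_0$ with the curve, the identification of the arrow numbers with the Wahl indices) collapses.

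What actually produces $Q_{a,b,c}$ is an M-resolution $W^+\to\oW$ with \emph{two} exceptional curves $\Gamma_1,\Gamma_2$ and three Wahl singularities $P_0,P_1,P_2$ of indices $n_0,n_1,n_2$. By Theorem~\ref{main}(3) the adjacent arrow counts are $a=\delta_1$ and $b=\delta_2$, and by \eqref{afbzdfbdfna} the remaining arrow number is $c=(an_2+bn_0)/n_1$, so that $\hom(\bar E_2,\bar E_0)=ab+c$. The content of Proposition~\ref{Q4answered} is that the triples $(a,b,c)$ arising this way are exactly the triples for which there exists a \emph{different, auxiliary} extremal P-resolution with Wahl singularities of indices $a,b$ and $\delta=c$. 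This is proved by a continued-fraction analysis: smoothing the middle singularity $P_1$ while keeping $P_0,P_2$ produces an extremal N-resolution whose data are read off from the dual fraction of $\oW$; Lemma~\ref{pun} then pins down $c$ in terms of $a,b$ and the entry $b_{i_2}$, and a matching extremal P-resolution $[{a\choose a-\epsilon_a}]-(\lambda-1)-[{b\choose\epsilon_b}]$ is exhibited explicitly. The converse direction constructs, from any such extremal P-resolution, infinitely many two-curve M-resolutions realizing $Q_{a,b,c}$. None of this is a ``local analysis near each Wahl singularity'' nor an $\Ext^1$ count on $\cY$; it is combinatorics of Hirzebruch--Jung continued fractions connecting two different resolutions.
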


Our second tool is birational geometry.
Let $\ocW$ be the total space of a smoothing of $\oW$ to $Y$ over a smooth curve. Special fibers of small birational models $\cW\to\ocW$
provide partial resolutions of singularities $W\to \oW$ that can be deformed to $Y$ via a $\Q$-Gorenstein deformation. For example,
irreducible components of $\Def_{P\in\oW}$
are parametrized by P-resolutions $W^+ \to \oW$ of  Koll\'ar and Shepherd-Barron \cite{KSB}.
The algebra $\hat R$ is the endomorphism algebra of a strong exceptional collection of vector bundles associated with a smoothing of another partial resolution $W^- \to \oW$, which we call the {\em N-resolution}. It is the negative analog of the  P-resolution. 

Geometric applications can be obtained by considering normal projective surfaces $W$ with $p_g(W)=q(W)=0$ which contain an N-resolution that contracts to some c.q.s. $P \in \oW$. Assume in addition that $\oW$ is unobstructed in deformations, and that $\oW \setminus \{P\}$ is simply-connected. These surfaces are abundant, and their smoothings $Y$ could be: rational surfaces, Enriques surfaces, proper elliptic surfaces, and surfaces of general type (see e.g. \cites{LP,HP,U16}). In Section~\ref{s6}, we consider applications to the following proper elliptic surfaces.  

\begin{definition}
A {\em Dolgachev surface} $D_{p,q}$ is a minimal elliptic fibration $Y \to \P^1$ 
with $H^2(Y,\cO_Y)=H_1(Y,\Z)=0$ and
with exactly two multiple fibers of coprime multiplicities $p$ and $q$.
It is simply-connected and has Kodaira dimension $1$.
\label{dolga}
\end{definition}

In \cite{CL18}, Lee and Cho construct an exceptional collection of maximum length $12$ of line bundles on Dolgachev surfaces $D_{2,3}$. Other Dolgachev surfaces cannot have exceptional collections of length $12$ even numerically by results of Vial \cite{Vial}. On the other hand, our exceptional collections never have full length because they only categorify the Milnor fiber of the smoothing.
Our results imply the following.

\begin{corollary} (Theorem \ref{I93I1})
Let $p,q \geq 2$ be coprime integers.  Dolgachev surfaces $D_{p,q}$ carry a strong exceptional collection $\bar E_{9},\ldots,\bar E_0$ associated with an N-resolution, where

\begin{enumerate}
    \item $\hat R=\End(\bar E_{9}\oplus\ldots\oplus\bar E_0)$ is the endomorphism algebra of the quiver with vertices $\bar P_0,\ldots,\bar P_9$ and with $pq-p-q$ arrows connecting each $\bar P_i$ to $\bar P_9$ for $i=0,\ldots,8$. 

    
    \item The semi-orthogonal complement of 
    $\langle\bar E_{9},\ldots,\bar E_0\rangle$ in $D^b(D_{p,q})$ has Mukai lattice $\Z^2$ with Euler pairing given by the Gram matrix $\left[\begin{matrix}
-1 & 3(pq-p-q) \cr
0  & -1\cr
\end{matrix}\right]$.
    This lattice has a full numerical exceptional collection if and only if $p=3$, $q=2$.
    \end{enumerate}
\end{corollary}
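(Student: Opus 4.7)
The plan is to realize each Dolgachev surface $D_{p,q}$ as the generic fiber of a $\Q$-Gorenstein smoothing of a normal projective surface $\bar W$ with $p_g(\bar W) = q(\bar W) = 0$ and $K_{\bar W}^2 = 0$, whose only singularity is a cyclic quotient singularity $P \in \bar W$ of index $\Delta = 10$ admitting an N-resolution $W^- \to \bar W$. Since $D_{p,q}$ is simply-connected, minimal, with $K^2=0$ and Kodaira dimension $1$, the P-resolution paired with this N-resolution must be the one whose $\Q$-Gorenstein smoothing produces precisely the two multiple fibers of multiplicities $p$ and $q$; by Lemma~\ref{pun} the numerical data of the paired N-resolution then yields $\delta = pq - p - q$. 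The construction follows the standard birational template used to degenerate elliptic surfaces; the points to verify are that $p_g(\bar W) = q(\bar W) = 0$, so that the orthogonal complement will have numerical rank $2$, and that $\bar W$ satisfies Assumptions~\ref{assume} needed to attach a Kawamata bundle $\bar F$ of rank $10$.

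With $\bar W$ in place, Theorem~\ref{main} applies: the semi-orthogonal decomposition $D^b(\bar W) = \langle \cA^{\bar W}, \cB^{\bar W}\rangle$ with $\cA^{\bar W} \simeq D^b(\bar R\text{-mod})$ deforms along the smoothing to $D^b(D_{p,q}) = \langle \cA^Y, \cB^Y\rangle$ with $\cA^Y \simeq D^b(\hat R\text{-mod})$, and the indecomposable summands of the deformed Kawamata bundle furnish a strong exceptional collection $\bar E_9, \ldots, \bar E_0$. The underlying acyclic quiver of $\hat R$ is extracted from the combinatorics of the N-resolution through Proposition~\ref{Q4answered} and Lemma~\ref{pun}: the distinguished vertex $\bar P_9$ corresponds to the Morita-nontrivial summand associated with the long component of the Wahl chain, the remaining nine vertices are the simple idempotents, and the $pq - p - q$ arrows from each $\bar P_i$ ($i \leq 8$) to $\bar P_9$ record precisely the $\delta$-invariant of the N-resolution.

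For the orthogonal complement $\cB^Y$, the invariants $\chi(\cO_{D_{p,q}}) = 1$, $K_{D_{p,q}}^2 = 0$, and $e(D_{p,q}) = 12$ force $b_2(D_{p,q}) = 10$ and hence $\rk K_0^{\mathrm{num}}(D_{p,q}) = 12$; since the exceptional collection contributes rank $10$, the numerical rank of $\cB^Y$ is $2$. To identify the Gram matrix, I would compute the Euler pairing on two explicit generators of the numerical lattice of $\cB^Y$, obtained by projecting the class of a generic smooth fiber of the elliptic fibration and the class of an appropriate line bundle onto the complement. Riemann--Roch on $D_{p,q}$ yields the diagonal entries $-1$, while the off-diagonal entry $3(pq-p-q)$ emerges from the intersection of these classes with the Chern character of the Kawamata bundle, where the factor of $3$ tracks the degree of the relevant polarization on the generic fiber and $pq - p - q$ is the Milnor-number contribution of the $\Q$-Gorenstein smoothing.

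Finally, the existence of a full numerical exceptional collection in $\cB^Y$ reduces to asking when the lattice $\Z^2$ with the stated Gram matrix is $\GL_2(\Z)$-equivalent to a matrix of the form $\bigl(\begin{smallmatrix} -1 & m \\ 0 & -1 \end{smallmatrix}\bigr)$ realized by an integral pair of exceptional classes; a direct Markov-type analysis, in the spirit of the Vial criterion cited above, produces $pq - p - q = 1$, i.e.\ $\{p,q\} = \{2,3\}$, as the only admissible case. The principal obstacle in this program is the first step: the explicit birational construction of $\bar W$ together with the simultaneous verification of $p_g = q = 0$ and of the identification of the resulting smoothing with $D_{p,q}$ carrying exactly the prescribed multiple fibers. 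Once $\bar W$ is secured, the remaining steps are applications of results established earlier in the paper together with routine Riemann--Roch and a finite Diophantine check.
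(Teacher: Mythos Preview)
Your outline has the right high-level shape---construct a degeneration $\bar W$, apply Theorem~\ref{main}, then compute the complement---but several of the concrete claims are wrong, and the parts you flag as ``routine'' are exactly where the work lies.

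First, you write that $P\in\bar W$ has ``index $\Delta=10$'' and that the Kawamata bundle has ``rank $10$.'' This conflates the number of Wahl singularities $r+1=10$ with the order $\Delta$ of the cyclic group. The c.q.s.\ in the paper has $\frac{\Delta}{\Omega}=[\underbrace{2,\ldots,2}_{q-2},q+1,p,\underbrace{2,\ldots,2}_{7},3,\underbrace{2,\ldots,2}_{p-2}]$, so $\Delta$ grows with $p,q$ and is far larger than $10$; the Kawamata bundle $\bar F$ has rank $\Delta$, not $10$. What has length $10$ is the exceptional collection, coming from an M-resolution with nine Wahl singularities $\frac{1}{p^2}(1,p-1)$ and one $\frac{1}{q^2}(1,q-1)$.

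Second, you invoke Proposition~\ref{Q4answered} and Lemma~\ref{pun} to read off the quiver. Those results classify the $3$-vertex quivers $Q_{a,b,c}$ arising from extremal P-resolutions; they say nothing about $10$-vertex quivers. The paper instead computes the $\delta_i$ directly from Proposition~\ref{hotdog}: from the dual fraction $\frac{\Delta}{\Delta-\Omega}$ one finds $d_{i_1}=1$, $d_{i_2}=9$, hence $\delta_1=pq-p-q$ and $\delta_i=0$ for $i\ge2$. The quiver shape then follows from formula~\eqref{afbzdfbdfna}.

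Third, and most substantively, you treat the construction of $\bar W$ and the identification of its smoothing with $D_{p,q}$ as a black box, and the Gram-matrix computation as ``routine Riemann--Roch.'' In the paper both are explicit and nontrivial: $\bar W$ is built from the unique rational elliptic fibration with an $I_9$ fiber by specific blow-ups (Figure~\ref{Xpq}); simple-connectedness of $\bar W^o$ (needed for Assumption~\ref{assume}(3)) is proved via a Mumford-type link computation (Prop.~\ref{sc}); and the Mukai lattice of $\cB^Y$ is computed not on $D_{p,q}$ but on the resolution $X$ via Lemma~\ref{K0X}, solving an explicit linear system in $\Pic(X)$ to find generators $\tilde v_1,\tilde v_2$ and then evaluating $\chi(\tilde v_i,\tilde v_j)$. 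Your proposed ``project a fiber class and a line bundle'' does not produce the off-diagonal entry $3(pq-p-q)$ without this machinery.
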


We now provide the definitions and the notation that will be used throughout this paper. We work  with schemes of finite type over $\Spec\C$. We use notation $\cW\to B$ for a flat deformation of normal surfaces  with the special fiber~$W$ and the total space~$\cW$. We also use notation $Y \rightsquigarrow W$ for a smoothing over a
smooth curve germ $0\in B$ with general fiber $Y$.
A deformation  over a smooth curve  is called $\Q$-Gorenstein if $K_\cW$ is $\Q$-Cartier. We~refer to \cite[S.~3]{H04} for a general theory of $\Q$-Gorenstein deformations. 
Quotient singularities of dimension $2$ admitting a $\Q$-Gorenstein smoothing are called T-singularities \cite{KSB}. They are either Du Val singularities or c.q.s.~of the form $\frac{1}{dn^2}(1,dna-1)$ for $0<a<n$ coprime \cite[Prop.~3.10]{KSB}. 
The special case is a {\em Wahl singularity}
$\frac{1}{n^2}(1,na-1)$.

\begin{definition}\label{sRGsrgrH}
A {\em c.q.s.~surface} $(\Gamma_1\cup\ldots\cup\Gamma_r\subset W)$ 
is a surface germ that contains a chain of smooth projective
rational curves $\Gamma_1,\ldots,\Gamma_r$ that are
toric boundary divisors at c.q.s. $P_0,\ldots,P_r$,
the surface is smooth elsewhere
(we also allow $P_i$ to be smooth points). 
We choose a toric boundary divisor germ $\Gamma_0$ at $P_0$ complementary to ~$\Gamma_1$
and 
$\Gamma_{r+1}$ at $P_r$ complementary to ~$\Gamma_r$.
A {\em c.q.s. resolution} of a c.q.s. $P \in \overline{W}$ is a c.q.s.~ surface that admits a contraction
$(\Gamma_1\cup\ldots\cup\Gamma_r\subset W)\to(P \in \overline{W})$. 
 A~c.q.s.~surface is called a {\em Wahl surface} if 
$P_0,\ldots,P_r$ are Wahl singularities.
A {\em Wahl resolution} is a c.q.s.~resolution $W\to\oW$ such that $W$ is a Wahl surface. In addition, we impose the following minimality assumption: a Wahl resolution $W$ should admit a $\Q$-Gorenstein smoothing $Y\rightsquigarrow W$ that blows down to a smoothing $Y\rightsquigarrow \oW$. 
\end{definition}

\begin{figure}[h]
\center
\includegraphics[width=0.8\textwidth]{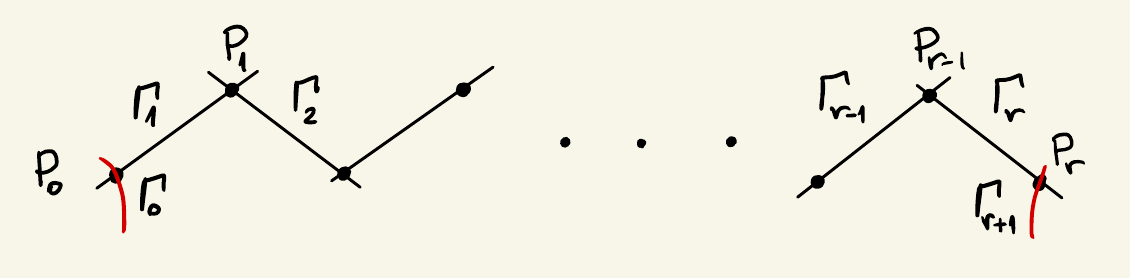}
\caption{Chain of curves in a Wahl (or c.q.s.) surface $W$}
\label{f1}
\end{figure}

Koll\'ar and Shepherd-Barron proved that irreducible components $\cC$ of the reduced versal deformation space $\Def_{P \in \overline{W}}$ 
are in a one-to-one  correspondence with {\em $P$-resolutions} \cite[Th.~ 3.9]{KSB} of $\oW$, i.e.~ c.q.s.~resolutions $W\to\oW$ with T-singularities 
and relatively ample canonical class. Concretely, a smoothing $Y \rightsquigarrow \oW$ from $\cC$ with total space $\ocW$ lifts to a smoothing $Y \rightsquigarrow W$ with total space $\cW$ given by  the relative canonical model of ~$\ocW$. We will use an analogous  one-to-one correspondence with {\em M-resolutions} 
of
Behnke and Christophersen~
\cite{BC}. 
An  M-resolution is a Wahl resolution $W^+\to\oW$ 
such that 
$K_{W^+}$ is relatively~ nef. 
The versal $\Q$-Gorenstein deformation space $\Def^{{\Q}G}_{\Gamma_1\cup\ldots\cup\Gamma_r\subset W^+}$ 
of an M-resolution
is smooth. 
Blowing down deformations \cite{Wahl} gives 
a map  $\Def^{{\Q}G}_{\Gamma_1\cup\ldots\cup\Gamma_r\subset W^+}\to \Def_{P \in \overline{W}}$, which is a Galois covering of an irreducible component $\cC$ and the Galois group is a reflection group.  
In particular, $\cC$ is also smooth and every  deformation  in $\cC$ is the blow-down of a $\Q$-Gorenstein deformation in
$\Def^{{\Q}G}_{\Gamma_1\cup\ldots\cup\Gamma_r\subset W^+}$ after a finite base change.

\begin{example}
The {\em minimal resolution} of singularities of $P \in \overline{W}$ is an example of an M-resolution. The corresponding component 
of the versal deformation space $\Def_{P \in \overline{W}}$ is called the {\em Artin component}. It parametrizes deformations of $\oW$ that admit a simultaneous resolution of singularities after a finite base change. 
\end{example}

\begin{notation} \label{wahlres}
Every Wahl surface $W$, including an M-resolution $W^+$, has the following numerical invariants $n_i,a_i,\delta_i$.
For $i=0,\ldots,r$, the Wahl singularity $P_i\in W$ has type $\frac{1}{n_i^2}(1,n_i a_i-1)$, where the Hirzebruch-Jung continued fraction of $\frac{n_i^2}{n_i a_i-1}$ goes in the direction from $\Gamma_i$ to $\Gamma_{i+1}$. For~smooth points, $n_i=a_i=1$. 
For~$i=1,\ldots,r$,
let  $ \delta_i := n_{i-1} n_{i} |K_W \cdot \Gamma_i|$ (a non-negative integer).
\end{notation} 


\begin{definition}\label{nres}
Let $W^+$ be an M-resolution of a c.q.s.~$P\in\overline{W}$ with invariants $n_i,a_i,\delta_i$ as in Notation~\ref{wahlres}. The corresponding  {\em N-resolution} $W^-$  
is a Wahl resolution   of $P\in\overline{W}$
with curves $\bar\Gamma_i$ and  singularities $\bar P_i$ of type ${1\over {\bar n}_i^2}(1,{\bar n}_i {\bar a}_i-1)$ 
such that $-K_{W^-}$ is relatively nef, i.e., $ K_{W^-} \cdot \bar\Gamma_i \leq 0$ for  $i=1,\ldots,r$, and

\begin{itemize}
    \item[(1)] The singularity $\bar P_r$ is the same as $P_0$. 
    Moreover, for every $i=1,\ldots,r$, the contraction of the chain $\bar \Gamma_{r-i+1}\cup\ldots\cup \bar \Gamma_r \subset W^-$ is the same  c.q.s as the contraction of the chain $\Gamma_1\cup\ldots\cup\Gamma_i \subset W^+$. We denote that c.q.s. by ${1\over \Delta_i}(1,\Omega_i)$.  
    \item[(2)] $\bar \delta_{r-i+1}=\delta_i$ for $i=1,\ldots,r$.
\end{itemize}
 The N-resolution associated with the M-resolution exists ans is unique (Section~\ref{s1}).\end{definition}

\begin{example}
By \cite[Ex. 3.15]{KSB}, the c.q.s.~ $\frac{1}{19}(1,7)$ admits three M-resolutions (for the notation see Section \ref{s1}), where the first M-resolution is the minimal resolution: $$(3)-(4)-(2) \ \ \ \ \ \ \ \ \ \ \ \ [{2 \choose 1}]-(1)-[{3 \choose 1}] \ \ \ \ \ \ \ \ \ \ \ \ (3)-[{2 \choose 1}]-(2).$$ 
We list the corresponding 
N-resolutions and the quivers for the path algebra $\hat R$. All this information can be computed using the program \cite{Z}.
$$[{ 8 \choose 3}]-(1)-[{ 8 \choose 3}]-(1)-[{2 \choose 1}]-(1) \ \ \ \ \ \ [{ 5 \choose 2}]-(1)-[{ 2 \choose 1}] \ \ \ \ \ \ [{ 8 \choose 3}]-(1)-[{5 \choose 2}]-(1)$$  
\begin{figure}[h]
\center
\includegraphics[width=10cm]{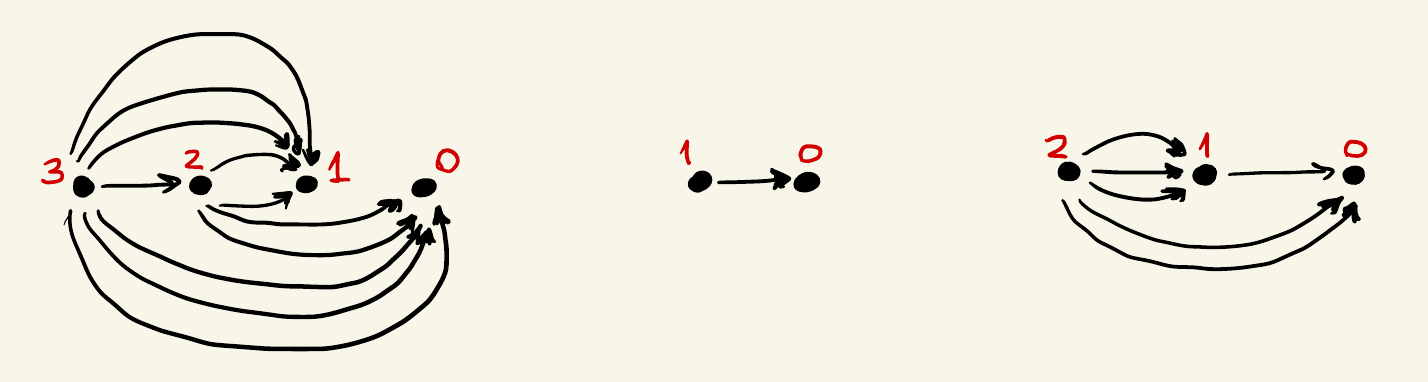}
\label{quivers}
\end{figure}
\label{quiver}
\end{example}

\begin{Assumption}\label{assume}
For technical reasons, we need to compactify all surfaces, which requires imposing the following assumptions throughout the paper.

\noindent
\begin{enumerate}
\item $W$ is a normal, projective c.q.s.~ surface 
smooth outside of $\{P_0,\ldots,P_r\}$.\break 
If $\Gamma_1 \cup \ldots \cup \Gamma_r\subset W$ is a c.q.s.~ resolution of $P\in\overline W$, then the surface $\overline{W}$ is determined as the contraction of the chain $\Gamma_1,\ldots,\Gamma_r$ to the point $P \in \overline{W}$.

\item $H^1(W,\O_W)=H^2(W,\O_W)=0$. Since 
c.q.s.~are rational singularities, if $W$ is a c.q.s.~resolution of $\oW$ then (2) is equivalent to the same vanishing on $\oW$ or, equivalently,  on the minimal resolution of $\oW$. For example, $\oW$ can be a rational surface. Since rational singularities are Du Bois, (2)~is~equivalent to the same vanishing on any projective deformation $Y$ of~$W$.

\item 
There is a Weil divisor $\bar A$ on $\oW$ 
that generates the local class group $\Cl(P \in \oW)$. By Lemma~\ref{goodA}, we can  choose effective smooth divisors $\bar A, \tilde{\bar A}\subset \oW$ such that 
the germ $P\in (\bar A\cup\tilde{\bar A})\subset\oW$ is et\'ale-locally isomorphic to
$0\in(x=0)\cup(y=0)\subset{1\over\Delta}(1,\Omega)$.
Proper transforms $\Gamma_0$ (resp.~$\Gamma_{r+1}$) of $\bar A$ 
(resp.~$\tilde{\bar A}$) in a c.q.s. resolution $W$ of $\oW$
intersect the chain $\Gamma_1\cup\ldots\cup\Gamma_r$
only at  $P_0$ (resp.~$P_r$),
where they are equivalent to  toric boundaries
opposite to $\Gamma_1$ (resp.~$\Gamma_r$) as in Figure~\ref{f1}.

\item  $H^2(\overline{W},T_{\overline{W}})=0$.
By Lemma~\ref{NO}, there are
no local-to-global obstructions to $\Q$-Gorenstein deformations of a Wahl resolution $W$ of $\oW$ or the pair $(W,\Delta)$ where $\Delta=\Gamma_0+\Gamma_1+\ldots+\Gamma_r+\Gamma_{r+1}$ if (2) and (3) also hold.
A~general example satisfying (4) is any surface $\overline{W}$ such that $-K_{\overline{W}}$ is big \cite[Prop. 3.1]{HP}. 

\end{enumerate}
\end{Assumption}


\begin{definition}
Let $Y \rightsquigarrow W$ be a projective $\Q$-Gorenstein smoothing of a Wahl surface $W$ over a smooth curve germ $0\in B$ satisfying Assumption~\ref{assume}. 
By~\cite{H13}, 
for each $P_i\in W$ and after shrinking $B$, we have an associated exceptional vector bundle $E_i$ of rank $n_i$ on $Y$, which we call a {\em Hacking vector bundle}. 
Bundles $E_r,\ldots, E_0$ 
form a {\em Hacking exceptional collection} 
on $Y$, see Section~\ref{AEFefEEfe}. 
\end{definition}

Our main theorem is the following (see Section~\ref{s4} for more detailed results).

\begin{theorem}\label{weFwetwET}\label{main}
Let $W^+$ be an M-resolution of  $P\in\overline W$
satisfying Assumption~\ref{assume}. Fix a projective $\Q$-Gorenstein smoothing $Y \rightsquigarrow W^+$ which is sufficiently general in its irreducible component of the versal deformation space of $\oW$. 
This component also contains a $\Q$-Gorenstein smoothing $Y \rightsquigarrow W^-$, where $W^-$ 
is the N-resolution associated to~$W^+$. 
\begin{enumerate}
    \item 
Let 
$\bar E_r,\ldots, \bar E_0$
be a  Hacking exceptional collection  on $Y$ associated with the\break N-resolution~ $W^-$. This  collection is strong: $\Ext^k(\bar E_i, \bar E_j)=0$ for $k>0$ and $i>j$.
\item
In contrast, let $E_r,\ldots, E_0$ be a Hacking exceptional collection on $Y$ associated with the M-resolution~$W^+$. Then we have  
$\Ext^k(E_i, E_j)=0$ for $k\ne1$ and $i>j$. \item
For $i=1,\ldots, r$, we have
$\Hom(\bar E_{r+1-i}, \bar E_{r-i})\simeq
\Ext^1(E_{i}, E_{i-1})^\vee\simeq
\C^{\delta_i}$.
\item
The Kawamata  bundle $\bar F$  on $\oW$ deforms  to a vector bundle $F\simeq \bigoplus\limits_{i=0}^r \bar E_i^{n_{r-i}}$ on~$Y$. Since $F$ has rank $\Delta$, we note that  $\Delta=n_0 \bar n_r + n_1 \bar n_{r-1} +\ldots + n_r \bar n_0$.
\item
The Kalck--Karmazyn algebra $\bar R=\End(\bar F)$ deforms to the algebra $\End(F)$, which is hereditary and
Morita-equivalent to the path algebra
$\hat R=\End(\bar E_r\oplus\ldots\oplus\bar E_0)$. 
\end{enumerate}
\end{theorem}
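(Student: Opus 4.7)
The plan is to realize everything on the total space of a $\Q$-Gorenstein smoothing, so that $\Ext$ computations transport between the central fibre and $Y$ by flat base change. I first note that both M- and N-resolutions of the same c.q.s. $P\in\oW$ admit $\Q$-Gorenstein smoothings in the same irreducible component $\cC\subset \Def_{P\in\oW}$: the blow-down maps $\Def^{\Q G}_{W^\pm}\to\cC$ from Koll\'ar--Shepherd-Barron and Behnke--Christophersen are surjective, so for a sufficiently general smoothing $Y\rightsquigarrow\oW$ in $\cC$ one obtains simultaneously families $\cW^+\to B$ and $\cW^-\to B$ with the same generic fibre $Y$. Consequently both Hacking families $\{E_i\}$ and $\{\bar E_i\}$ deform to bundles on the same surface $Y$ and may be compared there.

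Parts (1) and (2) are obtained by controlling signs using the relative canonical divisor. On $W^-$ the divisor $-K_{W^-}$ is relatively nef; combining a relative Kawamata--Viehweg vanishing with the local-to-global spectral sequence for $\Ext$ of Hacking bundles forces $\Ext^k(\bar E_i,\bar E_j)=0$ for $k>0$ and $i>j$, with the only non-vanishing contribution being $\Hom$ between adjacent members. Dually, on $W^+$ the nefness of $K_{W^+}$ together with Serre duality collapses the non-vanishing $\Ext$'s to degree $1$ (there is no $\Hom$ because the $E_i,E_j$ are distinct exceptional objects, and no $\Ext^{\geq 2}$ by the duality against a semi-positive canonical). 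Both statements descend from the total space to $Y$ via flat base change.

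For (3), Serre duality on $Y$ gives
\[
\Ext^1(E_i,E_{i-1})^\vee\;\simeq\;\Ext^1(E_{i-1},E_i\otimes\omega_Y);
\]
the local identifications of Wahl charts from Definition~\ref{nres}(1) match this with $\Hom(\bar E_{r+1-i},\bar E_{r-i})$, since both are controlled by the same T-singularity contraction. The common dimension is then computed by Riemann--Roch on the chain $\Gamma_i$ (resp.~$\bar\Gamma_{r+1-i}$), yielding $n_{i-1}n_i|K_W\cdot\Gamma_i|=\delta_i=\bar\delta_{r+1-i}$ by Notation~\ref{wahlres} and Definition~\ref{nres}(2).

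The crux is part (4). Following \cite{Kaw}, the Kawamata bundle $\bar F$ on $\oW$ is the maximal iterated extension of $\cO_{\oW}(-\bar A)$ by reflexive rank-one sheaves attached to the Hirzebruch--Jung chain of $P\in\oW$. My plan is to lift this iterated-extension structure along the smoothing $\cW^-\to B$: each reflexive rank-one piece deforms to a Hacking bundle $\bar E_i$ appearing with a multiplicity dictated by the recursion, which is exactly $n_{r-i}$ via the symmetry of the chain recorded in Definition~\ref{nres}(1). The strong exceptional property of (1) then forces the resulting filtration on $Y$ to split, yielding $F\simeq\bigoplus_{i=0}^r\bar E_i^{n_{r-i}}$ and the rank identity $\Delta=\sum n_i\bar n_{r-i}$ as a byproduct. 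Part (5) is then formal: the deformation of $\bar R=\End(\bar F)$ along the smoothing is $\End(F)$ by flatness and semi-continuity of $\hom$, the direct-sum decomposition of $F$ expresses $\End(F)$ as a matrix algebra over $\End(\bar E_r\oplus\cdots\oplus\bar E_0)=\hat R$ (hence Morita equivalence), and being the endomorphism algebra of a strong exceptional collection of length $r+1$ forces $\hat R$ to have global dimension $\leq 1$, i.e.~to be the path algebra of an acyclic quiver with no relations. The main obstacle I anticipate is the explicit matching of multiplicities in (4): one must argue that the abstract iterated-extension recipe of \cite{Kaw} produces, after deformation, precisely the $\bar E_i$'s of the \emph{N-resolution} (not of some other Wahl resolution), and this is exactly where the N-resolution is characterised by Definition~\ref{nres}.
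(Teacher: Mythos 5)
Your proposal has genuine gaps at the three load-bearing points, and the mechanisms you propose are not the ones that actually make the theorem work.

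First, parts (1) and (2). The bundles $E_i,\bar E_i$ live on the smooth fibre $Y$, so a relative Kawamata--Viehweg vanishing on $W^\pm$ does not directly control $\Ext^k_Y(E_i,E_j)$; and your assertion that ``there is no $\Hom$ because the $E_i,E_j$ are distinct exceptional objects'' is false (the N-resolution collection itself has large $\Hom$'s between distinct members, cf.\ formula \eqref{afbzdfbdfna}, and these are nonzero even between non-adjacent members). The actual argument is categorical and geometric at once: one connects $W^+$ to $W^-$ by a sequence of antiflips (Theorem~\ref{nresflips}), shows each antiflip is realized by a mutation of the Hacking collection on a common partial smoothing $Z$ (Prop.~\ref{SRGsrgsRG}, Lemma~\ref{hola}), and then pins down the homological shift of the mutated object by a \emph{rank} computation ($\rk E'_{i-1}=\delta_i n_{i-1}\pm n_i$ versus the rank predicted by the evaluation triangle, Theorem~\ref{main3}). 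It is this shift ($k=0$ in the $K$-nonnegative case, $k=1$ in the $K$-negative case) that forces $\RHom(E_i,E_{j})$ to be concentrated in a single degree; in particular the vanishing of $\Hom(E_i,E_{i-1})$ for the M-resolution is extracted by analyzing the \emph{next} antiflip, not by exceptionality. For non-adjacent pairs one first brings $P_i$ and $P_j$ together by antiflips that do not change $E_i,E_j$ (Lemma~\ref{ssgsGsrhSR}). Part (3) likewise comes from applying $\RHom(E_{i-1},-)$ to the mutation triangle, giving $\RHom(E_i,E_{i-1})^\vee\simeq\RHom(E'_i,E'_{i-1})[1]$ --- not from Serre duality on $Y$, which would introduce an unwanted twist by $\omega_Y$.

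Second, part (4): the Kawamata bundle $\bar F$ is the maximal iterated \emph{self}-extension of the single sheaf $\cO_{\oW}(-\bar A)$ (Definition~\ref{maxitext}), so there are no distinct ``reflexive rank-one pieces attached to the Hirzebruch--Jung chain'' whose deformations could be matched with the $\bar E_i$'s; the direct-sum decomposition of $F$ is invisible on $\oW$ and emerges only on $Y$. The paper obtains it by proving the two collections are dual, $\RHom(\bar E_j,E_{r-i})=\C^{\delta_{ij}}$, computing $\RHom(F,E_i)=\C^{n_i}$ and $\RHom(F,\bar E_i)=\C^{\bar n_i}$ via $R\pi_*\cO_W(-D_i)=\cO_{\oW}(-\bar A)$, and running a downward induction on the s.o.d.\ (Lemmas~\ref{askbfhsbKFHjsghj}, \ref{qergawgwrg}); strongness of the $\bar E_i$ collection is used only at the last step to split the extension. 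Third, in part (5) the claim that being the endomorphism algebra of a strong exceptional collection forces global dimension $\le 1$ is false (the Beilinson collection on $\P^2$ is strong but its endomorphism algebra has relations and global dimension $2$). Hereditariness of $\hat R$ requires the additional input $\Ext^2(E_i,E_j)=0$ for all $i,j$, i.e.\ part (2) applied to the \emph{dual} collection, which identifies the simples of $\hat R$ with the $E_i$'s and kills $\Ext^2$ between simples.
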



\begin{remark} 
The algebra $\hat R$ is a path algebra of a quiver with vertices $\bar E_r,\ldots,\bar E_0$ and with arrows connecting $\bar E_i$ to $\bar E_j$ for $i>j$ so that the total number of paths connecting $\bar E_i$ to $\bar E_j$ is equal to  (see Lemma~\ref{sdtheth}) \begin{equation}\label{afbzdfbdfna}\hom(\bar E_i,\bar E_j)=\bar n_j \bar a_i - \bar n_i \bar a_j=  \bar n_i \bar n_j \Big( \frac{ \bar \delta_{j+1}}{\bar n_j \bar n_{j+1}} + \dots +\frac{ \bar \delta_i}{\bar n_{i-1} \bar n_i} \Big). \end{equation} 

It follows from this formula that the quiver is  connected unless the algebra is semisimple, in which case the N-resolution is the M-resolution of a T-singularity. 
\end{remark}

\begin{remark}
One of the ingredients of the proof 
is a result of  Kawamata \cite{K21}, where 
Theorem~\ref{weFwetwET} was proved in the case  when
$Y \rightsquigarrow \overline W$
is a $\Q$-Gorenstein smoothing of a cyclic T-singularity ${1\over dn^2}(1,dna-1)$ (see also \cite{C20}). In this case 
all $\delta_i=0$,  the M-resolution is equal to the N-resolution,
Hacking vector bundles are pairwise orthogonal, and 
$\End(F)$ is a direct sum of $d$ copies of 
$\Mat_n(\C)$.
\end{remark}

In Section~\ref{s5} we illustrate these results in the simplest case of the Artin component, when $W^+\to \oW$ is a minimal resolution of singularities. In this case only, the exceptional collection $E_r,\ldots,E_0$ on $Y$ is a collection of line bundles, which is a deformation of an exceptional collection of line bundles on $W^+$. However, while the latter has both $\Hom$ and $\Ext^1$ in the forward direction, the former has $\Ext^1$ only. As in the general case, the dual  collection $\bar E_r,\ldots,\bar E_0$ 
associated with the N-resolution $W^-\to\oW$ is a  strong exceptional collection of vector bundles.

\begin{figure}[htbp]
\centering
\includegraphics[width=9cm]{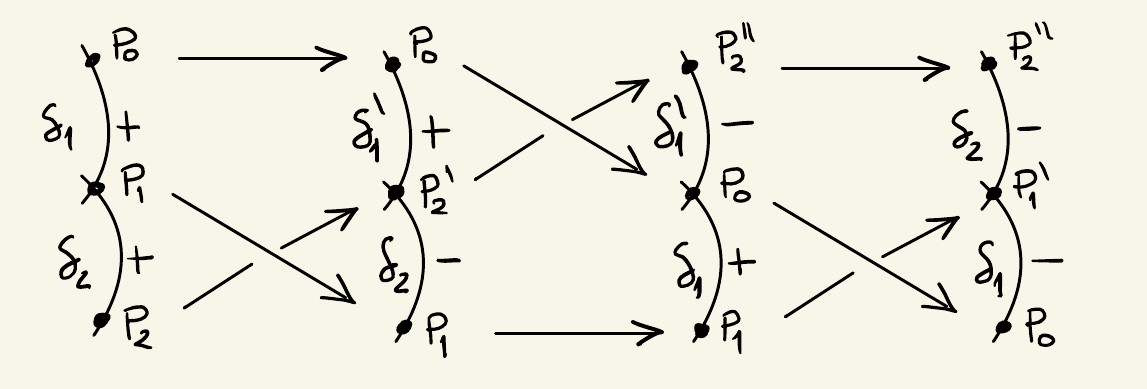}
\centering
\caption{Braiding from $W^+$ to $W^-$ for three Wahl singularities}
\label{f0}
\end{figure}

\begin{remark}
Antiflips of \cite{HTU17} are the
main  tool in the proof of Theorem~\ref{main}.
We use them to produce  $\Q$-Gorenstein smoothings $Y\rightsquigarrow W$ of different Wahl resolutions~$W\to\overline{W}$ over different curve germs in a given irreducible component of the versal deformation space of $\oW$, see Definition~\ref{sGshsRHWRH}.
Antiflips generate a ``geometric'' braid group action on the infinite set of all Wahl resolutions $W\to\oW$ compatible with the ``categorical'' braid group action by mutations of exceptional collections on $Y$, see Theorem~\ref{braidrelations}.
After applying finitely many  antiflips to the
$\Q$-Gorenstein smoothing $Y \rightsquigarrow W^+$ of the $M$-resolution, we obtain a $\Q$-Gorenstein smoothing $Y \rightsquigarrow W^-$ of 
the N-resolution associated to $W^+$. 
Going back from  $W^-$ to  $W^+$ can be done over the same curve $B$. This decomposes a birational map of total spaces of deformations $\cW^-\dasharrow\cW^+$
into a sequence of $r(r+1)\over2$ flips and flops.
\end{remark}

We will also describe the derived category of the total space~$\cW$ of a deformation.
We recall that a {\em semi-orthogonal decomposition} (s.o.d)  
of a triangulated category $\cT$ is a sequence of full triangulated subcategories
$\langle\cA_0,\ldots,\cA_r\rangle$ satisfying two conditions: 
(1)  $\Hom(\cA_j,\cA_i)=0$ for $j>i$, and 
(2) for every object $T\in\cT$,
there exist morphisms $0=T_r\to\ldots\to T_0=T$ such that the cone $A_i$ of $T_{i}\to T_{i-1}$ belongs to~$\cA_i$. 
The objects $A_i$ are functorial in $T$, i.e. we have projection functors $\cT\to\cA_i$.

\begin{theorem}\label{mainKawa}
Let $Y \rightsquigarrow W$ be a projective $\Q$-Gorenstein smoothing of a Wahl surface~$W$ satisfying Assumption \ref{assume} (1), (2), (3).
After possibly shrinking $B$, $D^b(\cW)$ admits a $B$-linear\footnote{I.e.~ preserved by tensoring with a pullback of any object $T\in D^{\perf}(B)$.} s.o.d.~$\langle  \cA^{\cW}_r,\ldots,\cA^{\cW}_0,\cB^{\cW} \rangle$
compatible with respect to restrictions to $W$ and $Y$
$$\begin{CD}
\langle \cA^W_r,\ldots,\cA^W_0,\cB^W \rangle  @<Li_W^*<<  \langle  \cA^{\cW}_r,\ldots,\cA^{\cW}_0,\cB^{\cW} \rangle @>Li_Y^*>> 
\langle \cA^Y_r,\ldots,\cA^Y_0, \cB^Y \rangle.\\
\end{CD}
$$
Each
$\cA^Y_i$ is  generated by the Hacking bundle $E_i$ and 
and each
$\cA^W_i\simeq D^b(R_i\hbox{\rm -mod})$, where 
$R_i$ is the Kalck-Karmazyn algebra associated to $P_i \in W$. Furthermore, 
$\cB^{\cW}\subset D^{\perf}(\cW)$.
\end{theorem}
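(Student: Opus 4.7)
The plan is to construct a relative exceptional collection $\cE_r,\ldots,\cE_0$ of vector bundles on $\cW$ that specializes to the Hacking exceptional collection on $Y$ and to a collection of Kawamata--type bundles $\bar E_i=Li_W^*\cE_i$ at the singular points of $W$; the $B$--linear s.o.d.\ they generate in $D^b(\cW)$, together with its right orthogonal $\cB^\cW$, will give the result.

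For each Wahl singularity $P_i\in W$, I would invoke Hacking's construction \cite{H13} (as in the paragraph preceding Theorem~\ref{weFwetwET}) to obtain, after shrinking $B$, a rank--$n_i$ vector bundle $\cE_i$ on $\cW$ restricting to the Hacking bundle $E_i$ on $Y$. Its restriction $\bar E_i$ to $W$ is, \'etale--locally at $P_i$, the Kawamata bundle attached to the germ $P_i\in W$, constructed via the divisor data provided by Assumption~\ref{assume}(3), and splits as a sum of line bundles away from $P_i$. Following the strategy used for the Kawamata bundle $\bar F$ in Section~\ref{s2} (originating in \cite{KKS}), one identifies $\End_W(\bar E_i)\cong R_i$ with the Kalck--Karmazyn algebra at $P_i$ and verifies $\Ext^k_W(\bar E_i,\bar E_i)=0$ for $k>0$, so that $\bar E_i$ generates a subcategory $\cA^W_i\subset D^b(W)$ equivalent to $D^b(R_i\text{-mod})$.

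Next I would establish the cross--vanishings $\RHom_W(\bar E_j,\bar E_i)=0$ for $j>i$. Locally at each $P_k$ the two bundles look like Kawamata bundles or sums of line bundles, and the required local vanishings follow once the ordering of the $P_i$ is chosen to be the one dictated by the chain of exceptional curves in the Wahl surface. Passing from local to global vanishings uses the cohomological Assumption~\ref{assume}(2) combined with careful bookkeeping of twists in the iterated--extension construction of the $\bar E_i$. This gives the s.o.d.\ $\langle \cA^W_r,\ldots,\cA^W_0,\cB^W\rangle$ of $D^b(W)$; the analogous s.o.d.\ on $Y$ is immediate from exceptionality of the Hacking collection.

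To promote these to a $B$--linear s.o.d.\ on $\cW$, I would use the base--change formalism for relative exceptional collections in the style of Kuznetsov. The key point is that $R\pi_*\RHom_\cW(\cE_j,\cE_i)$ is a perfect $\cO_B$--complex whose fibers at both the closed and generic points of $B$ vanish for $j>i$ (by the previous paragraph and by exceptionality of the Hacking collection on $Y$); Nakayama's lemma then gives vanishing after shrinking $B$. Setting $\cA^\cW_i:=\langle\cE_i\rangle$ (the smallest $B$--linear triangulated subcategory containing $\cE_i$) and $\cB^\cW:=\langle\cA^\cW_r,\ldots,\cA^\cW_0\rangle^{\perp}$ produces the claimed s.o.d., with base--change compatibility built into the construction. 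Perfectness of $\cB^\cW$ follows from $\cB^Y=D^{\perf}(Y)$ together with the fact that the $\cA^W_i$ categorically absorb all singular contributions of $W$ (Kalck--Karmazyn identifies the singularity category of $P_i\in W$ with that of $R_i$), so $\cB^W\subset D^{\perf}(W)$; flatness over $B$ then propagates perfectness to $\cW$. The main obstacle will be the global semi--orthogonality step: lifting local vanishings of $\Ext$ groups to global ones on $W$, while simultaneously matching the endomorphism algebra with $R_i$, requires delicate use of the Kawamata iterated extensions and an essential appeal to Assumption~\ref{assume}(2), and this is the most technically sensitive step of the argument.
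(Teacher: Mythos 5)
There is a genuine gap, and it sits at the very foundation of your plan: the theorem is not proved by extending the Hacking bundles to a relative exceptional collection of vector bundles on $\cW$, and in fact no such extension exists. The object that deforms flatly over all of $B$ is the Kawamata sheaf $\cF_i$ of rank $n_i^2$ (the maximal iterated extension of the divisorial sheaf $\cO_W(-D_i)$), which is \emph{not} locally free along the central fiber at $P_j$ for $j<i$ (there it is locally $\omega_{\cW}$ tensored with a free sheaf), and whose restriction to the smooth fiber is $E_i^{\oplus n_i}$ rather than $E_i$. Your hypothetical rank-$n_i$ bundle $\cE_i$ with $Li_W^*\cE_i$ generating $\cA^W_i$ and $\End_W(Li_W^*\cE_i)\cong R_i$ is internally inconsistent: a single object $G$ with $\End(G)=R_i$ and no higher self-$\Ext$'s classically generates only $D^{\perf}(R_i\hbox{\rm -mod})$, which is a \emph{proper} subcategory of $D^b(R_i\hbox{\rm -mod})$ because the Kalck--Karmazyn algebra $R_i$ has infinite global dimension whenever $P_i$ is singular. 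This is exactly why the paper defines $\cA_i^{\cW}$ as $\{\cF_i\}$, the essential image of $\otimes^L_{\bR_i}\cF_i\colon D^b(\bR_i\hbox{\rm -mod})\to D^b(\cW)$, and warns that $\{\cF_i\}\ne\langle\cF_i\rangle$ in general. Consequently the Kuznetsov-style base-change formalism for relative exceptional collections, on which your semi-orthogonality and $B$-linearity arguments rest, simply does not apply here; one instead needs boundedness of the functor $\otimes_{\bR_i}\cF_i$ (Lemma~\ref{qrgsrhr}(5)), Neeman's adjoint functor theorem to produce the projection, and the computation $\RHom(\cF_i,M^\bullet\otimes^L_{\bR_i}\cF_i)\simeq M^\bullet$ — which itself requires the auxiliary twisted sheaves $\cF_{i,q}$ and $\omega^q_{\cW}$ of Lemma~\ref{qrgwGwhwH} to get around the failure of local freeness.

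Two further steps would also fail as written. First, your claim that ``perfectness of $\cB^{\cW}$ follows from $\cB^Y=D^{\perf}(Y)$ \ldots and flatness propagates perfectness'' is not an argument: $\cB^Y\subset D^{\perf}(Y)$ is vacuous since $Y$ is smooth, and the actual content is $\cB^W\subset D^{\perf}(W)$, which the paper obtains by first producing the \emph{dual} s.o.d.\ $\langle\tilde\cB^W,\cA^W_r,\ldots,\cA^W_0\rangle$ from the resolution $X\to W$ via \cite{KKS}, proving that $\D_W$ of its first component is perfect, and then applying the duality anti-equivalence (Prop.~\ref{kjshefjhksEFG} and Remark~\ref{wfwEGwetgwe}); the same duality trick is then repeated on $\cW$. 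Second, the identification of $\cA^Y_i$ with $\langle E_i\rangle$ is not ``immediate from exceptionality of the Hacking collection'': it requires Kawamata's theorem \cite[Th.~4.3]{K21} to show $\cF_i|_Y\simeq E_i^{\oplus n_i}$, plus a descent and Noetherian-induction argument over $B^0$ (Theorem~\ref{sgsGsrgsRHsrh}). Your instinct that Assumption~\ref{assume}(2)--(3) and the iterated-extension structure drive the global vanishings is right, but the correct generators on $W$ are the \emph{reversed} divisorial sheaves $\cO_W(-D_r),\ldots,\cO_W(-D_0)$ (the unreversed collection fails to be semi-orthogonal at non-Gorenstein points, Lemma~\ref{wEGweg}), and the passage to $\cW$ is module-theoretic rather than bundle-theoretic.
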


See Section~\ref{AEFefEEfe} for results about deformations of a  c.q.s.~surface over any smooth base~$B$, for example for the whole versal $\Q$-Gorenstein deformation of ~$W$.
The  categories $\cA^W_i$, $\cA^Y_i$ and $\cA^{\cW}_i$  categorify Wahl singularities 
$P_i\in W$, their~Milnor fibers in~$Y$ and terminal singularities $P_i\in\cW$, respectively. 
The  categories $\cB^W$ and $\cB^Y$  categorify the complement of the chain  $\Gamma_1\cup\ldots\cup\Gamma_r\subset W$
and the complement of its Milnor fiber in $Y$, 
which are topologically equivalent.







\setcounter{tocdepth}{1}
\subsection*{Acknowledgements}

We are grateful to Paul Hacking and Alexander Kuznetsov for useful comments and discussions and especially to Martin Kalck for a suggestion to apply our methods to open questions raised in  \cite{BR} (see Prop.~\ref{Q4answered} and Example~~\ref{Q5answered}). We thank Juan Pablo Z\'u\~niga for writing the computer program \cite{Z} that finds all M- and N-resolutions. The first author was supported by the NSF grant DMS-2101726 and Simons Fellowship. The second author was supported by the FONDECYT regular grant 1190066.

\tableofcontents

\section{N-resolution associated with an M-resolution} \label{s1}

Let $0 < \Omega < \Delta$ be coprime integers, and let $P \in \overline{W}$ be a c.q.s. of type ${1\over \Delta}(1,\Omega)$. We~fix an M-resolution $W^+ \to \overline{W}$. We will construct its N-resolution $W^- \to \overline{W}$  in Lemma~\ref{existencenres}
and prove its 
uniqueness in Corollary~\ref{uniquenres}.
In this section we do not consider deformations of surfaces or derived categories. 
We write
$$\frac{\Delta}{\Omega}=[e_1, \ldots ,e_{\ell}]=e_1-\cfrac{1}{\ddots-\cfrac{1}{e_l}}.$$
If $\frac{\Delta}{\Omega} = [e_1, \ldots ,e_{\ell}]$ and $\frac{\Delta}{\Delta-\Omega} = [b_1,\ldots,b_s]$, then the Hirzebruch--Jung continued fraction $[b_s,\ldots,b_1,1,e_1,\ldots,e_{\ell}]$ is equal to $0$
(and in particular is well-defined, i.e.~there is no division by $0$).
This follows from the Riemenschneider's diagram \cite{Ri} and can be interpreted as the consecutive contraction of $(-1)$-curves in the chain of $\P^1$'s of self-intersections $-b_s,\ldots,-b_1,-1,-e_1,\ldots,-e_{\ell}$.
The contraction process terminates with a single  $\P^1$ of self-intersection~$0$. Since $\frac{\Delta}{\Omega'} = [e_{\ell}, \ldots ,e_{1}]$ implies $\Omega' \Omega \equiv 1(\text{mod} \Delta)$, then we also have $[e_1,\ldots,e_{\ell},1,b_s,\ldots,b_1]=0$. 
 
\begin{notation}
Various operations with Hirzebruch--Jung continued fractions will include the notation $[{n \choose a}]$, which abbreviates the Hirzebruch--Jung continued fraction of  $\frac{n^2}{na-1}$
of a Wahl singularity. For~example, for our M-resolution we have
\begin{equation}\label{argwgwrgwR}
[b_s,\ldots,b_1]-(1)-[{n_0 \choose a_0}]-(c_1)-[{n_1 \choose  a_1}]-(c_2)-\ldots-(c_r)-[{ n_r \choose a_r}]=0.
\end{equation}
Here $[{n_i \choose a_i}]$ represents the Wahl singularity $P_i$ and  $(c_i)$ represents the curve~$\Gamma_i$, so that its proper transform in the minimal resolution
of $W^+$ has self-intersection $-c_i$. 
\end{notation}

By \cite{Ch, Stevens}, there is a 
bijection between  P-resolutions of 
$P\in\overline{W}$ and the following set of zero continued fractions: $K(\Delta / \Omega)=\{[k_1,\ldots,k_s]=0\ \colon 1 \leq k_i \leq b_i\}$.  
The  M-resolution \cite{BC} is constructed by resolving all Du Val singularities
of the P-resolution
(they are of type $A_m$ for some $m$'s), and partially resolving each T-singularity $\frac{1}{dn^2}(1,dn a -1)$ with $d>1$ by its crepant Wahl resolution, which has $d-1$ rational curves and $d$ Wahl singularities of type $\frac{1}{n^2}(1,na-1)$.

\begin{notation}
Suppose the M-resolution $W^+ \to \overline{W}$ corresponds to a zero-fraction $[k_1,\ldots,k_s] \in K(\Delta / \Omega)$. 
As in Notation \ref{wahlres}, the surface $W^+$ contains curves $\Gamma_1, \allowbreak  \ldots, \allowbreak  \Gamma_r$ and Wahl singularities at $P_i$ of type $\frac{1}{n_i^2}(1,n_i a_i -1)$. We have $$\delta_i =n_{i-1} n_i K_{W^+} \cdot \Gamma_i\geq 0$$ for all $i=1,\ldots,r$. Let $d_i:=b_i-k_i \geq 0$. Furthermore, $d_1+\ldots+d_s=r+1$. Let~ $d_{i_1},\ldots,d_{i_e}$ be the set of nonzero $d_i$ with $i_1<i_2<\ldots<i_e$.
\end{notation}

%

\begin{proposition}
 $\delta_1,\ldots,\delta_r$ can be computed as follows: for $k=1,\ldots,e-1$,
$$\frac{\delta_{d_{i_1}+\ldots+d_{i_{k}}}}{ \epsilon_{d_{i_1}+\ldots+d_{i_{k}}}} =[b_{i_{k}+1},\ldots,b_{i_{k+1}-1}]$$ if $i_{k+1}>i_{k}+1$, or $\delta_{d_{i_1}+\ldots+d_{i_{k}}}=1$ if $i_{k+1}=i_{k}+1$.
All other $\delta_i$ are equal to $0$.
\label{hotdog}
\end{proposition}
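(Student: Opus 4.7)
The plan is to compute $\delta_i=n_{i-1}n_i(K_{W^+}\cdot\Gamma_i)$ curve by curve by locating each $\Gamma_i$ inside the construction of the M-resolution from its associated P-resolution, and then to reduce the computation on the few curves that can be non-crepant to a standard continued fraction identity.

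First I would split the curves $\Gamma_1,\ldots,\Gamma_r$ of $W^+$ into two types according to the construction recalled before the statement. Every P-resolution singularity is either Du Val or a T-singularity $\tfrac1{dn^2}(1,dna-1)$. In $W^+$ each Du Val $A_m$ singularity is replaced by its (crepant) minimal resolution, while each $\tfrac1{dn^2}(1,dna-1)$ is replaced by its crepant partial resolution consisting of $d$ Wahl singularities of type $\tfrac1{n^2}(1,na-1)$ joined by $d-1$ rational curves. Call any such curve \emph{internal}; by construction $K_{W^+}\cdot\Gamma=0$ for all internal curves, and so $\delta=0$ on them. The remaining curves of $W^+$ are exactly the \emph{transition} curves coming from the P-resolution itself; these separate two distinct T-singularities of the P-resolution.

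Second I would locate the transition curves in the Wahl labeling. Using the bijection between P-resolutions and zero-fractions $[k_1,\ldots,k_s]\in K(\Delta/\Omega)$, the singular points of the P-resolution correspond precisely to the indices $i_1<\ldots<i_e$ for which $d_{i_k}=b_{i_k}-k_{i_k}>0$; the $k$-th of them yields a chain of $d_{i_k}$ consecutive Wahl singularities in $W^+$ separated by $d_{i_k}-1$ internal crepant curves. Counting Wahl singularities from left to right (using $d_1+\ldots+d_s=r+1$) shows that the transition curve immediately after the $k$-th singular block receives label $i=d_{i_1}+\ldots+d_{i_k}$ for $k=1,\ldots,e-1$. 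This gives the right count: $r-(e-1)=\sum(d_{i_k}-1)$ curves with $\delta=0$ and $e-1$ possibly non-zero ones indexed as in the statement.

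Third I would compute $\delta$ on a transition curve $\Gamma=\Gamma_{d_{i_1}+\ldots+d_{i_k}}$. Locally along $\Gamma$ the P-resolution sits as a contraction of a chain of rational curves whose self-intersections are the entries of the Riemenschneider diagram strictly between position $i_k$ and position $i_{k+1}$, namely $-b_{i_k+1},\ldots,-b_{i_{k+1}-1}$; these contract to a string of Du Val $A$-singularities separating the two adjacent T-singularities of the P-resolution. Applying the adjunction formula to $\Gamma$ on the normal surface $W^+$ and expressing the two log discrepancies coming from its endpoints through the Hirzebruch-Jung fractions $[\binom{n_{i-1}}{a_{i-1}}]$ and $[\binom{n_i}{a_i}]$, together with the standard identity expressing the intersection of a chain in terms of its continued fraction, one obtains
\[
\frac{\delta_{d_{i_1}+\ldots+d_{i_k}}}{\epsilon_{d_{i_1}+\ldots+d_{i_k}}}=[b_{i_k+1},\ldots,b_{i_{k+1}-1}]
\]
whenever $i_{k+1}>i_k+1$, and $\delta_{d_{i_1}+\ldots+d_{i_k}}=1$ when the chain is empty. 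The main obstacle is the combinatorial bookkeeping: one must match the Riemenschneider positions $i_k$ with the curve indices of $W^+$ correctly, and then carry out the continued-fraction manipulation that converts the local adjunction computation into the claimed value. Once the local structure of the P-resolution is pinned down, the remainder is an exercise in the standard recursions for Hirzebruch-Jung continued fractions.
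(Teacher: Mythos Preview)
Your approach is essentially the same as the one the paper invokes: the paper's proof consists entirely of a citation to \cite[Cor.~10.1]{PPSU} and \cite[Prop.~4.1]{HTU17}, and your outline (separating internal crepant curves from transition curves, indexing the latter as $\Gamma_{d_{i_1}+\cdots+d_{i_k}}$, then computing $\delta$ on each transition curve via the dual continued fraction entries $b_{i_k+1},\ldots,b_{i_{k+1}-1}$) reconstructs exactly the algorithm those references contain. Your third step is admittedly a sketch rather than a computation, but since the paper itself defers to external sources for precisely that step, your write-up is at the same level of detail.
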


\begin{proof}
This is in the algorithm \cite[Cor.~10.1]{PPSU} for a P-resolution, adapted to its M-resolution. See also \cite[Prop.~4.1]{HTU17}. 
\end{proof}

\begin{example}
Consider $\frac{\Delta}{\Omega}=\frac{89}{33}=[3,4,2,2,4]$. Take the P-resolution $W^+$ given by $$[{2 \choose 1}]-(1)-[{3 \choose 1}]-(2)-[{2 \choose 1}]=[4]-(1)-[5,2]-(2)-[4]\quad \to \quad [3,4,2,2,4].$$ We have $\frac{\Delta}{\Delta-\Omega}=[2,3,2,5,2,2]$.
The element in $K(\Delta/\Omega)$ corresponding to $W^+$ is $[2,2,1,5,1,2]=0$. Thus $d_1=0$, $d_2=1$, $d_3=1$, $d_4=0$, $d_5=1$, and $d_6=0$. Therefore, by Proposition \ref{hotdog}, we have $\delta_1=1$ and $\delta_2=5$.
\label{exampleI}
\end{example}


\begin{lemma}
Let $0<B<A$ be integers with gcd$(A,B)=1$. Let $$\frac{A}{A-B}=[x_1,\dotsc,x_p] \ \ \text{and}  \ \ \frac{A}{B}=[y_1, \dotsc, y_q].$$ Then

\begin{enumerate}
    \item $\frac{A^2}{AB-1} = [y_1, \dotsc, y_{q-1}, y_q+x_p, x_{p-1}, \dotsc, x_1]$,
    \item $\frac{A^2}{A^2-(AB-1)}=[x_1,\ldots,x_p,2,y_q,\ldots,y_1]$, and
    \item $[x_p,\dotsc,x_1]-(1)-[y_1, \dotsc, y_{q-1}, y_q+x_p, x_{p-1}, \dotsc, x_1]$ contracts to $[x_p,\dotsc,x_1]$.
\end{enumerate}
\label{merken}
\end{lemma}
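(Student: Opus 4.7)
The plan is to establish (1), (2), (3) by direct combinatorial manipulations of Hirzebruch--Jung continued fractions, using the matrix presentation $M(a_1,\dotsc,a_n):=\mat{a_1}\cdots\mat{a_n}$, which is unimodular ($\det M=1$) and satisfies $[a_1,\dotsc,a_n]=P/Q$ where $\bigl(\begin{smallmatrix}P\\ Q\end{smallmatrix}\bigr)$ is its first column. The strategy is to treat (1) as the main workhorse, derive (2) from it via Riemenschneider duality, and handle (3) independently by an explicit cascade of blowdowns.

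For (1), the central observation is the elementary matrix identity
$$\mat{y_q+x_p}=\mat{y_q}\begin{pmatrix}1 & 0\\ -x_p & 1\end{pmatrix},$$
which allows me to factor $M(y_1,\dotsc,y_{q-1},y_q+x_p,x_{p-1},\dotsc,x_1)$ as a triple product $M(y_1,\dotsc,y_q)\cdot\bigl(\begin{smallmatrix}1 & 0\\ -x_p & 1\end{smallmatrix}\bigr)\cdot M(x_{p-1},\dotsc,x_1)$. The hypotheses $A/B=[y_1,\dotsc,y_q]$ and $A/(A-B)=[x_1,\dotsc,x_p]$ pin down the first columns of the outer factor matrices as $\bigl(\begin{smallmatrix}A\\ B\end{smallmatrix}\bigr)$ and (via $M(x_{p-1},\dotsc,x_1)=\mat{x_p}^{-1}M(x_p,\dotsc,x_1)$) a column read off from $\bigl(\begin{smallmatrix}A\\ A-B\end{smallmatrix}\bigr)$, while the remaining entries are forced by unimodularity. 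Multiplying out the first column of the triple product yields $\bigl(\begin{smallmatrix}A^2\\ AB-1\end{smallmatrix}\bigr)$, proving (1).

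For (2), I invoke the Riemenschneider duality recalled in the introduction: chains $[e_1,\dotsc,e_\ell]=\Delta/\Omega$ and $[b_1,\dotsc,b_s]=\Delta/(\Delta-\Omega)$ are characterized by the vanishing $[b_s,\dotsc,b_1,1,e_1,\dotsc,e_\ell]=0$. Taking $\Delta=A^2$, $\Omega=AB-1$ and the $e$'s from (1), the claim reduces to the zero relation
$$[y_1,\dotsc,y_q,\,2,\,x_p,\dotsc,x_1,\,1,\,y_1,\dotsc,y_{q-1},y_q+x_p,x_{p-1},\dotsc,x_1]=0.$$
This I would verify by a matrix computation analogous to (1), combined with the standard blowdown identity $\mat{a}\mat{1}\mat{b}=\mat{a-1}\mat{b-1}$ and the already-established relation $[x_p,\dotsc,x_1,1,y_1,\dotsc,y_q]=0$ to collapse the central block.

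For (3), I perform the explicit sequence of $(-1)$-blowdowns on the chain $[x_p,\dotsc,x_1,1,y_1,\dotsc,y_{q-1},y_q+x_p,x_{p-1},\dotsc,x_1]$. Contracting the central $-1$ produces a new $-1$-curve on whichever side has a neighboring $2$ (either $x_1$ or $y_1$), and one iterates in the order prescribed by the zero relation $[x_p,\dotsc,x_1,1,y_1,\dotsc,y_q]=0$. The $+x_p$ modification at position $y_q$ makes this cascade halt one blowdown earlier than the corresponding unmodified zero contraction, leaving an intermediate chain of the form $[1,\,x_p+1,\,x_{p-1},\dotsc,x_1]$ whose final boundary blowdown yields the desired $[x_p,\dotsc,x_1]$. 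The main obstacle is tracking the intermediate self-intersections precisely through the cascade; I would handle this by induction on $p$, with the base case $p=1$ (where $A/B=x_1/(x_1-1)=[2,\dotsc,2]$) being a direct check, and the inductive step following because after the initial blowdowns the chain reduces to a strictly smaller instance of the same shape.
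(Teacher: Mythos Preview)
The paper does not give its own proof of this lemma; it simply cites \cite[Lem.~8.5]{HP} and \cite[Cor.~2.1 and 2.2]{PSU}. Your self-contained argument via the matrix presentation, Riemenschneider duality, and an explicit blowdown cascade is the standard route and is correct.

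A few remarks on the details. For (1), your factorization
\[
\mat{y_q+x_p}=\mat{y_q}\begin{pmatrix}1 & 0\\ -x_p & 1\end{pmatrix}
\]
is correct and does the job once the second columns of $M(y_1,\dotsc,y_q)$ and $M(x_p,\dotsc,x_1)$ are pinned down by unimodularity; carrying out the multiplication gives first column $\bigl(\begin{smallmatrix}A^2\\ AB-1\end{smallmatrix}\bigr)$ as claimed. For (2), your reduction to the zero relation via duality is exactly right; the blowdown identity $\mat{a}\mat{1}\mat{b}=\mat{a-1}\mat{b-1}$ together with $[x_p,\dotsc,x_1,1,y_1,\dotsc,y_q]=0$ collapses the middle as you say. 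For (3), your cascade description is correct, and in particular the surviving $[x_p,\dotsc,x_1]$ is the trailing copy on the right of the chain (the $+x_p$ surplus at the $y_q$ position prevents that curve from ever reaching $-1$, so the wave of contractions eats everything to its left and stops at $[1,x_p+1,x_{p-1},\dotsc,x_1]$, which then blows down once more at the boundary). Your base case $p=1$ already exhibits this mechanism cleanly.

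The only cosmetic issue is that (2) and (3) are described programmatically (``I would verify'', ``I would handle by induction''); in a final write-up you would need to execute those computations, but the strategy is sound and there is no missing idea.
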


\begin{proof}
See \cite[Lem.8.5]{HP} or \cite[Cor.2.1 and 2.2]{PSU}.
\end{proof}

We use the geometric procedure in \cite[Cor.10.1]{PPSU}, which  interprets the zero continued fraction of the Wahl resolution \eqref{argwgwrgwR} 
as follows: 

\begin{itemize}
    \item[(1)] 
     At the beginning of \eqref{argwgwrgwR}
     we have $d_{i_1}$ Wahl chains $[{n_0 \choose a_0}]$ as follows: 
    $$[b_s,\ldots,b_1]-(1)-\underbrace{[{n_0 \choose a_0}]-(1)-[{n_0 \choose  a_0}]-(1)-\ldots-(1)-[{ n_0 \choose a_0}]}_{d_{i_1}}-(c_{d_{i_1}})-\ldots.$$
    We can blow-down the $(-1)$-curves and new $(-1)$-curves consecutively until we obtain the new chain 
    $$[b_{s},\ldots, b_{i_1+1},b_{i_1}-d_{i_1},b_{i_1-1},\ldots,b_1]-(c_{d_{i_1}})-[{ n_{d_{i_1}} \choose a_{d_{i_1}}}]-\ldots-(c_r)-[{ n_r \choose a_r}].$$ 
    \item[(2)] If $b_{i_1}-d_{i_1}=1$, then we contract this $(-1)$-curve and all new $(-1)$-curves in the subchain $[b_{s},\ldots, b_{i_1+1},b_{i_1}-d_{i_1},b_{i_1-1},\ldots,b_1]$ until there are none. 
    \item[(3)] Then the original $(-c_{d_{i_1}})$-curve becomes a $(-1)$-curve, and we have $$ \frac{n_{d_{i_1}}}{n_{d_{i_1}} - a_{d_{i_1}}}=[b_{1},\ldots, b_{i_1-1},b_{i_1}-d_{i_1},b_{i_1+1},\ldots,b_{i_2-1}].$$
    \item[(4)] We now repeat starting in (1) with the $d_{i_2}$. 
    \item[(5)] We end with $[\ldots,b_{i_e}-d_{i_e},\ldots,b_{i_1}-d_{i_1},\ldots]=0$, which is the zero continued fraction corresponding to the M-resolution.
\end{itemize}

\begin{proposition}
For every $j=0,\ldots,r$, we have that $$[{n_0 \choose a_0}]-(c_1)-[{n_1 \choose  a_1}]-(c_2)-\ldots-(c_j)-[{ n_j \choose a_j}]-(1)- \ \ \ \ \ \ \ \ \ \ \ \ \ \ \ \ \ \ \ \ \ \ \ \ \ \ \ \ \ \ \ \ \ \ \ \ \ \ \ $$ $$ \ \ \ \ \ \ \ \ \ \ \ \ \ \ \ \ \ \ \ \ \ \ \ \ \ \ \ \ \ \ \ \ \ \ \ \ \ \ [\ldots,b_{i_e}-d_{i_e},\ldots,b_{i_p}-d_{i_p}, \ldots,b_{i_{p-1}}-k, \ldots,b_{i_{p-2}}, \ldots b_{i_1},\ldots]=0,$$ for some $p$ and some $k$ depending on $j$.
\label{partialCQS}
\end{proposition}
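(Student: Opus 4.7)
The proposition is a refinement of equation \eqref{argwgwrgwR}, which is exactly the case $j=r$. The strategy is to invoke the geometric procedure (1)--(5) described just before the proposition, but to run it only \emph{partially}, stopping after the first $j+1$ Wahl singularities have been processed. The underlying principle is that a zero continued fraction represents a chain of $\P^1$'s that contracts, via successive blow-downs of $(-1)$-curves, to a single $\P^1$ of self-intersection zero; this property is preserved by blow-ups and blow-downs. Hence every intermediate chain arising during the procedure is itself a zero continued fraction.

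\emph{Starting point and inductive structure.} Begin from the Riemenschneider identity $[b_s,\ldots,b_1,1,e_1,\ldots,e_\ell]=0$ recalled at the start of Section~\ref{s1}. The procedure (1)--(5) builds the full chain \eqref{argwgwrgwR} from this identity by inserting Wahl singularities in $e$ groups of sizes $d_{i_1},\ldots,d_{i_e}$, each insertion step being realized by blow-ups at positions $i_1,\ldots,i_e$ of the initial chain. Writing $j+1=d_{i_1}+\ldots+d_{i_{p-1}}+k$ with $1\le k\le d_{i_p}$ uniquely determines the pair $(p,k)$ in the statement: these are the indices of the group and the position within the group at which the procedure has been halted. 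By the preservation principle above, the intermediate chain obtained at this partial stage is a zero continued fraction that contains the prefix $[{n_0\choose a_0}]-(c_1)-\ldots-(c_j)-[{n_j\choose a_j}]$ on one side, a $(-1)$-curve, and a partially modified version of $[b_s,\ldots,b_1]$ on the other side.

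\emph{Matching the statement.} It remains to identify the modified initial chain with the one written in the proposition. Proposition~\ref{hotdog} already records one half of the bookkeeping: it expresses the invariants $\delta_{d_{i_1}+\ldots+d_{i_k}}$ in terms of the $b_i$'s lying between two consecutive modification positions $i_k$ and $i_{k+1}$, which pins down exactly which positions are modified and by how much at each stage of the procedure. Combining this with the description of step (3) of the algorithm, which gives the identity $\frac{n_{d_{i_1}+\ldots+d_{i_{k}}}}{n_{d_{i_1}+\ldots+d_{i_{k}}}-a_{d_{i_1}+\ldots+d_{i_{k}}}}=[b_1,\ldots,b_{i_k}-d_{i_k},\ldots,b_{i_{k+1}-1}]$ at each stage, shows that the positions of the initial chain modified by the time the procedure has produced $j+1$ Wahl singularities are precisely $i_p,\ldots,i_e$ (fully, by $d_{i_p},\ldots,d_{i_e}$) and $i_{p-1}$ (by the residual amount $k$), while $b_{i_1},\ldots,b_{i_{p-2}}$ remain unchanged, in agreement with the statement.

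\emph{Expected obstacle.} The routine but delicate point is the reading direction: the algorithm (1)--(5) processes the chain from one end, whereas the proposition displays the modified initial chain in the opposite order. Checking that the indexing matches up (and verifying it against a small example such as Example~\ref{exampleI} to prevent off-by-one errors) is the main bookkeeping hurdle. Everything else is formal: once the correspondence between the partial algorithm's stopping time and the pair $(p,k)$ is pinned down, Lemma~\ref{merken} and the contraction invariance of zero continued fractions do the rest of the work, and the case $j=r$ recovers equation \eqref{argwgwrgwR} as a consistency check.
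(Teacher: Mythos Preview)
Your high–level idea---freeze the contraction procedure \textup{(1)--(5)} at an intermediate stage and use that blow-ups/blow-downs preserve the zero property---is the same as the paper's. But the direction issue you flag as ``routine bookkeeping'' is in fact the whole content of the proof, and your write-up gets it backwards.

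Running the procedure \textup{(1)--(5)} from \eqref{argwgwrgwR} absorbs $P_0$ first and modifies $b_{i_1}$ first; stopping partway leaves the \emph{later} Wahl singularities $P_{j+1},\ldots,P_r$ in the chain, with the \emph{low} positions $b_{i_1},b_{i_2},\ldots$ modified. That is the opposite of the statement, which keeps $P_0,\ldots,P_j$ and has the \emph{high} positions $b_{i_e},\ldots,b_{i_p}$ modified. (Your own formula $j+1=d_{i_1}+\cdots+d_{i_{p-1}}+k$ is inconsistent with your claim that $i_p,\ldots,i_e$ are the modified positions; the correct relation is $r-j=d_{i_e}+\cdots+d_{i_p}+k$.) Swapping sides is not a mere relabeling: the identity $[A,1,B]=0\Rightarrow[B,1,A]=0$ is \emph{not} valid for arbitrary zero continued fractions.

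The paper's fix is to first invoke the special dual-fraction identity $[e_1,\ldots,e_\ell,1,b_s,\ldots,b_1]=0$ (recalled at the start of Section~\ref{s1}) to rewrite \eqref{argwgwrgwR} with the $b$-chain on the \emph{right}:
\[
[{\textstyle n_0\choose a_0}]-(c_1)-\cdots-(c_r)-[{\textstyle n_r\choose a_r}]-(1)-[b_s,\ldots,b_1]=0.
\]
Now absorbing Wahl singularities from the right end removes $P_r$ first and subtracts from $b_{i_e}$ first; stopping after $P_r,\ldots,P_{j+1}$ have been absorbed yields exactly the displayed chain. So the missing ingredient in your argument is this preliminary swap; once it is in place, the rest of your outline is correct.
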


\begin{proof}
As explained above, the blowing-down process of the M-resolution produces the zero continued fraction $[\ldots,b_{i_e}-d_{i_e}, \ldots,b_{i_1}-d_{i_1},\ldots]$ 
by subtracting $1$ from the $b_{i_p}$ for each of the Wahl singularities from $n_0,a_0$ to $n_r,a_r$. Let us consider $$
[{n_0 \choose a_0}]-(c_1)-[{n_1 \choose  a_1}]-(c_2)-\ldots-(c_r)-[{ n_r \choose a_r}]-(1)-[\ldots,b_{i_e},\ldots,b_{i_1},\ldots]=0.$$ Then we do the same process of subtracting $1$ but now we start with $b_{i_e}$ and we finish with $b_{i_1}$. This proves the claim via stopping at the Wahl singularity $[{ n_j \choose a_j}]$ during this process.  
\end{proof}

\begin{lemma}\label{existencenres}
An N-resolution 
$W^- \to \overline{W}$ 
associated to the M-resolution $W^+ \to \overline{W}$
can be constructed as follows.
It has Wahl singularities $\bar P_i$ of type $\frac{1}{\bar n_i^2}(1,\bar n_i \bar a_i-1)$ for $i=0,\ldots,r$, which we will describe from the bottom up via $\tilde n_{p}, \tilde a_{p}$ 
such that 
$\bar n_{r-i}=\tilde n_i$, $\bar a_{r-i}=\tilde a_i$ for $i=0,\ldots,r$. 
The algorithm is as follows. 
\begin{itemize}
    \item If $i_1=1$ (i.e. $d_1 \neq 0$), then $\tilde n_p= \tilde a_p=1$ for $p=0,\ldots,d_1-1$. In other words, we start with $d_1$ smooth points.

\item If $i_1>1$, then $\frac{\tilde n_p}{\tilde n_p - \tilde a_p} =[b_1,\ldots,b_{i_1-1}]$ for $p=0,\ldots, d_{i_1}-1$. 

\item Let $q= \sum\limits_{j=1}^k d_{i_j}$. Then $\frac{\tilde n_p}{\tilde n_p - \tilde a_p} =[b_1,\ldots,b_{i_{k+1}-1}]$ for $p=q,\ldots,q+d_{i_{k+1}}-1$. 
\end{itemize}

The curves $\bar \Gamma_i$ for $i=1,\ldots,r$ are 
as follows. If $\bar \Gamma_i$ passes through one or two Wahl singularities, then its proper transform in the minimal resolution is a $(-1)$-curve. Otherwise (i.e. no Wahl singularities) it is a $(-2)$-curve. 
\end{lemma}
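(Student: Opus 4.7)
The plan is a direct combinatorial verification using the zero-continued-fraction encoding of Wahl resolutions from \cite{PPSU} (recalled in the discussion preceding Proposition~\ref{partialCQS}). The M-resolution $W^+$ is encoded by the data $b_1,\ldots,b_s$ (from $\frac{\Delta}{\Delta-\Omega}=[b_1,\ldots,b_s]$) together with the multiplicities $d_i=b_i-k_i\geq 0$ whose nonzero entries $d_{i_1},\ldots,d_{i_e}$ sit at positions $i_1<\ldots<i_e$. I will show that the proposed data for $W^-$---Wahl singularities grouped into $e$ blocks, where the $k$-th block has $d_{i_k}$ copies of the singularity with $\frac{\tilde n}{\tilde n-\tilde a}=[b_1,\ldots,b_{i_k-1}]$ connected internally by $(-1)$-curves and separated from adjacent blocks by $(-2)$-curves---assembles into a Wahl surface contracting to $\overline W$ and satisfying the conditions of Definition~\ref{nres}.

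That $W^-$ contracts to $\oW$ follows by running the blow-down algorithm outlined before Proposition~\ref{partialCQS} on the proposed configuration. Within each block, iterated application of Lemma~\ref{merken}(3) collapses the block's $(-1)$-curves and absorbs its Wahl singularities into a single Wahl singularity of the same type, creating a new $(-1)$-curve at the boundary. Running this across all $e$ blocks leaves behind a zero continued fraction that must coincide with the Riemenschneider encoding $[b_s,\ldots,b_1,1,e_1,\ldots,e_\ell]=0$ of $\oW$; hence $W^-\to\oW$ is a c.q.s.~resolution. For condition~(1) of Definition~\ref{nres}, note that $\tilde P_0=\bar P_r$ has type $[b_1,\ldots,b_{i_1-1}]$ by construction, which is the type of $P_0$ read off from the zero-fraction encoding of~$W^+$. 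The matching of intermediate partial contractions is the symmetric counterpart of Proposition~\ref{partialCQS}: peeling curves off the right of $W^-$ reads the same $b_\bullet$-prefix with the same subtractions $b_{i_p}-k$ as peeling off the left of $W^+$, because the block positions $i_1<\ldots<i_e$ are shared and only the direction is reversed. For condition~(2), apply Proposition~\ref{hotdog} to both resolutions: each $\delta_i$ (resp.~$\bar\delta_{r-i+1}$) is either $0$, $1$, or the continued fraction of the inter-block segment $[b_{i_k+1},\ldots,b_{i_{k+1}-1}]$, and the correspondence between indices yields $\bar\delta_{r-i+1}=\delta_i$. The relative nefness $K_{W^-}\cdot\bar\Gamma_i\leq 0$ follows immediately from the $(-1)$/$(-2)$-curve description via adjunction on the minimal resolution.

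The main obstacle is not the combinatorics---those are routine if detailed---but the minimality clause of Definition~\ref{sRGsrgrH}: a Wahl resolution must admit a projective $\Q$-Gorenstein smoothing that blows down to a smoothing of $\oW$. This is deformation-theoretic, not combinatorial. I~expect to establish it via the antiflip machinery of \cite{HTU17} previewed in the remark after Theorem~\ref{main}: applying a finite sequence of antiflips to the total space of a $\Q$-Gorenstein smoothing of $W^+$ produces a total space whose central fiber is the constructed $W^-$, inheriting both smoothability and compatibility with the given smoothing of $\oW$, and simultaneously placing $W^-$ within the same irreducible component of $\Def_{P\in\oW}$ as $W^+$.
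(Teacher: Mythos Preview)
Your overall shape is close to the paper's---both arguments contract the proposed chain via repeated applications of Lemma~\ref{merken}(3) to land on the zero continued fraction $[\ldots,b_{i_e}-d_{i_e},\ldots,b_{i_1}-d_{i_1},\ldots]$ of the M-resolution---but there is a genuine gap in your verification of condition~(2) and of $K_{W^-}\cdot\bar\Gamma_i\leq 0$.

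You propose to apply Proposition~\ref{hotdog} to $W^-$. That proposition is stated (and proved, via \cite[Cor.~10.1]{PPSU}) for M-resolutions of P-resolutions; it does not apply to a $K$-non-positive Wahl resolution such as~$W^-$. The paper instead computes each nonzero $\bar\delta_p$ by isolating, in the blow-down process, the step where we pass from the $i_{k+1}$-block to the $i_k$-block: at that moment the remaining picture is an \emph{extremal} Wahl resolution $[{\bar n_{p-1}\choose\bar a_{p-1}}]-(1)-[{\bar n_p\choose\bar a_p}]$ over some auxiliary c.q.s.~$\frac{1}{\Delta'}(1,\Omega')$, whose dual fraction is read off as an explicit subword of $[b_1,\ldots,b_s]$. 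One then invokes Lemma~\ref{extreNres}, where the toric computation $K\cdot\Gamma=-\frac{\bar n_1 a_0-n_0\bar a_1}{\bar n_1 n_0}$ simultaneously establishes the sign and the value $\bar\delta_p=[b_{i_k+1},\ldots,b_{i_{k+1}-1}]$. Your adjunction remark is not enough: for a $(-1)$-curve through two Wahl singularities, $K\cdot\bar\Gamma$ involves the discrepancies and is not automatically non-positive without using the specific inequalities between the $\bar n_i,\bar a_i$ coming from the construction.

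Two smaller points. First, your description of the curves is inverted: the statement says $(-2)$-curves occur only where \emph{both} endpoints are smooth (which happens only inside the first block when $i_1=1$), while all curves meeting at least one Wahl singularity---in particular all curves between adjacent blocks---are $(-1)$-curves. Second, for property~(1) the paper uses Proposition~\ref{partialCQS} directly rather than a ``symmetric counterpart'': the point is that eliminating Wahl singularities from the top of the N-resolution subtracts $1$ from $b_{i_e},b_{i_{e-1}},\ldots$ in that order, which is exactly the reversed order appearing in Proposition~\ref{partialCQS}. Your observation about the minimality clause (existence of a $\Q$-Gorenstein smoothing blowing down) is well taken; the paper indeed supplies this later via Theorem~\ref{nresflips} and Corollary~\ref{3foldantiflip}, not within the present proof.
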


\begin{example}
In Example \ref{exampleI}, 
$\bar n_0=35$, $\bar n_1=5$, $\bar n_2=2$. The N-resolution is
$$[{ 35 \choose 13}]-(1)-[{5 \choose 2}]-(1)-[{2 \choose 1}]=[3,4,2,2,7,2,3,2]-1-[3,5,2]-1-[4].$$
\label{exampleII}
\end{example}

Before proving Lemma \ref{existencenres} in general, let us do it just for extremal M-resolutions. 

\begin{definition}\label{srbsbsr}
A Wahl resolution $W \to \overline{W}$
of $P \in \overline{W}$ is called {\em extremal} if the exceptional divisor consists of a single curve $\Gamma_1$. We have two Wahl singularities $P_0, P_1$ (which may be smooth points). The type of $P_i$ is ${1\over n_i^2}(1,n_i a_i-1)$ and we have \begin{equation}\label{expres} 
\delta_1=n_0 n_1 \, |K_{W} \cdot \Gamma_1| \ \ \ \ \text{and} \ \ \ \ -n_0^2 n_1^2 \, \Gamma_1^2=  \Delta=n_0^2 + n_1^2 \pm \delta_1 n_0 n_1,\end{equation} where $\pm$ is the sign of $K_W \cdot \Gamma_1$. If $\delta_1=0$, then we have the M-resolution of $\frac{\Delta}{\Omega}=\frac{2n^2}{2na-1}$ for some $0<a<n$ coprime \cite{BC}. 
If $W$ is an extremal $M$-resolution with $\delta_1 >0$, then $W$ is an extremal P-resolution introduced and studied in \cite{HTU17}. 
\end{definition}

\begin{lemma}\label{extreNres}
An extremal M-resolution has a unique  N-resolution.
\end{lemma}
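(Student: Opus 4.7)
The plan is as follows. Since $r=1$, any N-resolution $W^-$ also consists of a single rational curve $\bar\Gamma_1$ sandwiched between two Wahl singularities $\bar P_0,\bar P_1$. By Definition~\ref{nres}, one is forced to have $\bar P_1 = P_0$ (so $\bar n_1 = n_0$ and $\bar a_1 = a_0$), $\bar\delta_1 = \delta_1$, and $K_{W^-}\cdot\bar\Gamma_1 \leq 0$. It therefore suffices to show that these constraints uniquely determine $(\bar n_0,\bar a_0)$ together with $\bar\Gamma_1^2$, and then to exhibit a Wahl surface realizing them.

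For uniqueness, I would apply the extremal identity \eqref{expres} to both $W^+$ (with the sign $+$, since $K_{W^+}\cdot\Gamma_1\ge 0$) and to $W^-$ (with the sign $-$, since $-K_{W^-}$ is relatively nef). This yields
\begin{equation*}
\Delta = n_0^2 + n_1^2 + \delta_1 n_0 n_1 = \bar n_0^2 + n_0^2 - \delta_1 n_0 \bar n_0,
\end{equation*}
a quadratic in $\bar n_0$ whose only positive solution is $\bar n_0 = n_1 + \delta_1 n_0$. The self-intersection $\bar\Gamma_1^2 = -\Delta/(\bar n_0 n_0)^2$ is then determined by \eqref{expres}, and the remaining invariant $\bar a_0$ is forced by the requirement that the c.q.s.~obtained by contracting the Hirzebruch--Jung chain associated with $[{\bar n_0 \choose \bar a_0}]-(\bar c_1)-[{n_0 \choose a_0}]$ coincides with $\frac{1}{\Delta}(1,\Omega)$: given the first Wahl block and the overall contracted c.q.s., the second Wahl block is read off unambiguously by the continued fraction algorithm.

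For existence I would split into two cases. If $\delta_1=0$, then $W^+$ is the crepant Wahl resolution of the T-singularity $\frac{2n_0^2}{2n_0 a_0-1}$, whence $n_0=n_1$ and $a_0=a_1$, and $W^- := W^+$ satisfies all the conditions by symmetry. If $\delta_1>0$, then $W^+$ is an extremal P-resolution in the sense of \cite{HTU17}, and that reference constructs a canonical dual Wahl surface with one curve, two Wahl singularities of types matching $(\bar n_0,\bar a_0)$ and $(n_0,a_0)$, and with $K\cdot\bar\Gamma_1<0$, contracting to the same c.q.s.~$\frac{1}{\Delta}(1,\Omega)$; this dual is precisely the N-resolution we need. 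The main obstacle is verifying that the numerical value $\bar a_0$ determined in the previous paragraph is coprime to $\bar n_0$ and yields a legitimate Wahl block in a Hirzebruch--Jung chain computing $\Omega$ correctly. This is essentially the content of the extremal P-resolution duality of \cite{HTU17}, and is also recovered as the extremal case of the algorithm of Lemma~\ref{existencenres}.
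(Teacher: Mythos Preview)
Your argument is essentially correct, but it takes a different route from the paper in both halves.

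\textbf{Uniqueness.} The paper does not pass through the quadratic coming from \eqref{expres}. Instead it works entirely with continued fractions: any candidate N-resolution gives
$[b_s,\ldots,b_1]-(1)-[{\tilde n_0\choose\tilde a_0}]-(1)-[{n_0\choose a_0}]=0$, hence also
$[{\tilde n_0\choose\tilde a_0}]-(1)-[{n_0\choose a_0}]-(1)-[b_s,\ldots,b_1]=0$, so the block $[{\tilde n_0\choose\tilde a_0}]$ is the dual of the contraction of $[{n_0\choose a_0}]-(1)-[b_s,\ldots,b_1]$ and is therefore uniquely determined. This pins down $\bar n_0$ and $\bar a_0$ simultaneously. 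Your quadratic cleanly yields $\bar n_0=\delta_1 n_0+n_1$ (and avoids any a priori assumption on $\bar c_1$), but your subsequent claim that $\bar a_0$ is ``read off unambiguously by the continued fraction algorithm'' would benefit from exactly the one-line duality argument the paper uses; note that knowing $\bar\Gamma_1^2=-\Delta/(\bar n_0 n_0)^2$ on the singular surface does not by itself give $\bar c_1$.

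\textbf{Existence.} The paper does not quote \cite{HTU17} as a black box. It takes the data produced by the algorithm of Lemma~\ref{existencenres}, uses Lemma~\ref{merken}(3) to check that $[b_s,\ldots,b_1]-(1)-[{\bar n_0\choose\bar a_0}]-(1)-[{n_0\choose a_0}]$ contracts to zero (hence resolves $\frac{1}{\Delta}(1,\Omega)$), and then verifies $K_{W^-}\cdot\bar\Gamma_1<0$ and $\bar\delta_1=\delta_1$ by a direct toric intersection computation combined with \cite[Lemma~4.2]{HTU17}. Your appeal to \cite{HTU17} for the dual surface is legitimate---this is exactly the initial antiflip $W_0^-$ recalled later around \eqref{wFwrgshasrh}---but you would still owe the reader the check that $\bar\delta_1=\delta_1$, which is precisely what the paper computes.

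\textbf{Circularity.} In the paper's logical order Lemma~\ref{extreNres} is proved \emph{before} Lemma~\ref{existencenres} and is invoked inside the latter's proof (to identify the $\bar\delta$'s at each break). So your parenthetical ``also recovered as the extremal case of the algorithm of Lemma~\ref{existencenres}'' is circular; drop it or recast it as motivation only.
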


\begin{proof}
If $\delta_1=0$, then we have the M-resolution of $\frac{\Delta}{\Omega}=\frac{2n^2}{2na-1}$ for some $0<a<n$ coprime \cite{BC}. Here N-resolution and M-resolution coincide, and there is only one index $i_1$ and $d_{i_1}=2$ (as at the end of Lemma \ref{merken} with $D=2$). If $W^+$ is an extremal $M$-resolution with $\delta_1 >0$, then we have an extremal P-resolution of \cite{HTU17}. Here we have only two indices $i_1,i_2$. We have $d_{i_1}=d_{i_2}=1$, and $$ [b_1,\ldots,b_{i_1}-1, \ldots,b_{i_2}-1 \ldots,b_s]=0.$$ 

We now prove that the N-resolution proposed in Lemma \ref{existencenres} is indeed an N-resolution. By Lemma \ref{merken} (3), $[b_s,\ldots,b_1]-(1)-[{\bar n_1 \choose \bar a_1}]-(1)-[{n_0 \choose a_0}]$ can be blown-down to $[b_s,\ldots,b_{i_2}-1,b_{i_2 -1}, \ldots, b_1]-(1)-[{n_0 \choose a_0}]$ and that can be blown-down to $ [b_1,\ldots,b_{i_1}-1, \ldots,b_{i_2}-1 \ldots,b_s]$, which is zero, and so $\frac{\Delta}{\Omega}=[{\bar n_1 \choose \bar a_1}]-(1)-[{n_0 \choose a_0}]$. Hence we do get a Wahl resolution $W \to \oW$ in this way. 

We now check that $K_{W} \cdot \Gamma <0$, where $\Gamma$ is the central curve, and $\bar \delta_1 = \delta_1$. Let~ $\frac{p_k}{q_k}=[b_1,\ldots,b_{k-1}]$, $p_1=1$, $p_0=q_1=0$, and $q_0=-1$. Then $$\frac{p_{i_1} q_{i_2} - p_{i_2} q_{i_1}}{p_{i_1} q_{i_2-1} -p_{i_2 -1} q_{i_1}}=[b_{i_2-1},\ldots,b_{i_1+1}]=\frac{\delta_1}{\epsilon'_1}$$ by \cite[Lemma 4.2]{HTU17}. But, by definition, we have $p_{i_1}=n_0$, $q_{i_1}=n_0-a_0$, $p_{i_2}=\bar n_1$, and $q_{i_2}=\bar n_1 - \bar a_1$. Therefore $\delta_1 = \bar n_1 a_0 - n_0 \bar a_1$. On the other hand, a toric computation shows that $K_{W} \cdot \Gamma= -1 +\big(1-\frac{\bar n_1 - \bar a_1}{\bar n_1} \big)+ \big(1-\frac{a_0}{n_0} \big)=-\frac{\bar n_1 a_0 - n_0 \bar a_1}{\bar{n}_1 n_0}$, and so  $K_{W} \cdot \Gamma$ is negative and $\bar \delta_1 = \delta_1$. 

Finally, for uniqueness let us consider some Wahl chain $[{\tilde n_1 \choose \tilde a_1}]$ such that $$[b_s,\ldots,b_1]-(1)-[{\tilde n_1 \choose \tilde a_1}]-(1)-[{n_0 \choose a_0}]=0,$$ but then we also have $[{\tilde n_1 \choose \tilde a_1}]-(1)-[{n_0 \choose a_0}]-(1)-[b_s,\ldots,b_1]=0$, and so $[{\tilde n_1 \choose \tilde a_1}]$ is determined, being dual to the contraction of $[{n_0 \choose a_0}]-(1)-[b_s,\ldots,b_1]$.
\end{proof}

\begin{proof} [Proof of Lemma \ref{existencenres}]
Note that the $\bar \delta_i$ are determined by the Wahl resolution, but in the proof we will check that $\bar \delta_i=\delta_{r+1-i}$ as required by the definition of the N-resolution. We will also need to prove property (1) in the definition.

The strategy is to consider the chain
\begin{equation}\label{adfadrh}
[b_s,\ldots,b_1]-(1)-[{\bar n_0 \choose \bar a_0}]-(1)-[{\bar n_1 \choose \bar a_1}]-(1)-\ldots-(1)-[{\bar n_r \choose \bar a_r}],
\end{equation} 
and to prove that it is contractible and contracts to zero. In this way, we would have that 
$[{\bar n_0 \choose \bar a_0}]-(1)-[{\bar n_1 \choose \bar a_1}]-(1)-\ldots-(1)-[{\bar n_r \choose \bar a_r}]=[e_1,\ldots,e_{\ell}]=\frac{\Delta}{\Omega}$. 
Notice that $[{\bar n_r \choose \bar a_r}]$ could correspond to $[2,\ldots,2]$ if $d_1\ne0$. Let us consider  $d_{i_1},\ldots,d_{i_e}$ (the set of nonzero $d_i$)  and write $ [b_s,\ldots,b_1]=[\ldots,b_{i_e},\ldots,b_{i_{e-1}}, \ldots,b_{i_1},\ldots]$ with $i_1<i_2<\ldots<i_e$.
By definition, we have either $\frac{\bar n_0}{\bar n_0 - \bar a_0}=[b_1,\ldots,b_{i_e -1}]$ when $i_e>1$, or $\bar P_0$ is a smooth point. In any case, by Lemma \ref{merken} (3), we have 
$$[\ldots,b_{i_e},\ldots,b_{i_{e-1}}, \ldots,b_{i_1},\ldots]-(1)-[{\bar n_0 \choose \bar a_0}]=[\ldots,b_{i_e}-1,\ldots,b_{i_{e-1}}, \ldots,b_{i_1},\ldots].$$ We continue with the following singularities, applying Lemma \ref{merken} (3) each time since $\frac{\bar n_p}{\bar n_p - \bar a_p}=[b_1,\ldots,b_{i_k -1}]$ for some $k$ depending on $p$. We recall that $i_e>i_{e-1}>\ldots>i_1$, and so this contraction process makes sense. In this way, we arrive to $$[\ldots,b_{i_e}-d_{i_e},\ldots,b_{i_{e-1}}-d_{i_{e-1}}, \ldots,b_{i_1}-d_{i_1},\ldots],$$ and this is the zero continued fraction of the M-resolution. Therefore the original chain is contractible, and it contracts to zero. 

We now show that the $\bar \delta_i$'s are indeed the ones from the algorithm.  
First we know that there are no problems with $\bar \delta_i=0$, they obviously coincide. Let us consider $\bar \delta_p$ corresponding to the break $i_k < i_{k+1}$. In the blowing-down process of (\ref{adfadrh}), consider the step
$$[\ldots,b_{i_{k+1}}-d_{i_{k+1}}+1,\ldots,b_{i_k},\ldots,b_1]-(1)-[{\bar n_{p-1} \choose \bar a_{p-1}}]-(1)-[{\bar n_p \choose \bar a_p}]-(1)-\ldots.$$ 

This is about computing $\bar \delta_p$ for $[{\bar n_{p-1} \choose \bar a_{p-1}}]-(1)-[{\bar n_p \choose \bar a_p}]=\frac{\Delta'}{\Omega'}$ for some $\Delta',\Omega'$. The~strategy is to compute $\bar \delta_p$ in this situation, and show that it coincides with our definition.
For that we compute the dual continued fraction $\frac{\Delta'}{\Delta'-\Omega'}$. But then 
$$[b_{i_{k+1}}-d_{i_{k+1}}+1,\ldots,b_{i_k},\ldots,b_1]-(1)- \frac{\Delta'}{\Omega'}$$ is either $0$ or we can complete it as $$[y_u\ldots,y_1,b_{i_{k+1}}-d_{i_{k+1}}+1,\ldots,b_{i_k},\ldots,b_1]-(1)- \frac{\Delta'}{\Omega'}$$ to make it zero, where $y_j\geq 2$, and $b_{i_{k+1}}-d_{i_{k+1}}+1 \geq 2$. Hence $$\frac{\Delta'}{\Delta'-\Omega'}=[b_1,\ldots,b_{i_k},\ldots,b_{i_{k+1}}-d_{i_{k+1}}+1,y_1,\ldots,y_u].$$ But this is the situation of an extremal N-resolution where we must subtract $-1$ in positions $i_k$ and $i_{k+1}$ to make it zero. Then the computation of $\bar \delta_p$ is identical to what we did in Lemma \ref{extreNres}.

To prove property (1) in Definition \ref{nres}, we use Proposition \ref{partialCQS}. Note that when we eliminate Wahl singularities in the N-resolution, we subtract $1$ from the $b_{i_p}$ from $i_e$ to $i_1$, and that is exactly what we have in Proposition \ref{partialCQS}.
\end{proof}


\begin{corollary}
Every M-resolution has a unique associated N-resolution.
\label{uniquenres}
\end{corollary}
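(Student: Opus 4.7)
The plan is to induct on $r$, the number of exceptional curves of $W^+$. The base cases $r=0$ (trivial) and $r=1$ (Lemma~\ref{extreNres}) are in place, so the content lies in the inductive step. My strategy is to reconstruct the N-resolution piece by piece, starting from the end matching $P_0$ and working inward one Wahl singularity and one curve at a time, with each step reducing to an extremal-uniqueness argument modelled on Lemma~\ref{extreNres}.

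The first step applies Definition~\ref{nres}(1) with $i=1$ together with $\bar P_r = P_0$, $K_{W^-}\cdot\bar\Gamma_r\le 0$, and $\bar\delta_r=\delta_1$ from Definition~\ref{nres}(2): these say that $\bar P_{r-1}\cup\bar\Gamma_r\cup\bar P_r\subset W^-$ is precisely the extremal N-resolution of $\frac{1}{\Delta_1}(1,\Omega_1)$ associated to the extremal M-resolution $P_0\cup\Gamma_1\cup P_1\subset W^+$. Lemma~\ref{extreNres} then pins down $\bar P_{r-1}$ and the self-intersection $\bar c_r$ of $\bar\Gamma_r$ uniquely.

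Iterating, for $i\ge 2$, I would assume that $\bar P_{r-i+1},\bar\Gamma_{r-i+2},\ldots,\bar P_r$ are already determined, so that the Hirzebruch--Jung expansion of $\frac{1}{\Delta_{i-1}}(1,\Omega_{i-1})$ is explicitly realized as a chain inside $W^-$. Applying Definition~\ref{nres}(1) with index $i$ and (2) with $\bar\delta_{r-i+1}=\delta_i$, the next pair $(\bar P_{r-i},\bar\Gamma_{r-i+1})$ is constrained by the continued-fraction identity
\[
\bigl[{\bar n_{r-i}\choose \bar a_{r-i}}\bigr]-(\bar c_{r-i+1})-\frac{\Delta_{i-1}}{\Omega_{i-1}}=\frac{\Delta_i}{\Omega_i}
\]
together with $K_{W^-}\cdot\bar\Gamma_{r-i+1}\le 0$. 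Following the uniqueness half of Lemma~\ref{extreNres}, I would append the dual continued fraction $\frac{\Delta_i}{\Delta_i-\Omega_i}$ to turn the above into a zero continued fraction and cyclically rearrange it, displaying $[{\bar n_{r-i}\choose \bar a_{r-i}}]$ as the Wahl-dual of a fully determined chain. The numerical constraint $\bar\delta_{r-i+1}=\delta_i$ then fixes $\bar c_{r-i+1}$, and the sign condition selects the N-side of the resulting extremal configuration, giving uniqueness.

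The main obstacle is that the ``fixed'' side in each iterated step is a general c.q.s.~$\frac{1}{\Delta_{i-1}}(1,\Omega_{i-1})$, not a single Wahl singularity $[{n_0\choose a_0}]$ as in Lemma~\ref{extreNres}. The remedy is that the inductive hypothesis furnishes an explicit chain realization of this c.q.s.~inside $W^-$, so its Hirzebruch--Jung expansion can be spliced into the cyclic rearrangement in exactly the role that $[{n_0\choose a_0}]$ plays in the extremal proof. The continued-fraction duality $[x_1,\ldots,x_p,1,y_q,\ldots,y_1]=0$ recorded in Lemma~\ref{merken} then drives uniqueness at each iteration, and terminating at $i=r$ produces the entire N-resolution.
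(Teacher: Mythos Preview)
Your approach is correct and uses the same core idea as the paper (the zero-continued-fraction/cyclic-rearrangement trick from Lemma~\ref{extreNres}), but the packaging differs. You announce ``induction on $r$'' yet actually carry out a direct iteration, pinning down $\bar P_{r-1}, \bar P_{r-2}, \ldots, \bar P_0$ one at a time. The paper runs a genuine induction on $r$: it observes that the tail $\bar\Gamma_2\cup\ldots\cup\bar\Gamma_r$ (with singularities $\bar P_1,\ldots,\bar P_r$) of \emph{any} N-resolution of $W^+$ satisfies all conditions of Definition~\ref{nres} relative to the sub-M-resolution $\Gamma_1\cup\ldots\cup\Gamma_{r-1}$ of $\frac{1}{\Delta_{r-1}}(1,\Omega_{r-1})$, and is therefore unique by the inductive hypothesis. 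Only $\bar P_0$ then remains, and a single application of the cyclic trick finishes. Unwinding the paper's induction reproduces exactly your iteration, so the arguments are equivalent; the paper's version is cleaner because it disposes of all but one singularity in one stroke, avoiding the need to argue separately at every intermediate step that the constraint $\bar\delta_{r-i+1}=\delta_i$ together with the sign condition really does fix $\bar c_{r-i+1}$---a point you assert but do not fully justify, and which the paper's induction sidesteps for all but the final step.
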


\begin{proof}
We know this is true for $r=1$ by Lemma \ref{extreNres}. For $r\geq 2$ we go by induction on $r$. We have that $\Gamma_1 \cup \ldots \cup \Gamma_{r-1}$ is an M-resolution of $\frac{1}{\Delta_{r-1}}(1,\Omega_{r-1})$, and so we can apply induction for all singularities, deltas, and $\frac{1}{\Delta_{i}}(1,\Omega_{i})$ except for $\bar n_0, \bar a_0$. Let $\frac{\Delta_{r-1}}{\Omega_{r-1}}=[f_1,\ldots,f_t]$. Then we have 
$[b_s,\ldots,b_1]-(1)-[{n'_0 \choose a'_0}]-(1)-[f_1,\ldots,f_t]=0$,  and this implies $[{n'_0 \choose a'_0}]-(1)-[f_1,\ldots,f_t]-(1)-[b_s,\ldots,b_1]=0$, and so $[{n'_0 \choose a'_0}]$ is determined by $\frac{1}{\Delta_{r-1}}(1,\Omega_{r-1})$ and $\frac{1}{\Delta}(1,\Omega)$. 
\end{proof}

\begin{example}
Using the computer program \cite{Z}, we find all M-resolutions and N-resolutions of the c.q.s.~$\frac{1}{85}(1,49)$. We have $\frac{85}{49}=[2, 4, 5, 2, 2]$, and $\frac{85}{36}=[3, 2, 3, 2, 2, 4]$. This c.q.s.~ has a deformation space with $5$ irreducible components. For each of them, we list the corresponding: zero continued fraction, dimension of the component, the vector of the $\delta_i$, the M-resolution, and the N-resolution.

\noindent
$\rule{12.7cm}{1.1pt}$
\noindent

$[1, 2, 2, 2, 2, 1]$, dimension is $10$, $(0, 2, 3, 0, 0)$

$(2)-(4)-(5)-(2)-(2)$ (minimal resolution)

$[{26 \choose 15}]-(1)-[{26 \choose 15}]-(1)-[{26 \choose 15}]-(1)-[{5 \choose 3}]-(1)-(2)$

\noindent
$\rule{12.7cm}{1.1pt}$
\noindent

$[2, 1, 3, 2, 2, 1]$, dimension is $8$, $(1, 7, 0, 0) $ 

$(2)-[{2 \choose 1}]-(5)-(2)-(2)$

$[{26 \choose 15}]-(1)-[{26 \choose 15}]-(1)-[{26 \choose 15}]-(1)-[{3 \choose 2}]-(1)$

\noindent
$\rule{12.7cm}{1.1pt}$
\noindent

$[1, 2, 3, 2, 1, 3]$, dimension is $6$, $(0, 8, 1)$

$(2)-(4)-[{3 \choose 1}]-(2)$

$ [{26 \choose 15}]-(1)-[{19 \choose 11}]-(1)-(2) $

\noindent
$\rule{12.7cm}{1.1pt}$
\noindent

$[2, 2, 3, 1, 2, 4]$, dimension is $2$, $(5)$

$(2)-[{7 \choose 2}]$ (extremal P-resolution)

$[{12 \choose 7}]-(1)$

\noindent
$\rule{12.7cm}{1.1pt}$
\noindent

$[3, 1, 3, 2, 1, 4]$, dimension is $2$, $(5)$

$[{3 \choose 2}]-(1)-[{4 \choose 1}]$ (extremal P-resolution)

$ [{19 \choose 11}]-(1)-[{3 \choose 2}]$

\noindent
$\rule{12.7cm}{1.1pt}$
\noindent

\end{example}



\section{Braid group action on Wahl resolutions}

Given a c.q.s.~$\oW$, we will show how to connect the M-resolution $W^+$ and the N-resolution $W^-$ by a sequence of antiflips, which are generators of the 
braid group $B_{r+1}$ action on the set of Wahl resolutions $W\to\oW$ with $r+1$ Wahl singularities. This action  comes from natural operations on deformations of Wahl resolutions. 


We first describe the action of $B_2\simeq\Z$ on extremal Wahl resolutions $W\to\oW$, where either $K_W\cdot\Gamma_1>0$ (extremal $P$-resolutions), $K_W\cdot\Gamma_1<0$
({\em $K$-negative resolutions}), or $K_W\cdot\Gamma_1=0$ when $\delta_1=0$
({\em $K$-trivial resolutions}).
We will refer to the action of a generator of $B_2$ as the {\em right antiflip}
and to its inverse as the {\em left antiflip}. 

A~$\Q$-Gorenstein smoothing $\cW\to B$ of 
an extremal Wahl resolution $W$
over a smooth curve can be blown-down to a smoothing $\ocW\to B$ of $\oW$. This gives a threefold contraction $\cW\to\ocW$, which is $K_{\cW}$-positive, $K_{\cW}$-negative, or 
$K_{\cW}$-trivial depending on the three cases above.
The antiflip is defined differently in each case.

\bigskip

{\bf Antiflips: $K$-positive case.}
Consider a $\Q$-Gorenstein smoothing $\cW^+\to B$ 
of 
an extremal P-resolution $W^+$ over a smooth curve.
One can ask if the morphism of threefolds $\cW^+\to\ocW$ given by blowing down the deformation admits an antiflip (a relative anticanonical model with terminal singularities).
This  was studied in \cite{HTU17} following an earlier work of Mori, Koll\'ar, and Prokhorov \cite{M88}, \cite{KM92}, \cite{M02}, \cite{MP11}. 
See \cite[S.~2]{U16} for a~summary of results. 
\begin{figure}[htbp]
\centering
\includegraphics[width=11cm]{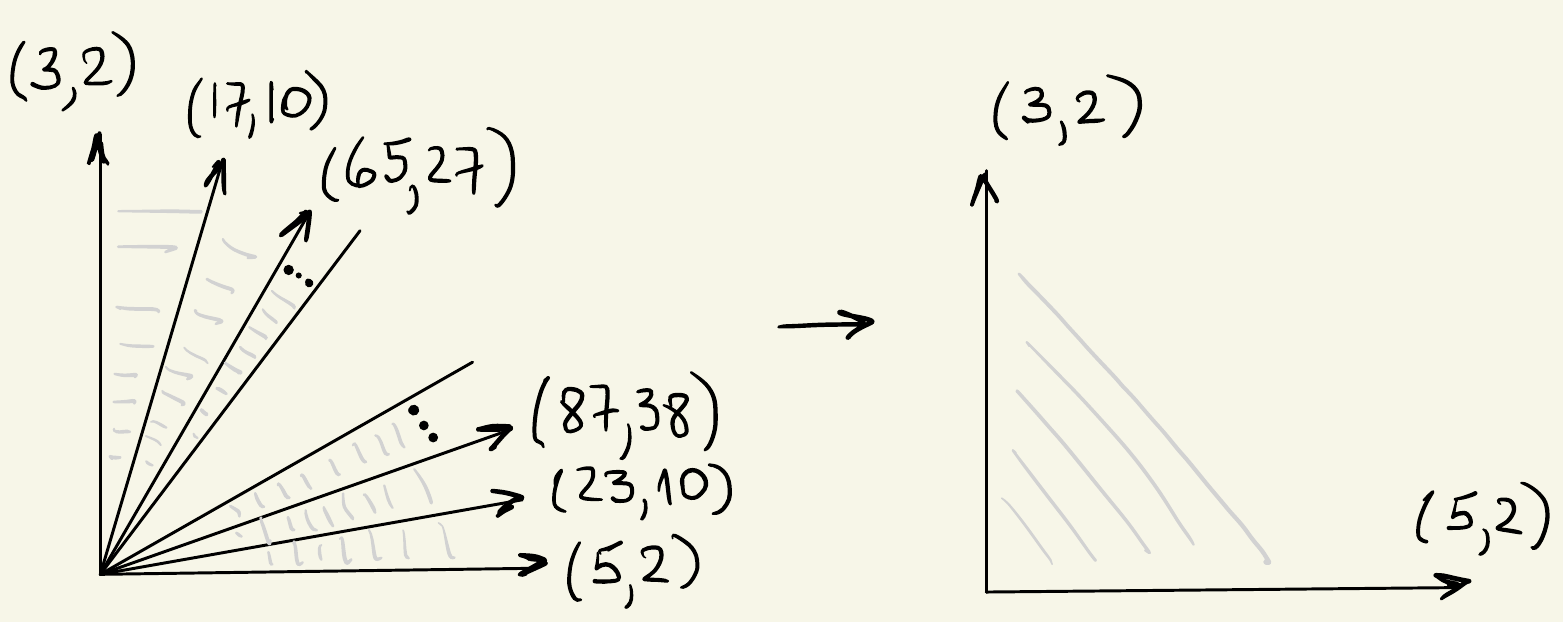}
\caption{Universal family of antiflips (on the left)\\ of the extremal P-resolution (on the right)}
\label{samplefamily}
\end{figure}
A~terminal antiflip exists if and only if the  boundary divisor $\Gamma_0+\Gamma_1+\Gamma_2\subset W^+$ deforms in $\cW^+$ and the axial multiplicities at $P_0$, $P_1$ satisfy $\alpha_0^2-\delta\alpha_0\alpha_1+\alpha_1^2>0$.
Each antiflip is a $\Q$-Gorenstein smoothing of a $K$-negative extremal Wahl resolution $W\to\oW$
and each of them 
appears this way for some $W^+$.
Terminal antiflips admit a universal family (see \cite{HTU17} for explicit equations) illustrated in Figure~\ref{samplefamily} in a concrete example, where  $\delta=4$.

Take an extremal P-resolution $[{n_0\choose a_0}]-(c)-[{n_1\choose a_1}]$  (in our example $[{3\choose 2}]-(1)-[{5\choose 2}]$) and consider its $\Q$-Gorenstein smoothing $\cW^+$ with a vector of axial multiplicities $(\alpha_0,\alpha_1)$  in the first quadrant (see the right side of Figure~\ref{samplefamily}). The corresponding antiflip~$\cW$  exists if $\alpha_0^2-\delta \alpha_0 \alpha_1+\alpha_1^2>0$. When  $(\alpha_0,\alpha_1)$ 
is in the interior of a $2$-dimensional cone $\sigma$ of the fan $\cF$ on the left side of Figure~\ref{samplefamily} then the special fiber $W \subset \cW$ is a $K$-negative extremal Wahl resolution. We ignore $1$-dimensional cones of $\cF$  until Section~\ref{s4}, where they will become crucial in the proof of Prop.~\ref{SRGsrgsRG}. The fan $\cF$ itself depends only on $\delta$ but it is decorated with data of Wahl singularities $(n,a)$ determined by certain recurrence relations. Decorations of the cone $\sigma$ determine Wahl singularities of $W$. If ~$\delta_1>1$ then $\cF$ is infinite and excludes the region $\alpha_0^2-\delta\alpha_0\alpha_1+\alpha_1^2\le 0$. In our example, we get 
$K$-negative extremal Wahl resolutions with 
singularities with data $[{5\choose 2}]-(1)-[{23\choose 10}]$,
$[{23\choose 10}]-(1)-[{87\choose 38}]$,~etc.~(see Figure~\ref{wwGwrgwRGW}). Expressing $(\alpha_0,\alpha_1)$
as a linear combination of generators of the cone $\sigma\subset\cF$ gives axial multiplicities of the anticanonical model (the smoothing $\cW$ of~$W$).

There are  two particular ``initial'' $K$-negative extremal Wahl resolutions $W_0^-$ and~$W_1^-$ that correspond to cones adjacent to the boundary of the first quadrant. 
\begin{figure}[htbp]
\centering
\includegraphics[width=8cm]{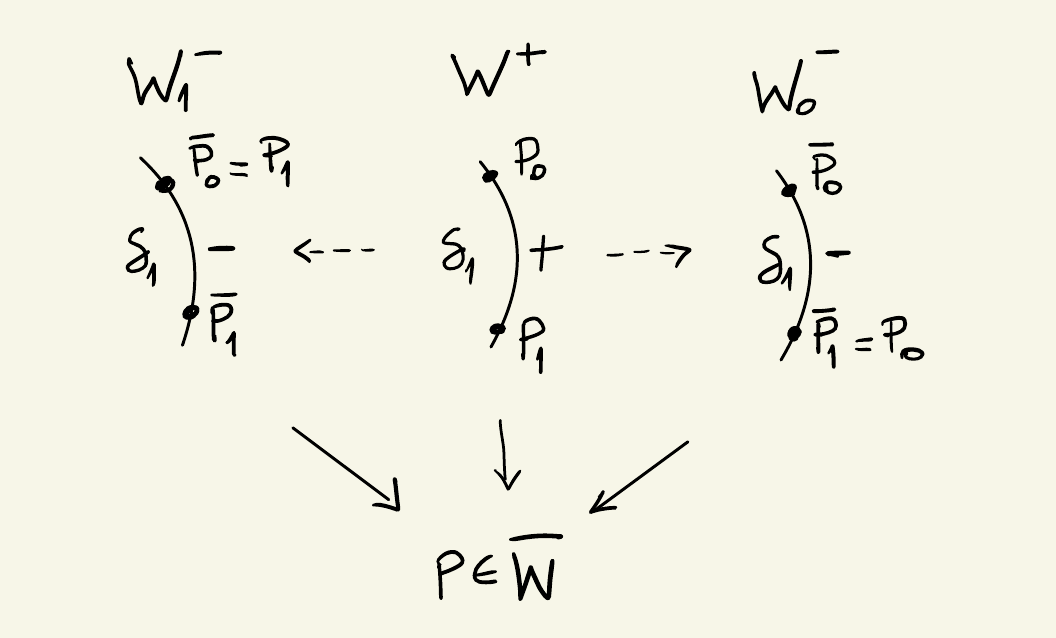}
\caption{Initial negative extremal Wahl resolutions}
\label{f2}
\end{figure}
Each of them preserves one of the Wahl singularities in $W^+$ (including the case of smooth points). To be precise, 
let $-c_1$ be the self-intersection of the proper transform of $\Gamma_1$ is the minimal resolution of $W^+$. 
The singularities of $W_0^-$ are $\bar P_0$ and $\bar P_1=P_0$
with 
\begin{equation}\label{wFwrgshasrh}
\bar n_0 = \delta_1 n_0 + n_1,\ 
\bar a_0 = \delta_1 a_0 + a_1 - (c_1 - 1)n_1\quad\hbox{\rm and}\quad \bar n_1= n_0,\  \bar a_1=a_0.
\end{equation}
The singularities of $W_1^-$ are $\bar P_0=P_1$
with $\bar n_0= n_1$,  $\bar a_0=a_1$ and 
$\bar P_1$ with $\bar n_1 = \delta_1 n_1 + n_0$,  $\bar a_1 = \delta_1 a_1 + a_0 + (c_1 - 1)n_0$.
The proper transform of $\Gamma_{1,i}\subset W_i^-$ for $i=0,1$ in the minimal resolution is a $(-1)$-curve, and $\bar\delta_1=\delta_1$ in both cases.
We refer to $W_0^- \to \overline{W}$ as the {\em right antiflip} (or just the antiflip) of an extremal $P$-resolution $W^+ \to \overline{W}$ and to $W_1^- \to \overline{W}$
as the {\em left antiflip}. 
By~Lemma~\ref{extreNres},
the right antiflip is the N-resolution of the extremal P-resolution.

Given $\Q$-Gorenstein smoothings $\cW_i^-$ of $W_i^-$ ($i=0,1$) over smooth curves $B_i$, the blow-down  deformations
$(\Gamma_{1,i} \subset \cW_i^-) \to (P \in \ocW_i)$  are birational contractions of $K_{\cW_i^-}$-negative curves of flipping type (k2A extremal neighborhoods \cite{HTU17}). The flips $\cW_i^- \dasharrow \cW^+_i$
give $\Q$-Gorenstein smoothings
$\cW_i^+ \to B_i$ of the same central fiber $W^+$. In particular, general fibers of $\cW_i^-$ and $\cW_i^+$ are isomorphic for each $i$.
The~curves $B_0$, $B_1$ are from the same component of $\Def_{P \in \overline{W}}$\footnote{
The threefolds $\cW_i^-$ are anticanonical models of $\cW^+_i$ for $i=0,1$. Since anticanonical models are unique (whenever they exist), we see that curves $B_0$ and $B_1$ are necessarily different.}. 

\smallskip

{\bf Antiflips: $K$-negative case.}
Antiflips 
of $K$-negative extremal resolutions $W$ correspond to counter-clockwise and clockwise rotations in Figure~\ref{samplefamily}
through the sequence of two-dimensional cones of the fan $\cF$.
The {\em right} (resp.~{\em left}) {\em antiflip} of a $K$-negative extremal resolution
$[{n_0\choose a_0}]-(1)-[{n_1\choose a_1}]$ 
different from $W_1^-$ (resp.~$W_0^-$)
is a $K$-negative extremal resolution 
$[{n'_0\choose a'_0}]-(1)-[{n'_1\choose a'_1}]$ 
such that  
$[{n_0\choose a_0}]=[{n'_1\choose a'_1}]$
(resp.~ 
$[{n'_0\choose a'_0}]=[{n_1\choose a_1}]$).
Their $\Q$-Gorenstein smoothings blow-down to 
smoothings of $\oW$ from the same
irreducible component of $\Def_{P \in \overline{W}}$. For example, universal family of  Figure~\ref{samplefamily} gives a sequence of right antiflips in Figure~\ref{wwGwrgwRGW} infinite in both directions.
The right (resp.~left) antiflip of $W_1^-$ (resp.~$W_0^-$) is actually a flip.

\begin{figure}[htbp]
\centering
\includegraphics[width=\textwidth]{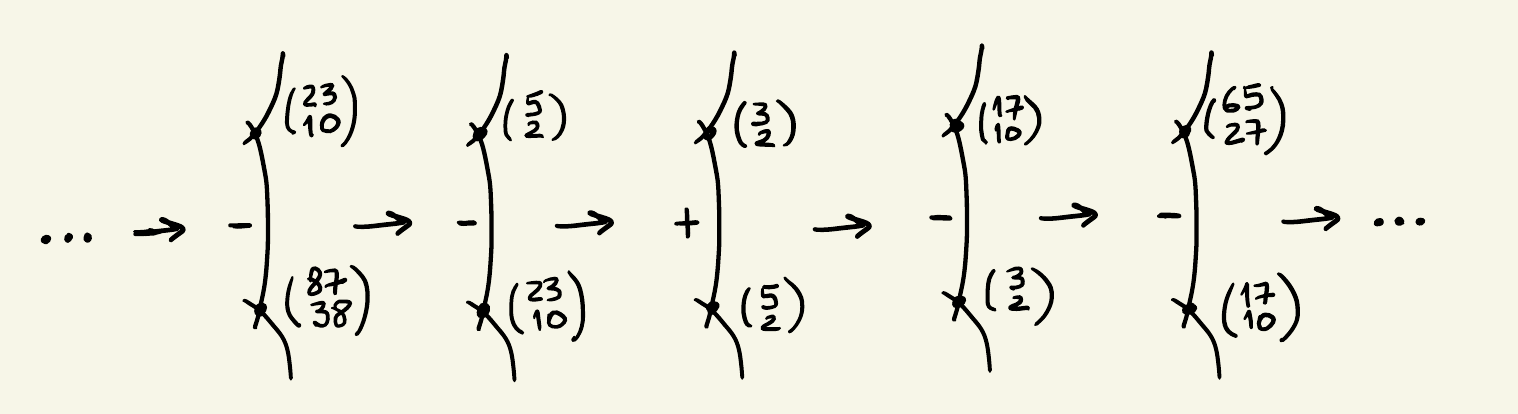}
\caption{Right antiflips of extremal Wahl resolutions.}
\label{wwGwrgwRGW}
\end{figure}

{\bf Antiflips: $K$-trivial  case.} 
Here   $W^+$ is $[{n\choose a}]-(1)-[{n\choose a}]$ (unless $n=1$ in which case it is a $(-2)$-curve with two smooth points) and $\oW={1\over 2n^2}(1,2na-1)$~\cite{BC}.  The blow-down $\ocW$
of a $\Q$-Gorenstein smoothing $\cW^+$ of $W^+$ is a $\Q$-Gorenstein smoothing of $\oW$. The contraction $\cW^+\to\ocW$ is crepant and can be flopped giving a threefold $\cW^-$ (which we call an antiflip of $\cW^+$) with a central fiber 
$W^-\simeq W^+$ (see \cite{BC} or \cite[S.~5]{K21}), which we call an antiflip of $W^+$ in the $K$-trivial case.

Before we consider the action of the braid group $B_{r+1}$ on $r+1$ strands, we address some global questions, namely existence of ``good'' divisors $\Gamma_0$ and $\Gamma_{r+1}$ and vanishing of local-to-global obstructions to deformations of Wahl resolutions.

\begin{lemma}\label{goodA}
Let $P\in \oW$ be a c.q.s.~surface
satisfying Assumption~\ref{assume} (1), (3). We can choose  effective, smooth divisors $\bar A$ and $\tilde{\bar A}$ such that the germ $P\in (\bar A\cup\tilde{\bar A})\subset\oW$ is et\'ale-locally isomorphic to the germ
$0\in(x=0)\cup(y=0)\subset\C^2/\mu_\Delta$.
Proper transforms $\Gamma_0$ of $\bar A$ and $\Gamma_{r+1}$ of $\tilde{\bar A}$ in any Wahl resolution $W$\ of $\oW$
intersect the chain $\Gamma_1\cup\ldots\cup\Gamma_r$
only at the end-points $P_0$ and~$P_r$,
where they give toric boundaries
opposite to $\Gamma_1$ (resp.~$\Gamma_r$).
\end{lemma}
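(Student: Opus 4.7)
The plan is to produce $\bar A$ and $\tilde{\bar A}$ by a jet-separation argument at $P$, a $\mu_\Delta$-equivariant formal inverse function theorem, and Bertini away from $P$.

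First I normalize the generator of $\Cl(P \in \oW)$ provided by Assumption~\ref{assume}(3). If $[\bar A]_P = k \in (\Z/\Delta)^\times$ and $H$ is an ample Cartier divisor on $\oW$, then after replacing $\bar A$ by an effective representative of $k' \bar A + mH$ with $k'k \equiv 1 \pmod \Delta$ and $m \gg 0$, I may assume $[\bar A]_P = 1 \in \Z/\Delta$, matching the class of $(x=0)$ under $\Cl(P) \simeq \hat\mu_\Delta$. By Serre's theorem, for $m$ large $\O(\bar A)$ is globally generated at $P$, so the restriction $H^0(\oW,\O(\bar A)) \twoheadrightarrow \O(\bar A)_P/\m_P\O(\bar A)_P$ is surjective. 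In the formal completion, $\widehat{\O(\bar A)}_P$ identifies with the module of weight-$1$ semi-invariants in $\C[[x,y]]$; the element $x$ has order $1$ in $(x,y)$, whereas every element of $\m_P\cdot\widehat{\O(\bar A)}_P$ has order $\geq 3$ (a nonzero invariant vanishing at $0$ has order $\geq 2$, since $x$ and $y$ are not invariant), so the class of $x$ in the fiber is nonzero and can be realized by a global section~$s_0$.

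The pullback of $s_0$ under the \'etale cover $\C^2 \to \C^2/\mu_\Delta$ is then a weight-$1$ semi-invariant $f = x + (\text{order} \geq 2)$. The map $(x,y) \mapsto (f,y)$ has Jacobian $I$ at the origin and is $\mu_\Delta$-equivariant (as $f$ and $x$ share weight $1$), so by the formal inverse function theorem, upgraded to \'etale via Artin approximation, the germ $(P \in \{s_0=0\}\subset \oW)$ is \'etale-locally isomorphic to $(0 \in (x=0)\subset \C^2/\mu_\Delta)$. To promote smoothness to the complement of $P$, I restrict to the affine subspace $V \subset H^0(\oW,\O(\bar A))$ of sections with the same fiber image as $s_0$ at $P$; on $\oW\setminus\{P\}$, where $\O(\bar A)$ is invertible, $V$ restricts to a base-point-free linear system for $m \gg 0$, so Bertini yields a generic $s \in V$ with smooth zero locus off $P$, while smoothness at $P$ is forced by the 1-jet condition. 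I set $\bar A := \{s=0\}$. The construction of $\tilde{\bar A}$ is parallel, starting from $\Omega \bar A$ (of class $\Omega \in \Cl(P)$, matching $(y=0)$) and choosing a section with 1-jet $y$ to obtain $g = y + (\text{order}\geq 2)$.

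Finally, the equivariant change of coordinates $(x,y) \mapsto (f,g)$ has Jacobian $I$ at $0$, so the pair $(P \in \bar A\cup\tilde{\bar A}\subset \oW)$ is \'etale-locally $(0 \in (x=0)\cup(y=0)\subset \C^2/\mu_\Delta)$, as required. The assertion about proper transforms then follows automatically, since any c.q.s.\ resolution $W \to \oW$ is toric over an \'etale neighborhood of $P$ and $\bar A,\tilde{\bar A}$ are the two toric boundaries there, so their proper transforms meet $\Gamma_1\cup\ldots\cup\Gamma_r$ only at $P_0$ and $P_r$ as toric divisors opposite to $\Gamma_1$ and $\Gamma_r$. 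The step I expect to be the main obstacle is the jet separation at the singular point $P$ for the reflexive (non-invertible) sheaf $\O(\bar A)$, together with the careful tracking of $\mu_\Delta$-equivariance in the formal-to-\'etale upgrade; the Bertini and transversality steps are then routine.
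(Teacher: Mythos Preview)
Your argument is correct and takes a genuinely different route from the paper's.

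The paper constructs a \emph{global} cyclic $\mu_\Delta$-cover $\pi:\hat W\to\oW$ branched along a smooth member $D\in|\Delta\bar A|$ that misses $P$; then $\hat W$ is smooth and $\oW=\hat W/\mu_\Delta$. The divisors $\bar A,\tilde{\bar A}$ are found as zero loci of $\mu_\Delta$-eigensections of $\pi^*H$ (for $H$ very ample) lying in the two eigenspaces of $T_{\hat P}\hat W$, with smoothness and transversality arranged by equivariant Bertini on the smooth variety $\hat W$. You instead stay on $\oW$: you surject $H^0(\oW,\O(\bar A))$ onto the fiber of the reflexive sheaf at $P$ (Serre), realize the local generator $x$ by a global section, invoke the equivariant formal inverse function theorem plus Artin approximation for the germ at $P$, and run ordinary Bertini on the smooth locus $\oW\setminus\{P\}$.

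What each approach buys: the paper's cyclic cover makes the $\mu_\Delta$-action global, so everything happens on a smooth projective variety where line-bundle sections and Bertini are transparent, and no formal-to-\'etale passage is needed. Your approach avoids building any auxiliary space and is more direct, at the cost of tracking the module structure of a reflexive sheaf at a singular point and invoking Artin approximation. Two small wording points: the map $\C^2\to\C^2/\mu_\Delta$ is the quotient map, not an \'etale cover (it is ramified at $0$); and the exact class of $(y=0)$ in $\Cl(P)\simeq\Z/\Delta$ depends on sign/identification conventions, so ``starting from $\Omega\bar A$'' should be read as ``starting from a Weil divisor in the class of $(y=0)$.'' Neither affects the validity of the argument.
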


\begin{proof}
We start by choosing $\bar A$ to be a Weil divisor generating the local class group of $P\in\oW$. We can make it effective
by adding a sufficiently ample Cartier divisor $H$. Then $\Delta\cdot \bar A$ is Cartier and effective. We can  add another multiple of $H$ to make $\Delta\cdot \bar A$ base-point-free. By Bertini theorem, we can  find a smooth divisor $D\in|\Delta\cdot\bar A|$ which does not pass through $P$.
Consider a cyclic cover 
$\pi:\,\hat W=\Spec_\oW\bigoplus\limits_{k=0}^{\Delta-1}\cO_\oW(-k\bar A)\to \oW$, where we use the canonical section $s_D\in H^0(\oW,\cO_\oW(D))$ to define the algebra structure using the map
$\cO_\oW(-\Delta\cdot \bar A)\mathop{\to}\limits^{s_D} \cO_\oW$
(see \cite{kolkov} for a  theory of cyclic covers). Then $\hat W$ is smooth and $\oW=\hat W/\mu_n$. More precisely,  $\pi_*\cO_{\hat W}=
\bigoplus\limits_{k=0}^{\Delta-1}\cO_\oW(-k\bar A)$ is an eigenvalue decomposition with respect to a primitive root $\zeta\in\mu_\Delta$,
where $\zeta$ acts on $\cO_\oW(-k\bar A)$ with weight $\zeta^{-k}$.
The cover is branched over $D$, where it is locally given by $(x,y)\mapsto (x,y^\Delta)$, where $y=0$ is a local equation of $D$, 
and over~$P$, where $\hat P=\pi^{-1}(P)$ is  a smooth point
and $\zeta$ acts on the tangent space of $\hat P$ with eigenvalues $(\zeta^a,\zeta^{\tilde a})$,
where $a$ and $\tilde a$ are coprime to $\Delta$.  Then $\mu_n$ acts on $H^0(\hat W,\pi^*H)$ with eigenspaces $H^0(\oW,\cO_\oW(H-k\bar A))$ for $k=0,\ldots,\Delta-1$.
The $\mu_\Delta$-action is free everywhere else.

Required divisors $A$, $\tilde A$ on $\oW$ can be found as equivariant sections  of $H^0(\hat W,\pi^*H)$ with eigenvalues $(\zeta^a,\zeta^{\tilde a})$. We require them to be smooth (in particular they restrict to given $\mu_\Delta$-eigenspaces in the tangent space of $\hat P$) and
be transversal to the branch divisor $\pi^{-1}(D)$. We claim that these divisors exist if $H$ is sufficiently ample. Since these geometric conditions on global sections are open  (in fixed $\mu_\Delta$-eigenspaces of $H^0(\hat W,\pi^*H)$), it suffices to prove that there exist sections as above having required geometric properties in finitely many equivariant affine charts $U_\alpha$ that cover $\hat W$ and equivariantly trivialize $\cO(\pi^*H)$. Since $\pi^*H$ is ample, 
any local section of $\cO_{\hat W}(U_\alpha)$ is a restriction of a global section of $\pi^*(kH)$
for a sufficiently  large $k$. So~it suffices to prove that there exist equivariant sections of $\cO_{\hat W}(U_\alpha)$ which have required geometric properties, which is clear. For example, in the equivariant affine chart containing $\hat P$, we can take any equivariant regular function that restricts to the given eigenspace in the tangent space of $\hat P$, and in particular is smooth at $\hat P$,
and then shrink the affine neighborhood to make it smooth everywhere in it.
\end{proof}

\begin{lemma}\label{NO}
Let $\pi:\,W \to \overline W$ be a c.q.s.~ resolution satisfying Assumption \ref{assume}. 
\begin{enumerate}
    \item 
We can choose divisors $\Gamma_0$ and $\Gamma_{r+1}$ 
as in Lemma~\ref{goodA}  so 
that there are no local-to-global obstructions to deformations of a pair
$(W,\Delta)$, where $\Delta$ is the boundary $\Gamma_{0}+\Gamma_1+\ldots+\Gamma_r+ \Gamma_{r+1}$, i.e.~the morphism 
$\Def_{(W,\Delta)}\to\prod\limits_{P_i\in W}\Def_{P_i\in (W,\Delta)}$ is smooth. 
\item If $W$ is a Wahl resolution then there are no local-to-global obstructions to $\Q$-Gorenstein deformations of $W$ or $(W,\Delta)$, for example there exists a $\Q$-Gorenstein smoothing $Y\rightsquigarrow W$ with a lifting of $\Delta$ 
for any choice of axial multiplicities
$\alpha_0,\ldots,\alpha_{r}$.
\end{enumerate}
\end{lemma}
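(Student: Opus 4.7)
The key input is Assumption~\ref{assume}(4), namely $H^2(\overline W, T_{\overline W})=0$, combined with the freedom in Lemma~\ref{goodA} to make $\overline A, \tilde{\overline A}$ arbitrarily ample while preserving the local structure at $P$. My plan is to identify the local-to-global obstruction as $H^2$ of an appropriate log tangent sheaf on $W$ and reduce to the assumed vanishing via pushforward and a cokernel sequence on $\overline W$.

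For part~(1), the local-to-global obstruction to smoothness of $\Def_{(W,\Delta)}\to\prod_i\Def_{P_i\in(W,\Delta)}$ lies in $H^2(W,T_W(-\log\Delta))$, via the local-to-global Ext spectral sequence for $\mathrm{RHom}(\Omega^1_W(\log\Delta),\O_W)$. The choice of $\Gamma_0,\Gamma_{r+1}$ from Lemma~\ref{goodA} ensures $(W,\Delta)$ is \'etale-locally toric at each $P_i$, with the two components of $\Delta$ through $P_i$ being the full toric boundary. Consequently $T_W(-\log\Delta)$ is locally free of rank~$2$, and globally $T_W(-\log\Delta)\simeq \pi^*T_{\overline W}(-\log\overline\Delta)$ where $\overline\Delta=\overline A+\tilde{\overline A}$ (both sides are the toric log tangent sheaf in a neighborhood of the exceptional fiber, and agree outside it). Since c.q.s.~are rational, $R^i\pi_*\O_W=0$ for $i>0$, so the projection formula yields $R^i\pi_*T_W(-\log\Delta)=0$ for $i>0$, and Leray gives
\[
H^2(W,T_W(-\log\Delta))\;\cong\;H^2(\overline W,T_{\overline W}(-\log\overline\Delta)).
\]
The natural inclusion $T_{\overline W}(-\log\overline\Delta)\hookrightarrow T_{\overline W}$ has torsion cokernel $Q$: on the smooth locus of $\overline W$ it is $N_{\overline A/\overline W}|_{\overline A}\oplus N_{\tilde{\overline A}/\overline W}|_{\tilde{\overline A}}$, with at most a $0$-dimensional extra contribution at $P$ (arising from the non-freeness of $T_{\overline W}$ at the quotient singularity). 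The long exact sequence combined with the assumption yields a surjection $H^1(\overline W,Q)\twoheadrightarrow H^2(\overline W,T_{\overline W}(-\log\overline\Delta))$; the $0$-dimensional torsion contributes nothing to $H^1$, and by Lemma~\ref{goodA} I may replace the ample divisor~$H$ used there by $kH$ for $k\gg0$, making $\overline A, \tilde{\overline A}$ arbitrarily ample smooth curves. Serre vanishing on these curves then forces $H^1(\overline W,Q)=0$, proving~(1).

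For part~(2), I run the same argument with $T_W(-\log\Delta)$ replaced by Hacking's $\Q$-Gorenstein log tangent sheaf $\mathcal T^{QG}_W(-\log\Delta)$ (see~\cite[\S3]{H04}). At each Wahl singularity $\frac{1}{n_i^2}(1,n_ia_i-1)$, the index-one Gorenstein cover is an $A_{n_i-1}$ hypersurface on which $\mu_{n_i}$-equivariant deformations are unobstructed; this both shows that the local $\Q$-Gorenstein obstruction at $P_i$ vanishes and that $\mathcal T^{QG}_W(-\log\Delta)$ is locally free at $P_i$ (still toric for its QG structure). Hence the same $\pi^*$-identification, projection-formula vanishing, and Serre-vanishing cokernel analysis yield $H^2(W,\mathcal T^{QG}_W(-\log\Delta))=0$. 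Since local $\Q$-Gorenstein deformations exist at each Wahl $P_i$ realizing any prescribed axial multiplicity $\alpha_i$, vanishing of local-to-global obstructions globalizes them to a $\Q$-Gorenstein smoothing $Y\rightsquigarrow W$ with $\Delta$ lifted.

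The principal technical obstacle I expect is verifying that the extra contribution to $Q$ at $P$ is purely $0$-dimensional, so that the Serre-vanishing argument on the curves $\overline A, \tilde{\overline A}$ actually suffices. This should follow from the explicit toric description of $T_{\overline W}$ and $T_{\overline W}(-\log\overline\Delta)$ at $P$ as $\mu_\Delta$-invariant derivation modules on $\C^2$, where both are rank-$2$ reflexive modules agreeing generically with an explicit finite-length quotient at $P$. The QG variant in~(2) requires the analogous toric verification also at the Wahl singularities $P_i\in W$.
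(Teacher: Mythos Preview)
Your overall route differs from the paper's in a useful way.  You identify $T_W(-\log\Delta)\simeq\pi^*T_{\overline W}(-\log\overline\Delta)$ directly (both are locally free, trivialised by $x\partial_x,y\partial_y$ in every toric chart, and $\pi$ is an isomorphism elsewhere), and then run the cokernel argument on $\overline W$.  The paper instead first treats $T_W(-\log\sum_{i=1}^r\Gamma_i)$ (only the exceptional curves), shows its $R\pi_*$ is a subsheaf of $T_{\overline W}$ with punctual cokernel so that $H^2$ equals $H^2(\overline W,T_{\overline W})=0$, and only afterwards adds $\Gamma_0,\Gamma_{r+1}$ via the normal bundle sequence on the orbifold stack $\underline W$ over $W$.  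Your shortcut is cleaner; the paper's detour avoids working near the singular point $P\in\overline W$.

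There is one real gap.  Your closing step ``Serre vanishing on these curves'' does not go through as written.  When you replace $\overline A$ by a member of $|\overline A_0+kH|$, both the curve and the line bundle $Q|_{\overline A}$ change; what you need is $\deg(Q|_{\overline A})>2g(\overline A)-2$, and an adjunction count shows this difference is governed by $-K_{\overline W}\cdot\overline A$, which Assumption~\ref{assume}(4) alone does not make positive for large $k$ (it only says $H^2(T_{\overline W})=0$, not that $-K$ is big).  The fix, and this is exactly what the paper does on $\underline W$, is to replace ``Serre vanishing on the curve'' by Serre vanishing on the \emph{surface}: use
\[
0\to\cO\to\cO(\overline A)\to N\to 0
\]
(on the orbifold stack $\underline{\overline W}$, where $\overline A$ is Cartier), so that $H^1(N)=0$ follows from $H^1(\cO(\overline A_0+kH))=0$ for $k\gg0$ together with $H^2(\cO)=0$ from Assumption~\ref{assume}(2).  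Incidentally, your worry about an extra $0$-dimensional piece in $Q$ at $P$ is unfounded: taking $\mu_\Delta$-invariants of $0\to T_{\C^2}(-\log xy)\to T_{\C^2}\to N_x\oplus N_y\to 0$ is exact in characteristic~$0$, and $(N_x)^{\mu_\Delta}$ is already free of rank one over $\cO_{\overline A,P}$, so $Q$ is a line bundle on each of $\overline A,\tilde{\overline A}$ with no residual torsion.

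For part~(2) you need not rerun the whole argument with a $\Q$-Gorenstein tangent sheaf.  The paper simply observes that the versal $\Q$-Gorenstein deformation of a Wahl singularity is $(xy=z^n+t)\subset\frac{1}{n}(1,-1,a,n)$ with $xy=0$ the boundary, so $\Def^{\Q G}_{P_i\in(W,\Delta)}\to\Def^{\Q G}_{P_i\in W}$ is smooth; then (2) follows from (1) by base change.
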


\begin{proof}
The versal $\Q$-Gorenstein deformation a Wahl singularity ${1\over n^2}(1,na-1)$ 
is $(xy=z^n+t)\subset{1\over n}(1,-1,a,n)$, where $t$ is a deformation parameter and $xy=0$ is the local equation of $\Delta$.
So the morphism $\Def^{\Q G}_{P_i\in (W,\Delta)}\to \Def^{\Q G}_{P_i\in W}$ is obviously smooth and we only need to prove the first statement.

We have a subsheaf $T_W(-\log\sum\limits_{i=1}^{r}{ \Gamma_i})\subset T_W$ of derivations that preserve ideal sheaves of $\Gamma_1,\ldots,\Gamma_r$.
It is well-known that $R\pi_*T_W(-\log\sum\limits_{i=1}^{r}{ \Gamma_i})$ is a subsheaf of $T_\oW$ with the quotient sheaf supported at $P\in\oW$ \footnote{It suffices to prove that $R^k\pi_*T_W(-\log\sum\limits_{i=1}^{r}{ \Gamma_i})=0$ for $k>0$. We work in a toric et\'ale neighborhoods of $P\in\oW$ and its partial resolution $\Gamma_1+\ldots+\Gamma_r\subset W$.
Then $R^k\pi_*T_W(-\log\Delta)=0$ for $k>0$ because $T_W(-\log\Delta)\simeq\cO_W\otimes\C^2$ in this et\'ale neighborhood. 
We have an exact sequence 
$0\to T_W(-\log\Delta)\to
T_W(-\log(\sum\limits_{i=1}^{r}{ \Gamma_i}))\to F\to0$,
where $F$ is supported on $\Gamma_0\cup\tilde \Gamma_{r+1}$.
Then $R^k\pi_*F=0$ for $k>0$ and we have a required vanishing.}. By Assumption \ref{assume}~(4), it follows that 
$H^2(W, T_W(-\log\sum\limits_{i=1}^{r}{ \Gamma_i}))=H^2(\oW,T_\oW)=0$. Since the sheaf  $T_W(-\log\sum\limits_{i=1}^{r}{ \Gamma_i})$ is the sheaf of infinitesimal automorphisms of the pair $(W,\Gamma_1+\ldots+\Gamma_r)$, it follows that there are no local-to-global obstructions to deformations of the pair. 

We need to find $\Gamma_0$ and $\Gamma_{r+1}$ that satisfy  Lemma~\ref{goodA} and
cohomology vanishing $H^2(W, T_W(-\log \Delta))=0$.
Let $q:\,\underline W\to W$ be an orbifold stack associated to $W$. An exact sequence 
$0\!\to\! T_{\underline W}(-\log\sum\limits_{i=0}^{r+1}{\underline \Gamma_i})\!\to\!
T_{\underline W}(-\log\sum\limits_{i=1}^{r}{\underline \Gamma_i})\!\to\! N_{{\underline \Gamma_0}/{\underline W} }\oplus N_{{\underline{\Gamma_{r+1}}}/{\underline W} }\!\to\!0$ on~$\underline W$
pushes forward to $
0\!\to\! T_W(-\log\Delta)\!\to\!
T_W(-\log\sum\limits_{i=1}^r\Gamma_i)\!\to\! F\oplus\tilde F\!\to\!0$,
where $F$ (resp.~$\tilde F$) is a sheaf supported on $\Gamma_0$ (resp.~$\Gamma_{r+1}$). It suffices to prove that 
$H^1({\underline \Gamma_0}, N_{{\underline \Gamma_0}/{\underline W} })=H^1(\underline{\Gamma_{r+1}}, N_{{\underline{\Gamma_{r+1}}}/{\underline W} })=0$.
Pushing forward an exact sequence 
$0\to\cO_{\underline W}\to\cO_{\underline W}({\underline \Gamma_{0}})\to N_{{\underline \Gamma_{0}}/{\underline W} }\to0$,
and using Assumption~\ref{assume} (2),
it suffices to prove that 
$H^1({\underline W},\cO_{\underline W}({\underline \Gamma_{0}}))=0$.
This holds after twisting $\Gamma_{0}$ with a sufficiently ample Cartier divisor $H$ in Lemma~\ref{goodA}. The same proof works for $\underline{\Gamma_{r+1}}$.
\end{proof}


We will define the action of  generators of $B_{r+1}$ on Wahl resolutions with $r+1$ singularities by treating every irreducible curve in its exceptional divisor as an extremal Wahl resolution.
Relations of the braid group are checked in Theorem~\ref{braidrelations}.

\begin{definition}\label{sGshsRHWRH}
Let $W \to \oW$ be a Wahl resolution with exceptional divisor $\Gamma_1\cup\ldots\cup\Gamma_r$ and  
toric boundaries $\Gamma_0$ and $\Gamma_{r+1}$ as in Lemmas~\ref{goodA} and \ref{NO}. The neighborhood of $\Gamma_i\subset W$ contains a subchain $[{n_{i-1}\choose a_{i-1}}]-(c_i)-[{n_{i}\choose a_{i}}]$ of an extremal Wahl resolution.
The~contraction $W\to W_i$ of $\Gamma_i\subset W$  gives a c.q.s.~surface.
The image of $\Gamma_i$ is a c.q.s. $\frac{1}{\Delta_{\Gamma_i}}(1,\Omega_{\Gamma_i})$, which has as toric boundary the image of $\Gamma_{i-1}$ and~ $\Gamma_{i+1}$. By Lemma~\ref{NO}, we can choose two deformations of $W_i$ (the same ones if the extremal resolution is a P-resolution or a  $K$-trivial resolution)
which (1) are equisingular at  singularities of $W_i$ other than $\frac{1}{\Delta_{\Gamma_i}}(1,\Omega_{\Gamma_i})$, (2) lift the boundary of $W_i$, and (3)~ smoothen $\frac{1}{\Delta_{\Gamma_i}}(1,\Omega_{\Gamma_i})$ as in the discussion of antiflips of extremal Wahl resolutions in the beginning of this section. These deformations of $W_i$ are blow-downs of $\Q$-Gorenstein deformations of $W$ and another Wahl resolution $R_i(W)\to \oW$, respectively.
We call $R_i(W)$ the right antiflip of 
$W \to \oW$ at~$\Gamma_i$.
The left antiflip  is defined is a similar way. The singularities of  $R_i(W)$ and $L_i(W)$ 
are the same as for $W$ except at the positions $i-1$ and $i$, where we have the singularities produced by the antiflip of an extremal Wahl resolution
$[{n_{i-1}\choose a_{i-1}}]-(c_i)-[{n_{i}\choose a_{i}}]$.
\end{definition}

\begin{corollary}\label{3foldantiflip}\label{3foldantiantiflip}
Given a sequence of Wahl resolutions  $W_0, W_1, \ldots, W_k\to\oW$ with Wahl chains $\Gamma_0^j,\ldots,\Gamma_{r+1}^j$ for $j=0,\ldots,k$, suppose $W_{i}=R_{l_i}(W_{i-1})$ 
for $i=1,\ldots,k$.

\begin{enumerate}
    \item 
There is a sequence of $\Q$-Gorenstein smoothings  $Y_i\rightsquigarrow W_i$ for $i=0,1,\ldots,k$ over smooth curve germs $B_i$ that  belong to the same component of $\Def_{P \in \oW}$.

\item If  $K_{W_{i-1}}\cdot\Gamma^{i-1}_{l_i}\ge0$
for $i=1,\ldots,k$,
i.e.~on every step we antiflip an extremal P-resolution or a $K$-trivial resolution,
then we can assume that $B_1=\ldots=B_k=B$
is the same curve in $\Def_{P \in \oW}$ and  $(W_{i-1} \subset \W_{i-1}) \to (0 \in B)$ is the flip (or flop) of $(W_{i} \subset \W_{i}) \to (0 \in B)$ for all $i=1,\ldots,k$
with respect to the contraction of $\Gamma^i_{l_i}\subset W_{i}$. In particular,  the smooth fibers $Y_i$ of these families are isomorphic.
\end{enumerate}
\end{corollary}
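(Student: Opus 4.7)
The plan is to proceed by induction on $k$. For part~(1), suppose inductively that there exists a $\Q$-Gorenstein smoothing $\cW_{i-1}\to B_{i-1}$ of $W_{i-1}$ with $B_{i-1}$ in a fixed irreducible component $\cC$ of $\Def_{P\in\oW}$. The curve $\Gamma^{i-1}_{l_i}\subset W_{i-1}$ together with its two adjacent Wahl singularities forms an extremal Wahl resolution of a sub-c.q.s., and the contraction of this single curve $W_{i-1}\to W_{i-1,l_i}$ deforms inside $\cW_{i-1}$ to a $\Q$-Gorenstein smoothing of $W_{i-1,l_i}$ that partially smoothens the c.q.s.~at the image of $\Gamma^{i-1}_{l_i}$. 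I would then invoke the antiflip construction reviewed at the start of this section --- namely the HTU universal family in the $K$-positive case, the fan-of-cones construction in the $K$-negative case, and the crepant flop in the $K$-trivial case --- to produce a partner $\Q$-Gorenstein smoothing of $W_{i-1,l_i}$ coming from a $\Q$-Gorenstein smoothing $\cW_i\to B_i$ of $W_i=R_{l_i}(W_{i-1})$. Since both smoothings of $W_{i-1,l_i}$ blow down to deformations of $\oW$ in the same irreducible component, we get $B_i\in\cC$, possibly after first replacing $\cW_{i-1}$ by another smoothing of $W_{i-1}$ in $\cC$ whose axial multiplicities lie in the appropriate cone of the HTU fan $\cF$.

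For part~(2), I would argue downwards from $i=k$. Using Lemma~\ref{NO}(2), fix a $\Q$-Gorenstein smoothing $\cW_k\to B$ of $W_k$ with $B\in\cC$ and with axial multiplicities generic enough that all iterated flips below exist. The hypothesis $K_{W_{i-1}}\cdot\Gamma^{i-1}_{l_i}\ge 0$ together with formulas~\eqref{wFwrgshasrh} implies that after the right antiflip one has $K_{W_i}\cdot\Gamma^i_{l_i}\le 0$, strict in the P-resolution case and an equality in the $K$-trivial case. By adjunction the contraction of $\Gamma^i_{l_i}\subset\cW_i$ is then a $K_{\cW_i}$-negative flipping (respectively flopping) contraction of a terminal threefold over $B$. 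Mori's existence theorem for three-dimensional flips and flops produces a flip (flop) $\cW_i\dashrightarrow\cW'\to B$ whose central fiber sits on the $K$-positive (respectively $K$-trivial) side of the local extremal Wahl configuration. Since $R_{l_i}$ is by construction the deformation-theoretic inverse of the local flip at $\Gamma_{l_i}$, this central fiber is exactly $W_{i-1}$; setting $\cW_{i-1}:=\cW'$ and iterating yields the required chain of flips and flops over the common base $B$, and in particular the smooth general fibers all coincide.

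The main technical difficulty is the identification in part~(2) of the central fiber of the flip with the prescribed Wahl resolution $W_{i-1}$: a priori Mori's theorem produces only some terminal threefold, so one must match its special fiber to the combinatorially defined resolution $L_{l_i}(W_i)=W_{i-1}$. I would reduce this matching to the extremal case and invoke the explicit description of $k2A$ antiflips in \cite{HTU17}, which identifies the two sides of each such flip with the specific Wahl resolutions produced by $R$ and $L$. A more mild secondary point in part~(1) is that in the $K$-positive case a given $\cW_{i-1}$ need not itself admit an antiflip, so one sometimes has to move within $\cC$ to a neighbouring smoothing of $W_{i-1}$ whose axial multiplicities satisfy the HTU existence inequality $\alpha_0^2-\delta\alpha_0\alpha_1+\alpha_1^2>0$.
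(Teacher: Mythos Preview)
Your proposal is correct and follows essentially the same strategy as the paper: part~(1) is unpacked from Definition~\ref{sGshsRHWRH}, and part~(2) is proved by starting from a $\Q$-Gorenstein smoothing of $W_k$ (provided by Lemma~\ref{NO}) and running flips/flops backwards along the chain. The paper's proof is terser --- it declares (1) ``clear'' and for (2) simply takes all axial multiplicities equal to $1$ and flips --- but the content is the same.

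Two remarks. First, your genericity condition on axial multiplicities is unnecessary: once $K_{W_i}\cdot\Gamma^i_{l_i}\le 0$ (which you correctly derive), the threefold contraction is $K$-nonpositive and the flip or flop exists by Mori's theorem regardless of axial multiplicities; the paper's choice of all multiplicities equal to $1$ is a convenience, not a constraint. Second, the ``main technical difficulty'' you flag --- identifying the flipped central fiber with $W_{i-1}$ --- is already baked into Definition~\ref{sGshsRHWRH} and the local HTU description: the flip of a k2A neighborhood always has central fiber the extremal P-resolution (the relative canonical model is unique), which is exactly $W_{i-1}$ restricted to the neighborhood of $\Gamma_{l_i}$, while the rest of the chain is untouched. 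So this step needs no further argument beyond invoking that definition.
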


\begin{proof}
(1) is clear. To prove (2), 
choose a 
$\Q$-Gorenstein smoothing $(W_k \subset \W_k) \to (0 \in B)$ over a smooth curve germ $B$
with all axial multiplicities equal to $1$, which exists by Lemma~\ref{NO}.
Then we  apply a sequence of flips (or flops if $\delta^i_{l_i}=0$) to contractions of 
$\Gamma^i_{l_i}\subset\W_i$ for $i=k,k-1,\ldots,1$.
\end{proof}


\begin{proposition}\label{computo}
Let $W \to \overline{W}$ be a Wahl resolution with a chain of $3$ curves $\Gamma_1, \Gamma_2, \Gamma_3$, and singularities $P_0, P_1, P_2, P_3$ where the type of $P_i$ is ${1\over n_i^2}(1,n_i a_i-1)$. Consider $W':=R_2(W)$ the right antiflip of $W \to \overline{W}$ at $\Gamma_2$. Hence we have a Wahl resolution $W' \to \overline{W}$ with a chain of $3$ curves $\Gamma'_1, \Gamma'_2, \Gamma'_3$, and singularities $P'_0=P_0, P'_1, P'_2=P_1, P'_3=P_3$. Let $\delta'_i=n'_{i-1} n'_i \, |K_{W'} \cdot \Gamma'_i|$. Then we have the following three situations:

\bigskip
\noindent \textbf{(-/-)}: $\Gamma_2 \cdot K_W < 0$ and $\Gamma'_2 \cdot K_{W'} < 0$.

\begin{enumerate}
    \item $n'_1=\delta_2 n_1 - n_2$, $a'_1=\delta_2 a_1 - a_2$, $n'_2=n_1$, $a'_2=a_1$, $\delta'_2=\delta_2$.
    
    \item $n'_0 n'_1 \, \Gamma'_1 \cdot K_{W'}=\frac{ \pm \delta_1 (\delta_2 n_1 - n_2)+ \delta_2 n_0}{n_1}$, where $\pm$ is the sign of $K_W \cdot \Gamma_1$.
    
    \item $ n'_2 n'_3 \, \Gamma'_3 \cdot K_{W'} ={ \pm \delta_3 n_1 - \delta_2 n_3\over n_2}$, where $\pm$ is the sign of $K_W \cdot \Gamma_3$.
    
\end{enumerate}

\bigskip   
\noindent \textbf{(-/+)}: $\Gamma_2 \cdot K_W < 0$ and $\Gamma'_2 \cdot K_{W'} > 0$. Let $-c'_2$ be the self intersection of the proper transform of $\Gamma'_2$ in the minimal resolution of $W'$.

\begin{enumerate}
    \item $n'_1=n_2-\delta_2 n_1$, $a'_1=a_2-\delta_2 a_1 - (c'_2 - 1)n'_1$, $n'_2=n_1$, $a'_2=a_1$, $\delta'_2=\delta_2$, and $\delta'_2=(c'_2-1)n'_1n_1+n_1a'_1-n'_1a_1$.
    
    \item $n'_0 n'_1 \, \Gamma'_1 \cdot K_{W'}=\frac{ \pm \delta_1 (n_2 -\delta_2 n_1) - \delta_2 n_0}{n_1}$, where $\pm$ is the sign of $K_W \cdot \Gamma_1$.
    
    \item $ n'_2 n'_3 \, \Gamma'_3 \cdot K_{W'} ={ \pm \delta_3 n_1 - \delta_2 n_3\over n_2}$, where $\pm$ is the sign of $K_W \cdot \Gamma_3$.
    
\end{enumerate}

\bigskip
\noindent \textbf{(+/-)}: $\Gamma_2 \cdot K_W \geq 0$ and $\Gamma'_2 \cdot K_{W'} \leq 0$. Let $-c_2$ be the self intersection of the proper transform of $\Gamma_2$ in the minimal resolution of $W$.

\begin{enumerate}
    \item $n'_1=\delta_2 n_1 + n_2$, $a'_1=\delta_2 a_1 + a_2 - (c_2 - 1)n_2$, $n'_2=n_1$, $a'_2=a_1$, $\delta'_2=\delta_2$.
    
    \item $n'_0 n'_1 \, \Gamma'_1 \cdot K_{W'}=\frac{ \pm \delta_1 (\delta_2 n_1 + n_2)+ \delta_2 n_0}{n_1}$, where $\pm$ is the sign of $K_W \cdot \Gamma_1$.
    
    \item $ n'_2 n'_3 \, \Gamma'_3 \cdot K_{W'} ={ \pm \delta_3 n_1 + \delta_2 n_3\over n_2}$, where $\pm$ is the sign of $K_W \cdot \Gamma_3$.
    
\end{enumerate}

In particular, we have in all cases that $$K_{W} \cdot \Gamma_1=K_{W'} \cdot (\Gamma'_1 +\Gamma'_2) \ \ \ \text{and} \ \ \ K_{W'} \cdot \Gamma'_2=K_{W} \cdot (\Gamma_1 +\Gamma_2).$$
\end{proposition}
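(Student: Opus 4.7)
The proposition is a local statement near $\Gamma_2$, so I would begin by isolating the extremal Wahl subchain $[{n_1\choose a_1}]-(c_2)-[{n_2\choose a_2}]$ inside $W$. By Definition~\ref{sGshsRHWRH}, the right antiflip $R_2$ modifies only the singularities adjacent to $\Gamma_2$, so $P'_0=P_0$ and $P'_3=P_3$, and the new central data at $P'_1$ (and the equality $P'_2=P_1$, $\delta'_2=\delta_2$) can be read off directly from the extremal antiflip formulas stated at the beginning of the section. In particular, the $(+/-)$ case (1) reproduces the formulas \eqref{wFwrgshasrh} verbatim with the role of $(n_0,a_0,n_1,a_1)$ there played by $(n_1,a_1,n_2,a_2)$, and the $(-/-)$ and $(-/+)$ cases are obtained in the same way from the $K$-negative extremal antiflip rules displayed in Figure~\ref{wwGwrgwRGW}.

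For items (2) and (3) I would use the same toric adjunction computation as in Lemma~\ref{extreNres}: for a smooth rational curve $\Gamma$ joining Wahl singularities of types $\frac{1}{n^2}(1,na-1)$ and $\frac{1}{m^2}(1,mb-1)$ with self-intersection $-c$ in the minimal resolution, one has
\begin{equation*}
nm\, K\cdot\Gamma \;=\; (2-c)nm - m(n-a') - n(m-b'),
\end{equation*}
where $a',b'$ are the dual Wahl data. Since the self-intersection of the proper transform of $\Gamma'_1$ (resp.\ $\Gamma'_3$) in the minimal resolution of $W'$ is determined by the requirement that the resulting chain contract via Hirzebruch--Jung arithmetic to the same c.q.s.~as $\Gamma_1$ (resp.\ $\Gamma_3$), one can apply Lemma~\ref{merken} to express this integer in terms of $(n_0,a_0,n_1,a_1,n_2,a_2,c_2,\delta_2)$. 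Plugging in the formulas from (1) and simplifying yields the stated expressions for $n'_0n'_1\,\Gamma'_1\cdot K_{W'}$ and $n'_2n'_3\,\Gamma'_3\cdot K_{W'}$; the signs $\pm$ track whether the neighboring curve is $K$-positive or $K$-negative in~$W$.

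The two ``in particular'' identities at the end I would derive as purely algebraic consequences of (1)--(3). For instance, in case $(-/-)$ one computes
\begin{equation*}
n'_0n'_1\,(\Gamma'_1+\Gamma'_2)\cdot K_{W'}
\;=\;\frac{\pm\delta_1(\delta_2 n_1-n_2)+\delta_2 n_0}{n_1}-\frac{\delta_2 n_0}{n_1}
\;=\;\frac{\pm\delta_1(\delta_2 n_1-n_2)}{n_1},
\end{equation*}
and dividing by $n'_0n'_1=n_0(\delta_2 n_1-n_2)$ recovers $\pm\delta_1/(n_0n_1)=K_W\cdot\Gamma_1$. The other three cancellations (the second identity in case $(-/-)$, and both identities in cases $(-/+)$ and $(+/-)$) are entirely analogous, relying only on the formula $K_{W'}\cdot\Gamma'_2 = \mp\delta_2/(n'_1n'_2)$ together with (2) and (3).

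The main obstacle is bookkeeping: three sign combinations $(-/-)$, $(-/+)$, $(+/-)$ must be treated in parallel, each with two neighboring curves whose $K$-signs are independent of the central sign, so in principle there are several subcases. The underlying toric and continued-fraction manipulations are routine (they are exactly the ones used for extremal resolutions in Section~\ref{s1}), but the signs in the formulas for $n'_0n'_1\,\Gamma'_1\cdot K_{W'}$ and $n'_2n'_3\,\Gamma'_3\cdot K_{W'}$ must be tracked carefully, in particular in the $(-/+)$ case where the new central curve $\Gamma'_2$ changes sign of $K$-intersection and one must reconcile $\delta'_2=\delta_2$ with the alternate expression $\delta'_2=(c'_2-1)n'_1 n_1 + n_1 a'_1 - n'_1 a_1$.
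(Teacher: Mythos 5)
Your item (1) is obtained exactly as in the paper, by specializing the extremal antiflip formulas \eqref{wFwrgshasrh} to the subchain $[{n_1\choose a_1}]-(c_2)-[{n_2\choose a_2}]$. For items (2) and (3), however, you take a genuinely different route. The paper contracts $\Gamma_2\subset W$ and $\Gamma'_2\subset W'$ to a common surface $\tilde W$, writes $K_W\equiv\pi^*K_{\tilde W}-\epsilon_+\Gamma_2$ and $K_{W'}\equiv\pi'^*K_{\tilde W}+\epsilon_-\Gamma'_2$, solves for the discrepancies $\epsilon_\pm$ by intersecting with the contracted curves, and then intersects with $\Gamma'_1$ (whose image in $\tilde W$ equals that of $\Gamma_1$); this relates $K_{W'}\cdot\Gamma'_1$ directly to $K_W\cdot\Gamma_1$ and never requires the self-intersection $-c'_1$ of the proper transform of $\Gamma'_1$. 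Your route needs $c'_1$ (and the dual Wahl data at both ends of $\Gamma'_1$) as input to the adjunction formula. You assert these are determined by Hirzebruch--Jung arithmetic, which is true, but extracting them explicitly is precisely the $2\times2$-matrix manipulation that the paper only carries out later (in the proof of Theorem~\ref{braidrelations}), and it is the hardest step of your computation, left entirely unexecuted. Note also that your displayed adjunction formula carries the coefficient $(2-c)nm$, whereas adjunction on the minimal resolution gives $K_X\cdot\hat\Gamma=c-2$, so the sign of the $c$-term must be fixed before any ``plugging in and simplifying'' can match (2) and (3).

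The genuine gap is in the final identities. Your cancellation proving $K_W\cdot\Gamma_1=K_{W'}\cdot(\Gamma'_1+\Gamma'_2)$ from (1)--(2) is correct, but the claim that the remaining cancellations are ``entirely analogous'' is not: the identity $K_{W'}\cdot\Gamma'_2=K_W\cdot(\Gamma_1+\Gamma_2)$ does not follow from (1)--(3) and is false as printed. Take the first three curves of the M-resolution $(2)-[{2\choose 1}]-(5)-(2)-(2)$ of $\frac{1}{85}(1,49)$: there $n_0=n_2=n_3=1$, $n_1=2$, $\delta_1=1$, $\delta_2=7$, $\delta_3=0$, so $K_W\cdot(\Gamma_1+\Gamma_2)=\frac12+\frac72=4$, while case $(+/-)$ gives $n'_1=15$, $n'_2=2$ and hence $K_{W'}\cdot\Gamma'_2=-\frac{7}{30}$. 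What the analogous cancellation actually yields, applied to item (3) rather than item (2), is the downstream companion $K_{W'}\cdot\Gamma'_3=K_W\cdot(\Gamma_2+\Gamma_3)$ (in the example, $\frac72=\frac72$), which is the form used in Lemma~\ref{braid}; the printed version is an index slip. A proof that certifies the identity as written cannot be correct: had you run the computation instead of declaring it analogous, you would have found the discrepancy and should have flagged it.
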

 
\begin{proof}
We only prove the case \textbf{(+/-)}, the others follow similar computations. The new singularities $P_1'$, $P_2'$ in (1) are computed by the formulas \eqref{wFwrgshasrh}. In particular $\delta'_2=\delta_2$. To find $\delta'_1$ and $\delta'_3$, we contract $\Gamma_2 \subset W$ and $\Gamma'_2 \subset W'$, and do intersection theory on the singular surfaces involved. We will only compute $\delta'_1$.

Let $\pi \colon W \to \tilde W$ and $\pi' \colon W' \to \tilde W$ be the contractions of $\Gamma_2 \subset W$ and $\Gamma'_2 \subset W'$ respectively. We have $K_{W} \equiv  \pi^*(K_{\tilde W}) - \epsilon_+ \Gamma_2$, and $K_{W'} \equiv  {\pi'}^*(K_{\tilde W}) + \epsilon_- \Gamma'_2$. Intersection with $\Gamma_2$ and $\Gamma'_2$ gives $\epsilon_+=\frac{\delta_2 n_1 n_2}{\tilde \Delta}$ and $\epsilon_-=\frac{\delta_2 n_1 n'_2}{\tilde \Delta}$, where $\tilde \Delta= n_1^2 + n_2^2 + \delta_2 n_1 n_2$. As $K_{W'} \cdot \Gamma'_1= \frac{\pm \delta'_1}{n_0 n'_1}$, we use both of the previous equations to intersect with $\Gamma'_1$, and we find (2). We note that $K_W \cdot \Gamma_1>0$ implies $K_{W'} \cdot \Gamma'_1 >0$.
\end{proof}

\begin{lemma} \label{braid}
Let $W_0 \to \overline{W}$ be a Wahl resolution. Then we have the braid relation $R_2 R_1 R_2(W_0)=R_1 R_2 R_1(W_0)$.
\end{lemma}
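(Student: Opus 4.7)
The braid identity asserts that two distinct sequences of three antiflips yield the same Wahl resolution. My plan is to reduce the verification to the formulas in Proposition \ref{computo}: since each $R_i$ modifies only the data at positions $i-1$ and $i$ of the Wahl chain (and leaves singularities beyond position $2$ untouched), it suffices to assume that $W_0$ contains a subchain $\Gamma_1 \cup \Gamma_2$ with singularities $P_0, P_1, P_2$ together with the toric boundaries $\Gamma_0, \Gamma_3$ supplied by Lemma \ref{goodA}.

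First I would apply Proposition \ref{computo} iteratively along each side of the equation. On the left, compute the triple
\[ W_0 \xrightarrow{R_2} W_0^{(1)} \xrightarrow{R_1} W_0^{(2)} \xrightarrow{R_2} W_0^{(3)} = R_2 R_1 R_2(W_0), \]
recording at each step the Wahl invariants $(n_i, a_i)$ and the signs of the intersection numbers $K \cdot \Gamma_i$, selecting the appropriate branch $(-/-)$, $(-/+)$, or $(+/-)$ of Proposition \ref{computo}. Parallelly, I would compute
\[ W_0 \xrightarrow{R_1} \tilde W_0^{(1)} \xrightarrow{R_2} \tilde W_0^{(2)} \xrightarrow{R_1} \tilde W_0^{(3)} = R_1 R_2 R_1(W_0). \]
The essential tool is the conservation law
\[ K_W \cdot \Gamma_1 = K_{W'} \cdot (\Gamma'_1 + \Gamma'_2), \qquad K_{W'} \cdot \Gamma'_2 = K_W \cdot (\Gamma_1 + \Gamma_2) \]
at the end of Proposition \ref{computo}, which dictates how the sign pattern at the neighboring curves evolves and fixes the input signs for each subsequent application. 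Finally one checks that the Wahl types $(n_i^{(3)}, a_i^{(3)})$ agree with $(\tilde n_i^{(3)}, \tilde a_i^{(3)})$ for $i = 0, 1, 2$. Since a Wahl singularity is determined by its type, and both resolutions contract to the same c.q.s.~$\oW$ via the same toric boundaries, this equality forces the two Wahl resolutions to be canonically isomorphic over $\oW$.

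The main obstacle is the case analysis: each antiflip branches into the three cases of Proposition \ref{computo}, yielding in principle up to $27$ combinations on each side. To tame the bookkeeping, I would stratify by the initial sign pattern of $(K_{W_0} \cdot \Gamma_1, K_{W_0} \cdot \Gamma_2)$ and use the conservation law to propagate signs automatically; many branches then collapse because the recursive identities for $(n_i, a_i, \delta_i)$ in the three cases of Proposition \ref{computo} differ only by a coordinated sign flip coming from the $\pm$ factor in formulas (2) and (3). A more conceptual shortcut would be to invoke Corollary \ref{3foldantiantiflip}(2): choose a $\Q$-Gorenstein smoothing $\cW_0 \to B$ of $W_0$ with all axial multiplicities equal to $1$, and realize each of $R_2 R_1 R_2(W_0)$ and $R_1 R_2 R_1(W_0)$ as the central fiber of a composition of three 3-fold flips/flops of $\cW_0$ over the fixed base $B$. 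The two resulting birational models of $\cW_0$ would then be minimal models of a common anticanonical threefold contraction, and one could attempt to identify them directly; this bypasses the combinatorial case-check at the cost of establishing a braid-type relation for the 3-fold flop/flip sequences, so I would only fall back on it if the direct calculation becomes unwieldy.
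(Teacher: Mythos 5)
You have identified the right engine---Proposition \ref{computo} and the conservation law at its end---and your observation that this law determines the signs needed at each subsequent step from the data of $W_0$ alone is correct (the two displayed identities give $K_{W'}\cdot\Gamma'_1=-K_W\cdot\Gamma_2$ as well as $K_{W'}\cdot\Gamma'_2=K_W\cdot(\Gamma_1+\Gamma_2)$, so the branch of Proposition \ref{computo} to use at each stage is forced). But the proposal stops short of a proof in two places. First, the direct computation is never carried out, and it is less mechanical than ``plug in the three formulas on each side and compare'': in the mixed-sign cases the expression for $a'_1$ involves the self-intersection $c_2$, resp.\ $c'_2$, of the proper transform of the antiflipped curve in the minimal resolution, and in case $(-/+)$ this is data of the \emph{output} surface, recoverable only from the condition that the new chain still contracts to $\oW$. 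So a formula-by-formula comparison of $R_2R_1R_2(W_0)$ and $R_1R_2R_1(W_0)$ does not close without an auxiliary uniqueness input, and the case analysis you hope to collapse is never actually collapsed in your text.

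The paper's proof avoids the computation by a matching argument that is the missing idea here. Set $W_1=R_2(W_0)$ and $W_2=R_1(W_1)$. The conservation law gives $K_{W_0}\cdot\Gamma_{1,0}=K_{W_1}\cdot(\Gamma_{1,1}+\Gamma_{2,1})=K_{W_2}\cdot\Gamma_{2,2}$, while the formulas of Proposition \ref{computo} show $P_{0,0}=P_{1,2}$ and $P_{1,0}=P_{2,2}$: the first two antiflips transport the extremal subchain carried by $\Gamma_{1,0}$, together with its $K$-intersection, into positions $1,2$ of $W_2$. Hence the third antiflip $R_2(W_2)$ performs the \emph{same} local operation as the first antiflip $R_1(W_0)$ on the other side, which immediately yields $P_{1,3}=P'_{1,3}$ and $P_{2,3}=P'_{2,3}$. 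The remaining unknowns ($P_{0,3}$ and $\delta_{1,3}$) are then pinned down not by computation but by the fact that both chains contract to the same c.q.s., combined with the classification in \cite{HTU17} of extremal P-resolutions of a fixed c.q.s.\ with a fixed $\delta$. To turn your outline into a proof you should either supply this identification, or genuinely execute the case analysis including the determination of the $c$'s. Your threefold fallback via Corollary \ref{3foldantiflip} would, as you note, require a braid-type statement for the flip/flop sequences themselves, which the paper neither proves nor needs.
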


\begin{proof}
We have in principle $7$ distinct surfaces in this proof: $W_0$, $R_2(W_0)=W_1$, $R_1(W_1)=W_2$, $R_2(W_2)=W_3$,
$R_1(W_0)=W'_1$, $R_2(W'_1)=W'_2$, and $R_1(W'_2)=W'_3$. We want to show that $W_3=W'_3$. For each $W_i \to \overline{W}$ we have a chain of $2$ curves $\Gamma_{1,i}, \Gamma_{2,i}$, singularities $P_{0,i}, P_{1,i}, P_{2,i}$, and $\delta_{j,i}$. We have the analogue notation with ' for the Wahl resolutions $W'_i \to \overline{W}$.

By Proposition \ref{computo}, we have that $$K_{W_0} \cdot \Gamma_{1,0}=K_{W_1} \cdot (\Gamma_{1,1} +\Gamma_{2,1}) =K_{W_2}\cdot \Gamma_{2,2}.$$ We also have that $P_{0,0}=P_{1,2}$ and $P_{1,0}=P_{2,2}$. Therefore the antiflipping of $\Gamma_{1,0}$ is equal to the antiflipping of $\Gamma_{2,2}$, and so $P_{1,3}=P'_{1,3}$ and $P_{2,3}=P'_{2,3}$. Morover, by Proposition \ref{computo} again, we have $$K_{W'_1} \cdot \Gamma'_{1,1}=K_{W'_2} \cdot (\Gamma'_{1,2} +\Gamma'_{2,2}) =K_{W_3}\cdot \Gamma'_{2,3},$$ and $\Gamma_{1,1}$ is the antiflip of $\Gamma_{1,0}$. Hence the contraction of $\Gamma'_{2,3}$ and $\Gamma_{2,3}$ define the same c.q.s and have same $\delta$.

Recall that $W_3 \to \overline{W}$ and $W'_3 \to \overline{W}$ contract the Wahl chain to the same c.q.s. On the other hand, as $\Gamma'_{2,3}$ and $\Gamma_{2,3}$ contract to the same c.q.s., we also have that $\Gamma'_{1,3}$ and $\Gamma_{1,3}$ contract to the same c.q.s. They are both extremal Wahl resolutions over the same c.q.s., and so, since c.q.s. at most have two extremal P-resolutions with the same $\delta$ \cite{HTU17}, we have that $\delta_{1,3}=\delta'_{1,3}$. Moreover, since $P_{1,3}=P'_{1,3}$, we have that $P_{0,3}=P'_{0,3}$.  Therefore we obtain $R_2 R_1 R_2(W_0)=R_1 R_2 R_1(W_0)$.
\end{proof}

We will describe the sequence of right antiflips
$W_0,W_1,W_2\ldots$ that terminates with a N-resolution of a given M-resolution  $W_0:=W^+ \to \overline{W}$ with $r+1$ singularities. 
For $r=1$, the N-resolution is equal to the right antiflip of an extremal M-resolution
by Lemma~\ref{extreNres}.

\begin{proposition}\label{r=2}
If $r=2$, then we arrive to the N-resolution after applying three right antiflips as shown in Figure \ref{f3}, where $R_2(W_0)=W_1$, $R_1(W_1)=W_2$, and $R_2(W_2)=W_3$. In this way, by Lemma \ref{braid}, we have $W^-=R_2 R_1 R_2(W_0)=R_1 R_2 R_1(W_0)$.
\end{proposition}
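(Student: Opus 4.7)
The plan is to verify that the surface $W_3:=R_2 R_1 R_2(W_0)$ satisfies every defining property of the N-resolution from Definition \ref{nres}, and then invoke uniqueness (Corollary \ref{uniquenres}) to conclude $W^-=W_3$. The braid identity from Lemma \ref{braid} then immediately yields the second equality $W^-=R_1 R_2 R_1(W_0)$.

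First, I would track singularities through the three antiflips. The key input, encoded in \eqref{wFwrgshasrh} and Definition \ref{sGshsRHWRH}, is that the right antiflip of an extremal Wahl chain $[{n_0\choose a_0}]-(c)-[{n_1\choose a_1}]$ preserves the left singularity $P_0$ as the right singularity of the new chain. Labelling the singularities of $W_j$ as $(P_{0,j},P_{1,j},P_{2,j})$, this bookkeeping propagates to
\[
W_0:(P_0,P_1,P_2),\quad W_1:(P_0,\ast,P_1),\quad W_2:(\ast,P_0,P_1),\quad W_3:(\ast,\ast,P_0),
\]
which delivers the endpoint condition $P_{2,3}=P_0$ demanded by Definition \ref{nres}(1).

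Next I would verify the $\delta$-swap $\delta_{2,3}=\delta_1$ and $\delta_{1,3}=\delta_2$, imitating the intersection computation in the proof of Lemma \ref{braid}. The identity
\[
K_{W_0}\cdot\Gamma_{1,0}=K_{W_1}\cdot(\Gamma_{1,1}+\Gamma_{2,1})=K_{W_2}\cdot\Gamma_{2,2},
\]
combining the two relations at the end of Proposition \ref{computo}, together with the fact that $R_2\colon W_2\to W_3$ preserves the $\delta$-invariant of the antiflipped curve (Proposition \ref{computo}(1)) and with the identifications $P_{1,2}=P_0$, $P_{2,2}=P_1$ from the previous step, gives $\delta_{2,3}=n_0 n_1 |K_{W_0}\cdot\Gamma_{1,0}|=\delta_1$. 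The dual statement $\delta_{1,3}=\delta_2$ is obtained by running the analogous chain of identities starting from $K_{W_0}\cdot\Gamma_{2,0}$. The relatively-nef condition $K_{W_3}\cdot\Gamma_{i,3}\le 0$ follows because $\Gamma_{2,3}$ was just produced by a right antiflip and is therefore $K$-negative, while $\Gamma_{1,3}$ descends from $\Gamma_{1,2}$, which was already $K$-negative after the step $W_1\to W_2$.

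Finally, for Definition \ref{nres}(1) with $i=1$, the contraction of $\Gamma_{2,3}\subset W_3$ must give the same c.q.s.\ $\frac{1}{\Delta_1}(1,\Omega_1)$ as the contraction of $\Gamma_{1,0}\subset W_0$. The extremal subchain around $\Gamma_{2,3}$ has right endpoint $P_0$, invariant $\delta_1$ and is $K$-negative; by the uniqueness clause in the proof of Lemma \ref{extreNres}, such data pin down the chain as the right antiflip of $[{n_0\choose a_0}]-(c_1)-[{n_1\choose a_1}]$, which by construction contracts to $\frac{1}{\Delta_1}(1,\Omega_1)$. The case $i=2$ is automatic as both $W_3$ and $W_0$ resolve $\oW$. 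Corollary \ref{uniquenres} then forces $W_3=W^-$. The main obstacle I expect is the symmetric argument for $\delta_{1,3}=\delta_2$: the companion chain of $K$-intersection identities must be assembled from the less transparent second relation $K_{W'}\cdot\Gamma'_2=K_W\cdot(\Gamma_1+\Gamma_2)$ of Proposition \ref{computo}, while the extremal type ($K$-positive, $K$-negative, or $K$-trivial) of the neighborhood being antiflipped oscillates along the three steps.
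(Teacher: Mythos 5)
Your overall architecture is the same as the paper's: track the singularities and the invariants $\delta$, $K\cdot\Gamma$ through the three antiflips via Proposition \ref{computo} and check the conditions of Definition \ref{nres}. The bookkeeping of singularities (giving $P_{2,3}=P_0$) and the telescoping chain $K_{W_0}\cdot\Gamma_{1,0}=K_{W_1}\cdot(\Gamma_{1,1}+\Gamma_{2,1})=K_{W_2}\cdot\Gamma_{2,2}$, hence $\delta_{2,3}=\delta_{2,2}=\delta_1$, are correct and coincide with what the paper does (this part is lifted from the proof of Lemma \ref{braid}). The problem is the other half.

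The step you defer --- ``the dual statement $\delta_{1,3}=\delta_2$ is obtained by running the analogous chain of identities'' --- is a genuine gap, and no such dual chain telescopes. The first chain works because the curve that lands in position $2$ of $W_3$ reconnects the \emph{original} pair $(P_0,P_1)$, so equal $K$-degrees convert directly into equal $\delta$'s. For $\Gamma_{1,3}$ the additive relations only give $K_{W_3}\cdot\Gamma_{1,3}=K_{W_2}\cdot\Gamma_{1,2}-K_{W_3}\cdot\Gamma_{2,3}$, a difference of two fractions whose denominators involve the \emph{new} indices $\bar n_0,\bar n_1$ built up by the recursions of \eqref{wFwrgshasrh}; the claim $\bar n_0\bar n_1\,|K_{W_3}\cdot\Gamma_{1,3}|=\delta_2$ is a nontrivial arithmetic identity that must be checked against the explicit formulas of Proposition \ref{computo}(2)--(3), which is exactly the computation the paper performs ($\Gamma_{1,3}\cdot K_{W_3}=-\delta_2/(\bar{\bar n}_2\bar n_1)$) and you omit. (A concrete check: for $\tfrac1{19}(1,7)$ with M-resolution having $\delta_1=3,\delta_2=1$, one finds $K_{W_2}\cdot\Gamma_{1,2}=-5/8$ and $K_{W_3}\cdot\Gamma_{2,3}=-3/5$, so $K_{W_3}\cdot\Gamma_{1,3}=-1/40$; nothing in your chain produces $-1/40$ without the index recursion.) The same gap infects your nefness claim: ``$\Gamma_{1,3}$ descends from the $K$-negative $\Gamma_{1,2}$'' is not an argument, since the $K$-degree of the neighbor changes under the antiflip and a difference of two non-positive numbers has no determined sign. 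Finally, your use of the ``uniqueness clause'' of Lemma \ref{extreNres} to pin down the contraction of $\Gamma_{2,3}$ runs backwards: fixing one endpoint singularity, $\delta$, and $K$-negativity leaves a one-parameter family of possible opposite endpoints (the solutions of $\bar n\,a_0-n_0\bar a=\delta_1$) with distinct contractions. The correct argument, as in the paper, is that the extremal neighborhood of $\Gamma_{2,2}$ has the \emph{same two} endpoint singularities, the same $\delta$, and the same $K$-degree as that of $\Gamma_{1,0}$, hence the same contraction, and that an antiflip preserves the contraction of the antiflipped curve by construction.
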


\begin{figure}[htbp]
\centering
\includegraphics[width=8.5cm]{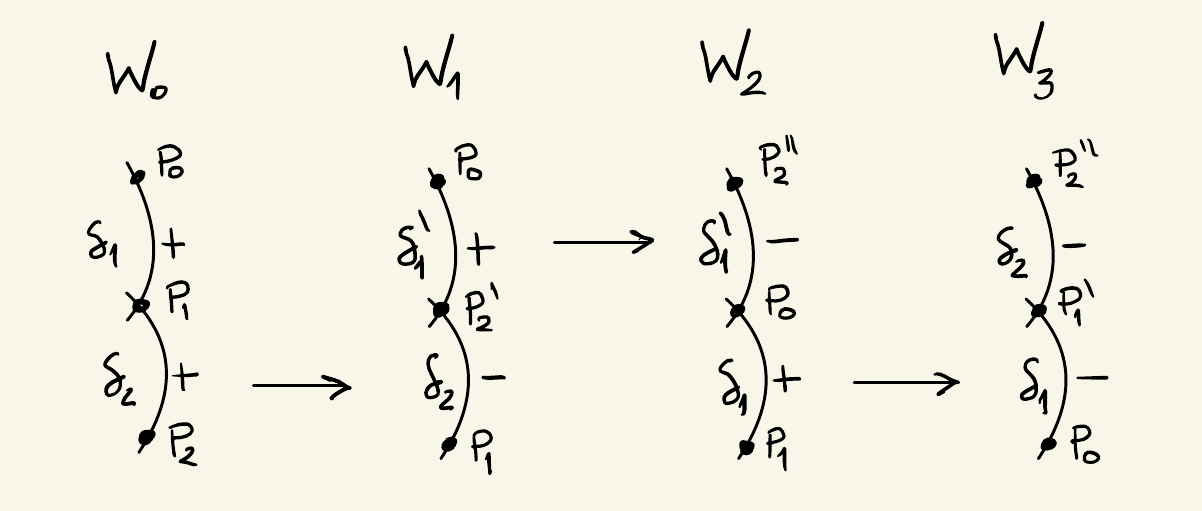}
\centering
\caption{The antiflipping process for $3$ singularities}
\label{f3}
\end{figure}

\begin{proof}
We will do the explicit computation following Prop.~ \ref{computo}. We first antiflip at $\Gamma_{2,0} \subset W_0$ as in Figure~\ref{f3}. By Prop.~ \ref{computo} we have that $\Gamma_{1,1} \cdot K_{W_1} = \frac{\delta_1 \delta_2 n_1 + \delta_1 n_2+ \delta_2 n_0}{n_1 n_0 \bar n_2}\ge0$ using the notation in Figure \ref{f3}. Next we antiflip at $\Gamma_{1,1}\subset W_1$. By using  Prop.~\ref{computo}, one can verify that $\Gamma_{2,2} \cdot K_{W_2}= \frac{\delta_1}{n_0 n_1}\ge0$. 
Finally, we antiflip at $\Gamma_{2,2}\subset W_2$. We compute using Prop.~ \ref{computo} that $\Gamma_{1,3} \cdot K_{W_3}= -\frac{\delta_2}{\bar{\bar{n}}_2 \bar n_1}$. Moreover the c.q.s. contraction of $\Gamma_{1,0}$ in $W_0$ is the c.q.s. of the contraction of $\Gamma_{2,2}$ in $W_2$, which is the c.q.s. of the contraction of $\Gamma_{2,3}$ in $W_3$. Hence $R_2 R_1 R_2(W_0)$ is the N-resolution by Definition~\ref{nres}. 

\end{proof}




\begin{theorem}\label{nresflips}
After applying $r(r+1)/2$ right antiflips of curves contained in the Wahl resolutions starting with $W^+ \to \overline{W}$, we get the corresponding  N-resolution $W^- \to \overline{W}$. On every step, we antiflip either an extremal P-resolution or a curve with $\delta=0$.
\end{theorem}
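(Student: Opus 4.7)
Plan: Proceed by induction on $r$, the number of exceptional curves in the M-resolution. The base cases $r=1$ and $r=2$ are handled by Lemma~\ref{extreNres} and Proposition~\ref{r=2} (with Lemma~\ref{braid}).

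For the inductive step with $r\ge 3$, the plan is to exhibit an explicit sequence of $r(r+1)/2$ right antiflips arranged in $r$ decreasing blocks: Block~1 consists of $R_r, R_{r-1},\ldots, R_1$ applied in this order (so $R_r$ is performed first); Block~2 consists of $R_r,R_{r-1},\ldots,R_2$; and so on, down to Block~$r$ which is a single $R_r$. This corresponds to the reduced expression $(R_r)(R_{r-1}R_r)\cdots(R_1R_2\cdots R_r)$ for the half-twist in the braid group $B_{r+1}$, and the total length is $r+(r-1)+\cdots+1=r(r+1)/2$. Each individual antiflip is justified via Proposition~\ref{computo}: the initial $R_r$ is of P-type (or $K$-trivial) because $K_{W^+}\cdot\Gamma_r\ge 0$ in the M-resolution, and within each block the identity $K_{W'}\cdot\Gamma'_{j-1}=K_W\cdot(\Gamma_{j-1}+\Gamma_j)$ propagates nonnegativity leftward along the chain, so every subsequent antiflip is extremal P-type or $K$-trivial.

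Tracking singularities via the \textbf{(+/-)} case of Proposition~\ref{computo}, the right antiflip $R_j$ sends the singularity previously at position $j-1$ to position $j$ and installs a new singularity at position $j-1$, leaving the other positions unchanged. Direct bookkeeping shows that after Block~$k$ the bottom position~$r$ contains the original singularity $P_{r-k}$, and hence after all $r$ blocks position~$r$ contains $P_0$, matching property~(1) of Definition~\ref{nres} for $i=1$. The remaining parts of Definition~\ref{nres} then reduce to: (i) the invariance $\delta'_j=\delta_j$ at the antiflipped curve (item~(1) of each case in Proposition~\ref{computo}) combined with the shift identities for adjacent $\delta$'s, which yield $\bar\delta_{r-i+1}=\delta_i$ as predicted by Proposition~\ref{hotdog}; and (ii) repeated application of Lemma~\ref{merken} to identify the sub-chain contractions $\bar\Gamma_{r-i+1}\cup\ldots\cup\bar\Gamma_r \to \frac{1}{\Delta_i}(1,\Omega_i)$. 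By uniqueness (Corollary~\ref{uniquenres}) the resulting Wahl resolution must be $W^-$.

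The main obstacle will be the combinatorial bookkeeping across block boundaries: one must verify that at the start of each new block the lower portion of the chain is still in an admissible configuration (with its bottom curve extremal P-type or $K$-trivial) so that the next block can begin with a legitimate $R_r$. This can be handled by carrying positivity across the boundary using the identities of Proposition~\ref{computo}, or by observing that the sub-chain acted on in each subsequent block plays the role of a smaller M-resolution in the sense of Proposition~\ref{partialCQS}, so the inductive hypothesis applies. The braid relation (Lemma~\ref{braid}) provides further flexibility by permitting alternative reduced expressions of the half-twist, which can be exploited to simplify subcases of the verification.
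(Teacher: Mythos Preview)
Your proposal is essentially the paper's proof. Both argue by induction on $r$ with the same base cases (Lemma~\ref{extreNres}, Proposition~\ref{r=2}), and both begin the inductive step with the descending block $R_r, R_{r-1},\ldots,R_1$. The paper's key step, which you list only as your fallback ``option (ii)'', is precisely that after this first block the sub-chain in positions $2,\ldots,r$ is an M-resolution \emph{isomorphic} to the original chain $\Gamma_1,\ldots,\Gamma_{r-1}$ (same singularities $P_0,\ldots,P_{r-1}$ shifted by one slot, same $\delta$'s), so the inductive hypothesis applies verbatim to it. The paper establishes this by invoking Proposition~\ref{r=2} at each consecutive pair during the first block; you should make this the main argument rather than attempting direct block-by-block bookkeeping, which would be considerably harder. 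The paper's final identification of the result with $W^-$ is also more concrete than your sketch: after induction fixes everything except $\bar n_0,\bar a_0,\bar\delta_1$, the paper pins these down by reversing the continued fractions and reducing to the extremal case over a common c.q.s., then uses \cite{HTU17} for uniqueness of the $\delta$ and hence of the remaining singularity.

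One small inconsistency: the reduced word you write, $(R_r)(R_{r-1}R_r)\cdots(R_1R_2\cdots R_r)$, does not match the block order you describe (your Block~1 has length $r$ and is performed first, but the rightmost factor in your word has length $r$ with $R_1$ first). The intended element is the half-twist, but be careful with the convention for composition order when you write it up.
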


\begin{proof}
The proof goes by induction on $r$. By Lemma~\ref{extreNres}, Prop.~\ref{r=2}, we have it for $r=1,2$. Let us say we have it for $r$. Consider an M-resolution $W^+ \to \overline{W}$ with the exceptional divisor  $\Gamma_1\cup\ldots\cup\Gamma_{r+1}$. 
We know that there is a unique N-resolution $W^- \to \overline{W}$ of $W^+ \to \overline{W}$ with curves $\bar \Gamma_1,\ldots, \bar \Gamma_{r+1}$ and singularities $\bar n_i, \bar a_i$ for $i=0,\ldots,r$ with $\bar \delta_i$. 

We  note that the N-resolution for the contraction of the chain  $\Gamma_1\cup\ldots\cup\Gamma_{r}\subset W$ to some c.q.s. has the same $\bar n_i, \bar a_i$ for $i=0,\ldots,r-1$, and same $\bar \delta_i$ for $i=1,\ldots,r-1$ as the N-resolution $W^-$. This is just part of the algorithm to find the N-resolution. 

Let us first antiflip the curves $\Gamma_{r+1},\ldots, \Gamma_1$ in that order, starting with $W_0=W^+ \to \overline{W}$ an ending with $W_{r+1} \to \overline{W}$. 
Then by using Proposition \ref{r=2} for each consecutive pair $\Gamma_i, \Gamma_{i+1}$, where we have only the first two flips in Figure \ref{f3} for each pair, we have 

\begin{itemize}
    \item  $n_{i+1,r+1}= n_{i}$, $a_{i+1,r+1}=a_i$ for $i=0,\ldots r$,
    \item $\delta_{i+1,r+1}=\delta_{i}$ for $i=1,\ldots r$,
    \item The Wahl-chain $\Gamma_{2,r+1},\ldots, \Gamma_{r+1,r+1}$ is an M-resolution, and
    \item $\Gamma_{1,r+1} \cdot K_{W_{r+1}} \leq 0$.
\end{itemize}

So we have that $\Gamma_{2,r+1},\ldots, \Gamma_{r+1,r+1}$ is an M-resolution isomorphic to $\Gamma_1,\ldots,\Gamma_r$. We now apply induction on this chain, to obtain via $r(r-1)/2$ antiflips its unique N-resolution. As it was said above, the $\bar n_i, \bar a_i$ coincide with the ones of $W^- \to \overline{W}$, and the only missing part is the first singularity and the intersection with with canonical class of the first curve. In this way, we have a new Wahl-resolution with $r+1$ curves $\tilde{W} \to \overline{W}$, and we want to prove it is indeed the N-resolution. 

At this point, we do not know about $\tilde n_0, \tilde a_0$ and $\tilde \delta_0$. The corresponding curve $\tilde \Gamma_1$ may be positive or negative for canonical class. But we now reverse the continued fractions for $\tilde{W} \to \overline{W}$ and for $W^- \to \overline{W}$, and after contracting we get a
$$[{ \tilde n_0 \choose \tilde n_0 - \tilde a_0}]-(c)-[{\bar n_1 \choose \bar n_1 - \bar a_1}] -(1)-[\text{some c.q.s.}]=0 $$ $$[{ \bar n_0 \choose \bar n_0 - \tilde a_0}]-(1)-[{\bar n_1 \choose \bar n_1 - \bar a_1}] -(1)-[\text{some c.q.s.}]=0 $$ over the same c.q.s. But then $\delta$'s must be equal, as we are in the case of an extremal P-resolution and/or extremal N-resolution over the same c.q.s., and so we can apply \cite{HTU17}. Moreover, they share a Wahl singularity in the same position, then they must be equal (singularity and sign of intersection with canonical class).
\end{proof}

\begin{theorem}\label{braidrelations}
The operations of right antiflips $R_i$ on Wahl resolutions $W\to\oW$
with $r+1$ singularities satisfy braid relations $R_iR_j=R_jR_i$ for $i>j+1$ and 
$R_{i}R_{i+1}R_i=R_{i+1}R_iR_{i+1}$.
In particular, they give the action of the braid group $B_{r+1}$.
\end{theorem}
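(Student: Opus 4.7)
The plan is to verify the two types of relations separately by exploiting the \emph{locality} of the antiflip operation. By Definition~\ref{sGshsRHWRH}, the right antiflip $R_k(W)$ depends only on the extremal Wahl subchain $[{n_{k-1} \choose a_{k-1}}]-(c_k)-[{n_{k}\choose a_k}]$ around $\Gamma_k$, and only modifies $\Gamma_k$ together with the singularities $P_{k-1}, P_k$; all other curves and singularities of $W$ are preserved as schemes, and the intersections $K_W\cdot\Gamma_{k-1}$, $K_W\cdot\Gamma_{k+1}$ change only through the new singularity types at the affected endpoints.

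For the commutation $R_iR_j=R_jR_i$ with $i>j+1$, the supports $\{i-1,i\}$ and $\{j-1,j\}$ are disjoint, so the extremal Wahl subchain used by $R_i$ is untouched by $R_j$, and conversely. Performing the two antiflips in either order thus yields identical Wahl resolutions: the singularities at positions outside $\{i-1,i,j-1,j\}$ keep their original types, positions $\{j-1,j\}$ acquire those produced by antiflipping $\Gamma_j$, and positions $\{i-1,i\}$ acquire those produced by antiflipping $\Gamma_i$; the curves $\Gamma_j$ and $\Gamma_i$ are the antiflipped ones, and all other curves are literally unchanged.

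For the braid relation $R_iR_{i+1}R_i=R_{i+1}R_iR_{i+1}$, I would reduce to Lemma~\ref{braid} by localization. The subchain $\Gamma_i\cup\Gamma_{i+1}\subset W$, with the three singularities $P_{i-1},P_i,P_{i+1}$ and the toric boundary germs at $P_{i-1}$ and $P_{i+1}$ inherited from $\Gamma_{i-1}$ and $\Gamma_{i+2}$, is itself a Wahl resolution of the c.q.s.~obtained by contracting $\Gamma_i\cup\Gamma_{i+1}$. Since $R_i$ and $R_{i+1}$ depend only on this local data, the three-step braid inside $W$ is computed entirely by the three-step braid inside the local Wahl resolution, and Lemma~\ref{braid} gives the required coincidence there. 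The singularities at positions $j\notin\{i-1,i,i+1\}$ and the curves $\Gamma_j$ for $j\neq i,i+1$ are preserved throughout all six antiflips, so the global braid relation follows from the local one.

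The main obstacle is that during the three-step braid, the sign of $K_{W_k}\cdot\Gamma$ for the antiflipped curve may switch between the cases \textbf{(-/-)}, \textbf{(-/+)} and \textbf{(+/-)} of Proposition~\ref{computo}, so the reduction must accommodate mixed behavior. This is already handled uniformly inside the proof of Lemma~\ref{braid}, where the two final Wahl resolutions are matched via the preserved identities $K_{W_0}\cdot\Gamma_{1,0}=K_{W_2}\cdot\Gamma_{2,2}$ of Proposition~\ref{computo}, together with the uniqueness of extremal P- or N-resolutions of a fixed c.q.s.~with a prescribed $\delta$ from~\cite{HTU17}. Combining the commutation and braid relations gives the standard presentation of $B_{r+1}$ with generators $R_1,\ldots,R_r$, yielding the asserted action on the set of Wahl resolutions of $\oW$ with $r+1$ singularities.
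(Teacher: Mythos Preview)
Your locality idea is correct in spirit and matches the paper's approach, but there is a genuine gap in both parts: you have not dealt with the curves \emph{adjacent} to the modified region. The antiflip $R_k$ replaces the singularities at positions $k-1$ and $k$, and this changes how $\Gamma_{k-1}$ and $\Gamma_{k+1}$ meet those singularities; in particular it changes the self-intersection $c_{k-1}$, $c_{k+1}$ of their proper transforms in the minimal resolution. A Wahl resolution is determined not just by its singularity types but also by these $c$-values, so your blanket claim that ``all other curves are literally unchanged'' is not enough.

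For the commutation with $i=j+2$, the curve $\Gamma_{j+1}$ sits between the two modified regions and has its $c$-value altered by both $R_j$ (through the new $P_j$) and $R_i$ (through the new $P_{j+1}$); you need to argue that the result is the same in either order. The paper does this by observing that both $R_iR_j(W)$ and $R_jR_i(W)$ contract to the same c.q.s.~$\oW$, and since all other data already agrees this global constraint forces the remaining $c_{j+1}$ to coincide. For the braid relation, Lemma~\ref{braid} only matches the singularities at positions $i-1,i,i+1$ and the curves $\Gamma_i,\Gamma_{i+1}$; the $c$-values of $\Gamma_{i-1}$ and $\Gamma_{i+2}$ are not covered by the local argument. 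The paper closes this with a short continued-fraction computation: writing the minimal resolution as $[A]-(u)-[B]-(v)-[C]$ with $[A],[B],[C]$ already known to agree, the equality of the two Hirzebruch--Jung continued fractions (both equal to $\Delta/\Omega$) forces $u=u'$ and $v=v'$. You should add these boundary checks; once done, your proof coincides with the paper's.
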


\begin{proof}
For the relation $R_iR_j=R_jR_i$ we just note that $i-j\geq 2$, and so the birational operations $R_i,R_j$ on $W$ commute. Indeed, the only scenario to consider is when $i=j+2$, in which case we still need to show that the $(j+1)$-st curves in $R_iR_j(W)$
$R_jR_i(W)$ have the same self-intersections in the minimal resolutions. But since all other data is the same, this follows from the fact that both of them resolve $\oW$.

For the other relation, we first note that the curves at $i$ and $i+1$ positions of $R_{i}R_{i+1}R_i(W)$ and $R_{i+1}R_iR_{i+1}(W)$ have the same subWahl chain by Lemma \ref{braid}. 
Hence for the whole Wahl chains in $R_{i}R_{i+1}R_i(W)$ and $R_{i+1}R_iR_{i+1}(W)$ we have the same singularities. We only need to check the effect on the curves in positions $i-1$ and $i+2$, since in their complement we have equal Wahl subchains. Let $[A]$ be the continued fraction for the minimal resolution of the chain from $0$ to $i-2$, let $[B]$ be the one for the chain from $i$ to $i+1$, and let $[C]$ be continued fraction for the chain from $i+3$ to $r$. Then we are in the situation $$[A]-(u)-[B]-(v)-[C]=[A]-(u')-[B]-(v')-[C]$$ as continued fractions for the Wahl chains in $R_{i}R_{i+1}R_i(W)$ and $R_{i+1}R_iR_{i+1}(W)$, since both contract to $\frac{1}{\Delta}(1,\Omega)$. We want $u=u'$ and $v=v'$. Using the representation of continued fractions as multiplications of $2 \times 2$ matrices, we simplify that equation into $(u)-[B]-(v)=(u')-[B]-(v')$. But as continued fractions this means that $u-\frac{1}{[B,v]} = u'-\frac{1}{[B,v']}$, and if $u\geq u'$, then $(u-u')-\frac{1}{[B,v]} = -\frac{1}{[B,v']}$. But $\frac{1}{[B,v']}>0$, and so $(u-u')-\frac{1}{[B,v]} <0$, but then $u=u'$. Hence $v=v'$. Therefore Wahl chains are equal.
\end{proof}

\begin{corollary}
Every Wahl resolution $W\to\oW$ is in the braid group orbit of a  unique $M$-resolution $W^+\to\oW$.
\end{corollary}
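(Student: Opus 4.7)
The plan is to handle existence and uniqueness separately, with each step resting on results already established in the paper.

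For uniqueness, I would combine Corollary~\ref{3foldantiflip}(1) with the Behnke--Christophersen correspondence. Wahl resolutions in a common braid orbit admit $\Q$-Gorenstein smoothings whose blow-downs to $\oW$ lie in a single irreducible component of $\Def_{P\in\oW}$, and M-resolutions are in bijection with these components. Hence two M-resolutions in one orbit correspond to a common component and must coincide.

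For existence, starting from an arbitrary Wahl resolution $W\to\oW$, Lemma~\ref{NO} produces a $\Q$-Gorenstein smoothing $(W\subset\cW)\to(0\in B)$, which blows down to a smoothing in some irreducible component $\cC$ of $\Def_{P\in\oW}$; let $W^+$ be the associated M-resolution. I would run the relative threefold MMP on $\cW$ over $\ocW$. At each stage, if $K_\cW$ is not relatively nef, the special fiber contains a curve $\Gamma_i$ with $K_W\cdot\Gamma_i<0$; the associated extremal contraction is a k1A or k2A flipping contraction in the Mori--Koll\'ar--Prokhorov classification, and its flip reproduces the left antiflip $L_i$ of Definition~\ref{sGshsRHWRH}, replacing the special fiber by $L_i(W)$. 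Termination of threefold flips delivers a relative minimal model whose special fiber is a Wahl resolution with $K$ relatively nef, hence an M-resolution; by the uniqueness just established it must equal $W^+$.

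The principal obstacle is identifying each MMP step with an antiflip as defined above. One must verify that every extremal $K_\cW$-negative contraction is supported on a single curve $\Gamma_i$ of the Wahl chain, and that the resulting flip stays within the category of Wahl resolutions; both claims reduce to the local analysis near a single $\Gamma_i$, which is exactly the extremal Wahl resolution setting treated via the universal family of Figure~\ref{samplefamily}. A purely combinatorial alternative, avoiding the general 3-fold MMP, would be to show directly, using the intersection formulas of Proposition~\ref{computo}, that applying $L_i$ at a curve with $K\cdot\Gamma_i<0$ strictly decreases a positive integer invariant built from the $n_j$'s, forcing termination by hand.
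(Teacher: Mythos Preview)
Your approach matches the paper's: flip $K$-negative curves $\Gamma_i$ until none remain, with uniqueness coming from the Behnke--Christophersen bijection between M-resolutions and components, and termination either by general 3-fold flip termination or by a decreasing integer invariant. The paper in fact runs both arguments, and the invariant it uses is precisely the one you propose at the end: the indices $n_j$ of the Wahl singularities adjacent to the flipped curve strictly decrease.

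Two small corrections. First, a single threefold flip of the k2A neighborhood at $\Gamma_i$ does not in general realize the braid generator $L_i$, but a power of it: in the fan picture the flip jumps from the cone of $W$ directly to the extremal P-resolution, while each application of $L_i$ moves one cone at a time. The paper accordingly says the new special fiber is obtained from $W$ ``via a sequence of antiflips at the same $i$-th curve.'' This does not affect your argument, since a power of a generator is still a braid group element. Second, your ``principal obstacle'' is a non-issue: you need not show that \emph{every} extremal ray is supported on some $\Gamma_i$, only that you can always \emph{choose} to contract a $\Gamma_i$ with $K_W\cdot\Gamma_i<0$ when one exists. The minimality assumption built into Definition~\ref{sRGsrgrH} is what guarantees these are flipping (not divisorial) contractions, so the process stays within Wahl resolutions.
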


\begin{proof}
By definition of a Wahl resolution, it has a $\Q$-Gorenstein smoothing $Y\rightsquigarrow W$ that blows down to a smoothing  $Y\rightsquigarrow\oW$.
Let $\cW$ be its total space.
If $K_W\cdot\Gamma_i<0$
for one of the curves $\Gamma_i\subset W$
then its contraction 
$\cW\to\cW_i$ is a k2A extremal neighborhood of flipping type
and can be flipped to $\cW'\to\cW_i$. By~ the discussion from the beginning of this section, the special fiber $W'\subset\cW'$can be obtained from $W$ via a sequence of antiflips at the same $i$-th curve. 

The indices of the Wahl singularities corresponding to $\Gamma'_i \subset W'$ are always less than or equal to the indices of the Wahl singularities corresponding to $\Gamma_i \subset W$, and one of these new indices is strictly smaller. Indeed, as in Figure \ref{f2}, the first right or left antiflip behaves like that. If $\delta_i=1$, then there are no more k2A antiflips. If $\delta_i>1$, then there are infinitely many antiflips with indices $n_k>n_{k-1}$ which satisfy the recurrence $n_{k-1}+n_{k+1}=\delta_i n_k$, and so indices form an increasing sequence.    

Then we can pick another $K$-negative curve in $W'$ and iterate the process. If we arrive to a surface with no Wahl singularities, then it must be the minimal resolution of $\oW$, as any $(-1)$-curve in the Wahl chain would produce a divisorial contraction blow-down deformation of $\oW$, and this is not allowed by the minimality assumption on Wahl resolutions.  

Therefore, as the set of indices in the Wahl chain strictly decreases at least one of them for each flip, we have that this process eventually gives a $M$-resolution~$W^+$. (Alternatively, any sequence of flips in dimension $3$ terminates.) Since $W^+$ has a $\Q$-Gorenstein smoothing $\cW^+$ in the same irreducible component of $\Def_{P\in\oW}$ as $\cW$, the M-resolution $W^+$ is uniquely determined. 
\end{proof}


\section{Derived category of a c.q.s.~surface following \cite{KKS}} \label{s2}

\begin{notation}\label{asrgasrgarh}
We fix a c.q.s. surface $W$ that satisfies Assumption \ref{assume} (1), (2),~(3). 
We do not need the singularities to be Wahl or the chain $\Gamma_1,\ldots,\Gamma_r$ to be contractible. 
In fact,
a weaker condition
than  Assumption~\ref{assume}  (3),  is sufficient in Sections
\ref{s2} and \ref{AEFefEEfe}
except in Corollaries~\ref{asgasrgsarhasr} and ~\ref{EFegwEWE}:
there exists a Weil divisor $A\subset W$, which is Cartier outside of $P_0$ and generates the local class group $\Cl(P_0 \in W)$. Then $\tilde A=
-K_W-\Gamma_1-\ldots-\Gamma_r-A$
is Cartier outside of $P_r$ and generates $\Cl(P_r \in W)$. 
\end{notation}

Consider the following Weil divisors on $W$:
\begin{equation}\label{EFefeGeEG}
D_0=A, \quad D_1=A+\Gamma_1,\quad\ldots,\quad D_r=A+\Gamma_1+\ldots+\Gamma_r.
\end{equation}
If all points $P_0,\ldots,P_r$ are smooth then this gives a
well-known exceptional collection 
$\cO_W(D_0), \cO_W(D_1), \ldots, \cO_W(D_r)$. 
But if some points are singular then this collection of divisorial sheaves is typically not exceptional. Indeed, if $i>j$ then
\begin{equation}\label{wegwEGweweg}
R\Gamma\cHom(\cO_W(D_i),\cO_W(D_j))=
R\Gamma\cO_W(-\Gamma_{j+1}-\ldots-\Gamma_i))=0
\end{equation}
because of the short exact sequence
$$0\to \cO_W(-\Gamma_{j+1}-\ldots-\Gamma_i)\to\cO_W\to\cO_{\Gamma_{j+1}\cup\ldots\cup\Gamma_i}\to0.$$
However, if $i>k>j$ and $P_k$ is 
not Gorenstein (i.e.~not an $A_s$ singularity)
then $\Ext^1(\cO_W(D_i),\cO_W(D_j))\ne0$.
This follows from \eqref{wegwEGweweg}, the local-to-global spectral sequence for $\Ext$ and  the fact that 
$\Ext^1_{\cO_{W,P_k}}(\cO_{W,P_k}(\Gamma_k+\Gamma_{k+1}),\cO_{W,P_k})\ne0$:

\begin{lemma}\label{wEGweg}
Let $(p\in Z)$ be a germ of a non-Gorenstein c.q.s. with toric boundary divisors $\Gamma, \Gamma'\in\Cl(p\in Z)$. Then $\Ext^1(\cO_{Z,p}(\Gamma+\Gamma'),\cO_{Z,p})\ne0$.
\end{lemma}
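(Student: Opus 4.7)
The plan is to reduce the Ext computation to the one-dimensional nodal curve $D=\Gamma\cup\Gamma'$, where local duality becomes trivial. After completion, write $R=\widehat{\cO}_{Z,p}$ with maximal ideal $\m$ and canonical module $\omega_R$, so that $\omega_R^{-1}=\cO_Z(\Gamma+\Gamma')$ is the module of the lemma. Since $K_Z=-\Gamma-\Gamma'$, the module $\omega_R=\cO_Z(-\Gamma-\Gamma')$ sits inside $R$ as the ideal sheaf of $D$, and $R/\omega_R=\cO_D$. Passing to the Galois cover $\C^2\to Z$ and setting $X=x^\Delta$, $Y=y^\Delta$ identifies $\cO_D$ with $\C[[X,Y]]/(XY)$, a one-dimensional Gorenstein (complete intersection) ring.

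First I would apply $\RHom_R(\omega_R^{-1},-)$ to $0\to\omega_R\to R\to\cO_D\to 0$. Local duality on the $2$-dimensional CM ring $R$ with dualizing module $\omega_R$ gives $\Ext^i_R(\omega_R^{-1},\omega_R)\cong H^{2-i}_{\m}(\omega_R^{-1})^\vee$, which vanishes for $i=1,2$ because the rank-$1$ reflexive module $\omega_R^{-1}$ over the normal $2$-dimensional CM ring $R$ is maximal Cohen--Macaulay of depth $2$.  Hence
\[\Ext^1_R(\omega_R^{-1},R)\cong\Ext^1_R(\omega_R^{-1},\cO_D),\]
reducing the problem to an Ext computation into the $1$-dimensional module $\cO_D$.

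Next I would use the change-of-rings spectral sequence
\[E_2^{p,q}=\Ext^p_{\cO_D}(\Tor^R_q(\omega_R^{-1},\cO_D),\cO_D)\Rightarrow\Ext^{p+q}_R(\omega_R^{-1},\cO_D).\]
Since $\cO_D$ is one-dimensional, $E_r^{p,q}=0$ for $p\geq 2$; consequently no differential enters or exits $E_r^{1,0}$ for $r\geq 2$, so $E_\infty^{1,0}=E_2^{1,0}$ embeds as a subgroup of $\Ext^1_R(\omega_R^{-1},\cO_D)$. Local duality on the $1$-dimensional Gorenstein ring $\cO_D$ identifies
\[E_2^{1,0}=\Ext^1_{\cO_D}(\omega_R^{-1}\otimes_R\cO_D,\cO_D)\cong H^0_{\m_D}(\omega_R^{-1}\otimes_R\cO_D)^\vee,\]
namely the Matlis dual of the $\m_D$-torsion in $T:=\omega_R^{-1}\otimes_R\cO_D$.

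The crux, which I expect to be the main technical obstacle, is showing that $T$ has nonzero $\m_D$-torsion. By Nakayama $\mu_{\cO_D}(T)=\mu_R(\omega_R^{-1})$, whereas any torsion-free rank-$1$ $\cO_D$-module has at most two generators at the node (it must be $\cO_D$ itself or its normalization). In the non-Gorenstein case $\Omega\neq\Delta-1$, inspection of the semigroup $\{(a,b)\in\Z_{\geq 0}^2\colon a+\Omega b\equiv 1+\Omega\pmod\Delta\}$ describing $\omega_R^{-1}$ as a $\mu_\Delta$-eigenspace shows that the three monomials $x^{1+\Omega}$, $xy$, $y^{\Omega^*+1}$, where $\Omega\Omega^*\equiv 1\pmod\Delta$, are pairwise incomparable and each is minimal (none can be reduced by subtracting a nonzero weight-$0$ monomial and staying non-negative). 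Hence $\mu_R(\omega_R^{-1})\geq 3$, $T$ is not torsion-free, $E_2^{1,0}\neq 0$, and therefore $\Ext^1_R(\omega_R^{-1},R)\neq 0$.
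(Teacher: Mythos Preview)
Your argument is correct: the reduction via $0\to\omega_R\to R\to\cO_D\to 0$ and local duality gives $\Ext^1_R(\omega_R^{-1},R)\cong\Ext^1_R(\omega_R^{-1},\cO_D)$, the change-of-rings spectral sequence has $E_2^{p,q}=0$ for $p\ge 2$ because the one-dimensional Gorenstein ring $\cO_D$ has injective self-dimension $1$, and the generator count $\mu_R(\omega_R^{-1})\ge 3$ versus $\mu\le 2$ for rank-$(1,1)$ torsion-free modules over a node (which are either $\cO_D$ or its normalization) forces torsion in $T$.

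The paper takes a completely different and much shorter route. It observes that $M=\cO_{Z,p}(\Gamma+\Gamma')\cong\omega_R^{-1}$ is a non-free maximal Cohen--Macaulay module (since the singularity is non-Gorenstein), so by Auslander's theorem on almost split sequences over two-dimensional normal local rings there is a non-split sequence $0\to\tau(M)\to N_M\to M\to 0$ with $\tau(M)=(M\otimes\omega_R)^{\vee\vee}\cong R$, whence $\Ext^1(M,R)\ne 0$ immediately. Your approach avoids AR theory entirely and is more elementary in that sense, trading a single citation for a hands-on computation (spectral sequence, classification of torsion-free modules over a node, explicit minimal monomial generators $x^{1+\Omega}$, $xy$, $y^{\Omega^*+1}$). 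The paper's approach is slicker and makes transparent why the non-Gorenstein hypothesis is exactly what is needed: it is precisely the condition that $\omega_R^{-1}$ is not free, hence admits an AR sequence, whose left-hand term is forced to be $R$ by the identity $\tau(\omega_R^{-1})=R$.
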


\begin{proof}
Let $R=\cO_{Z,p}$.
Note that $\Gamma+\Gamma'\sim-K_Z$ in $\Cl(p\in Z)$.
Since $-K_Z$ is not Cartier, 
$M=\cO_{Z,p}(\Gamma+\Gamma')$ is not a projective $R$-module. Thus we have a non-split
Auslander--Reiten exact sequence \cite{Auslander}
$0\to\tau(M)\to N_M\to M\to0$,
where $\tau(M)=(M\otimes\omega_Z)^{\vee\vee}=R$.
Thus $\Ext^1(M,R)\ne0$.
\end{proof}

Fortunately, orthogonality  holds for the ``reversed''  collection 
\begin{equation}\label{rgargr}
\cO_W(-D_r),\ \ldots, \cO_W(-D_1),\ \cO_W(-D_0),
\end{equation}
which therefore is essentially a unique choice
that works for  singular surfaces.
Indeed, the next result appears almost verbatim in \cite[Th.~2.12]{KKS}.

\begin{definition}
Let $Z$ be a projective variety.
A s.o.d. $D^b(Z)=\langle\cA_0,\ldots,\cA_r,\cB\rangle$ is called a {\em Kawamata decomposition} if every subcategory $\cA_i$ for $i=0,\ldots,r$ is classically generated by a maximal Cohen-Macaulay sheaf  and $\cB\subset D^{\perf}(Z)$.
\end{definition}

\begin{proposition}\label{kjshefjhksEFG}
For $i=0,\ldots,r$, let $A^W_i\subset D^b(W)$
be a triangulated subcategory classically generated by  $\cO_W(-D_i)$. Then we have a Kawamata decomposition
\begin{equation}\label{aeFafaEGaeg}
D^b(W)=\langle\cA^W_r,\ldots,\cA^W_0, \cB^W\rangle,
\end{equation}
so that $\cB^W\subset D^{\perf}(W)$,
as well as an s.o.d.
\begin{equation}D^b(W)=\langle \tilde\cB^W, \cA^W_r,\ldots,\cA^W_0\rangle\label{sdgsgs}
\end{equation}
with the property that $\D_W(B)$
is perfect\footnote{We denote by $\D_{\cX}=R\cHom(\cdot,\omega_\cX^\bullet)$ the  duality functor on a  noetherian scheme $\cX$.} for every $B\in\tilde\cB^W$.
\end{proposition}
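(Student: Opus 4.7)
The plan is to follow [KKS, Theorem 2.12] essentially verbatim; the only adjustment is to work under the mildly weaker hypothesis of Notation~\ref{asrgasrgarh}. The argument has three steps: (a)~upgrade the $\Hom$-level orthogonality \eqref{wegwEGweweg} to full $R\Hom$-orthogonality for the collection $\cO_W(-D_r),\ldots,\cO_W(-D_0)$; (b)~verify admissibility of $\langle \cA^W_r,\ldots,\cA^W_0\rangle$, giving the existence of both the right orthogonal $\cB^W$ and the left orthogonal $\tilde{\cB}^W$; (c)~show $\cB^W\subset D^{\perf}(W)$ via the local Kalck--Karmazyn description, and deduce the dual statement for $\tilde{\cB}^W$.

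For step (a), one needs $R\Hom(\cO_W(-D_i),\cO_W(-D_j))=0$ for $i<j$. The $\Hom$-part follows from \eqref{wegwEGweweg}: the reflexive $\cHom$ is $\cO_W(-\Gamma_{i+1}-\ldots-\Gamma_j)$, whose $R\Gamma$ vanishes by Assumption~\ref{assume}~(2) and the structure-sheaf short exact sequence. The delicate point is controlling the higher local $\cExt^k$ sheaves. Lemma~\ref{wEGweg} shows that the forward collection $\cO_W(D_0),\ldots,\cO_W(D_r)$ carries nonzero $\cExt^1$ between $\cO_W(D_i)$ and $\cO_W(D_j)$ at each non-Gorenstein $P_k$ with $i>k>j$; the whole point of reversing and negating the collection is that these offending Ext groups move to the opposite, hence orthogonal, direction. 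The local verification at each $P_m$ is toric: writing the germ as $\C[[x,y]]^{\mu_\Delta}$ and both $\cO_W(-D_i)$ and $\cO_W(-D_j)$ as explicit rank-one reflexive modules of the skew group ring, one computes $\cExt^k=0$ for $k\ge 1$ via an explicit equivariant projective resolution. This local toric computation is the main obstacle.

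For step (b), admissibility of each $\cA^W_i$ follows because its generator $\cO_W(-D_i)$ is a coherent sheaf on the projective Cohen--Macaulay scheme $W$; concretely, the adjoints of the inclusion $\cA^W_i\hookrightarrow D^b(W)$ can be described through the Kalck--Karmazyn equivalence $\cA^W_i\simeq D^b(\bar R_i\text{-mod})$ to be set up in Section~\ref{AEFefEEfe}. For step (c), $\cB^W\subset D^{\perf}(W)$ follows from Buchweitz--Orlov: perfectness is detected by vanishing in each local singularity category $D_{\mathrm{sg}}(W,P_i)$, and Kalck--Karmazyn show that $\cO_W(-D_i)$ classically generates $D_{\mathrm{sg}}(W,P_i)$. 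Hence any $B\in\cB^W$ right-orthogonal to all $\cO_W(-D_i)$ vanishes in each $D_{\mathrm{sg}}(W,P_i)$ and is therefore perfect. The statement that $\D_W(B)$ is perfect for $B\in\tilde{\cB}^W$ follows by applying the same argument to the dualized generators $\D_W(\cO_W(-D_i))$, which also classically generate each $D_{\mathrm{sg}}(W,P_i)$ because $\D_W$ restricts to an anti-autoequivalence on each local singularity category.
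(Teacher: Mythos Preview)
Your step (a) is on the right track, and the paper also begins by verifying orthogonality directly, but with a cleaner local argument than an explicit equivariant resolution: for $j<i$, at $P_k$ with $k>j$ the divisor $D_j$ is Cartier so $\cO_W(-D_j)$ is locally free there, while at $P_k$ with $k<i$ the divisor $D_i$ contains both toric boundaries $\Gamma_k,\Gamma_{k+1}$, so $\cO_W(-D_i)\simeq\omega_W$ locally. Either observation kills $\cExt^{>0}$ at once since the other sheaf is reflexive (hence mCM).

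The genuine gaps are in (b) and (c). In (b), admissibility of $\cA_i^W$ in $D^b(W)$ does \emph{not} follow from $\cO_W(-D_i)$ being a coherent sheaf: on a singular scheme there is no Serre functor, and the inclusion $\langle\cO_W(-D_i)\rangle\hookrightarrow D^b(W)$ need not have both adjoints. Invoking the Kalck--Karmazyn equivalence is circular, since in this paper that equivalence is deduced \emph{after} the present s.o.d.\ is in place. In (c), the vanishing of $R\Hom_W(B,\cO_W(-D_i))$ does not descend to the local singularity categories $D_{\mathrm{sg}}(\cO_{W,P_k})$: localization followed by Verdier quotient can create new morphisms, so you cannot conclude that the image of $B$ is orthogonal to the local generator. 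Note also that $\cO_W(-D_i)$ is not perfect at $P_k$ for $k<i$ either (it is locally $\omega_W$ there), so the bookkeeping of which sheaf controls which singularity is more delicate than you suggest.

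The paper circumvents both issues by passing to the minimal resolution $\pi\colon X\to W$. There one has an honest exceptional collection of line bundles, hence an s.o.d.\ of $D^b(X)$, and \cite[Th.~2.12]{KKS} pushes it forward to the s.o.d.~\eqref{sdgsgs} of $D^b(W)$, with admissibility coming for free and $R\pi_*\cO_X(-D_i^X)$ identified with $\cO_W(-D_i)$. Perfectness of $\D_W(B)$ for $B\in\tilde\cB^W$ then follows from \cite[Lemma~2.5]{KKS} applied on $X$, using that $\D_X(\cO_E(-1))\simeq\cO_E(-1)[-1]$ for each exceptional curve $E$. Finally, the Kawamata decomposition~\eqref{aeFafaEGaeg} with $\cB^W\subset D^{\perf}(W)$ is obtained not by a singularity-category argument but by Grothendieck duality: one runs the same construction with the divisors $K_W+D_i=-\tilde A-\Gamma_r-\cdots-\Gamma_{i+1}$ (built from the other end $\tilde A$ of the chain) and applies the anti-equivalence $\D_W$.
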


\begin{remark}
A~coherent sheaf $\cF$ on a noetherian scheme $\cX$ is called maximal Cohen-Macaulay (mCM) if $\depth_{\cO_{\cX,x}}\cF_x=\dim\cO_{\cX,x}$ for every  $x\in\cX$ \cite{BrunsHerzog}. If~$\cF$ is mCM and $\cO_{\cX,x}$ is regular then $\cF_x$ is 
locally free (since it  has  projective dimension~$0$ by the 
Auslander--Buchsbaum formula). 
If $\cF$ is mCM then $\cF$ is reflexive,
which  implies that  
$\cF$ is torsion-free 
(these notions are equivalent to mCM in dimension $2$ and $1$, respectively).
If $\cF$ is mCM and $i:\,\cD\hookrightarrow \cX$ is an effective Cartier divisor
then $i^*\cF$ is mCM on $\cD$.
If $\cX$ is CM 
with a dualizing sheaf~$\omega$ then $\cF$ is mCM if and only if $\cExt^p(\cF,\omega)=0$ for $p>0$.
And then $\cHom(\cF,\omega)$ is  mCM.
\end{remark}

\begin{proof}[Proof of Prop.~\ref{kjshefjhksEFG}]
First we check orthogonality. Let $i>j$ and $p>0$.
For $k>j$, 
$$\cExt^p(\cO_{W,P_k}(-D_j),\cO_{W,P_k}(-D_i))\simeq
\cExt^p(\cO_{W,P_k},\cO_{W,P_k}(-D_i))=0$$
by obvious reasons and for $k<i$, 
$$\cExt^p(\cO_{W,P_k}(-D_j),\cO_{W,P_k}(-D_i))\simeq
\cExt^p(\cO_{W,P_k}(-D_j),\omega_{W,P_k})=0$$
because $\cO_{W}(-D_j)$ is reflexive 
and therefore mCM.
By \eqref{wegwEGweweg}, this implies  $\cO(-D_i)\subset \langle\cO(-D_j)\rangle^\perp$
and therefore $\cA_W^i\subset (\cA_W^j)^\perp$.
Next we borrow  analysis from \cite{KKS}.

Let $X$ be the resolution of singularities of $W$ with exceptional divisors $E_i^1+\ldots+E_i^{m_i}$ over each $P_i\in W$. 
Let $\Gamma_i^X\subset X$ be the proper transform of $\Gamma_i\subset W$ for $i=1,\ldots,r$
and let $D_0^X$ be the proper transform of $A$.
Define divisors $D_i^X$ for $i=1,\ldots,r$ inductively as follows:
$D_i^X=D_{i-1}^X+E_i^1+\ldots+E_i^{m_i}+\Gamma_i^X$. This gives
a s.o.d.
$D^b(X)=\langle\cB^X, \cA^X_r,\ldots,\cA^X_0\rangle$,
where
$$
\cA^X_i=\langle
\cO(-D^X_i-E_i^1-\ldots-E_i^{m_i}),\ldots,\cO(-D^X_i-E_i^1),\cO(-D^X_i)
\rangle.
$$
The pushforward of this s.o.d.~ to $W$ was studied in \cite{KKS}:

(1) $\cA^X_i$ is classically  generated by $\cO(-D^X_i)$ and sheaves $\cO_{E_i^l}(-1)$, $1\le l\le m_i$. Indeed,  $D^X_i\cdot E_i^1=1$ and $D^X_i\cdot E_i^j=0$,
so  for $k=1,\ldots,m_i$
we have  exact sequences 
$0\to \cO(-D^X_i-E_i^1-\ldots-E_i^k)\to
\cO(-D^X_i-E_i^1-\ldots-E_i^{k-1})\to\cO_{E_i^k}(-1)\to0
$. 

(2) By \cite[Th.~2.12]{KKS}, we have a s.o.d. \begin{equation}\label{wFwefweG}
    D^b(W)=\langle R\pi_*\cB^X, R\pi_*\cA^X_r,\ldots,R\pi_*\cA^X_0\rangle,
    \end{equation}
    where $R\pi_*\cB^X$ is equivalent to $\cB^X$ and $R\pi_*\cA^X_i$ is equivalent to the Verdier quotient
    $\cA^X_i/\langle\cO_{E_i^l}(-1)\rangle_{1\le l\le m_i}$. In particular, it is classically generated by $R\pi_*\cO(-D_i^X)$. 
    
(3) Set $\bar D_i^X=D_i^X+E_i^1+\ldots+E_i^{m_i}$. Then $R\pi_*\cO(-D_i^X)=R\pi_*\cO(-\bar D_i^X)$ and
    $$-\bar D_i^X\cdot E_j^k=\begin{cases}
      0, &       j>i\cr
      -1-(E_i^{m_i})^2, &j=i,\ k=m_i\cr
      -2-(E_j^k)^2, &\hbox{\rm otherwise.}\cr
    \end{cases}
    $$
    By \cite[Cor.~6.2]{KKS},  $R\pi_*\cO(-\bar D_i^X)=\pi_*\cO(-\bar D_i^X)$ is a reflexive sheaf $\cO_W(-D_i)$. This gives an s.o.d.~\eqref{sdgsgs}
    with $\tilde\cB^W=R\pi_*\cB^X$.

(4) Let $B\in \tilde\cB^W$ and write $B=R\pi_*B^X$, $B^X\in\cB^X$.
 Let $i:\,E\hookrightarrow X$ be a component of the exceptional divisor of $\pi$. By coherent duality,
$\D_X(Ri_*\cO_E(-1))=Ri_*\D_{\P^1}(\cO_E(-1))=Ri_*\cO_E(-1)[-1]$.
Since $\RHom(Ri_*\cO_E(-1),B^X)=0$ by (1) above, it follows that
$\RHom(\D_X(B^X),Ri_*\cO_E(-1))=0$. 
By \cite[Lemma 2.5]{KKS}, this implies that $\D_W(B)=R\pi_*\D_X(B^X)$ is perfect.

(5)
To finish the proof, consider divisors
$K_W+D_i=-\tilde A-\Gamma_r-\ldots-\Gamma_{i+1}$
for $i=0,\ldots,r$,
where $\tilde A$ is as in Assumption~\ref{assume}.
Arguing as above, we get an s.o.d.
\begin{equation}\label{sDGSDGSH}
D^b(W)=\langle \cB, \langle\omega_W(D_0)\rangle,\ldots,\langle\omega_W(D_r)\rangle\rangle,
\end{equation}
where $\D_W(B)$ is perfect for every $B\in\cB$. Applying 
the duality anti-equivalence $\D_W$ to the s.o.d. \eqref{sDGSDGSH} gives the Kawamata decomposition \eqref{aeFafaEGaeg}.
\end{proof}

\begin{remark}\label{wfwEGwetgwe}
The proof shows that duality functor $\D_W$ takes the s.o.d.
\eqref{aeFafaEGaeg}
to the s.o.d.~\eqref{sDGSDGSH}, where $\omega_W(D_i)\simeq\cO_W(-\tilde A-\Gamma_r-\ldots-\Gamma_{i+1})$ for every $r=0,\ldots,r$.
\end{remark}

\begin{corollary}\label{asgasrgsarhasr}
Let $\pi:\,W\to\oW$ be a c.q.s.~resolution of a surface with a single c.q.s.~$P$ satisfying Assumption \ref{assume} (1), (2), (3). 
Then s.o.d.'s~\eqref{aeFafaEGaeg}
and \eqref{sdgsgs} for $W$ and $\oW$ 
are compatible via $R\pi_*$, that is
$R\pi_*\langle\cA^W_r,\ldots,\cA^W_0\rangle=\cA^{\oW}$
and $\cB^W$ (resp.~$\tilde\cB^W$) is equivalent to 
$\cB^\oW$ (resp.~$\tilde\cB^\oW$) via $R\pi_*$. Also, $\cB^W=L\pi^*\cB^\oW$.
\end{corollary}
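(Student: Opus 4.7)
The plan is to deduce all compatibility statements from a common resolution $X$ that dominates both $W$ and $\oW$.

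Let $X$ be the minimal resolution of $W$, so that $\pi_{\oW}=\pi\circ\pi_W\colon X\to\oW$ is also a resolution. Apply the construction from the proof of Proposition~\ref{kjshefjhksEFG} to $X$ twice: once viewing $X$ as a resolution of $W$, and once as a resolution of $\oW$. In the first case we obtain the s.o.d.~$D^b(X)=\langle\cB^X,\cA^X_r,\dots,\cA^X_0\rangle$; in the second case $D^b(X)=\langle\bar\cB^X,\bar\cA^X\rangle$. The key observation is that $\bar\cA^X$ is classically generated by the line bundles $\cO_X(-A^X-\sum_{l\le k}E'_l)$ for $k=0,1,\ldots,N$, where $E'_1,\dots,E'_N$ enumerates the full chain of exceptional curves of $\pi_{\oW}$ (the proper transforms $\Gamma_j^X$ together with the divisors $E_i^l$ over the $P_i$'s), and this is the same set of generators as for $\langle\cA^X_r,\dots,\cA^X_0\rangle$. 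Hence $\langle\cA^X_r,\dots,\cA^X_0\rangle=\bar\cA^X$ as triangulated subcategories, and by taking orthogonals $\cB^X=\bar\cB^X$.

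Next, push this identification forward. By the proof of Proposition~\ref{kjshefjhksEFG} applied to $W$, the s.o.d.~\eqref{sdgsgs} on $W$ is $R\pi_{W*}$ of the s.o.d.~on $X$; applied to $\oW$, the s.o.d.~$\langle\tilde\cB^{\oW},\cA^{\oW}\rangle$ is $R\pi_{\oW*}$ of the $\oW$-construction on $X$. Using $R\pi_{\oW*}=R\pi_*\circ R\pi_{W*}$ together with step~1 yields
\begin{equation*}
\cA^{\oW}=R\pi_*\langle\cA^W_r,\dots,\cA^W_0\rangle,\qquad \tilde\cB^{\oW}=R\pi_*\tilde\cB^W,
\end{equation*}
establishing the first assertion and the equivalence for $\tilde\cB$.

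For the Kawamata decomposition~\eqref{aeFafaEGaeg}: by Remark~\ref{wfwEGwetgwe} the duality $\D_W$ interchanges \eqref{aeFafaEGaeg} with \eqref{sDGSDGSH}, and \eqref{sDGSDGSH} is obtained from the same construction on $X$ applied to the dual generator $\tilde A$ in place of $A$. Repeating the previous two steps with $\tilde A$ yields compatibility of \eqref{sDGSDGSH} with $R\pi_*$. Grothendieck duality for the proper morphism $\pi$ (using $\omega_W^\bullet=\pi^!\omega_{\oW}^\bullet$) gives $R\pi_*\circ\D_W\simeq \D_{\oW}\circ R\pi_*$, and applying this to the $\cB$-piece of \eqref{sDGSDGSH} transfers the compatibility across duality, yielding $\cB^{\oW}=R\pi_*\cB^W$. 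Finally, for the identification $\cB^W=L\pi^*\cB^{\oW}$: given $B\in\cB^{\oW}$ and $a\in\cA^W_i$, adjunction gives $\RHom(L\pi^*B,a)\simeq\RHom(B,R\pi_*a)$. Since $R\pi_*a\in\cA^{\oW}$ by the first assertion, and $\cB^{\oW}$ lies in the s.o.d.-orthogonal to $\cA^{\oW}$, this vanishes, so $L\pi^*\cB^{\oW}\subset\cB^W$. The projection formula combined with $R\pi_*\cO_W=\cO_{\oW}$ (rational singularities) gives $R\pi_*L\pi^*B\simeq B$, exhibiting $L\pi^*|_{\cB^{\oW}}$ as an inverse to the already-established equivalence $R\pi_*\colon\cB^W\xrightarrow{\sim}\cB^{\oW}$, so its essential image is all of $\cB^W$.

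The main obstacle I expect is in the third step: checking that the ``dual'' generator $\tilde A$ on $\oW$ lifts compatibly to the one on $W$ so that the two $X$-level constructions really yield the same $\cB$-piece, and that Grothendieck duality applies cleanly at the level of these full subcategories rather than object-by-object.
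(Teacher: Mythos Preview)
Your proof is correct and follows essentially the same approach as the paper's: both work on the common resolution $X$ (the minimal resolution of $W$, which is also a resolution of $\oW$), observe that the two $X$-level constructions of Proposition~\ref{kjshefjhksEFG} use literally the same chain of curves and the same base divisor $A^X$ (hence the same $\cB^X$), and then push forward via $R\pi_{\oW*}=R\pi_*\circ R\pi_{W*}$. The paper compresses this into one sentence, while you write out the duality step for $\cB^W$ and the $L\pi^*$ argument explicitly, which is fine.

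Regarding the obstacle you flag: neither point is an issue. The compatibility of $\tilde A$ is exactly what Assumption~\ref{assume}~(3) (via Lemma~\ref{goodA}) provides: $\tilde{\bar A}$ on $\oW$ is chosen so that its proper transform in $W$ is $\Gamma_{r+1}$, which is precisely the $\tilde A$ used in constructing \eqref{sDGSDGSH} on $W$; thus the two $X$-level ``dual'' constructions again coincide on the nose. As for Grothendieck duality, the natural isomorphism $R\pi_*\circ\D_W\simeq\D_{\oW}\circ R\pi_*$ holds object-by-object, and since $\D_W$ and $\D_{\oW}$ are anti-autoequivalences of the bounded derived categories, this automatically carries full admissible subcategories to full admissible subcategories; no extra categorical input is needed.
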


Assumption \ref{assume} (3) implies that proper transforms $\Gamma_0$ (resp.~$\Gamma_{r+1}$) of $\bar A\subset\oW$ 
(resp.~$\tilde{\bar A}\subset\oW$) in $W$ of $\oW$
intersect the chain $\Gamma_1\cup\ldots\cup\Gamma_r$
only at  $P_0$ (resp.~$P_r$),
where they are equivalent to  toric boundaries
opposite to $\Gamma_1$ (resp.~$\Gamma_r$).
So the corollary  follows from the proof 
of Prop.~\ref{kjshefjhksEFG} since the minimal resolution of $W$ is also a resolution of $\oW$
and the s.o.d.'s~\eqref{aeFafaEGaeg}
and \eqref{sdgsgs} defined using \eqref{wFwefweG} are compatible. It generalizes 
\cite[Th.~2.12]{KKS} (which is the special case of Corollary~\ref{asgasrgsarhasr} when $W=X$ is a smooth  resolution of $\oW$). It shows that we can view the category $\langle\cA^W_r,\ldots,\cA^W_0\rangle$
as a partial resolution of singularities of the category 
$\cA^\oW$.

\begin{definition}  
Let
 $\cZ\to B=\Spec\bB$ be a morphism to an affine scheme, 
 let $\cF$ be a coherent sheaf on $\cZ$, and let $\bR\subset\End(\cF)$ be a finite $\bB$-algebra. There is a functor of abelian categories
$\otimes_\bR\cF:\,
\bR\hbox{\rm -Mod}\to\Qcoh(\cZ)$, which  takes a $\bR$-module 
$M=\Coker(\bR^I\mathop{\rightarrow}\limits^\phi\bR^J)$ to $M\otimes_\bR\cF:=\Coker(\cF^I\mathop{\rightarrow}\limits^\phi\cF^J)$. We denote its left derived functor by  $\otimes^L_\bR\cF:\,
D(\bR\hbox{\rm -Mod})\to D(\Qcoh(\cZ))$ and the corresponding homological functors by $\Tor_{\bR}^j(\cdot,\cF):\,\bR\hbox{\rm -Mod}\to\Qcoh(\cZ)$.
We call $\otimes_\bR\cF$ {\em bounded} if, for every f.g.~ $\bR$-module $M$, $\Tor_{\bR}^j(M,\cF)=0$ for $j\gg0$. In this case $\otimes^L_\bR\cF$ induces a  functor 
 $\otimes^L_\bR\cF:\,
D^b(\bR\hbox{\rm -mod})\to D^b(\cZ)$.
We denote its essential image by $\{\cF\}\subset D^b(\cZ)$\footnote{Note that $\{\cF\}\ne\langle\cF\rangle$ in general because $\{\cF\}$ is not necessarily classically generated by~$\cF$.}.
\end{definition}

\begin{definition}
Let $\cD$ be a coherent sheaf on a projective variety $Z$.
An iterated extension of $\cD$ is defined recursively as either $\cD$ or a non-trivial extension of an 
iterated extension by $\cD$.
A universal iterated extension $\cD^p$ for $p\ge0$ 
is a coherent sheaf defined inductively
as $\cD^0=\cD$ and $\cD^p$ for $p>0$ is the universal extension 
\begin{equation}\label{aaegaEGaega}
0\to \cD\otimes_\C\Ext^1(\cD^{p-1},\cD)^\vee\to \cD^p\to\cD^{p-1}\to 0.
\end{equation}
An~iterated extension $\hat\cD$ is called maximal
if $\Ext^1(\hat\cD,\cD)=0$, see  \cite{Kaw}. If the maximal iterated extension  exists then it is unique and equal to $\cD^p$ for some $p\ge0$.
\label{maxitext}
\end{definition}

\begin{definition}\label{kjhfkjgfk}
For every $i=0,\ldots,n$, 
let $F_i$ be  the maximal iterated extension of $\O_W(-D_i)$, where the
Weil divisor $D_i$
on $W$ was defined in \eqref{EFefeGeEG}.
\end{definition}

\begin{lemma}
The sheaf $F_i$ exists and is locally free at every $p\in W$ except at $P_j$ for $j<i$, where $\D_W(F_i)$ is locally free.
The functor $\otimes_{R_i}F_i$ is bounded,
where $R_i=\End(F_i)$ is isomorphic to the 
Kalck-Karmazyn algebra of the germ  $(P_i\in W)$. In notation of Prop.~\ref{kjshefjhksEFG}, 
$$D^b(R_i-\mmod)\simeq
A^W_i=\langle 
\cO_W(-D_i)\rangle=\{F_i\}\subset D^b(W).$$
\end{lemma}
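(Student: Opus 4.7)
The plan is to reduce to Kawamata's local construction \cite{Kaw} of the Kawamata bundle at a c.q.s., and to globalize using the cohomological vanishing in Assumption~\ref{assume}. First I compute $\cExt^p(\cO_W(-D_i),\cO_W(-D_i))$ at each point. At smooth points and at $P_k$ with $k>i$, the divisor $D_i$ is locally trivial, so $\cO_W(-D_i)$ is locally free and these Ext sheaves vanish for $p>0$. At $P_k$ with $k<i$, both toric boundaries at $P_k$ appear in $D_i$, so $\cO_W(-D_i)\cong\omega_{W,P_k}$ is mCM, whence $\cExt^p(\omega,\omega)=0$ for $p>0$ by the mCM characterization recalled in the remark preceding Prop.~\ref{kjshefjhksEFG}. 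The only nontrivial local Ext is at $P_i$, where $\cO_W(-D_i)$ is the reflexive ideal of the single toric boundary $\Gamma_i$; Kawamata's local theorem \cite{Kaw} constructs the maximal iterated extension here, a locally free module of rank equal to the index of $P_i$, whose endomorphism algebra is the Kalck--Karmazyn algebra of $(P_i\in W)$.

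To lift this to a global maximal iterated extension, I run the local-to-global spectral sequence for $\Ext^1(\cD^{p-1},\cO_W(-D_i))$ and split it into $H^0(\cExt^1)$ and $H^1(\cHom)$. The first is concentrated at $P_i$ and vanishes after finitely many iterations by the local termination. For the second, induction on $p$ via the universal-extension sequences~\eqref{aaegaEGaega} reduces to $H^1(\cHom(\cO_W(-D_i),\cO_W(-D_i)))$. Since $\cO_W(-D_i)$ is a reflexive rank-$1$ sheaf on a normal surface, $\cHom(\cO_W(-D_i),\cO_W(-D_i))=\cO_W$, and $H^1(\cO_W)=0$ by Assumption~\ref{assume}~(2). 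Hence the process terminates, and $F_i$ has the prescribed local structure: locally free at $P_i$ and at $P_k$ for $k>i$, while at $P_k$ for $k<i$ it is locally $\omega_{W,P_k}$, whose dual is therefore locally free. The identification of $R_i=\End(F_i)$ with the Kalck--Karmazyn algebra reduces to the local computation at $P_i$, since at any other point the local endomorphism algebra of $F_i$ is scalar.

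The boundedness of $\otimes_{R_i}F_i$ and the derived equivalence $D^b(R_i\text{-mod})\simeq\{F_i\}$ follow from the corresponding local properties in \cite{Kaw,KK17} applied at $P_i$, combined with the fact that on the locus where $F_i$ is free, the functor is exact. For the equality $\{F_i\}=A^W_i=\langle\cO_W(-D_i)\rangle$ with the subcategory of Prop.~\ref{kjshefjhksEFG}, one inclusion follows because $F_i$ is iteratively extended from $\cO_W(-D_i)$, and the other because $\cO_W(-D_i)$ is recovered from $F_i$ as an iterated cone via the defining sequences of $\cD^p$. I expect the main obstacle to be synchronizing Kawamata's local termination at $P_i$ with the global induction on $p$ for the $H^1(\cHom)$ contribution: the induction closes precisely because Assumption~\ref{assume}~(2) provides $H^1(\cO_W)=0$ at every stage, and some care is needed to ensure that $\cHom(\cD^{p-1},\cO_W(-D_i))$ remains cohomologically tame as $p$ grows, so that the two halves of the spectral sequence vanish simultaneously.
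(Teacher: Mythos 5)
The paper offers no argument for this lemma at all: its proof is the single sentence ``This is a summary of results from \cite{KKS,K21}.'' Your local analysis of $\cO_W(-D_i)$ (locally free away from $P_0,\ldots,P_i$; isomorphic to $\omega_W$ up to a free factor at $P_k$ for $k<i$; the ideal of one toric boundary at $P_i$) is correct and is exactly the input the cited results use. The gap is in your globalization/termination step. You split $\Ext^1(\cD^{p},\cD)$ via the local-to-global sequence into $H^1(\cHom)$ and $H^0(\cExt^1)$ and claim (i) the second piece dies for large $p$ ``by the local termination'' and (ii) the first piece vanishes by an induction reducing to $H^1(W,\cO_W)=0$. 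Neither claim is justified. For (i): Kawamata's theorem terminates the \emph{local} universal extension at $P_i$, whereas your $\cD^p$ is built from the \emph{global} group $\Ext^1(\cD^{p-1},\cD)$; for local termination to say anything about $\cD^p$ you need the edge map $\Ext^1(\cD^{p-1},\cD)\to H^0(\cExt^1(\cD^{p-1},\cD))$ to be surjective at every stage, which forces you to control $H^2(W,\cHom(\cD^{p-1},\cD))$. You never do; the paper handles exactly this term in Lemma~\ref{asfhsrHs} by Serre duality, reducing it to $\Hom(\cHom(\cD^{p-1},\cD),\omega_W)=0$ via $H^0(W,\omega_W)=0$, i.e.\ it uses $H^2(W,\cO_W)=0$ from Assumption~\ref{assume}(2) --- an ingredient entirely absent from your argument, which invokes only $H^1(\cO_W)=0$.

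For (ii) the induction does not close: applying $\cHom(-,\cD)$ to \eqref{aaegaEGaega} gives $0\to\cHom(\cD^{p-1},\cD)\to\cHom(\cD^{p},\cD)\to K\to0$ with $0\to K\to\cO_W\otimes V\to A\to0$, where $V=\Ext^1(\cD^{p-1},\cD)$ and $A$ is supported at $P_i$; then $H^1(W,K)=\Coker\bigl(V\to H^0(A)\bigr)$, and since $A$ is a quotient $\cO_{W,P_i}$-module of $V\otimes\cO_{W,P_i}$ its length can exceed $\dim V$, so this cokernel has no reason to vanish. Two further slips: the rank of $F_i$ is $\Delta_i=|\Cl(P_i\in W)|$, not the (Gorenstein) index of $P_i$; and $\End(F_i)=H^0(W,\cHom(F_i,F_i))$ does not ``reduce to the local computation at $P_i$,'' because at points where $F_i$ is free of rank $\Delta_i$ the local endomorphism algebra is a matrix algebra, not scalars --- one needs the global statement $\RHom(F_i,\cO_W(-D_i))=\C$, which again rests on the vanishings above. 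If you want a self-contained proof rather than the citation, the cleaner route (the one taken in \cite{KKS}) is to construct $F_i$ as the pushforward of an explicit bundle from the resolution used in the proof of Prop.~\ref{kjshefjhksEFG} and verify maximality afterwards.
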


\begin{proof}
This is a summary of results from \cite{KKS,K21}.
\end{proof}

\begin{lemma}\label{asfhsrHs}
For $k>0$, $i\le j$,
\begin{enumerate}
\item $\cExt^k(F_i,F_j)=\cExt^k(F_i,\cO_W(-D_j))=0$;
\item
$\Ext^k(F_i,F_j)=\Ext^k(F_i,\cO_W(-D_j))=0$.
\end{enumerate}
\end{lemma}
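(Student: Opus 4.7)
The plan is to deduce both parts from the structural facts already in hand: the local freeness of $F_i$ away from $P_0,\ldots,P_{i-1}$ and of $\D_W F_i$ at those singular points (preceding lemma), the semi-orthogonal decomposition of Proposition~\ref{kjshefjhksEFG}, and the tilting description $\cA^W_i\simeq D^b(R_i\hbox{\rm -mod})$ sending $F_i$ to $R_i$.

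Part (1) is local, so I would check the stalk at each $P_m$. When $m\ge i$, the sheaf $F_i$ is locally free at $P_m$ by the preceding lemma, hence $\cExt^k(F_i,-)_{P_m}=0$ for $k>0$ trivially. When $m<i\le j$, all three sheaves $F_i$, $F_j$, $\cO_W(-D_j)$ are mCM (reflexive on a normal surface), so I apply the duality isomorphism $R\cHom(A,B)\simeq R\cHom(\D_W B,\D_W A)$, which is valid for mCM $A,B$ on a CM surface via $\D_W\D_W=\mathrm{id}$ combined with tensor-hom adjunction. The key observation is that both $\D_W F_j$ and $\D_W\cO_W(-D_j)$ are locally free at $P_m$: for $\D_W F_j$ this is the preceding lemma since $m<i\le j$ forces $m<j$; for $\D_W\cO_W(-D_j)$ this holds because locally at $P_m$ the divisor $D_j$ is the full toric boundary (both $\Gamma_m,\Gamma_{m+1}$ if $m>0$, or $\bar A,\Gamma_1$ if $m=0$), so $\cO_W(-D_j)_{P_m}\simeq\omega_{W,P_m}$ and hence $\D_W\cO_W(-D_j)_{P_m}\simeq\cHom(\omega,\omega)_{P_m}\simeq\cO_{W,P_m}$. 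Once the first argument is locally free, all higher $\cExt$ vanish.

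For part (2), part (1) and the local-to-global spectral sequence collapse the higher $\cExt$ rows, so $\Ext^k(F_i,-)=H^k(W,\cHom(F_i,-))$. When $i<j$, both $F_j$ and $\cO_W(-D_j)$ lie in $\cA^W_j$ while $F_i\in\cA^W_i$, and in the decomposition $\langle\cA^W_r,\ldots,\cA^W_0,\cB^W\rangle$ the subcategory $\cA^W_j$ sits strictly to the left of $\cA^W_i$. By semi-orthogonality, $\RHom(F_i,F_j)=\RHom(F_i,\cO_W(-D_j))=0$ in all degrees, in particular for $k>0$. When $i=j$, the derived equivalence $\cA^W_i\simeq D^b(R_i\hbox{\rm -mod})$ identifies $F_i$ with the projective module $R_i$, and sends $\cO_W(-D_i)$ to some $R_i$-module $S_i$; since $\RHom_{R_i}(R_i,M)=M$ for any $M$, both Ext groups reduce to $\Ext^k_{R_i}(R_i,M)$ and vanish for $k>0$ by projectivity of $R_i$.

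The main technical point is the local duality in part (1): one must verify that the dual $\D_W\cG$ of the target is locally free at precisely those points $P_m$ where $F_i$ itself fails to be locally free. This is exactly why the hypothesis $i\le j$ is used --- it guarantees that for every $m<i$ we also have $m<j$, placing us at a point where the preceding lemma (for $F_j$) or the local isomorphism with $\omega$ (for $\cO_W(-D_j)$) supplies the required local freeness.
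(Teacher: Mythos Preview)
Your argument for part~(1) is essentially the paper's, just phrased through the $\D_W$-duality rather than by directly noting that at $P_m$ with $m<i$ both $F_i$ and the target are locally $\omega_W$ tensored with a free sheaf; either formulation is fine. For part~(2) with $i<j$ you and the paper both invoke the s.o.d.

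The gap is in the case $i=j$ for the target $\cO_W(-D_i)$. Your assertion that the equivalence sends $\cO_W(-D_i)$ to an $R_i$-\emph{module} $S_i$ (concentrated in degree~$0$) is precisely the statement to be proved: the inverse equivalence is $\RHom(F_i,-)$, so $H^k(S_i)=\Ext^k(F_i,\cO_W(-D_i))$, and ``$S_i$ is a module'' means these vanish for $k\neq0$. Projectivity of $R_i$ only yields $\Ext^k_{R_i}(R_i,M)=H^k(M)$, which does not vanish for a general complex~$M$. (Your argument for $\Ext^{>0}(F_i,F_i)=0$ is fine, since $F_i$ genuinely corresponds to $R_i$.)

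The paper handles $i=j$ for $\cO_W(-D_i)$ directly: $\Ext^1=0$ is the defining property of the \emph{maximal} iterated extension; $\Ext^{\ge3}=0$ because by part~(1) these are $H^{\ge3}$ of a sheaf on a surface; and $\Ext^2=0$ is obtained via Serre duality as vanishing of $\Hom(\cHom(F_i,\cO_W(-D_i)),\omega_W)$, proved by induction along the universal-extension filtration using $H^0(W,\omega_W)=0$ from Assumption~\ref{assume}(2). Only afterwards does the paper bootstrap to $\Ext^{>0}(F_i,F_i)=0$ by filtering the second argument. Your route can be patched: once you have $\Ext^{>0}(F_i,F_i)=0$ from the equivalence and $\Ext^1(F_i,\cO_W(-D_i))=0$ from maximality, apply $\RHom(F_i,-)$ to the surjection $F_i\twoheadrightarrow\cO_W(-D_i)$ with kernel $G$; since $\Ext^3(F_i,G)=H^3(W,\cHom(F_i,G))=0$ by part~(1) and dimension, the long exact sequence forces $\Ext^2(F_i,\cO_W(-D_i))=0$.
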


\begin{proof}
$F_i$ and $\cO_W(-D_j)$
are locally isomorphic to $\omega_W$ tensored
with a  free sheaf at $P_k$ for $k<i$.
Since $F_i$ is locally free elsewhere,
$\cExt^k(F_i,\cO_W(-D_j))=0$ for $k>0$.
This implies the same vanishing for iterated extensions of $\cO_W(-D_j)$
including $F_j$.

Equation (2) for $i<j$ follows from Prop.~\ref{kjshefjhksEFG} (the s.o.d.). 
Let $\cD=\cO_W(-D_i)$,
$\cD^p=F_i$.
From (1), $\Ext^k(\cD^p,\cD)=H^k(W,\cHom (\cD^p,\cD))$, which is equal to $0$ if $k>2$.
Also, $\Ext^1(\cD^p,\cD)=0$ by the definition of the maximal iterated extension.

The vanishing of $\Ext^2(\cD^p,\cD)=H^2(W,\cHom (\cD^p,\cD))$ is equivalent, by Serre duality and reflexivity of $\cHom (\cD^p,\cD)$, to vanishing of 
$\Hom(\cHom (\cD^p,\cD),\omega_W)$.
We~argue by induction on $p$ using~\eqref{aaegaEGaega}.
The base of induction is vanishing of  
$\Hom(\cHom (\cD,\cD),\omega_W)=H^0(W,\omega_W)$, which holds by Assumption \ref{assume} (2).
Applying $\cHom(\cdot,\cD)$ to \eqref{aaegaEGaega} gives an exact sequence
\begin{equation}\label{qrgwgwwrfwe}
    0\to \cHom (\cD^{j-1},\cD)\to \cHom (\cD^j,\cD)\to \cO_W\otimes \Ext^1(\cD^{j-1},\cD)\to A\to 0,
\end{equation}
where $A$ is a sheaf supported at $P_i$. Since $W$ is a CM surface, $\Ext^1(A,\omega_W)=0$.
Splitting \eqref{qrgwgwwrfwe} into two short exact sequences and using again that $H^0(W,\omega_W)=0$
shows that $\Hom(\cHom (\cD^{j-1},\cD),\omega_W)=0$
implies $\Hom(\cHom (\cD^j,\cD),\omega_W)=0$.

Finally, 
vanishing of $\Ext^k(\cD^p,\cD)$  for $k>0$
implies vanishing of $\Ext^k(\cD^p,\cD^j)$  by induction on $j$, including vanishing of $\Ext^k(F_i,F_i)$ for $k>0$.
\end{proof}

Finally, we address ``locality'' of subcategories $\cA^W_i=\langle\cO_W(-D_i)\rangle\subset D^b(W)$, i.e.~there dependence on the divisor $D_i$.

\begin{lemma}\label{wefeqf}
There is a short exact sequence 
$$0\to \Pic(W)\to\Cl(W)\mathop{\longrightarrow}^\gamma \mathop{\oplus}\limits_{i=0}^r\Cl(P_i\in W)\to0,$$ where $\Cl(P_i\in W)$ is the local class group 
and $\gamma$ is the restriction.
\end{lemma}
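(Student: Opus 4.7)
The plan is to verify the three constituent assertions of the exact sequence in turn. For injectivity of $\Pic(W)\hookrightarrow\Cl(W)$, I would invoke the standard fact that on a normal variety the natural map sending a Cartier divisor to its associated Weil divisor is injective on linear equivalence classes: a Cartier divisor whose Weil class is principal is the divisor of a rational function, hence principal as a Cartier divisor. For exactness at $\Cl(W)$, note that $\gamma(D)=0$ means $D$ is locally principal at each $P_i$; since $W$ is smooth on the complement of $\{P_0,\ldots,P_r\}$ (Notation~\ref{asrgasrgarh}), every Weil divisor is automatically Cartier there, so $D$ is Cartier globally and $[D]\in\Pic(W)$.

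The main work lies in proving surjectivity of $\gamma$, and the plan is to construct preimages explicitly using the distinguished global Weil divisors supplied by Notation~\ref{asrgasrgarh}. Namely, $A$ is Cartier away from $P_0$ and maps to a generator of $\Cl(P_0\in W)$; dually, $\tilde A=-K_W-\Gamma_1-\ldots-\Gamma_r-A$ is Cartier away from $P_r$ and maps to a generator of $\Cl(P_r\in W)$; and each chain curve $\Gamma_k$ for $1\le k\le r$ is Cartier away from the pair $\{P_{k-1},P_k\}$, through each of which it passes as a toric boundary of the respective c.q.s. Since $\Cl(P_k\in W)$ is cyclic and any toric boundary class of a c.q.s.~generates the whole local class group, $\gamma_k(\Gamma_k)$ is a generator of $\Cl(P_k\in W)$ for every $1\le k\le r$.

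Given a target $(\bar D_0,\ldots,\bar D_r)\in\bigoplus_i\Cl(P_i\in W)$, I would then proceed by a triangular cancellation argument, processing coordinates from $k=r$ downwards. Begin by adding a multiple of $\tilde A$ to match $\bar D_r$ without affecting any other coordinate. Inductively, once coordinates $k+1,\ldots,r$ have been matched, add a suitable multiple of $\Gamma_k$ (or of $A$ at the final step $k=0$) to correct coordinate $k$: this uses the generator property of $\gamma_k(\Gamma_k)$ (resp.~of $\gamma_0(A)$) and disturbs only coordinate $k-1$, which has not yet been matched, leaving the already-matched coordinates $k+1,\ldots,r$ intact. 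After processing $k=0$ the induction closes and the resulting global Weil divisor realizes the given tuple.

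The point that requires care is justifying that each toric boundary $\gamma_k(\Gamma_k)$ is really a generator of $\Cl(P_k\in W)$ for all intermediate $k$. This reduces to a direct toric check on $\frac{1}{\Delta_k}(1,\Omega_k)$: under the identification $\Cl(P_k\in W)\simeq\Z/\Delta_k$, the two toric boundary classes correspond to integers coprime to $\Delta_k$ (namely $1$ and the inverse of $\Omega_k$ modulo $\Delta_k$, which is a unit since $\gcd(\Omega_k,\Delta_k)=1$), and so both are generators. This ensures the inductive cancellation goes through without obstruction and completes the proof.
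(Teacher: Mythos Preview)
Your proof is correct and follows essentially the same strategy as the paper's. Both arguments reduce immediately to surjectivity of $\gamma$ and exploit the fact that each toric boundary generates the cyclic local class group; the paper uses the partial sums $D_i=A+\Gamma_1+\ldots+\Gamma_i$ (which are lower-triangular: trivial at $P_j$ for $j>i$) with induction increasing in $i$, whereas you use the individual curves $\Gamma_k$ together with $A,\tilde A$ (which are bi-diagonal) with induction decreasing in $k$. Since $D_i-D_{i-1}=\Gamma_i$, these are really the same triangular cancellation in a different basis and direction; your use of $\tilde A$ at the first step is a harmless convenience that the paper's ordering avoids.
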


\begin{proof}
It suffices to prove surjectivity of $\gamma$. Each $\Cl(P_i\in W)$
is a finite cyclic group generated by any one of the toric boundaries. The divisor  $D_i$ restricts to  a generator in $\Cl(P_i\in W)$ and to $0$ in $\Cl(P_j\in W)$ for $j>i$.
Arguing by induction on~$i$, 
some linear combination $D_i+\sum_{j<i}n_jD_j$ with integer coefficients restricts to a generator in $\Cl(P_i\in W)$ and to $0$ in $\Cl(P_j\in W)$ for $j\ne i$.
\end{proof}

\begin{figure}[htbp]
\center
\includegraphics[width=10cm]{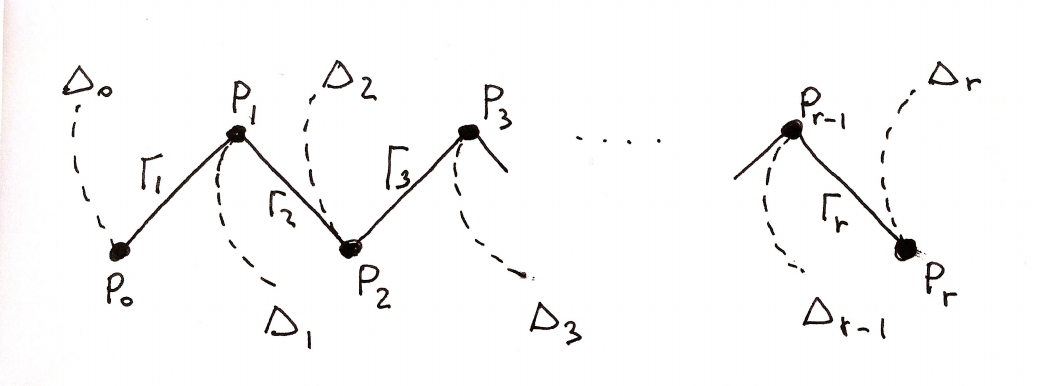}
\caption{Divisors $\Delta_0,\ldots,\Delta_r$ on $W$}
\label{fasfg}
\end{figure}

\begin{definition}\label{wegwRGwrg}
Fix $q=0,\ldots,r$. 
By Lemma~\ref{wefeqf}, there exists
a Weil divisor  $\Delta_q$ which
is equivalent to $D_q$ in $\Cl(P_q\in W)$ and is Cartier at $P_i$ for $i\ne q$, 
see Figure~\ref{fasfg}.
Let $\omega_W^q=\cO_W(-D_q+\Delta_q)$. For  $i=q,\ldots,r$, let $D_{i,q}=\Delta_q+\Gamma_{q+1}+\ldots+\Gamma_i$.
Let $\cA_{i,q}^W\subset D^b(W)$ be a full triangulated subcategory classically generated by 
$\cO_W(-D_{i,q})$. 
\end{definition}

\begin{corollary}\label{WEGsgsRHg}
    There is a s.o.d. $D^b(W)=\langle \bar\cB_q^W, \bar\cA^W_{r,q},\ldots \bar\cA^W_{q,q}\rangle$
such that $\D_W(B)$ is perfect at $P_i$ for $i\ge q$
for every  $B\in\bar\cB_q^W$ and
$\cA_{i,q}^W=\{F_{i,q}\}\simeq D^b(R_{i,q}-\mmod)$,
where $F_{i,q}$ is the maximal iterated extension of $\cO_W(-D_{i,q})$
and $R_{i,q}=\End(F_{i,q})$ is isomorphic to the 
Kalck-Karmazyn algebra of $(P_i\in W)$. 
Furthermore, categories $\langle \cA^W_{r,q},\ldots \cA^W_{q,q}\rangle$ and $\langle\cA^W_r,\ldots,\cA^W_q\rangle$  are equivalent via an adjoint pair of functors  
$\bullet\otimes^L\omega^q_W$ and $R\cHom(\omega^q_W,\bullet)$.
\end{corollary}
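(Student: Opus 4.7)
The plan is to mimic the proof of Proposition~\ref{kjshefjhksEFG} with the divisors $D_{i,q}$ replacing the $D_i$, and then to deduce the equivalence between $\langle\cA^W_{r,q},\ldots,\cA^W_{q,q}\rangle$ and $\langle\cA^W_r,\ldots,\cA^W_q\rangle$ from the divisorial relation $-D_{i,q}+(\Delta_q-D_q)=-D_i$.

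First, I will verify semi-orthogonality of the collection $\cO_W(-D_{r,q}),\ldots,\cO_W(-D_{q,q})$. Since $D_{i,q}-D_{j,q}=\Gamma_{j+1}+\ldots+\Gamma_i$ as in the case $q=0$, the global Hom vanishing follows from the same short exact sequence as in~\eqref{wegwEGweweg}. Local $\cExt^p$ computations at each $P_k$ parallel Proposition~\ref{kjshefjhksEFG}: at $P_k$ with $k<q$ both sheaves are line bundles; at $P_q$ the linear equivalence $\Delta_q\sim D_q$ in $\Cl(P_q\in W)$ identifies $\cO_W(-D_{i,q})$ locally with $\omega_W$ for $i>q$, since $\Delta_q+\Gamma_{q+1}$ is locally equivalent to $\Gamma_q+\Gamma_{q+1}\sim -K_W$; the cases $q<k\le j$ and $k>j$ are unchanged. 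I then construct the s.o.d.\ by working on the minimal resolution $X\to W$, lifting $D_{i,q}$ to $D^X_{i,q}$, defining $\bar\cA^X_{i,q}$ exactly as $\cA^X_i$, and invoking the pushforward mechanism of \cite[Th.~2.12]{KKS} via steps (1)--(4) of Proposition~\ref{kjshefjhksEFG}'s proof. The remainder $\bar\cB^W_q$ absorbs both the original $\cB^W$-part and what would have been subcategories for $i<q$, and one verifies along the way that $\D_W(B)$ is perfect at $P_i$ for $i\ge q$.

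Next I will identify $\bar\cA^W_{i,q}$ with $\{F_{i,q}\}\simeq D^b(R_{i,q}\hbox{\rm -mod})$. The construction of $F_{i,q}$, the Ext-vanishings analogous to Lemma~\ref{asfhsrHs}, and the identification $\End(F_{i,q})\simeq R_{i,q}$ with the Kalck--Karmazyn algebra of $(P_i\in W)$ depend only on the germ of $W$ at $P_i$ and on the class of $D_{i,q}$ in $\Cl(P_i\in W)$. Since this class is a generator of the local class group just like $D_i$, all arguments transfer verbatim.

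Finally, for the equivalence I observe that $\omega^q_W=\cO_W(\Delta_q-D_q)$ is invertible in an open neighborhood of $\{P_q,\ldots,P_r\}$: at $P_q$ by the choice $\Delta_q\sim D_q$, and at $P_k$ with $k>q$ because neither $\Delta_q$ nor $D_q$ meets $P_k$. Over this locus the identity $-D_{i,q}+(\Delta_q-D_q)=-D_i$ gives $\cO_W(-D_{i,q})\otimes\omega^q_W\simeq\cO_W(-D_i)$. Near $P_k$ with $k<q$ the sheaf $\cO_W(-D_{i,q})$ is locally free, so the derived tensor product agrees with the underived one and again coincides with $\cO_W(-D_i)$ locally. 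Assembling these identifications, $\bullet\otimes^L\omega^q_W$ sends generator to generator, and the adjoint $R\cHom(\omega^q_W,\bullet)$ supplies the inverse. The hardest part will be confirming that this adjunction restricts to a genuine equivalence on the semi-orthogonal sums despite $\omega^q_W$ failing to be invertible near $P_0,\ldots,P_{q-1}$, which amounts to checking unit and counit on the generators using local freeness of the relevant sheaves away from $\{P_q,\ldots,P_r\}$.
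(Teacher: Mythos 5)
Your proposal is correct and follows essentially the same route as the paper: the s.o.d.\ is obtained by rerunning Proposition~\ref{kjshefjhksEFG} for the divisors $D_{i,q}$ (ignoring $P_0,\ldots,P_{q-1}$, where everything is Cartier), and the equivalence comes from the adjoint pair $\bullet\otimes^L\omega^q_W$, $R\cHom(\omega^q_W,\bullet)$ together with the observation that $\cO_W(-D_{i,q})$ and $\omega^q_W$ fail to be invertible at disjoint sets of points, so the tensor product is underived and matches generators. The only substantive step you flag but do not carry out — that the unit and counit are isomorphisms on generators and hence on the classically generated subcategories — is exactly what the paper checks, via $R\cHom(\omega^q_W,\cO(-D_i))=\cO(-D_{i,q})$ (using reflexivity of $\omega^q_W$ and $\cO_{W,P_k}(-D_i)\simeq\omega_{W,P_k}$ for $k<q$) followed by induction on iterated extensions to identify $F_{i,q}\otimes\omega^q_W$ with $F_i$.
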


\begin{proof}
Existence of the s.o.d. is proved as in Prop.~\ref{kjshefjhksEFG}. We can ignore singularities at $P_0,\ldots,P_{q-1}$, where the sheaves
$\cO_W(-\Delta_q-\Gamma_{q+1}-\ldots-\Gamma_i)$ are Cartier.

Note that $\omega^q_{W,P_i}\simeq\omega_{W,P_i}$ for $i<q$ and is an invertible sheaf elsewhere. In~particular, $R\cHom(\omega^q_W, \omega^q_W)\simeq\cO_W$.
Since $\cO_W(-D_{i,q})$ and $\omega^q_W$ are non-Cartier at disjoint subsets of points, we have $\cO(-D_{i,q})\otimes^L\omega^q_W=\cO(-D_{i,q})\otimes\omega^q_W=\cO(-D_i)$.
Likewise,  we have
$R\cHom(\omega^q_W,\cO(-D_i))=\cHom(\omega^q_W,\cO(-D_i))=\cO(-D_{i,q})$.
Indeed, for $p>0$, $\cExt^p(\omega^q_W,\cO(-D_i))=0$ at $P_k$ for $k\ge q$ because $\omega^q_W$ is invertible there but also for $k<q$ because $\cO_{W,P_k}(-D_i)\simeq\omega_{W,P_k}$ there and $\omega^q_W$ is reflexive.

Let $F_{i,q}$ be the maximal iterated extension of $\cO(-D_{i,q})$.
Adjointness of 
$\bullet\otimes^L\omega^q_W$ and $R\cHom(\omega^q_W,\bullet)$ implies  
$\Ext^1(\cO(-D_i),\cO(-D_i))=\Ext^1(\cO(-D_{i,q}),\cO(-D_{i,q}))$. 
Arguing by induction on iterated extensions, this gives 
$F_{i,q}\otimes^L\omega^q_W=F_i$ and 
$R\cHom(\omega^q_W,F_i)=\cHom(\omega^q_W,F_i)=F_{i,q}$.
In particular, $\cA^W_{i,q}$ is equivalent to $\cA^W_{i}$
and the algebras $R_{i,q}=\End(F_{i,q})$ and $R_{i}=\End(F_i)$ and 
are isomorphic.
\end{proof}

\section{Derived category of a deformation of a  c.q.s.~surface}\label{AEFefEEfe}

We use notation of Section~\ref{s2}: $W$ is a c.q.s. surface satisfying Assumption \ref{assume} (1), (2), (3).
Weaker conditions of Notation~\ref{asrgasrgarh}
will be sufficient except in Corollary~\ref{EFegwEWE}.
For every $i=0,\ldots,r$, 
let $F_i$ be the Kawamata sheaf on $W$,
see Definition~\ref{kjhfkjgfk}.

\begin{lemdef}\label{sFsefwetwet}
Let $\cW$ be a projective deformation
of $W$ over a smooth affine variety~$B=\Spec\B$.
For  $i=0,\ldots,r$, possibly after shrinking~$B$,
there exists a unique  coherent sheaf $\cF_i$ on $\cW$
flat over $B$  such that $\cF_i|_W\simeq F_i$.
We call $\cF_i$ a Kawamata sheaf.
\end{lemdef}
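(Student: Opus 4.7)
The plan is to apply standard deformation theory of coherent sheaves to $F_i$, leveraging the cohomological vanishing already established in Lemma~\ref{asfhsrHs}. Recall that first-order deformations of a coherent sheaf on $W$ are classified by $\Ext^1_W(F_i,F_i)$, while obstructions to lifting deformations along small extensions of Artinian bases lie in $\Ext^2_W(F_i,F_i)$. Setting $i=j$ in Lemma~\ref{asfhsrHs}(2) gives $\Ext^k(F_i,F_i)=0$ for all $k>0$, so $F_i$ is simultaneously rigid and unobstructed.

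First I would pass to the formal completion $\hat B = \Spf \hat{\cO}_{B,0}$ at a closed point $0\in B$ and build the deformation order by order. At each step from $A_n=\cO_{B,0}/\m^{n+1}$ to $A_{n+1}$, unobstructedness produces a flat lift $\cF_{i,n+1}$ of $\cF_{i,n}$, and rigidity ensures that this lift is unique up to (non-canonical) isomorphism. Together these assemble into a formal coherent sheaf $\hat{\cF}_i$ on $\hat{\cW} := \cW\times_B \hat B$, flat over $\hat B$, with restriction $F_i$ on the special fiber.

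Since $\cW\to B$ is projective, Grothendieck's existence theorem algebrizes $\hat{\cF}_i$ to a coherent sheaf on $\cW\times_B \Spec\hat{\cO}_{B,0}$, still flat over the base. A standard noetherian approximation argument (or Artin approximation) then descends this to a coherent sheaf $\cF_i$ on $\cW\times_B U$ for some Zariski neighborhood $U$ of $0\in B$. Flatness on $U$ is checked by the local criterion of flatness on the special fiber and then extended by openness of the flat locus; after shrinking $B$ to $U$ we obtain the required sheaf. Uniqueness follows from the same inductive scheme: given two deformations $\cF_i,\cF_i'$, the identity on $F_i$ lifts, thanks to $\Ext^1(F_i,F_i)=0$, to a compatible family of isomorphisms $\cF_{i,n}\simeq\cF'_{i,n}$, yielding a formal isomorphism which algebrizes and spreads out.

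The main subtlety I expect is the passage from the formal/infinitesimal construction to a genuine algebraic sheaf in a common Zariski neighborhood where existence, flatness, and uniqueness all hold simultaneously; this is where the projectivity of $\cW\to B$ and the formal GAGA / Artin approximation machinery do the real work. A secondary point worth care is that $F_i$ fails to be locally free at the non-Gorenstein singularities $P_0,\ldots,P_{i-1}$, so ``deformation'' here must mean $B$-flat rather than locally free; this is automatic in the inductive construction since each infinitesimal lift is defined to be flat, and it is precisely the property one needs to make $\otimes^L$ and $R\cHom$ with $\cF_i$ well-behaved in the subsequent sections.
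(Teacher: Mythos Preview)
Your proposal is correct and follows exactly the same approach as the paper, which simply states that existence and uniqueness follow from Lemma~\ref{asfhsrHs} and deformation theory of coherent sheaves. You have merely unpacked what that standard deformation-theoretic argument entails (infinitesimal lifting from $\Ext^1=\Ext^2=0$, Grothendieck existence, spreading out), which is precisely the intended reasoning.
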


\begin{proof}
Existence and uniqueness of $\cF_i$ follow from Lemma~\ref{asfhsrHs} and  deformation theory of coherent sheaves. 
\end{proof}

\begin{lemma}\label{qrgsrhr}
Kawamata sheaves have the following properties (after shrinking $B$):
\begin{enumerate}
\item 
$\cF_i$ is locally free at every $p \in W$ except at $P_j$ for $j<i$, where $\D_\cW(\cF_i)$ is locally free.
In particular, the restriction of $\cF_i$ to every smooth fiber $Y$ of $\cW\to B$ is locally free.
\item $\cExt^p(F_i,F_j)=0$ for $p>0$, $i\le j$;
\item
$\Ext^p(\cF_i,\cF_i)=0$ for $p>0$ and
$\Ext^p(\cF_i,\cF_j)=0$ for all $p$ and $i<j$.
\item
$\bR_i=\End(\cF_i)$ is a finite projective $\bB$-module and $R_i=\bR_i\otimes_\bB k(0)$.
\item $\otimes_{\bR_i}\cF_i$ is bounded. 
\end{enumerate}
\end{lemma}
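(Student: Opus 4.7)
The strategy is to deform each statement from its counterpart on $W$ (established in Section~\ref{s2}) to the total space $\cW$, shrinking $B$ whenever an open or cohomological condition is needed. I will treat (2) as the analogue of Lemma~\ref{asfhsrHs}~(1) for $\cF_i,\cF_j$ on $\cW$; if it is meant as a restatement on $W$ it is already proven.

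For (1), the locally free locus of $\cF_i$ is open in $\cW$, and on $W$ it equals $W\setminus\{P_0,\ldots,P_{i-1}\}$ by the corresponding statement for the Kawamata sheaf $F_i$. Since the points $P_j$ extend to sections of the singular locus of $\cW\to B$, after shrinking $B$ the non-free locus of $\cF_i$ is contained in these sections. The dual statement follows analogously applied to $\D_\cW(\cF_i)$, noting that $\D_\cW(\cF_i)|_W$ coincides (up to shift by $\dim B$) with $\D_W(F_i)$, which is locally free at $P_0,\ldots,P_{i-1}$. Smooth fibers $Y$ avoid the deformed singular locus, so $\cF_i|_Y$ is automatically locally free. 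For (2), local $\cExt$ vanishing is now a pointwise check: away from the singular locus $\cF_i$ is locally free, while at $P_j$ with $j<i$ the dual $\D_\cW(\cF_i)$ is locally free and we can rewrite $\cExt^p(\cF_i,\cF_j)$ via duality to obtain the vanishing, exactly as in the proof of Lemma~\ref{asfhsrHs}~(1).

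For (3), the key input is that on $W$ we have $\Ext^p(F_i,F_i)=0$ for $p>0$ and $\Ext^p(F_i,F_j)=0$ for $p>0$ by Lemma~\ref{asfhsrHs}~(2), and $\Hom(F_i,F_j)=0$ for $i<j$ because $F_i\in\cA^W_i$, $F_j\in\cA^W_j$ and the s.o.d.~$\langle\cA^W_r,\ldots,\cA^W_0,\cB^W\rangle$ from Prop.~\ref{kjshefjhksEFG} forces $\Hom(\cA^W_i,\cA^W_j)=0$ for $i<j$. Now consider the complex $C_{ij}:=R\pi_*\RcHom_\cW(\cF_i,\cF_j)$ on $B$. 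By (1) and (2), $\cF_i$ and $\RcHom(\cF_i,\cF_j)$ have finite Tor-dimension over $\bB$, so base change along $\{0\}\hookrightarrow B$ is well-defined and identifies $Li_0^*C_{ij}$ with $R\Gamma(W,\RcHom(F_i,F_j))$. By the vanishings on $W$, $Li_0^*C_{ij}$ is zero for $i<j$ and concentrated in degree~$0$ for $i=j$. Since $B$ is smooth and the cohomology sheaves of $C_{ij}$ are coherent, Nakayama-type arguments (applied stalk by stalk at $0$) let us shrink $B$ until $C_{ij}$ itself is zero (resp.~concentrated in degree~$0$), yielding the required vanishings of $\Ext^p(\cF_i,\cF_j)$.

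Property (4) is then immediate: $C_{ii}$ is concentrated in degree~$0$ and equals $\bR_i$, which by flat base change satisfies $\bR_i\otimes_\bB k(0)=\End(F_i)=R_i$; the vanishing of higher cohomology and the flatness of $\cF_i$ imply that $\bR_i$ is a flat, hence projective, finite $\bB$-module. Property (5) follows by reducing to the special fiber: for a f.g.~$\bR_i$-module $M$, the complex $M\otimes^L_{\bR_i}\cF_i\otimes^L_\bB k(0)$ equals $(M\otimes^L_{\bR_i}R_i)\otimes^L_{R_i}F_i$, whose cohomology is bounded because $\otimes_{R_i}F_i$ is bounded on $W$ (Section~\ref{s2}) and $M\otimes^L_{\bR_i}R_i$ has bounded amplitude over~$R_i$ (as $\bR_i$ is flat over $\bB$); an application of Nakayama on $\bB$ then bounds the amplitude on $\cW$ after shrinking~$B$.

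\textbf{Main obstacle.} The delicate point is Step (3): turning fiberwise vanishing of $\Ext$ into vanishing on the total space $\cW$. Because $\cF_i$ is neither locally free nor a perfect complex in general, the base change formula for $\RcHom$ must be established from (1)--(2) (finite Tor-dimension over $\bB$ and duality at the T-singularities) before the Nakayama argument can be run. Once this technical point is in place, the rest of the lemma follows from a clean combination of deformation theory over the smooth base $B$ and the semi-orthogonal decomposition already obtained on $W$.
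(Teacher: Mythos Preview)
Your overall strategy matches the paper's: deform each fibrewise statement to $\cW$ by openness, base change, and Nakayama. Parts (2)--(4) are essentially the paper's argument. Two points deserve comment.

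For (1), the paper takes a more structural route than your openness argument for $\D_\cW(\cF_i)$. It observes that at $P_j$ with $j<i$ the sheaf $F_i$ is locally isomorphic to $\omega_W^{\oplus s}$, and that $\omega_W$ has a \emph{unique} local deformation (because $R\cHom(\omega_W,\omega_W)=\cO_W$), which must therefore be $\omega_\cW$. Hence $\cF_i$ is locally isomorphic to $\omega_\cW^{\oplus s}$ at these points, and $\D_\cW(\cF_i)$ is locally free there. Your approach works too, but it requires first checking that $\D_\cW(\cF_i)$ is a shifted sheaf (i.e.\ that $\cF_i$ is mCM on $\cW$) before the open-locus argument applies. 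More importantly, the paper's explicit local description immediately yields (2) and shows that $\cHom(\cF_i,\cF_j)$ is flat over $B$, which is exactly the input needed for the base change step in (3) that you correctly flagged as the main obstacle.

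For (5) there is a genuine gap. Your Nakayama step ``after shrinking $B$'' depends on the module $M$: for each $M$ you obtain some $j_0(M)$ and a neighbourhood of $0\in B$ on which the relevant $\Tor$'s vanish, but a single shrink of $B$ must make the functor bounded for \emph{all} finitely generated $\bR_i$-modules simultaneously. The paper closes this by invoking \cite{ELS}: $D^b(\bR_i\hbox{\rm -mod})$ has a classical generator $\hat M$, so it suffices to prove that $\hat M\otimes^L_{\bR_i}\cF_i$ has bounded cohomology. One application of Nakayama for this single $\hat M$ then gives (5) uniformly. Once you insert this reduction, the rest of your argument for (5) coincides with the paper's.
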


\begin{proof}
Locally free sheaves deform to locally free sheaves.
Since $R\cHom(\omega_W,\omega_W)=\cO_W$,
the sheaf $\omega_W$ deforms locally uniquely. Since $\omega_\cW$ is flat over $B$, the deformation of $\omega_W$ must be locally isomorphic to $\omega_{\cW}$ by adjunction. It~follows that $\cF_i$
is locally isomorphic to $\omega_\cW$ tensored with a locally free sheaf at $P_j$
for $j<i$. Equivalently, 
we have (1), which immediately implies (2).

By (2),  $\Ext^p(\cF_i,\cF_j)=H^p(\cW,\cHom(\cF_i,\cF_j))$ for 
$i<j$ and all $p$. Thus (3)  follows from Lemma~\ref{asfhsrHs},  cohomology and base change
\cite[Tag 0AA7]{Stacks} and Nakayama's lemma. This also proves (4).


It remains to prove (5). We drop the index $i$.
By \cite{ELS}, the category $D^b(\bR\hbox{\rm -mod})$ has a classical generator, without loss of generality 
a $\bR$-module  $\hat M$. It suffices to prove that $\hat M\otimes_\bR^L\cF$ has bounded cohomology.
A free $\bR$-resolution $P^\bullet\to\hat M$
can also be viewed as a projective $\bB$-resolution.
Since $\cO_{B,0}$ is a regular local ring,  
$\Tor_{\bB}^j(\hat M, k(0))=0$ for $j> \dim B$.
Thus $P^\bullet\otimes_\bB k(0)=P^\bullet\otimes_\bR R$ has bounded cohomology, i.e.~gives an object in $D^b(R-\mmod)$. Since $\otimes_{R} F$ is bounded, it follows that 
$(P^\bullet\otimes_\bB k(0))\otimes^L_RF$ has bounded cohomology, and so
$\Tor_{\bR}^j(\hat M,F)=0$ for $j\ge j_0$.
Since $\cF$ is flat over $B$, this implies that 
$\Tor_{\bR}^j(\hat M,\cF)\otimes_\bB k(0)=0$ for $j\ge j_0$ 
and thus 
$\Tor_{\bR}^j(\hat M,\cF)=0$ for $j\ge j_0$ by Nakayama's lemma (after shrinking $B$).
\end{proof}

We also need an analogue of Kawamata sheaves
for chains of curves in $W$ that start  not at $P_0$ but at some  point $P_q$, $q>0$.
As in Definition~\ref{wegwRGwrg},
we take a Weil divisor  $\Delta_q$ which
is equivalent to $D_q$ in $\Cl(P_q\in W)$ and is Cartier at $P_i$ for $i\ne q$
(see Figure~\ref{fasfg}). Let 
$F_{i,q}$ be the maximal iterated extension of $\cO_W(-\Delta_q-\Gamma_{q+1}-\ldots-\Gamma_i)$
for $i=q,\ldots,r$ (see Corollary~\ref{WEGsgsRHg} for its properties).

\begin{lemma}\label{qrgwGwhwH} 
Fix $q=0,\ldots,r$. 
\begin{enumerate}
    \item 
There exist unique mCM sheaves $\omega^q_{\cW}$ and $\cF_{i,q}$ (for $i\ge q$) on $\cW$ flat over $B$ that restrict to sheaves $\omega^q_{W}$ and $F_{i,q}$ on $W$. 
\item The sheaves $\cF_{i,q}$ satisfy (1)--(5) of Lemma~\ref{qrgsrhr} (ignoring singularities at $P_0,\ldots,P_{q-1}$). 
\item 
We have 
$M^\bullet\otimes^L_{\bR_i}\cF_i\simeq
(M^\bullet\otimes^L_{\bR_i}\cF_{i,q})\otimes^L\omega^q_{\cW}$
for every $M^\bullet\in D^b(\bR_i-\mmod)$.
\item
$\RHom(\cF_i, M^\bullet\otimes^L_{\bR_i} \cF_i)\simeq M^\bullet$
for every $M^\bullet\in D^b(\bR_i-\mmod)$.
\end{enumerate}
\end{lemma}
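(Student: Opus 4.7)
The four parts will be addressed sequentially, each reducing to Corollary~\ref{WEGsgsRHg} on the special fiber combined with the deformation-theoretic machinery of Lemma-Definition~\ref{sFsefwetwet} and Lemma~\ref{qrgsrhr}. For (1), both $\omega^q_W$ and $F_{i,q}$ have vanishing higher self-$\Ext$, so each deforms uniquely after shrinking $B$. Explicitly, $R\cHom(\omega^q_W,\omega^q_W)\simeq\cO_W$ (established inside the proof of Corollary~\ref{WEGsgsRHg}), hence $\Ext^p(\omega^q_W,\omega^q_W)=H^p(W,\cO_W)=0$ for $p>0$ by Assumption~\ref{assume}(2); the lift $\omega^q_\cW$ is automatically mCM since depth is preserved in flat families over the regular base $B$. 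For $F_{i,q}$, the adjoint equivalence $\bullet\otimes^L\omega^q_W$ of Corollary~\ref{WEGsgsRHg} identifies self-$\Ext^p$'s of $F_{i,q}$ and of $F_i$, and the latter vanish for $p>0$ by Lemma~\ref{asfhsrHs}. For (2), the proof of Lemma~\ref{qrgsrhr} carries over after replacing $F_i,D_i,\Gamma_1\cup\ldots\cup\Gamma_r$ by $F_{i,q},D_{i,q},\Gamma_{q+1}\cup\ldots\cup\Gamma_r$: at $P_0,\ldots,P_{q-1}$ both $\omega^q_W$ and $\cF_{i,q}$ are locally free (resp.\ locally $\omega_W$-tensor-free), so these points contribute neither higher local $\Ext$'s nor non-trivial Tors in the boundedness step.

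The central step for (3) is the tensor identity on $\cW$:
\begin{equation}\label{tenso}
\cF_i\simeq\cF_{i,q}\otimes^L\omega^q_{\cW}.
\end{equation}
Granted this, the general case follows by resolving $M^\bullet$ by free $\bR_i$-modules and observing that both $M^\bullet\mapsto M^\bullet\otimes^L_{\bR_i}\cF_i$ and $M^\bullet\mapsto(M^\bullet\otimes^L_{\bR_i}\cF_{i,q})\otimes^L\omega^q_\cW$ are exact functors in $M^\bullet$ that agree on $\bR_i$ via \eqref{tenso}. To prove \eqref{tenso}, I would first check that the derived tensor product is concentrated in degree zero: at $P_j$ with $j<q$ the sheaf $\cF_{i,q}$ is locally free, while at $P_j$ with $j\ge q$ the sheaf $\omega^q_\cW$ is invertible, so no higher Tors appear. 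Flatness over $B$ then follows from flatness of $\cF_{i,q}$ and of $\omega_\cW$. Restriction to the special fiber gives $F_{i,q}\otimes\omega^q_W\simeq F_i$ by Corollary~\ref{WEGsgsRHg}, and the uniqueness proved in (1) upgrades this isomorphism to \eqref{tenso} on the total space.

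For (4), the adjunction $(\otimes^L_{\bR_i}\cF_i,\RHom(\cF_i,-))$ supplies a natural unit morphism $\eta_{M^\bullet}:M^\bullet\to\RHom(\cF_i,M^\bullet\otimes^L_{\bR_i}\cF_i)$. For $M^\bullet=\bR_i$ it reduces to the canonical map $\bR_i\to\End(\cF_i)$, which is an isomorphism by Lemma~\ref{qrgsrhr}(3),(4). Being an isomorphism is preserved by shifts, finite direct sums, and cones, so $\eta$ is an isomorphism on all perfect $\bR_i$-complexes; resolving an arbitrary $M^\bullet\in D^b(\bR_i\hbox{\rm -mod})$ by finitely generated free modules then promotes $\eta_{M^\bullet}$ to an isomorphism in general.

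The principal technical hurdle is the flatness and vanishing-of-higher-Tor analysis underlying \eqref{tenso}: one has to check carefully that in every analytic neighborhood of a singular point of $\cW$ at least one of $\cF_{i,q},\omega^q_\cW$ is locally free, so that the derived tensor product reduces to the underived one and permits comparison with $\cF_i$ via the uniqueness clause of (1). All other steps then reduce either to classical deformation arguments or to formal triangulated-category manipulations.
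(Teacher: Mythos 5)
Parts (1)--(3) of your proposal are correct and follow essentially the paper's route: (1) and (2) are the same uniqueness-of-deformation argument as for $\cF_i$, and your proof of (3) is just a more detailed version of the paper's observation that $\cF_i\simeq\cF_{i,q}\otimes^L\omega^q_\cW\simeq\cF_{i,q}\otimes\omega^q_\cW$ (the point being, exactly as you say, that at every point one of the two factors is locally free) followed by associativity of $\otimes^L$.

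Part (4), however, has a genuine gap in the last step. Your d\'evissage shows that the unit $\eta_{M^\bullet}$ is an isomorphism for $M^\bullet\in D^{\perf}(\bR_i)$, but $\bR_i$ has infinite global dimension (its fiber is the Kalck--Karmazyn algebra, whose singularity category is nontrivial), so $D^b(\bR_i\hbox{\rm -mod})\ne D^{\perf}(\bR_i)$: a bounded complex of finitely generated modules is resolved only by a complex of free modules that is unbounded below, and ``finitely many cones'' never reach it. Nor can you pass to the limit for free: the subcategory where $\eta$ is an isomorphism is closed under coproducts only if $\RHom(\cF_i,-)$ commutes with coproducts, i.e.\ only if $\cF_i$ is perfect, which it is not (it has infinite local projective dimension at $P_0,\ldots,P_{i-1}$). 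For the same reason a naive truncation argument fails: $\cExt^p(\cF_i,G)$ need not vanish for $p\gg0$ at those points, so applying $\RHom(\cF_i,-)$ to the tail of the resolution does not push it off to low degrees. The paper closes exactly this hole by \emph{using part (3)}: it replaces $\cF_i$ by the locally free sheaf $\cF_{q,q}$ twisted by $\omega^q_\cW$, and for a locally free sheaf the identity $\RcHom(\cF_{q,q},P^\bullet\otimes_{\bR_i}\cF_{q,q})\simeq P^\bullet\otimes_{\bR_i}\cHom(\cF_{q,q},\cF_{q,q})$ holds termwise for the whole (bounded-above) free resolution $P^\bullet$, so there is no convergence issue; applying $R\Gamma$ and $\RHom(\cF_i,\cF_i)=\bR_i$ then gives $M^\bullet$. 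You should either reproduce that local computation or supply some other argument that handles non-perfect $M^\bullet$.
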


\begin{proof}
(1) and (2) are proved in the same way as  
Lemmas~\ref{sFsefwetwet} and \ref{qrgsrhr}, 
respectively.

It is clear that 
$\cF_{i}\simeq\cF_{i,q}\otimes^L\omega^q_{\cW}\simeq\cF_{i,q}\otimes\omega^q_{\cW}$
and $\bR_i=\End(\cF_{i})\simeq\End(\cF_{i,q})$. Thus (3) follows from  associativity of the derived tensor product.

By Lemma~\ref{qrgsrhr}, $\RHom(\cF_i,\cF_i)=\bR_i$ and $\RcHom(\cF_i,\cF_i)=\cHom(\cF_i,\cF_i)$. 
 Let $P^\bullet$ be a bounded above complex of free $\bR_0$-modules quasi-isomorphic to $M^\bullet$. Suppose first that $i=0$.
 Then $\cF_0$ is locally free and 
$$\RcHom(\cF_0, M^\bullet\otimes^L_{\bR_0} \cF_0)\simeq
\cF_0^\vee\otimes(P^\bullet\otimes_{\bR_0} \cF_0)\simeq
P^\bullet\otimes_{\bR_0}\cHom(\cF_0,  \cF_0).
$$
Applying $R\Gamma$ to both sides gives a desired isomorphism.
The general case $i=q$ follows by the same calculation applied to a locally free sheaf $\cF_{q,q}$ using (3).
\end{proof}

\begin{definition}
Let $\cA_i^\cW=\{\cF_i\}\subset D^b(\cW)$ for $i=0,\ldots,r$.
\end{definition}

\begin{theorem}\label{asfbsfbsf}
Let $\cW$ be a projective deformation
of $W$ over a smooth affine variety~$B$.
After shrinking $B$,  $\cA_i^\cW\simeq D^b(\bR_i-\mmod)$  and we have $B$-linear s.o.d.'s~
\begin{equation}\label{qsfesfsrgasrg}
D^b(\cW)=\langle \cA_r^\cW,\ldots,\cA_0^\cW, \cB^\cW\rangle=
\langle \tilde\cB^\cW,\cA_r^\cW,\ldots,\cA_0^\cW\rangle
\end{equation}
that restrict to~\eqref{aeFafaEGaeg} and \eqref{sdgsgs}
on $W$.
Furthermore,
$\cB^\cW$ and $\D_\cW(\tilde\cB^\cW)\subset D^{\perf}(\cW)$.
\end{theorem}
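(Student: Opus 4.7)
The strategy is to lift the construction of Proposition \ref{kjshefjhksEFG} relatively over $B$, using the relative Kawamata sheaves $\cF_i$ of Lemma-Definition \ref{sFsefwetwet} and their properties from Lemmas \ref{qrgsrhr} and \ref{qrgwGwhwH}. First I would verify that $\Phi_i := (\,\cdot\,) \otimes^L_{\bR_i} \cF_i \colon D^b(\bR_i\hbox{\rm -mod}) \to D^b(\cW)$ is a well-defined (Lemma \ref{qrgsrhr}(5)) fully faithful embedding with essential image $\cA_i^\cW$, yielding $\cA_i^\cW \simeq D^b(\bR_i\hbox{\rm -mod})$. Indeed, by Lemma \ref{qrgwGwhwH}(4), the right adjoint $\Psi_i := \RHom_\cW(\cF_i,\,\cdot\,)$ satisfies $\Psi_i \Phi_i \simeq \mathrm{id}$, and resolving $M$ by free $\bR_i$-modules gives $\RHom_\cW(\Phi_i M, \Phi_i N) \simeq \RHom_{\bR_i}(M, N)$.

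Next I would establish the semi-orthogonality $\RHom_\cW(\cA_i^\cW, \cA_j^\cW) = 0$ for $i < j$. By the same adjunction trick, this reduces to $\Psi_i \Phi_j N = 0$, and resolving $N$ by free $\bR_j$-modules further reduces it to $\RHom_\cW(\cF_i, \cF_j) = 0$, which is Lemma \ref{qrgsrhr}(3). Each $\cA_i^\cW$ is admissible in $D^b(\cW)$, so $\langle \cA_r^\cW, \ldots, \cA_0^\cW \rangle$ is admissible and yields the s.o.d.'s \eqref{qsfesfsrgasrg} with $\cB^\cW$ and $\tilde\cB^\cW$ defined as its right and left orthogonal complements. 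The $B$-linearity is inherited from the evident $\bB$-linearity of $\Phi_i$, and the compatibility with $Li_W^*$ in the statement follows from $B$-flatness of $\cF_i$ (so $Li_W^*\cF_i\simeq F_i$) together with base change applied to $\Phi_i$ and $\Psi_i$, recovering \eqref{aeFafaEGaeg} on $W$.

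The hard part will be showing $\cB^\cW \subset D^{\perf}(\cW)$ (and the dual $\D_\cW(\tilde\cB^\cW) \subset D^{\perf}(\cW)$). I would proceed by relativizing step (4) of the proof of Proposition \ref{kjshefjhksEFG}. After shrinking $B$, one builds a resolution $\rho \colon \cX \to \cW$ whose restriction to the special fiber is the smooth resolution $X \to W$ used in loc.~cit.; such a simultaneous resolution exists because the exceptional chains of rational curves over each $P_i$ can be produced by successive blow-ups of sections of $\cW \to B$ through $P_i$, and the prescribed self-intersections ensure these $\P^1$'s deform flatly over $B$. The relative analogue of the s.o.d.~constructed in step (2) of loc.~cit.~gives $D^b(\cX) = \langle \cB^\cX, \cA_r^\cX, \ldots, \cA_0^\cX \rangle$, and $R\rho_*$ carries this to the decomposition on $\cW$ built above, with the identification of the relative pushforwards $R\rho_*\cO_\cX(-\bar\cD_i^\cX)$ with $\cF_i$ following from the uniqueness of flat deformations of mCM sheaves. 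Since $\cX$ is smooth, $D^b(\cX) = D^{\perf}(\cX)$. For $B^\cX \in \cB^\cX$, relative Grothendieck--Serre duality gives $\D_\cX(R i_{E*} \cO_E(-1)) \simeq R i_{E*} \cO_E(-1)[-1]$ for each exceptional component $E$ of $\rho$; hence $\RHom_\cX(\D_\cX(B^\cX), R i_{E*} \cO_E(-1)) = 0$, and the relative version of \cite[Lem.~2.5]{KKS} shows $\D_\cW(R\rho_* B^\cX)$ is perfect, whence $R\rho_* B^\cX$ itself is perfect. The statement for $\tilde\cB^\cW$ follows by running the symmetric construction starting from \eqref{sDGSDGSH} (cf.~Remark \ref{wfwEGwetgwe}).
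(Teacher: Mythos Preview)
Your argument has a genuine gap in the ``hard part'': the simultaneous resolution $\rho\colon\cX\to\cW$ you propose does not exist in general. The deformation $\cW\to B$ is arbitrary and may smoothen some or all of the singularities $P_i$; over such a base the chain of exceptional curves in the minimal resolution of $W$ does not deform (the curves have self-intersection $\le -2$ and disappear in a smoothing), and the terminal $3$-fold singularities of $\cW$ at the $P_i$ admit no small resolution. So step~(4) of Proposition~\ref{kjshefjhksEFG} cannot be relativized as you suggest. The paper instead checks perfectness of $\D_\cW(B)$ pointwise at each $P_i$ by restricting to $W$: since $i_W\colon W\hookrightarrow\cW$ is a regular embedding, $Li_W^*\D_\cW(B)\simeq\D_W(Li_W^*B)$ up to shift, and $Li_W^*B$ lands in $\tilde\cB^W$, where Proposition~\ref{kjshefjhksEFG} already gives perfectness.

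There is a second, related gap in your treatment of semi-orthogonality and admissibility. For $i>0$ the sheaf $\cF_i$ is not locally free at $P_0,\ldots,P_{i-1}$, hence not perfect, so $\RHom_\cW(\cF_i,\cdot)$ does not commute with the infinite direct sums appearing when you resolve $N$ by free $\bR_j$-modules, and $\Psi_i$ need not land in $D^b$. The paper handles both issues by an induction on $i$ that carries along the statement that every object of $\tilde\cB^\cW_{i-1}$ is dual-perfect at $P_0,\ldots,P_{i-1}$; this inductive hypothesis is exactly what is needed to show that $\Psi_i$ restricts to a functor $\tilde\cB^\cW_{i-1}\to D^b(\bR_i\hbox{\rm -mod})$ (at the bad points $\cF_i$ is locally $\omega_\cW^{\oplus s}$ and $T$ is locally $\omega_\cW\otimes^L S$ with $S$ perfect, so $\RcHom(\cF_i,T)$ is bounded). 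For the semi-orthogonality itself the paper twists by $\omega^q_\cW$ via Lemma~\ref{qrgwGwhwH}(3) to replace $\cF_q$ by the locally free sheaf $\cF_{q,q}$, after which a naive-truncation argument on the bounded-above $\cF_i$-resolution of $T$ goes through.
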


\begin{proof}
We  will prove by induction on $k$ the following statements:
(1)
$D^b(\cW)$ has a $B$-linear s.o.d. $\langle \tilde\cB_k^\cW,\cA_k^\cW,\ldots,\cA_0^\cW\rangle
$; 
(2)
$\cA_k^\cW\simeq D^b(\bR_k-\mmod)$;
(3) $\cA_k^\cW$ restricts to $\cA_k^W$ on $W$; 
(4)  
$\D_\cW(B)$ is perfect at $P_0,\ldots,P_k$ for every $B\in\tilde\cB_k^\cW$. 
Suppose this is known for  $k<i$, we will prove it for $k=i$ (for $k=-1$ there is nothing to prove). 

First we claim that $\cA_i^\cW\subset \tilde\cB_{i-1}^\cW$. Fix $q<i$. Since objects in $\cA_q^\cW$ are represented by bounded above complexes with components isomorphic to $\cF_q^{\oplus n}$ for some $n$, 
by the spectral sequence for $\Ext$ \cite[Tag 07A9]{Stacks} 
it suffices to prove that $\Hom(\cF_q,T)=0$ for every $T\in \cA_i^\cW$.
For $q=0$, we argue as follows. $T$ can be represented by a bounded above complex with bounded cohomology and with components isomorphic to direct sums of~$\cF_i$. By Lemma~\ref{qrgsrhr},
$\RHom(\cF_q,\cF_i)=0$. So we can use naive truncations to reduce to the case of $T\simeq G[k]$, where $G$ is a sheaf and $k\gg0$.
Since $\cF_0$ is locally free, $\cExt^l(\cF_q,G)=0$ for $l>0$. 
Thus $\Hom(\cF_q,T)=\Ext^k(\cF_q,G)=H^k(\cW,\cHom(\cF_q,G))=0$ since $k\gg0$.
For general $q$, we use Lemma~\ref{qrgwGwhwH} and write
$\cF_q\simeq \cF_{q,q}\otimes^L\omega^q_{\cW}$ and $T\simeq T_q\otimes^L\omega^q_{\cW}$ with  $T_q\in \{\cF_{i,q}\}$.
Thus $\RHom(\cF_q,T)=\RHom(\cF_{q,q},T_q)=0$ as above because
$\cF_{q,q}$ is locally free.

The functor  $\tilde\Phi=\otimes^L_{\bR_i}\cF_i:\,
D(\bR_i-\Mod)\to D(\Qcoh(\cW))$ commutes with arbitrary direct sums, and therefore has a right adjoint functor
 $\tilde\Psi:\,D(\Qcoh(\cW))\to D(\bR_i-\Mod)$
by \cite{N96}.
By adjunction,
we have
$$\tilde\Psi(T)\simeq \RHom(\bR_i, \tilde\Psi(T))
\simeq \RHom(\tilde\Phi(\bR_i), T)\simeq\RHom(\cF_i, T).$$
We claim that $\tilde\Psi$ induces a functor
$\cB_{i-1}^\cW\to D^b(\bR_i-\mmod)$. Indeed, 
let $T\in\cB_{i-1}^\cW$. It~suffices to prove boundedness of $\RcHom(\cF_i, T)$. After shrinking $B$, it suffices to prove boundedness at every point $p\in W$. At $p\ne P_0,\ldots,P_{i-1}$,
$\cF_i$ is locally free and boundedness is clear.
At one the remaining points $P_j$, $\cF_i$ is locally a deformation of $\omega_W^{\oplus s}$ for some $s$,
and therefore is locally isomorphic to $\omega_{\cW}^{\oplus s}$. On the other hand, $\D_{\cW}(T)$ is perfect at $P_j$, and so $T$ is locally isomorphic to $\omega_{\cW}\otimes^LS$, where $S$ is a perfect complex. Thus $\RcHom(\cF_i, T)$ is bounded at $P_j$ as well.

By Lemma~\ref{qrgwGwhwH} (4), we have
$$\Psi(\Phi(M^\bullet))\simeq \RHom(\bR_i, \Psi(\Phi(M^\bullet)))\simeq\RHom(\cF_i, M^\bullet\otimes^L_{\bR_i} \cF_i)\simeq M^\bullet.$$ Thus the adjunction morphism $M^\bullet\to\Psi(\Phi(M^\bullet))$ 
is an isomorphism. Therefore, $\Phi$ is fully
faithful,  $\cA_i^{\cW}$
is a right admissible subcategory of $\cB^\cW_{i-1}$ equivalent to the category $D^b(\bR_i-\mmod)$, and we have a s.o.d.
$\langle \tilde\cB_i^\cW,\cA_i^\cW,\ldots,\cA_0^\cW\rangle
$. 

We need to prove that the s.o.d.~$\langle \tilde\cB_i^\cW,\cA_i^\cW,\ldots,\cA_0^\cW\rangle$
is $B$-linear and restricts to the s.o.d.~$\langle \tilde\cB_i^W,\cA_i^W,\ldots,\cA_0^W\rangle$ on $W$.
It is enough to show that
$\cA_i^\cW$ is $B$-linear and restricts to $\cA_i^W$.
Equivalently, 
it suffices to show that $D^b(\bR_i-\mmod)$ is $\B$-linear, which is clear, 
and also that it restricts to $D^b(R_i-\mmod)$ at $0\in B$. Indeed, 
since $B$ is smooth, a $\B$-module $k(0)$ is resolved by a finite Koszul complex (after shrinking~$B$). Thus the functor $\otimes_{\B}k(0)$ sends
$D^b(\bR_i-\mmod)$ to $D^b(R_i-\mmod)$. Moreover, the same Koszul complex shows that every
$M^\bullet\in D^b(\bR_i-\mmod)$ is a direct summand of 
$\tilde M^\bullet\otimes^Lk(0)$, where $\tilde M^\bullet$
is the restriction of scalars to $\bR_i$.

Next we claim that 
$\D_\cW(B)$ is perfect at $P_i$ for every $B\in\tilde\cB_i^\cW$. 
It suffices to prove that $\D_\cW(B)\otimes^Lk(P_i))$ has bounded cohomology.
Since $i_W:\,W\hookrightarrow\cW$ is a regular embedding, 
$Li_W^*\D_\cW(B)\in D^b(W)$, so it suffices to prove
that $Li_W^*\D_\cW(B)$ is perfect.
Since $i_W:\,W\hookrightarrow\cW$ is a regular embedding,
$Li_W^*\D_\cW(B)=\D_W(Li_W^*(B))$ (up to a shift),
so it suffices to prove that the latter is perfect.
Since $Li_W^*(B)\in \tilde\cB_i^W$, this follows from 
Prop.~\ref{kjshefjhksEFG}. Finally, the fact that 
$\cB^\cW\subset D^{perf}(\cW)$ is proved in the same way using 
Prop.~\ref{kjshefjhksEFG} and Remark~\ref{wfwEGwetgwe}.
\end{proof}


The next corollary provides categorification for ``blowing down deformations''.

\begin{corollary}\label{EFegwEWE}
Let $W$ be a c.q.s.~resolution of a surface $\oW$ with a single c.q.s.~$P$ satisfying Assumption \ref{assume} (1), (2), (3).
Let $\cW$ be a projective deformation
of $W$ over a smooth affine variety~$B$. After shrinking $B$,
there is a morphism $\pi:\,\cW\to\ocW$ to a projective deformation of $\oW$.
The s.o.d.'s~\eqref{qsfesfsrgasrg}
for $\cW$ and $\ocW$ 
are compatible, namely
$R\pi_*\langle\cA^{\cW}_r,\ldots,\cA^{\cW}_0\rangle=\cA^{\ocW}$
and $\cB^\cW$ (resp.~$\tilde\cB^\cW$) is equivalent to 
$\cB^{\ocW}$ (resp.~$\tilde\cB^{\ocW}$) via $(R\pi_*, L\pi^*)$.
\end{corollary}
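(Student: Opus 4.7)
The plan proceeds in two stages. First, I would construct the relative blow-down $\pi\colon\cW\to\ocW$ after possibly shrinking $B$. Second, I would deduce the semi-orthogonal decomposition compatibility from the absolute case (Corollary \ref{asgasrgsarhasr}) combined with the $B$-linearity and uniqueness properties built into Theorem \ref{asfbsfbsf}.

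For constructing $\pi$, I would fix a very ample Cartier divisor $\bar H$ on $\oW$ and let $H$ denote its pullback to $W$ via the c.q.s.~resolution. Then $H$ is Cartier with $H\cdot\Gamma_i=0$ for every exceptional curve and is globally big and nef. Since Assumption \ref{assume} (2) gives $H^j(W,\cO_W)=0$ for $j>0$, the line bundle $\cO_W(H)$ deforms uniquely to $\cO_\cW(\tilde H)$ on $\cW$, possibly after shrinking $B$. For $n\gg0$ we have $H^i(W,\cO_W(nH))=H^i(\oW,\cO_\oW(n\bar H))=0$ for $i>0$ by Serre vanishing, so cohomology-and-base-change (after further shrinking $B$) gives that the pushforwards $\pi_{B,*}\cO_\cW(n\tilde H)$ are locally free and higher direct images vanish. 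The graded $\cO_B$-algebra $\bigoplus_{n\ge 0}\pi_{B,*}\cO_\cW(n\tilde H)$ is finitely generated because this holds on the central fiber, and its relative $\mathrm{Proj}$ over $B$ provides the sought-after deformation $\ocW\to B$; the induced morphism $\pi\colon\cW\to\ocW$ restricts to the original contraction on the central fiber.

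For the s.o.d.~compatibility, Theorem \ref{asfbsfbsf} applied to $\ocW$ produces $B$-linear decompositions $\langle\cA^{\ocW},\cB^{\ocW}\rangle$ and $\langle\tilde\cB^{\ocW},\cA^{\ocW}\rangle$. Rationality of the singularity $P\in\oW$ implies $R\pi_*\cO_\cW=\cO_\ocW$ fiberwise, and by cohomology-and-base-change this persists on $\cW\to\ocW$ after shrinking $B$, so by the projection formula $(L\pi^*,R\pi_*)$ is a fully faithful adjunction on $D^b(\ocW)$. I would then check that $L\pi^*\cB^{\ocW}\subset D^b(\cW)$ lands inside the subcategory $\cB^{\cW}$ by verifying that its restriction to the central fiber coincides (via Corollary \ref{asgasrgsarhasr}) with $\cB^{W}$, and then invoking the uniqueness of the s.o.d.~characterized by $\cB^{\cW}\subset D^{\perf}(\cW)$ built into Theorem \ref{asfbsfbsf}. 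The same strategy handles $\tilde\cB$. Finally, to see that $R\pi_*$ sends $\langle\cA^{\cW}_r,\ldots,\cA^{\cW}_0\rangle$ into $\cA^{\ocW}$, it suffices (by generation) to check that each $R\pi_*\cF_i$ lies in $\cA^{\ocW}$; on the central fiber this is precisely the content of Corollary \ref{asgasrgsarhasr}, and it lifts to $\cW$ by $B$-linearity and a Nakayama-style argument as in the proof of Theorem \ref{asfbsfbsf}.

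The main obstacle will be the construction of $\pi$, since the fiber-level map $W\to\oW$ is a \emph{small} contraction inside the threefold $\cW$ (its exceptional locus is the one-dimensional chain $\Gamma_1\cup\ldots\cup\Gamma_r$), for which general MMP existence theorems are delicate. The pullback-of-ample strategy bypasses MMP entirely by producing $\tilde H$ as an honest relatively semi-ample line bundle, after which the relative $\mathrm{Proj}$ does the job. A secondary technical point is verifying that $R\pi_*$ commutes with restriction to the central fiber on the Kawamata sheaves $\cF_i$; this follows from the Ext-vanishing in Lemma \ref{qrgsrhr} together with standard cohomology-and-base-change along $\cW\to B$.
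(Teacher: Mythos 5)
Your reduction of the orthogonality statements to the central fiber via Corollary~\ref{asgasrgsarhasr}, followed by cohomology-and-base-change and Nakayama, is exactly the mechanism the paper uses, and your construction of $\pi$ by deforming the pullback of an ample bundle and taking relative $\mathrm{Proj}$ is an acceptable substitute for the paper's citation of Wahl's blow-down of deformations. Two local caveats: to place $L\pi^*\bar B$ in $\cB^{\cW}$ you should run the explicit orthogonality test $\RHom(L\pi^*\bar B, A_i)=0$ against generators of $\cA_i^{\cW}=\{\cF_i\}$ (these categories are the essential images of $\otimes^L_{\bR_i}\cF_i$, not classically generated by $\cF_i$, so ``it suffices to check $R\pi_*\cF_i$'' and ``invoke uniqueness of the s.o.d.'' both need to be replaced by orthogonality checks against the relevant generators, propagated from the fiber by Nakayama); and $L\pi^*$ is only a functor to $D^b$ on perfect complexes, which is harmless here because $\cB^{\ocW}\subset D^{\perf}(\ocW)$.

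The genuine gap is that everything you establish is an inclusion or a full-faithfulness statement, while the corollary asserts equalities and equivalences. You obtain $L\pi^*\cB^{\ocW}\subset\cB^{\cW}$, $R\pi_*\langle\cA^{\cW}_r,\ldots,\cA^{\cW}_0\rangle\subset\cA^{\ocW}$, and full faithfulness of $L\pi^*$, but you never address why $R\pi_*\langle\cA^{\cW}_r,\ldots,\cA^{\cW}_0\rangle$ exhausts $\cA^{\ocW}$, why $R\pi_*\cB^{\cW}=\cB^{\ocW}$, or why $L\pi^*:\cB^{\ocW}\to\cB^{\cW}$ is essentially surjective. The paper needs two further inputs for these: (i) $R\pi_*:D^b(\cW)\to D^b(\ocW)$ is essentially surjective because $\pi$ has fibers of dimension at most one \cite[Cor.~2.5]{KuzSex}; combined with your inclusions and the functoriality of the s.o.d.\ projection functors, this forces both inclusions to be equalities (an object of $\cA^{\ocW}$ is $R\pi_*T$ for some $T$, and the image of the decomposition of $T$ must coincide with the decomposition of $R\pi_*T$); and (ii) the unit $L\pi^*R\pi_*B\to B$ is an isomorphism for every $B\in\cB^{\cW}$, checked by restricting to $W$ (where it is an isomorphism by Corollary~\ref{asgasrgsarhasr}) and applying Nakayama --- this is what makes $L\pi^*$ essentially surjective onto $\cB^{\cW}$, while $R\pi_*L\pi^*\simeq\mathrm{id}$ on $\cB^{\ocW}$ follows from the projection formula and $R\pi_*\cO_{\cW}=\cO_{\ocW}$ as you note. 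Without (i) and (ii) your argument shows only that the decompositions are compatible up to inclusion, not the equivalences claimed.
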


\begin{proof}
For blowing-down deformations, see \cite{Wahl}.
Let $j:\,W\hookrightarrow\cW$ be the (regular) embedding.
Choose classical generators $A_i\in\cA_i^\cW$, $\bar A\in\cA^{\ocW}$, $B\in\cB^\cW$, $\bar B\in\cB^{\ocW}$. 

By Assumption \ref{assume} (3),
proper transforms $\Gamma_0$ (resp.~$\Gamma_{r+1}$) of $\bar A\subset\oW$ 
(resp.~$\tilde{\bar A}\subset\oW$) in $W$ of $\oW$
intersect the chain $\Gamma_1\cup\ldots\cup\Gamma_r$
only at  $P_0$ (resp.~$P_r$),
where they are equivalent to  toric boundaries
opposite to $\Gamma_1$ (resp.~$\Gamma_r$).
So we can apply Corollary~\ref{asgasrgsarhasr}.
For every $i=0,\ldots,r$,  
$\RHom(Lj^*L\pi^*\bar B, Lj^*A_i)=0$ by Corollary~\ref{asgasrgsarhasr} and $\RHom(L\pi^*\bar B, A_i)$ has bounded cohomology since  $L\pi^*\bar B$ is perfect. Thus  $\RHom(L\pi^*\bar B, A_i)=0$ by cohomology and base change and Nakayama's lemma (after shrinking $B$).
It follows that $L\pi^*(\cB^{\ocW})\subset \cB^{\cW}$ and, by adjunction,
$R\pi_*\cA^{\cW}_i\subset\cA^{\ocW}$. Next, 
$\RHom(Lj^*R\pi_* B, Lj^*\bar A)=
\RHom(R\pi_*Lj^* B, Lj^*\bar A)=0$ by Corollary~\ref{asgasrgsarhasr}.
Furthermore, $\RHom(R\pi_* B, \bar A)=\RHom(B, \pi^!\bar A)$ is bounded above since $B$ is perfect.
Thus, $\RHom(R\pi_* B, \bar A)=0$ by cohomology and base change and Nakayama's lemma (after shrinking $B$).
It follows that $R\pi_*(\cB^{\cW}) \subset \cB^{\ocW}$.
Since $\pi$ has fibers of dimension at most $1$, $R\pi_*(D^b(\cW))=D^b(\ocW)$ by \cite[Cor.~2.5]{KuzSex}.
Therefore, 
$R\pi_*\langle\cA^{\cW}_r,\ldots,\cA^{\cW}_0\rangle=\cA^{\ocW}$
and $R\pi_*(\cB^{\cW})=\cB^{\ocW}$. Since $R\pi_*{\cO_{\cW}}=\cO_{\ocW}$,
$R\pi_*L\pi^*$ gives the identity functor on $\cB^{\ocW}$ by projection formula. It remains to show that $L\pi^*:\,\cB^{\ocW}\to\cB^{\cW}$
is essentially surjective. 
Indeed, 
the unit of adjunction $L\pi^*R\pi_*B\to B$ is an isomorphism
for any $B\in \cB^\cW$ by Nakayama's lemma since its restriction to $W$
is an isomorphism by Corollary~\ref{asgasrgsarhasr}.
\end{proof}

\begin{remark}\label{wEFegEWG}
We focus on situations when $\pi$ induces an isomorphism on generic fibers $Y\simeq \oY$ of deformations, e.g.~ when $\oY$ is a smoothing of $\oW$.  Corollary \ref{EFegwEWE} shows that then $\cA^{\oW}$ deforms to a category
$\cA^{\oY}$, which has an  s.o.d.~ $\langle\cA^{Y}_r,\ldots,\cA^{Y}_0\rangle$
related to the fact that $\cA^Y=\cA^\oY$
is also a deformation of $\langle\cA^{\cW}_r,\ldots,\cA^{\cW}_0\rangle$.
\end{remark}

The following theorem completes the proof of Theorem~\ref{mainKawa}.

\begin{theorem}\label{sgsGsrgsRHsrh}
In the notation of Theorem~\ref{asfbsfbsf},
suppose $W$ has Wahl singularities. Let $B^0\subset B$ be the locus of smooth fibers~ $Y$ of the deformation and suppose $B^0\ne\emptyset$. Let the s.o.d.~$D^b(Y)=\langle \cA^Y_r,\ldots,\cA^Y_0, \cB^Y\rangle$ be the base change of 
$D^b(\cW)=\langle  \cA^{\cW}_r,\ldots,\cA^{\cW}_0,\cB^{\cW} \rangle$.
After shrinking $B$, for $i=0,\ldots,r$, the restriction of the Kawamata bundle
$\cF_i|_Y$ is isomorphic to $E_i^{\oplus n_i}$
and $\cA^{Y}_i=\langle E_i\rangle$, where $E_i$ 
is a Hacking exceptional vector bundle with
$$\rk E_i=n_i,\quad
c_1(E_i)=-n_i(A+\Gamma_1+\ldots+\Gamma_i) \in H_2(Y).$$
\end{theorem}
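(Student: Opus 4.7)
The plan is to reduce the statement to a local calculation around each Wahl singularity $P_i$ and invoke Kawamata's matrix-algebra theorem for cyclic T-singularities. First, $\cF_i|_Y$ is locally free: by Lemma~\ref{qrgsrhr}(1), $\cF_i$ is locally free on $\cW$ away from $\{P_j\}_{j<i}\subset W$, and none of these (singular) points of $\cW$ lie on a smooth fiber $Y$. Its rank equals $\rk F_i$, which is the $\C$-dimension of the Kalck--Karmazyn algebra of the Wahl singularity $\frac{1}{n_i^2}(1,n_ia_i-1)$, namely $n_i^2$; equivalently, this is the rank of the local Kawamata bundle at $P_i$, where $F_i$ is locally free.

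Second, I compute the endomorphism algebra on a general fiber. By Theorem~\ref{asfbsfbsf} and Lemma~\ref{qrgsrhr}(3)-(4), $\bR_i=\End(\cF_i)$ is finite projective over $\bB$, and after shrinking $B$ we get $\End_Y(\cF_i|_Y)\simeq\bR_i\otimes_\bB k(b)$ for $b\in B^0$. Since every deformation of a Wahl singularity to a smooth surface is automatically $\Q$-Gorenstein, formal-locally at $P_i$ the family $\cW\to B$ is a $\Q$-Gorenstein smoothing of $\frac{1}{n_i^2}(1,n_ia_i-1)$; using the characterization of $F_i$ via the maximal iterated extension property together with Lemma~\ref{qrgwGwhwH}, one checks that $\bR_i$ agrees, after formal completion along $P_i$, with the local endomorphism algebra considered by Kawamata. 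The cyclic T-singularity case of \cite{K21} with $d=1$ then yields $\bR_i\otimes_\bB k(b)\simeq \Mat_{n_i}(\C)$.

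Given this, the identification of $E_i$ is automatic by Morita equivalence: a locally free sheaf on $Y$ with endomorphism algebra $\Mat_{n_i}(\C)$ must split as $E_i^{\oplus n_i}$ where $E_i$ is indecomposable with $\End(E_i)=\C$, forcing $\rk E_i=n_i$. Since $\cA^Y_i$ is the base change of $\cA^\cW_i\simeq D^b(\bR_i\text{-mod})$, on a general fiber it becomes equivalent to $D^b(\Mat_{n_i}(\C)\text{-mod})$, which is classically generated by the unique simple module corresponding to $E_i$; hence $\cA^Y_i=\langle E_i\rangle$, and $E_i$ is the Hacking vector bundle of \cite{H13}. For the first Chern class, additivity of $c_1$ along the iterated extensions~\eqref{aaegaEGaega} gives $c_1(F_i)=(\rk F_i)\,c_1(\cO_W(-D_i))=-n_i^2\,[D_i]$ in $\Cl(W)\otimes\Q$; specializing to $Y$ via the natural restriction $H^2(\cW,\Q)\to H^2(Y,\Q)$ and comparing with $c_1(E_i^{\oplus n_i})=n_i\,c_1(E_i)$ yields $c_1(E_i)=-n_i(A+\Gamma_1+\ldots+\Gamma_i)$ in $H_2(Y)$.

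The main obstacle is the middle step: matching the algebra $\bR_i$, defined globally from $\cF_i$ via Theorem~\ref{asfbsfbsf}, with the local endomorphism algebra of the formal Wahl singularity used by Kawamata. This matching requires faithful flatness of the formal completion of $\cW$ along $P_i$ together with the uniqueness of the maximal iterated extension (Definition~\ref{maxitext}), so that the formal-local restriction of $\cF_i$ coincides with the local Kawamata sheaf of $P_i$ and the two $\End$-algebras may be identified after completion.
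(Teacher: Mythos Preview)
Your overall architecture—identify $\End_Y(\cF_i|_Y)$ as a matrix algebra and then split by Morita—is reasonable, and your Chern class computation is fine. But the crucial middle step has a genuine gap.

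You want $\bR_i\otimes_\bB k(b)\simeq\Mat_{n_i}(\C)$ for $b\in B^0$, and you propose to get it by matching $\bR_i$ ``after formal completion along $P_i$'' with Kawamata's local endomorphism algebra. This does not work as stated. The algebra $\bR_i=H^0(\cW,\cHom(\cF_i,\cF_i))$ is a global $\bB$-module; formal completion of $\cW$ along the section $P_i$ produces an endomorphism algebra that is far larger (a module over the completed local ring), and there is no identification between the two that would let you read off the fibre $\bR_i\otimes k(b)$ from the formal picture. More to the point, \cite[Th.~4.3]{K21} is itself a \emph{global} theorem about a projective family over a smooth curve, not a formal-local statement, so the local-to-global bridge you envision is neither available nor needed. (Incidentally, your auxiliary claim that every smoothing of a Wahl singularity is automatically $\Q$-Gorenstein is false: $\tfrac{1}{4}(1,1)$ already has a $3$-dimensional Artin component—this is exactly Pinkham's example from the introduction.)

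The paper instead applies \cite[Th.~4.3]{K21} directly, globally, over a curve: pick $y\in B^0$ and any curve germ $(0\in\Delta)\to(0\in B)$ through $y$. After a finite base change $\Delta'\to\Delta$, Kawamata's theorem yields a relatively exceptional bundle $\cE_i$ on $\cW|_{\Delta'\setminus\{0\}}$ together with $\cF_i\simeq\cE_i^{\oplus n_i}$. One then checks this descends to $\Delta\setminus\{0\}$ (two non-isomorphic exceptional bundles cannot both sit in $\langle E_i\rangle$), spreads $\cE_i$ over an open $U\subset B^0$ using rigidity of exceptional bundles and uniqueness of the deformation of $\cF_i$, and finally covers all of $B^0$ by Noetherian induction on the complement. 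Your Morita conclusion $\cA_i^Y=\langle E_i\rangle$ then follows; it is a consequence of the splitting, not a route to it.
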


\begin{remark}
By Riemann--Roch, we also have
$c_2(E_i)=\frac{n_i-1}{2n_i}(c_1(E_i)^2 + n_i +1)$.
\end{remark}

A direct corollary is the Hacking exceptional collection (H.e.c.) announced in the introduction, which for dual  bundles
was stated in \cite{H16} (without proof).

\begin{corollary}\label{RHrh}
Hacking vector bundles $E_r,\ldots,E_0$ on $Y$  form an exceptional collection.
\end{corollary}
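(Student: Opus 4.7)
The plan is to deduce the corollary as an immediate consequence of Theorem \ref{sgsGsrgsRHsrh}. That theorem produces a $B$-linear semi-orthogonal decomposition whose base change to the smooth fiber reads $D^b(Y) = \langle \cA^Y_r, \ldots, \cA^Y_0, \cB^Y \rangle$ and identifies each component as $\cA^Y_i = \langle E_i \rangle$. The bundles $E_i$ are individually exceptional by their construction as Hacking bundles in \cite{H13}, so the only new content of the corollary is the mutual semi-orthogonality of the collection.

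To extract this, I would simply unpack the orthogonality built into the s.o.d.~using the definition recalled in Theorem \ref{mainKawa}: for a decomposition $\langle \cB_0, \ldots, \cB_{r+1} \rangle$ one has $\Hom(\cB_j, \cB_i) = 0$ whenever $j > i$. Relabelling so that $\cB_k = \cA^Y_{r-k}$ for $k = 0, \ldots, r$ and $\cB_{r+1} = \cB^Y$, this translates into $\Hom(\cA^Y_a, \cA^Y_b) = 0$ for $a < b$. Since each $\cA^Y_a$ is a full triangulated subcategory and contains every shift $E_a[k]$, this vanishing upgrades to $\Ext^k(E_a, E_b) = 0$ for all $k \in \Z$ and all $a < b$, which is precisely the semi-orthogonality required of the collection $E_r, E_{r-1}, \ldots, E_0$ (listed in this decreasing order) to be exceptional.

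There is no real obstacle at this stage: all of the substantive deformation-theoretic and categorical work has already been carried out in Theorem \ref{sgsGsrgsRHsrh} and the preceding lemmas (the construction of the Kawamata sheaves $\cF_i$, the boundedness of $\otimes_{\bR_i}\cF_i$, and the identification of the deformed subcategories $\cA^{\cW}_i$ with module categories over $\bR_i$). The only point deserving brief care is the index reversal needed to reconcile the listing order of the s.o.d.~with the standard convention for exceptional collections; once this is done, the corollary drops out immediately.
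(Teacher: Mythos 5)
Your proposal is correct and is exactly the argument the paper intends: the corollary is stated as a direct consequence of Theorem~\ref{sgsGsrgsRHsrh}, whose base-changed s.o.d.\ $D^b(Y)=\langle \cA^Y_r,\ldots,\cA^Y_0,\cB^Y\rangle$ with $\cA^Y_i=\langle E_i\rangle$ yields $\Ext^\bullet(E_a,E_b)=0$ for $a<b$, while each $E_i$ is individually exceptional by construction. Your handling of the index reversal between the s.o.d.\ convention and the exceptional-collection convention is also correct.
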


\begin{proof}[Proof of Theorem~\ref{sgsGsrgsRHsrh}]
This is essentially \cite[Th.~ 4.3]{K21} applied to each Kawamata bundle $\cF_i$. More precisely, 
choose $y\in B^0$ and let $(0\in\Delta)\to(0\in  B)$ be a curve passing through~$y$. For inductive reasons, we allow $\Delta$ to be singular at $0$. By \cite[Th.~ 4.3]{K21}, after possibly shrinking $\Delta$, there exists a finite morphism  $u:\,(0\in\Delta')\to(0\in\Delta)$ such that $u^{-1}(0)=0$, $\Delta'$ is smooth, and $\cW|_{\Delta'\setminus\{0\}}$ carries a relatively exceptional Hacking vector bundle $\cE_i$ 
such that  $\cF_i\simeq E_i^{\oplus n_i}$ on $\cW|_{\Delta'\setminus\{0\}}$.
We claim that the bundle $\cE_i$ and this isomorphism descend to $\cW|_{\Delta\setminus\{0\}}$.
Indeed, otherwise $Y$ carries non-isomorphic exceptional Hacking bundles $E_i$ and $E'_i$ such that
$\cF_i\simeq E_i^{\oplus n_i}\simeq {E'_i}^{\oplus n_i}$. So $E_i'\in\langle E_i\rangle$, which is impossible, because $\langle E_i\rangle$ contains only one exceptional object (up to a shift). Since $E_i$ is exceptional, it deforms uniquely to relatively exceptional vector bundle $\cE_i$ on  $\cW|_U$ for some open subset $U\subset B^0$. It follows that 
$F_i|_Y$ deforms to $\cE_i^{\oplus{n_i}}$ over $U$.
But $\Ext^p(F_i|_Y,F_i|_Y)=0$ for $p>0$, so this deformation must be equal to $\cF_i$ over $U$.
If $U=B^0$ (after shrinking $B$) then we are done.
If not, choose the next curve $\Delta$ intersecting $B^0$ and contained in $B\setminus U$ and argue by Noetherian induction.
\end{proof}

In the next lemma we consider a ``hybrid'' sitiation when the 
$\Q$-Gorenstein deformation smoothens some points (just one to simplify notation), is locally isotrivial around others, and the chain of rational curves connecting them also deforms.

\begin{lemma}\label{argadrhadh}
Suppose  $W$ has a Wahl singularity at $P_i$. Let $\cW\to B$ be a $\Q$-Gorenstein deformation
over a smooth curve which is locally isotrivial at $P_j$ for $j\ne i$, smoothens $P_i$, and such that the chain 
$A+\Gamma_1+\ldots+\Gamma_r$ on~$W$ deforms to the chain 
$A^Y+\Gamma_1^Y+\ldots+\Gamma^Y_{i-1}+\Gamma^Y_{i+1}+\ldots+\Gamma^Y_r$ on 
a general fiber~$Y$. More precisely, $\Gamma_{i-1}+\Gamma_{i}\subset W$ deforms to 
$\Gamma^Y_{i+1}\subset Y$ connecting $P_{i-1}$ and $P_{i+1}$.
Take the s.o.d. 
$D^b(\cW)=\langle \cA^{\cW}_r,\ldots,\cA^{\cW}_0, \cB^{\cW} \rangle$ 
as in Theorem~\ref{asfbsfbsf} and its restriction to~ $Y$,
$D^b(Y)=\langle\cA_r^Y,\ldots,\cA^Y_0, \cB^Y \rangle$.
Then $\cA^Y_j=\{ F^Y_j \}$ for $j\neq i$ and $\cA^Y_i=\langle E_i\rangle$,
where $E_i$ is the Hacking vector bundle associated with a singularity $P_i\in W$ as in Th.~\ref{sgsGsrgsRHsrh} and 
$F^Y_j$ for $j\ne i$ are Kawamata sheaves on $Y$ associated with 
singularities  $P_j\in Y$ as in Prop.~\ref{kjshefjhksEFG}.
\end{lemma}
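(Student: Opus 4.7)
The plan is to combine the two main tools already established: the general semi-orthogonal decomposition of a deformation from Theorem~\ref{asfbsfbsf} (which explains how Kawamata sheaves of the preserved singularities survive) and the local analysis of Theorem~\ref{sgsGsrgsRHsrh} (which produces a Hacking bundle out of the Kawamata sheaf at the smoothed point). Concretely, I first invoke Theorem~\ref{asfbsfbsf} for $\cW\to B$ to get the $B$-linear s.o.d.~$D^b(\cW)=\langle\cA^\cW_r,\ldots,\cA^\cW_0,\cB^\cW\rangle$ with $\cA^\cW_j=\{\cF_j\}$, where $\cF_j$ is the unique flat deformation of the Kawamata sheaf $F_j$ on $W$ supplied by Lemma~\ref{sFsefwetwet}. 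By $B$-linearity and base change, restricting to the smooth fiber $Y$ produces the s.o.d.~$\langle\cA^Y_r,\ldots,\cA^Y_0,\cB^Y\rangle$ where $\cA^Y_j$ is the essential image of $\,\cdot\otimes^L_{\bR_j}(\cF_j|_Y)$. So the entire problem reduces to identifying each restriction $\cF_j|_Y$.

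For $j=i$, the argument of Theorem~\ref{sgsGsrgsRHsrh} applies essentially verbatim. By Lemma~\ref{qrgsrhr}\,(1), $\cF_i$ is locally free away from $P_0,\ldots,P_{i-1}$ and locally of the form $\omega_{\cW}^{\oplus s}$ tensored with a free sheaf near $P_i$; the only place where the deformation is non-isotrivial is at $P_i$, which is exactly the $\Q$-Gorenstein smoothing of a Wahl singularity to which Kawamata's Theorem~4.3 in \cite{K21} applies. This yields a Hacking exceptional bundle $E_i$ on $Y$ of rank $n_i$ with $\cF_i|_Y\simeq E_i^{\oplus n_i}$, and since $E_i$ is exceptional, $\{E_i^{\oplus n_i}\}=\langle E_i\rangle$, giving $\cA^Y_i=\langle E_i\rangle$.

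For $j\neq i$, the claim is $\cF_j|_Y\simeq F^Y_j$, the Kawamata sheaf of $P_j\in Y$. My approach is to lift the iterated-extension construction of $F_j$ from $W$ to $\cW$: the divisorial sheaf $\cO_W(-D_j)$ deforms uniquely to an $\cO_\cW$-module $\cO_\cW(-\cD_j)$ (since $\Ext^{\geq1}(\cO_W(-D_j),\cO_W(-D_j))$ governing obstructions behaves well), the relevant $\Ext^1$-groups deform flatly by a cohomology-and-base-change argument modeled on Lemma~\ref{asfhsrHs}, and so the universal iterated extensions can be built on $\cW$. The resulting sheaf restricts to $F_j$ on $W$, hence equals $\cF_j$ by uniqueness in Lemma~\ref{sFsefwetwet}. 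Restricting further to $Y$ presents $\cF_j|_Y$ as an iterated extension of $\cO_Y(-D^Y_j)$, and I verify maximality via $\Ext^1(\cF_j|_Y,\cO_Y(-D^Y_j))=0$, deduced from the corresponding vanishing on $\cW$ by base change and Nakayama. Uniqueness of the maximal iterated extension (Definition~\ref{maxitext}) then forces $\cF_j|_Y\simeq F^Y_j$.

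The main obstacle will be the case $j>i$, where $\cF_j$ is reflexive-but-not-locally-free at $P_0,\ldots,P_{j-1}$ (including $P_i$), yet the local structure at $P_i$ changes non-trivially under the smoothing. One must check that the divisor restriction $\cD_j|_Y$ genuinely equals the expected divisor $D^Y_j=A^Y+\Gamma_1^Y+\ldots+\Gamma_{i-1}^Y+\Gamma_{i+1}^Y+\ldots+\Gamma_j^Y$, where the single curve $\Gamma^Y_{i+1}$ absorbs the pair meeting at the vanishing cycle; this is where the geometric hypothesis on how the chain deforms gets used. Once that identification is in hand, the iterated-extension comparison on $Y$ goes through uniformly in $j$, completing the proof.
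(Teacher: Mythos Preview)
Your handling of $j=i$ matches the paper. For $j\ne i$ your route is different but workable, with one misstatement to flag.

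You propose to lift the entire iterated-extension tower to $\cW$, verify by base change that it restricts to the tower on $W$ defining $F_j$, identify the result with $\cF_j$ by uniqueness, and then restrict to $Y$ and check maximality. This can be made rigorous: the needed base change for each $\Ext^1_\cW(\cD_j^{p-1},\cO_\cW(-\cD_j))$ follows from the $\Ext^{\ge2}$-vanishing established \emph{in the proof} of Lemma~\ref{asfhsrHs} (its statement alone, for the maximal $F_j$, is not enough), and flat short exact sequences restrict along the regular embedding $Y\hookrightarrow\cW$. One correction: the flat sheaf $\cO_\cW(-\cD_j)$ does not exist because ``$\Ext^{\ge1}(\cO_W(-D_j),\cO_W(-D_j))$ behaves well''---self-$\Ext^1$ is nonzero here (it is what drives the iterated extension). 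The actual reason is that the \emph{divisor} $D_j=A+\Gamma_1+\cdots+\Gamma_j$ lifts to $\cW$ by the hypothesis on how the chain deforms; this is where that geometric input enters, not only in your last paragraph.

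The paper avoids the relative tower altogether. From $\RHom_Y(\cF_j|_Y,\cO_Y(-D_j^Y))=\C$ (Lemma~\ref{asfhsrHs} plus base change) and the filtration on $F_j^Y$ with successive quotients $\cO_Y(-D_j^Y)$, it extracts a single generically surjective map $\psi\colon\cF_j|_Y\to F_j^Y$; both sheaves are torsion-free of the same rank, so $\psi$ is injective, and since $\ch(\cF_j|_Y)=\Delta_j\,\ch(\cO_Y(-D_j^Y))=\ch(F_j^Y)$ the cokernel vanishes. This Chern-character shortcut is quicker than tracking intermediate $\Ext^1$-groups over the family.
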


\begin{proof}
$\cA^Y_i=\langle E_i\rangle$ by the argument of
Th.~\ref{sgsGsrgsRHsrh}. We just have to analyze $\cA^Y_j$ for $j\ne i$,
i.e.~prove that the Kawamata sheaf $F_j$ of $W$ deforms to the Kawamata sheaf $F_j^Y$, i.e.~$\cF_j|_Y=F_j^Y$. There is a divisorial sheaf $\cD_j$ on $\cW$
flat over $B$ and such that $\cD_j|_W=\O_W(-A-\Gamma_1-\ldots-\Gamma_j)$
and $\cD_j|_{Y}:=\O_Y(-A^Y-\Gamma^Y_1-\ldots-\Gamma^Y_j)$.

The sheaf $F_j$ (resp., $F_j^{Y}$) is a maximal iterated extension of  $\cD_j|_W$ (resp., $\cD_j|_{Y}$). 
Furthermore,
$\cExt^p(\cF_j|_Y,\cD_j|_Y)=0$ for $p>0$ and 
$\RHom(\cF_j|_Y,\cD_j|_Y)=\C$ 
by Lemma~\ref{asfhsrHs},  cohomology and base change, and Nakayama's lemma. 
It follows that $\RHom(\cF_j|_{Y},F_j^{Y})=\C^{\Delta_j}$.
Both $\cF_j|_{Y}$, and $F_j^{Y}$ are vector bundles on $Y$
of rank $\Delta_j$ except at points $P_k$ for $k<j$, where 
they are locally isomorphic to $\omega_Y^{\oplus \Delta_j}$.
Since $F_j^{Y}$ has a filtration with quotients isomorphic to $\cD_j|_Y$,
and $\Hom(\cF_j|_Y,\cD_j|_Y)=\C$, it follows that there
exists a generically surjective morphism $\psi \colon \cF_j|_{Y} \to F_j^{Y}$. As both are torsion-free sheaves of the same rank, we have $\ker(\psi)=0$. Thus we have a short exact sequence $0\to\cF_j|_{Y} \to F_j^{Y}\to G\to0$. But
$\cF_j|_{Y}$ and $F_j^{Y}$ have the same Chern character 
$\Delta_j\ch(\cD_j|_Y)$. Thus $\ch(G)=0$, and therefore $G=0$.
\end{proof}

\section{Comparing braid group actions: mutations and antiflips} \label{s4}

According to Theorem~\ref{braidrelations}, the braid group $B_{r+1}$ on $r+1$ strands acts on the set of Wahl resolutions $W\to\oW$ of a fixed c.q.s. surface $\oW$ with $r+1$ singularities by antiflips 
of curves $\Gamma_i$ contained in the exceptional divisor. 
These Wahl resolutions 
admit $\Q$-Gorenstein smoothings 
to the surfaces within the same irreducible component of the versal deformation space of $\oW$. 
According to Corollary~\ref{RHrh},
each Wahl resolution $W$ gives rise to an exceptional collection of Hacking vector bundles on its $\Q$-Gorenstein smoothing.  Since Hacking bundles deform uniquely, in fact $W$ gives a H.e.c.
on all sufficienly general smoothings of $\oW$
within a given irreducible component of its versal deformation space.
In this section we show that antiflips of Wahl resolutions correspond to mutations of these H.e.c. While the braid group action on exceptional collections is  well-known, the special feature of our situation is that mutations of exceptional vector bundles are also exceptional vector bundles (up to a shift)
and not more complicated exceptional objects. This has strong consequences for their $\Hom$ spaces and clarifies the structure of the deformation of the Kalck--Karmazyn algebra that corresponds to the germ $P\in\oW$.

We recall that from an exceptional collection $\langle F,E \rangle \subset D^b(Y)$ we can obtain two other exceptional collections $\langle E,R_E(F) \rangle$ (right mutation of $F$ over $E$) and $\langle L_E(F),E \rangle$ (left mutation of $F$ over $E$), so that $\langle F,E \rangle=\langle E,R_E(F) \rangle=\langle L_E(F),E \rangle$. 
The objects are defined by distinguished evaluation triangles 
$$
E \otimes \RHom(E,F) \to F \to L_E(F)\to
    \quad\hbox{\rm and}\quad R_E(F) \to F \to E \otimes \RHom(F,E)^\vee\to.
$$
For a longer exceptional collection $\langle E_r,\ldots E_0 \rangle$, the action of left and right mutations induces an action of the braid group $B_{r+1}$ of $r+1$ strands on $\langle E_r,\ldots E_0 \rangle$. We~will also use mutations of more general s.o.d.'s. Matching braid group actions on Wahl resolutions and exceptional collections on their smootings relies on a geometric construction 
(Proposition~\ref{SRGsrgsRG}), which uses deformations from the universal family of antiflips that correspond to $1$-dimensional cones of the fan $\cF$ in Figure~\ref{samplefamily}.


\begin{proposition}\label{SRGsrgsRG}
Let $W$ be a Wahl resolution of a c.q.s.~ surface $\oW$ satisfying Assumption~\ref{assume}.
Let $W'=R_i(W)$ be the right antiflip of $W$ at the curve $\Gamma_i$ for some $i=1,\ldots,r$. 
The Wahl singularities of $W'$ are  $P'_j=P_j$ for $j \neq i-1,i$, and $P'_{i}=P_{i-1}$. 

We~consider two geometrically different  situations:
either (a) $K_W\cdot\Gamma_i\ge0$ or\break (b) $K_W\cdot\Gamma_i<0$.
Suppose also that  $K_{W'}\cdot\Gamma_i'<0$ in case (b)\footnote{If $K_{W'}\cdot\Gamma_i'>0$
then the left antiflip $W=L_i(W')$ will be in case (a) and the same results hold.}

Then there exist $\Q$-Gorenstein  families
$\cW,\cW'$
over a smooth curve $B$
with the following properties.
In case~(a) $\cW$ (resp.~$\cW'$) has special fiber $W$ (resp.~$W'$) over $0\in B$, the families are 
isomorphic over $B\setminus\{0\}$.
In case (b) we have  $B=\P^1$, $\cW=\cW'$, and the family has fiber $W$ (resp.~$W'$) over $0\in \P^1$ (resp.~$\infty\in\P^1$).
Let $Z\simeq Z'$ be  isomorphic (in both cases) general fibers of these families. These deformations have the following properties:
 
\begin{enumerate}
    \item $Z \rightsquigarrow W$ is equisingular at $P_j$ for $j\ne i$,
    $P_i$ is smoothened.
    \item $Z' \rightsquigarrow W'$
    is equisingular at $P_j'$ for $j\ne i-1$, 
      $P'_{i-1}$  is smoothened.
\item
Boundary divisors $\Gamma_0+\ldots+\Gamma_{r+1}\subset W$ and
$\Gamma'_0+\ldots+\Gamma'_{r+1}\subset W'$ lift to $Z\simeq Z'$.
\item The surface $Z\simeq Z'$ admits a $\Q$-Gorenstein smoothing to a surface $Y$ in the fixed irreducible component of the versal deformation space of $\oW$.
\end{enumerate}
\end{proposition}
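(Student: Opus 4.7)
The plan is to derive both cases from the local toric universal family of antiflips around $\Gamma_i$ (constructed in \cite{HTU17} and summarized around Figure~\ref{samplefamily}) and to globalize via Lemma~\ref{NO}, which provides the absence of local-to-global obstructions for $\Q$-Gorenstein deformations of a pair $(W,\Delta)$ under Assumption~\ref{assume}. It thus suffices to prescribe axial multiplicities at each Wahl singularity of $W$ (and of $W'$) together with a lift of the boundary divisors; these glue to a projective family by Lemma~\ref{NO}.

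In case (a), the local picture of $W$ at $\Gamma_i$ is an extremal P-resolution (or a K-trivial chain) and $W'=R_i(W)$ is an initial K-negative resolution in the sense of Figure~\ref{f2}, so its cone $\sigma_{W'}\subset\cF$ is adjacent to one of the coordinate axes of $\R^2_{\ge0}$. I would pick a smooth curve germ $(0\in B)$ whose axial multiplicity direction at $(P_{i-1},P_i)$ is $(0,1)$ and zero at all other Wahl singularities of $W$. Lemma~\ref{NO} lifts this to a $\Q$-Gorenstein deformation $\cW\to B$ of $W$ smoothening $P_i$ and preserving all other $P_j$. Read from the opposite side of the same ray, inside $\sigma_{W'}$, the same curve $B$ also lifts to a $\Q$-Gorenstein deformation $\cW'\to B$ of $W'$ smoothening the new bigger singularity $P'_{i-1}$ while preserving $P'_i=P_{i-1}$ and all other $P'_j=P_j$. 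The generic fibers coincide, $Z\simeq Z'$, because they are the two small modifications of a common blow-down deformation of $\oW$ determined by the image of $B$ in $\Def_{P\in\oW}$.

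In case (b), the cones $\sigma_W$ and $\sigma_{W'}$ are two adjacent two-dimensional cones of $\cF$ sharing a common ray. The explicit toric universal family of \cite{HTU17}, restricted to a projective pencil of axial multiplicity directions parameterized by the ratio $t=\alpha_{i-1}:\alpha_i$, realizes $W$ and $W'$ as the two distinguished degenerate fibers of a single family over $\P^1$, at $t=0$ and $t=\infty$ respectively. Globalizing via Lemma~\ref{NO} yields a projective family $\cW=\cW'\to\P^1$ with the required properties. In both cases, property~(4) is immediate from Corollary~\ref{3foldantiflip}: $\Q$-Gorenstein smoothings of all Wahl resolutions in a single braid orbit lie in a common irreducible component of $\Def_{P\in\oW}$.

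The main obstacle will be case (b): constructing the $\P^1$-base is not a formal consequence of the abstract fan $\cF$, but rather requires the explicit toric equations of \cite{HTU17}, because two adjacent K-negative cones must be realized as the two degenerate ends of a genuine projective pencil while simultaneously keeping all other axial multiplicities zero along the entire pencil. The latter compatibility between the local toric model near $\Gamma_i$ and the equisingular deformation of the rest of $W$ is exactly what Lemma~\ref{NO} is designed to provide.
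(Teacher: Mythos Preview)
Your outline has the right shape in case (a) but contains a genuine gap, and in case (b) it does not match the actual construction.

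In case (a), you construct $\cW\to B$ and $\cW'\to B$ separately via Lemma~\ref{NO} and then assert they are over ``the same curve $B$''. Lemma~\ref{NO} applied to $W$ and to $W'$ produces two a priori unrelated curve germs; nothing identifies them. The paper avoids this by building only $\cW'\to B$ (the $K$-negative side, corresponding to the boundary ray of the fan), contracting $\Gamma'_i$, and then taking the \emph{flip} (or flop) of that contraction, citing \cite[Prop.~3.16]{HTU17}. The flip does not alter $B$ or the fibres over $B\setminus\{0\}$, so $Z\simeq Z'$ is automatic. What remains, and what you skip, is checking that the flipped special fibre is $W$ itself: one must verify that the chain structure near $\Gamma_i$ agrees, in particular that the self-intersections of the adjacent curves $\Gamma_{i-1}$ and $\Gamma_{i+1}$ are correct. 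The paper does this using \cite[Prop.~3.16]{HTU17} and the formulas of Proposition~\ref{computo}.

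In case (b), your ``projective pencil of axial multiplicity ratios $t=\alpha_{i-1}:\alpha_i$'' has no direct meaning: axial multiplicities parametrize deformations of a fixed Wahl resolution, and the toric universal family of \cite{HTU17} is not fibred over a $\P^1$ of such ratios. The paper's construction is entirely different and concrete. One first builds $Z\rightsquigarrow W$ along the interior ray separating $\sigma_W$ from $\sigma_{W'}$; by \cite[Prop.~2.4]{HTU17} this $Z$ is the special fibre of a k1A neighbourhood, meaning $Z\to W_i$ is a (non-c.q.s.) resolution of $Q_i$ whose exceptional curve $E\simeq\P^1$ meets one component $D_s$ of the minimal resolution of $P_{i-1}$ transversally. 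Moving the attaching point along $\tilde D_s$ gives an equisingular $\mathbb G_m$-family of surfaces $Z$, and \cite[Prop.~2.4]{HTU17} identifies the two limit fibres as $W$ and $W'$; this is the $\P^1$. Globalization again uses Lemma~\ref{NO}.

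Finally, your appeal to Corollary~\ref{3foldantiflip} for property~(4) does not apply: that corollary concerns Wahl resolutions connected by antiflips, whereas $Z$ is a general fibre and (in case (b)) its contraction to $W_i$ is explicitly not a c.q.s.\ resolution. The paper instead argues via semicontinuity that $H^2(Z,T_Z(-\log\Delta))=0$, applies Lemma~\ref{NO} to $Z$, and then blows down the resulting smoothing to $\oW$.
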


\begin{remark} By analogy with case (b), 
in case (a) one can glue families $\cW$ and $\cW'$ to one family over a non-separated curve with a double origin $\{0,0'\}$.
\end{remark}

\begin{proof}
We start with case (a).
By Lemma~\ref{NO}, there exists a deformation $Z'\rightsquigarrow W'$
over a smooth curve
with properties (2) and (3) and total space $\cW'\to B$.
(This deformation corresponds to one of the two boundary rays of the first quadrant on the left side of Figure~\ref{samplefamily}.)
By \cite[Prop.~3.16]{HTU17}, there exists a contraction $\cW' \to \cW'_i$ of the curve $\Gamma'_i$ only.
This contraction is either $K$-negative or $K$-trivial, let 
$\cW^+ \to \cW'_i$ be its flip (or flop)
with special fiber $W^+$.
We claim that $W^+=W$ and the flip (or flop) gives a required deformation $Z \rightsquigarrow W$.
Indeed, $W^+$ has the same singularities as $W$ and the same flipped curve $\Gamma_i$ by \cite[Prop.~3.16]{HTU17}.
The chains up to $P'_{i-2}$ and from $P'_{i+1}$ are not affected by the flip. 
The curve from $P'_{i-2}$ to $P_i'$ in $Z'$ degenerates in $W^+$ to an irreducible curve $\Gamma_{i-1}$ from $P_{i-2}$ to $P_{i-1}$ while the curve from $P_i'$ to $P'_{i+1}$ breaks in $W^+$ into a union of a curve $\Gamma_{i+1}$ from $P_i$ to $P_{i+1}$ and a flipped curve $\Gamma_i$ from $P_{i-1}$ to ~$P_i$. 
Since $\cW'_i$ can be further contracted to a deformation $\ocW$ of $\oW$, $W^+$~is a Wahl resolution of $\oW$. 
By Prop.~\ref{computo}, the curves $\Gamma_{i-1}$
and $\Gamma_{i+1}$ have the same self-intersections in $W$ and $W^+$ and so the same $\delta$'s.

Geometry in case (b) is  different. Take a contraction  $(\Gamma_i\subset W)\to (Q_i\in W_i)$.
By~Lemma~\ref{NO}, there exists a deformation $Z\rightsquigarrow W$ with properties (1) and (3).
(This deformation corresponds to one of the non-boundary rays of the fan $\cF$ on the left side of Figure~\ref{samplefamily}.) By~\cite[Prop.~2.4]{HTU17}, $Z$ is  the special fiber of a k1A neighborhood. This means the following: there is a contraction $Z\to W_i$, which is a (non-c.q.s.!) resolution of $Q_i\in W_i$.
Its exceptional divisor $E\simeq\P^1$ passes through the Wahl singularity $P_{i-1}\in W$. The proper transform of $E$ intersects exactly one of the irreducible curves $D_s$ in the chain
$D_1,\ldots,D_p$ of the minimal resolution
$P_{i-1}\in W$, the intersection of $E$ and $D_s$ is 
transversal in one point.
Contracting $E$ gives a chain $D_1,\ldots,\tilde D_s,\ldots,D_p$ with 
$\tilde D_s^2=D_s^2+1$.
This chain is a (non-minimal) resolution of the singularity $Q_i\in W_i$. Blowing up a varying point of $\tilde D_s$ and contracting the chain 
$D_1,\ldots,\tilde D_s,\ldots,D_p$ back to 
$P_{i-1}\in W$ 
gives an obvious equisingular family with fibers $Z$ over ${\Bbb  G}_m$. Furthermore, by  \cite[Prop.~2.4]{HTU17}, the same family arises from deformations
$Z'\rightsquigarrow W'$ with properties (2) and (3). This gives the family $\cW$ over $\P^1$. By the proof of  Lemma~\ref{NO} and  semicontinuity, we have $H^2(Z,T_Z(-\log\Delta))=0$,
which again by Lemma~\ref{NO} shows that there are no local-to-global obstructions to its deformations. So we can obtain a $\Q$-Gorenstein smoothing $Y\rightsquigarrow Z$. Since $Z$ admits a contraction to $W_i$, it can be further contracted to $\oW$. By blowing down deformations, the smoothing $Y\rightsquigarrow Z$ blows down to a 
smoothing $Y\rightsquigarrow \oW$ proving (4).
\end{proof}


\begin{lemma}\label{hola}
In notation of Prop.~\ref{SRGsrgsRG},
consider  s.o.d.'s 
$D^b(\cW)=\langle \cA^{\cW}_r,\ldots,\cA^{\cW}_0, \cB^{\cW} \rangle$ and $D^b(\cW')=\langle \cA^{\cW'}_r,\ldots,\cA^{\cW'}_0, \cB^{\cW'}\rangle$ of Theorem~\ref{asfbsfbsf} and
their restrictions to $Z\simeq Z'$:
\begin{equation}\label{EGegEWGAR}
    D^b(Z)=\langle\cA_r,\ldots,\cA_0, \cB \rangle =
\langle \cA'_r,\ldots,\cA'_0,\cB' \rangle.
\end{equation}
Then $\cB=\cB'$,
$\cA_j=\{ F_j \}$ for $j\neq i$, and
$\cA'_j=\{ F_j \}$ for $j\neq i-1$.
Furthermore,
$$\langle\cA_{i},\cA_{i-1}\rangle = 
\langle E_{i}, \{ F_{i-1}\}\rangle = 
\langle \{ F_{i-1}\}, E'_{i-1} \rangle = 
\langle\cA_i',\cA'_{i-1}\rangle.$$
Here $E_i$ and $E_{i-1}'$ are Hacking vector   bundles associated with singularities $P_i\in W$ and $P_{i-1}'\in W'$ of central fibers of deformations as in Th.~\ref{sgsGsrgsRHsrh} and 
$F_j$ for $j\ne i$ are Kawamata sheaves  associated with 
singularities  $P_j$ on the general fiber $Z$ as in Prop.~\ref{kjshefjhksEFG}.
\end{lemma}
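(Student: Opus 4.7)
The plan is to apply Lemma~\ref{argadrhadh} separately to the two deformations $Z \rightsquigarrow W$ and $Z' \rightsquigarrow W'$ provided by Prop.~\ref{SRGsrgsRG}, and then to compare the resulting semi-orthogonal decompositions on $D^b(Z) = D^b(Z')$. Property~(1) of Prop.~\ref{SRGsrgsRG} says that $\cW$ is equisingular at $P_j$ for $j \ne i$, smoothens $P_i$, and (with property~(3)) lifts the boundary chain to $Z$; Lemma~\ref{argadrhadh} then yields $\cA_j = \{F_j\}$ for $j \ne i$ and $\cA_i = \langle E_i\rangle$, where $E_i$ is the Hacking bundle of $P_i \in W$. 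The analogous reasoning applied to $\cW' \rightsquigarrow W'$ gives $\cA'_j = \{F'_j\}$ for $j \ne i-1$ and $\cA'_{i-1} = \langle E'_{i-1}\rangle$, with $F'_j$ the Kawamata sheaf on $Z'$ associated to the singularity $P'_j$.

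Next I would match the sheaves $F'_j$ on $Z'$ with the $F_j$ on $Z$ under the isomorphism $Z \simeq Z'$. For indices $j \notin \{i-1, i\}$, the antiflip $R_i$ is local near $\Gamma_i$ (Definition~\ref{sGshsRHWRH}), so $P'_j = P_j$ and the divisor classes $A + \Gamma_1 + \ldots + \Gamma_j$ and $A + \Gamma'_1 + \ldots + \Gamma'_j$ restrict to the same Weil divisor on $Z$; uniqueness of the maximal iterated extension (Lemma~\ref{sFsefwetwet}) then forces $F'_j = F_j$ in $D^b(Z)$. For the index shift $j = i$, $P'_i = P_{i-1}$, and tracking the divisor through the antiflip identifies $A + \Gamma'_1 + \ldots + \Gamma'_i$ with $A + \Gamma_1 + \ldots + \Gamma_{i-1}$ on $Z$, giving $\cA'_i = \{F_{i-1}\}$. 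To establish $\cB = \cB'$ I would apply the blow-down Corollary~\ref{EFegwEWE}: both families contract to deformations $\ocW, \ocW'$ of $\oW$ with $\cB^\cW = L\pi^*\cB^\ocW$ and $\cB^{\cW'} = L\pi'^*\cB^{\ocW'}$. In case~(a) the families agree over $B \setminus \{0\}$, so the contractions do as well, and their $\cB$-components restrict to the same subcategory of $D^b(Z)$; case~(b) is tautological since $\cW = \cW'$.

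With all components $\cA_j = \cA'_j$ for $j \notin \{i-1, i\}$ and $\cB = \cB'$ matched, the two middle two-term subcategories $\langle\cA_i, \cA_{i-1}\rangle$ and $\langle\cA'_i, \cA'_{i-1}\rangle$ must coincide as the common orthogonal complement in $D^b(Z)$; substituting the explicit descriptions yields the stated chain $\langle E_i, \{F_{i-1}\}\rangle = \langle\{F_{i-1}\}, E'_{i-1}\rangle$. The main obstacle I anticipate is the divisor-class bookkeeping across the antiflip, particularly verifying the identification on $Z$ that pins down the index shift $F'_i = F_{i-1}$; once this is in hand, everything else is a routine application of the previously established results.
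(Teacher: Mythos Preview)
Your overall strategy matches the paper's: apply Lemma~\ref{argadrhadh} to each family to identify the $\cA$-pieces with Kawamata sheaves and Hacking bundles, and use Corollary~\ref{EFegwEWE} (blowing down to $\ocW$) to identify the $\cB$-pieces. The divisor bookkeeping you outline for $\cA'_i=\{F_{i-1}\}$ is exactly what is needed and is straightforward once one uses the description of the chain on $Z$ from the proof of Prop.~\ref{SRGsrgsRG}.

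There is, however, a genuine gap in your treatment of case~(b). You write that ``case~(b) is tautological since $\cW=\cW'$'', but this is not so. Although the total space is the same family over $\P^1$, the two s.o.d.'s in question are produced by Theorem~\ref{asfbsfbsf} from \emph{different} special fibers: one deforms the Kawamata sheaves of $W=\cW_0$, the other those of $W'=\cW_\infty$. Each construction only lives over a (possibly shrunk) neighborhood of its own special point, and there is no a~priori reason the resulting subcategories $\cB^{\cW}$ and $\cB^{\cW'}$ agree on a common general fiber $Z$. The paper closes this gap by observing that in case~(b) the family $\cW\to\P^1$ blows down to the \emph{constant} family $\oW\times\P^1$; then Corollary~\ref{EFegwEWE}, applied near $0$ and near $\infty$ separately (where the central fibers $W\to\oW$ and $W'\to\oW$ are genuine c.q.s.\ resolutions), identifies both $\cB$ and $\cB'$ on $Z$ with $L\pi^*\cB^{\oW}$, hence with each other. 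You should replace the ``tautological'' remark with this argument.
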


\begin{proof}
In case (a) of Prop.~\ref{SRGsrgsRG}, the deformations $\cW$ and $\cW'$ blow-down to the same deformation $\ocW$ of~$\oW$, so we have $\cB=\cB'$ by Corollary~\ref{EFegwEWE}.
In case (b) the deformation $\cW\to\P^1$ blows down to the constant
deformation $\oW\times\P^1$ of $\oW$. For fibers over $b\ne0,\infty$, the contraction 
$\pi:\,\cW_b\to\oW$ is not a c.q.s. resolution. However, 
$\cW_0=W\to\oW$ and $\cW_\infty=W'\to\oW$ are c.q.s. resolutions, so Corollary~\ref{EFegwEWE} still applies and gives $\cB=\cB'$ since it is a pullback of the  category $\cB^{\oW}$ via $L\pi^*$.

So the only thing to check is that subcategories $\cA_j$ and $\cA_j'$ that correspond to singular points of $W$ and $W'$ where the deformations are equisingular are subcategories of $D^b(Z)$  associated with Kawamata sheaves of these 
singularities. This follows from Lemma~\ref{argadrhadh}.
\end{proof}

\begin{theorem}\label{main3}
In assumptions of Prop.~\ref{SRGsrgsRG}, let $Y$
be a general smooth surface within an irreducible 
component of the versal deformation space of $\oY$
that contains $\Q$-Gorenstein smoothings of $W$ and $W'=R_i(W)$. Consider the corresponding H.e.c.
\begin{equation}\label{AEFefgesge}
D^b(Y)= \langle E_r,\ldots, E_0, \cB^Y \rangle = \langle E'_r,\ldots, E'_0, \cB'^Y \rangle.
\end{equation} 
Then $\cB'^Y=\cB^Y$, $E'_j=E_j$ for $j \neq i,i-1$, $E'_i=E_{i-1}$, and $E'_{i-1}=R_{E_{i-1}}(E_{i})[k]$,
where $k=0$ in case (a) of Prop.~\ref{SRGsrgsRG} and $k=1$ in case (b).
Moreover, in case (a)
$$\Hom(E_{i}, E_{i-1})=\Ext^2(E_i, E_{i-1}) 
=\Ext^1(E'_{i}, E'_{i-1})=\Ext^2(E'_{i}, E'_{i-1})=
0,$$ 
$$\Ext^1(E_{i}, E_{i-1})\simeq \Hom(E'_{i}, E'_{i-1})^\vee\simeq\C^{\delta_i}.$$
In case (b),
$\Ext^k(E_i, E_{i-1})=\Ext^k(E'_{i}, E'_{i-1})=
0$ for $k>0$, and  $$\Hom(E_{i}, E_{i-1})\simeq \Hom(E'_{i}, E'_{i-1})^\vee\simeq\C^{\delta_i}.$$
\end{theorem}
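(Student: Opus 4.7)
The plan is to transport the s.o.d.-level comparison of Lemma~\ref{hola} from the semismooth surface $Z\simeq Z'$ furnished by Prop.~\ref{SRGsrgsRG} up to the fully smooth general fiber $Y$, and then to read off the Ext information from the resulting mutation triangle by matching ranks against Prop.~\ref{computo}.

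First I would smoothen the remaining Wahl singularities of $Z$ by the $\Q$-Gorenstein deformation $Y\rightsquigarrow Z$ produced by Prop.~\ref{SRGsrgsRG}(4), and apply Theorem~\ref{asfbsfbsf} together with Theorem~\ref{sgsGsrgsRHsrh} to deform the two s.o.d.'s
\[
D^b(Z)=\langle\cA_r,\dots,\cA_0,\cB\rangle=\langle\cA'_r,\dots,\cA'_0,\cB'\rangle
\]
of Lemma~\ref{hola} to the corresponding H.e.c.~s.o.d.'s on $Y$. Each Kawamata factor $\{F_j\}$ on $Z$ deforms to the Hacking factor $\langle E_j\rangle$ on $Y$, while the Hacking factors $\langle E_i\rangle$ and $\langle E'_{i-1}\rangle$ already present on $Z$ extend tautologically. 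Because the two s.o.d.'s on $Z$ share $\cB=\cB'$, $\cA_j=\cA'_j$ for $j\notin\{i-1,i\}$, and $\cA_{i-1}=\cA'_i=\{F_{i-1}\}$, these deformations immediately yield $\cB^Y=\cB'^Y$, $E'_j=E_j$ for $j\notin\{i-1,i\}$, $E'_i=E_{i-1}$, and the crucial equality
\[
\langle E_i,E_{i-1}\rangle=\langle E_{i-1},E'_{i-1}\rangle
\]
of admissible subcategories of $D^b(Y)$. This forces $E'_{i-1}=R_{E_{i-1}}(E_i)[k]$ for some integer $k$ via the defining triangle
\[
R_{E_{i-1}}(E_i)\to E_i\to E_{i-1}\otimes\RHom(E_i,E_{i-1})^\vee.
\]

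Next I would pin down $k$ and $\RHom(E_i,E_{i-1})$ simultaneously by a rank computation. Since $E'_{i-1}$ is an honest vector bundle, namely the Hacking bundle attached to $P'_{i-1}\in W'$ by Theorem~\ref{sgsGsrgsRHsrh}, the object $R_{E_{i-1}}(E_i)$ is concentrated in a single cohomological degree, which forces $\RHom(E_i,E_{i-1})$ to be concentrated in a single degree $s$ of some dimension $d$. Two cases then emerge from the triangle: if $s=1$ it rotates to a short exact sequence $0\to E_{i-1}^{d}\to E'_{i-1}\to E_i\to 0$ with $k=0$ and $\rk E'_{i-1}=d\,n_{i-1}+n_i$, while if $s=0$ it rotates to $0\to E_i\to E_{i-1}^{d}\to E'_{i-1}\to 0$ with $k=1$ and $\rk E'_{i-1}=d\,n_{i-1}-n_i$. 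Comparing with Prop.~\ref{computo}, which gives $\bar n_{i-1}=\delta_i n_{i-1}+n_i$ in case~(a) and $\bar n_{i-1}=\delta_i n_{i-1}-n_i$ in case~(b), selects the first alternative with $d=\delta_i$ in case~(a) and the second with $d=\delta_i$ in case~(b), yielding $\Ext^s(E_i,E_{i-1})=\C^{\delta_i}$ with all other $\Ext^t(E_i,E_{i-1})=0$.

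Finally, the dual identities $\Hom(E'_i,E'_{i-1})=\C^{\delta_i}$ together with the corresponding vanishings follow by applying $\RHom(E_{i-1},-)$ to the short exact sequence from the previous step, using that both orderings $\langle E_i,E_{i-1}\rangle$ and $\langle E_{i-1},E'_{i-1}\rangle$ are exceptional to kill the neighboring terms, combined with $\End(E_{i-1})=\C$. I expect the main obstacle to be the \emph{a priori} sheafiness of $E'_{i-1}$: without knowing that a Hacking vector bundle attached to $P'_{i-1}\in W'$ exists on $Y$, the mutation $R_{E_{i-1}}(E_i)[k]$ could in principle be a genuinely derived object and the rank-matching trick would fail. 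This sheafiness is precisely what Theorem~\ref{sgsGsrgsRHsrh}, applied to the $\Q$-Gorenstein smoothing of $W'$ obtained by composing $Z\rightsquigarrow W'$ with the further smoothing of $Z$ to $Y$, provides.
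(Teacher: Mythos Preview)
Your overall strategy is the same as the paper's: deform the comparison of Lemma~\ref{hola} along $Y\rightsquigarrow Z$ via Theorems~\ref{asfbsfbsf} and~\ref{sgsGsrgsRHsrh}, conclude that $E'_{i-1}$ is a shift of the mutation, and then pin down the shift $k$ by matching the rank of the Hacking bundle $E'_{i-1}$ against Prop.~\ref{computo}. The first part is fine and matches the paper. The gap is in the sentence ``the object $R_{E_{i-1}}(E_i)$ is concentrated in a single cohomological degree, which forces $\RHom(E_i,E_{i-1})$ to be concentrated in a single degree.'' This implication fails exactly when $k=0$, i.e.\ in case~(a). With $k=0$ the cohomology long exact sequence of the mutation triangle only gives $\Ext^2(E_i,E_{i-1})=0$ and a four-term sequence
\[
0\longrightarrow E_{i-1}^{\ext^1}\longrightarrow E'_{i-1}\longrightarrow E_i\longrightarrow E_{i-1}^{\hom}\longrightarrow 0,
\]
which allows $\hom$ and $\ext^1$ to be simultaneously nonzero. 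Your rank match sees only the K-theoretic rank $n_i+(\ext^1-\hom)\,n_{i-1}$, equivalently $\chi(E_i,E_{i-1})=-\delta_i$ (this is Lemma~\ref{compu}); it cannot separate $\hom$ from $\ext^1$. So in case~(a) you have established $k=0$ and $\Ext^2=0$, but not $\Hom=0$, and hence not $\Ext^1\simeq\C^{\delta_i}$ either.

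The paper closes this gap by iterating: it applies the same analysis to the \emph{next} antiflip $W''=R_i(W')$, which is an instance of case~(b). There the K-theoretic rank of $R_{E'_{i-1}}(E'_i)$ is $n'_i-\delta_i n'_{i-1}$, while $n''_{i-1}=\delta_i n'_{i-1}-n'_i$ by Prop.~\ref{computo}; the signs disagree, so $k$ must be odd, hence $k=1$. Now $k=1$ \emph{does} force $\Ext^1(E'_i,E'_{i-1})=0$ (and $\Ext^2=0$) directly from the long exact sequence. Combining this with the duality $\RHom(E_i,E_{i-1})^\vee\simeq\RHom(E'_i,E'_{i-1})[1]$, obtained by applying $\RHom(E_{i-1},-)$ to the case~(a) mutation triangle, gives $\Hom(E_i,E_{i-1})\simeq\Ext^1(E'_i,E'_{i-1})^\vee=0$ and finishes case~(a). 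This bootstrap via the second antiflip is the missing idea; your argument for case~(b) is essentially correct (because $k=1$ really does force concentration), and the general case~(b) follows by induction along the sequence of antiflips starting from case~(a).
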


\begin{proof}
By Theorem~\ref{sgsGsrgsRHsrh}, the subcategories $\langle E_i\rangle$
and $\langle E'_i\rangle$ are also generated by the  Kawamata bundles $F_i$ and $F_i'$.
By Lemma~\ref{hola}, we can consider a $\Q$-Gorenstein smoothing $Y \rightsquigarrow Z$ with the total space $\cY$ inside the $\Q$-Gorenstein deformation spaces of $W$ and $W'$ and Kawamata sheaves on $Z$ are deformations of Kawamata sheaves on $W$ and $W'$. Therefore, s.o.d.'s \eqref{AEFefgesge}
are specializations of the s.o.d.'s 
$$
    D^b(\cY)=\langle\cA^{\cY}_r,\ldots,\cA^{\cY}_0, \cB^\cY \rangle =
\langle \cA'^{\cY}_r,\ldots,\cA'^{\cY}_0,\cB'^{\cY} \rangle
$$
obtained by Theorem~\ref{asfbsfbsf}
as deformation of the s.o.d.'s \eqref{EGegEWGAR}.
Thus $\cB'^Y=\cB^Y$ by Corollary~\ref{EFegwEWE}, $E'_j=E_j$ for $j \neq i,i-1$, $E'_i=E_{i-1}$, and 
$R_{E_{i-1}}(E_{i}) \in \langle E'_{i-1} \rangle$, and so $E'_{i-1}=R_{E_{i-1}}(E_{i})[k]$ for some $k$. 

Next we consider case (a), where we will show now that $k=0$. 
Let $C^{\bullet}=R_{E_{i-1}}(E_{i})$. By definition of mutation we have 
an exact sequence
$$0 \to E_{i-1} \otimes \Ext^2(E_i,E_{i-1})^{\vee} \to C^{-1} \to 0 \to E_{i-1} \otimes \Ext^1(E_i,E_{i-1})^{\vee} \to $$ $$ C^0 \to  E_i \to E_{i-1} \otimes \Hom(E_i,E_{i-1})^{\vee} \to C^1 \to 0.$$

So either $k=0$ or $k=\pm1$. 
Since $\rk(E'_{i-1})= n_{i-1}'=\delta_i n_{i-1} + n_i=\delta_i \rk(E_{i-1}) + \rk(E_i)$ by Prop.~\ref{computo} and $\rk(R_{E_{i-1}}(E_i))= \delta_i \rk(E_{i-1}) + \rk(E_i)$ by definition of mutation, we have $k=0$, and so $E'_{i-1}=R_{E_{i-1}}(E_{i})$. 

Since $C^{-1}=C^1=0$, we have $\Ext^2(E_i,E_{i-1})=0$.
Recall that $E_{i-1}\simeq E_i'$. 
Applying 
$\RHom(E_{i-1},\bullet)$ to
the distinguished triangle $E'_{i-1} \to E_i \to E_{i-1} \otimes \RHom(E_i,E_{i-1})^{\vee}$ gives  $ \RHom(E_i,E_{i-1})^{\vee}\simeq\RHom(E_{i}',E'_{i-1})[1]$. Therefore 
$$\Hom(E'_{i},E'_{i-1})=\Ext^1(E_i,E_{i-1})^{\vee} \ \ \ \text{and} \ \ \ \Ext^1(E'_{i},E'_{i-1})=\Hom(E_i,E_{i-1})^{\vee}.$$
This also shows that $\Ext^2(E'_{i},E'_{i-1})=0$.
We claim that $\Ext^1(E'_{i},E'_{i-1})=0$ as well, which will show that $\Hom(E_i,E_{i-1})=0$ and therefore that $\Ext^1(E_{i}, E_{i-1})\simeq\C^{\delta_i}$ by Lemma~\ref{compu}.
In order to do this we will analyze the next
antiflip $W''=R_i(W')$, which is an instance of case (b). This calculation will also settle the case~(b).

As before, we have $E_i''=E_{i-1}'$ and $E_{i-1}''=R_{E_{i-1}'}(E_i)[k]$ for some $k$.
Let $C^\bullet=R_{E_{i-1}'}(E_i')$. By definition of 
mutation we have an exact sequence
$$0 \to   C^{-1} \to 0 \to E'_{i-1} \otimes \Ext^1(E'_i,E'_{i-1})^{\vee} \to $$ $$ C^0 \to  E'_i \to E'_{i-1} \otimes \Hom(E'_i,E'_{i-1})^{\vee} \to C^1 \to 0.$$
So $C^{-1}=0$ and therefore $k=0$ or $k=1$. We claim that $k=1$.
Indeed, $\rk(E_{i-1}'')=n_{i-1}''=\delta n'_{i-1}-n_i'=\delta\rk(E'_{i-1})-\rk(E_i')$. On the other hand,
$\rk(R_{E_{i-1}'}(E_i'))=-\chi(E_i',E_{i-1}')\rk(E'_{i-1})+\rk(E_i')=\delta\rk(E'_{i-1})+\rk(E_i')$.
So $k=1$. This shows that $\Ext^1(E_i',E_{i-1}')=0$,
which proves all claims in case (a).

Finally, to finish case (b), we apply 
$\RHom(E'_{i-1},\bullet)$ to
the distinguished triangle $E''_{i-1}[-1] \to E'_i \to E'_{i-1} \otimes \RHom(E'_i,E'_{i-1})^{\vee}$, which  gives  $ \RHom(E'_i,E'_{i-1})^{\vee}\simeq\RHom(E_{i}'',E''_{i-1})$. So $\Ext^k(E''_{i}, E''_{i-1})=
0$ for $k>0$, and we also have an isomorphism $\Hom(E''_{i}, E''_{i-1})\simeq \Hom(E'_{i}, E'_{i-1})^\vee\simeq\C^{\delta_i}$.
\end{proof}

\begin{lemma}\label{compu}
Let $Y$ be a $\Q$-Gorenstein smoothing of a Wahl resolution $W$. Let $E_r,\ldots,E_0$ be the corresponding H.e.c.~on $Y$.
Then $\chi(E_{i},E_{i-1})=-n_{i} n_{i-1} \, \Gamma_{i} \cdot K_W$\footnote{The Euler pairing for $\alpha, \beta \in D^b(Z)$ is the integer
$\chi(\alpha,\beta)= \sum_j (-1)^j \dim_{\C} \Hom(\alpha,\beta[j])$.}.
In particular, if $K_W\cdot\Gamma_i>0$ then $\chi(E_{i},E_{i-1})=-\delta_i$ and 
if $K_W\cdot\Gamma_i<0$ then $\chi(E_{i},E_{i-1})=\delta_i$.
\end{lemma}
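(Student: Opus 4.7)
The plan is to evaluate $\chi(E_i, E_{i-1})$ via Hirzebruch--Riemann--Roch on the smooth projective surface $Y$, using the explicit Chern data for Hacking bundles from Theorem~\ref{sgsGsrgsRHsrh} and the remark that follows it. Assumption~\ref{assume}(2) forces $\chi(\cO_Y) = 1$, so $\mathrm{td}_2(Y) = 1$ and the Todd class of $Y$ simplifies.

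Set $D_j := -(A + \Gamma_1 + \ldots + \Gamma_j)$, so $c_1(E_j) = n_j D_j$ and $D_{i-1} - D_i = \Gamma_i$. The formula $c_2(E_j) = \tfrac{n_j-1}{2 n_j}(c_1(E_j)^2 + n_j + 1)$ gives
$\tfrac12(c_1(E_j)^2 - 2 c_2(E_j)) = \tfrac12\bigl(n_j D_j^2 - (n_j^2 - 1)/n_j\bigr)$.
Substituting into $\chi(E_i, E_{i-1}) = \int_Y \mathrm{ch}(E_i^\vee)\,\mathrm{ch}(E_{i-1})\,\mathrm{td}(Y)$, expanding, and using $D_i^2 + D_{i-1}^2 - 2 D_i \cdot D_{i-1} = \Gamma_i^2$, a direct computation collapses the expression to
\[
\chi(E_i, E_{i-1}) \;=\; -\tfrac{n_i n_{i-1}}{2}\bigl(K_Y \cdot \Gamma_i - \Gamma_i^2\bigr) \;+\; \tfrac{n_i^2 + n_{i-1}^2}{2 n_i n_{i-1}}.
\]

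The next step is to reduce intersection numbers on $Y$ to intersection numbers on $W$. The classes $D_j$ are first Chern classes of the Kawamata sheaves of Section~\ref{s2} and therefore deform from $W$ to $Y$ with the $\Q$-Gorenstein family; hence the difference $\Gamma_i$ deforms as an $H^2$-class, giving $K_Y \cdot \Gamma_i = K_W \cdot \Gamma_i$ and $\Gamma_i^2|_Y = \Gamma_i^2|_W$. Then the identity built into Definition~\ref{srbsbsr}, applied to the extremal Wahl sub-configuration $[{n_{i-1} \choose a_{i-1}}]-(c_i)-[{n_i \choose a_i}]$ formed by $\Gamma_i$ together with the two adjacent Wahl singularities $P_{i-1}, P_i$, reads
$-n_{i-1}^2 n_i^2 \Gamma_i^2 = n_{i-1}^2 + n_i^2 \pm \delta_i n_{i-1} n_i$
with $\pm$ equal to the sign of $K_W \cdot \Gamma_i$, equivalently
$\Gamma_i^2 + K_W \cdot \Gamma_i = -(n_i^2 + n_{i-1}^2)/(n_i^2 n_{i-1}^2)$.
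Feeding this back cancels the fractional correction exactly and yields $\chi(E_i, E_{i-1}) = -n_i n_{i-1}\,K_W \cdot \Gamma_i$; the ``in particular'' statement then follows from $\delta_i = n_{i-1} n_i |K_W \cdot \Gamma_i|$.

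The hard part is the Chern-character bookkeeping: the residual fraction $(n_j^2 - 1)/n_j$ produced by $c_2(E_j)$ must combine across $E_i$ and $E_{i-1}$ to form exactly the $(n_i^2 + n_{i-1}^2)/(2 n_i n_{i-1})$ correction that is then cancelled by the extremal-Wahl identity, and the signs coming from dualization in $\mathrm{ch}(E_i^\vee)$ must be tracked carefully. The only conceptual subtlety is the legitimacy of computing $\Gamma_i^2$ and $\Gamma_i \cdot K$ on $W$ rather than on $Y$ (where $\Gamma_i$ need not lift as a divisor); this is justified because only the $H^2$-classes of the Kawamata-sheaf Chern classes, and hence their differences, enter Riemann--Roch, and those classes deform by construction of the Hacking bundles.
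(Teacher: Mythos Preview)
Your argument is correct and follows essentially the same route as the paper: apply Riemann--Roch on $Y$ with the explicit Chern data of Theorem~\ref{sgsGsrgsRHsrh}, transport the intersection numbers to $W$, and finish with the adjunction-type identity for $\Gamma_i$. Your version of that identity, $\Gamma_i^2+K_W\cdot\Gamma_i=-(n_i^2+n_{i-1}^2)/(n_i^2 n_{i-1}^2)$, is the one that makes the cancellation work (the paper records it as $\Gamma_i^2=-\Gamma_i\cdot K_W-\tfrac{1}{n_i}-\tfrac{1}{n_{i-1}}$, which is a typo for $\tfrac{1}{n_i^2}$, $\tfrac{1}{n_{i-1}^2}$), and your derivation of it from the formula $-n_{i-1}^2n_i^2\Gamma_i^2=n_{i-1}^2+n_i^2\pm\delta_i n_{i-1}n_i$ in Definition~\ref{srbsbsr} is a clean way to get it.
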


\begin{proof}
$\chi(E_i,E_{i-1})=\sum (-1)^j\ext^j(E_i,E_{i-1})$, and so by Riemann-Roch
$$\chi(E_i,E_{i-1})=n_i \ch_2(E_{i-1}) - c_1(E_i)\cdot c_1(E_{i-1}) + n_{i-1} \ch_2(E_i)\qquad{}$$
$${}\qquad+{1\over 2}\left(n_{i-1} c_1(E_i) - n_i c_1(E_{i-1})\right)\cdot K_{Y} + n_i n_{i-1},$$ where $\ch_2=\frac{1}{2}(c_1^2-2c_2)$. By Theorem \ref{sgsGsrgsRHsrh}, we know that $$ c_1(E_i)=-n_i(A+\Gamma_1+\ldots+\Gamma_i) \in H_2(Y),$$ and $c_2(E_i)=\frac{n_i-1}{2n_i}(c_1(E_i)^2 + n_i +1)$. Hence we do the computation in $W$. We have $\ch_2(E_i)=\frac{1}{2}\Big(n_i(A+\Gamma_1+\ldots+\Gamma_i)^2-\frac{n_i^2-1}{n_i}\Big)$, and so $$n_i \ch_2(E_{i-1}) - c_1(E_i)\cdot c_1(E_{i-1}) + n_{i-1} \ch_2(E_i)=\frac{1}{2} n_i n_{i-1} \Gamma_i^2 -n_i n_{i-1} + \frac{n_{i-1}}{2n_i} + \frac{n_i}{2n_{i-1}}.$$ But on $W$ we have $\Gamma_i^2=-\Gamma_i \cdot K_W - \frac{1}{n_i} - \frac{1}{n_{i-1}}$. On the other hand $n_{i-1} c_1(E_i) - n_i c_1(E_{i-1})=-n_{i-1} n_i \Gamma_i$, and so we plug in the formula for $\chi(E_i,E_{i-1})$ to obtain the formula.
\end{proof}

Finally,  we can finish the proof of Theorem~\ref{weFwetwET}
with Lemmas~\ref{ssgsGsrhSR}-\ref{qergawgwrg}.

\begin{lemma}\label{ssgsGsrhSR}
Let $W$ be a Wahl resolution of $\overline W$
satisfying Assumption~\ref{assume} and let $Y$
be a sufficiently general surface from the corresponding versal deformation space of $\oW$.
Let~$E_r,\ldots, E_0$ be the corresponding Hacking exceptional collection on $Y$. Then, for every $i>j$, either $\Ext^k(E_i, E_j)=0$ for $k\ne1$ or $\Ext^k(E_i, E_j)=0$ for $k\ne0$.

In particular, suppose $E_r,\ldots, E_0$ 
(resp.~$\bar E_r,\ldots, \bar E_0$)
is the H.e.c.~ 
that corresponds to the M-resolution $W^+$
(resp.~the N-resolution $W^-$). Then
\begin{enumerate}
\item 
$\bar E_r,\ldots, \bar E_0$ is a strong exceptional collection, i.e.~$\Ext^k(\bar E_i, \bar E_j)=0$ for $k>0$, $i>j$.
\item
$\Ext^k(E_i, E_j)=0$ for $k\ne1$, $i>j$. 
\item
For $i=1,\ldots, r-1$, we have
$\Hom(\bar E_{r-i+1}, \bar E_{r-i})\simeq
\Ext^1(E_i, E_{i-1})^\vee\simeq
\C^{\delta_i}$.
\item $\bar E_r,\ldots, \bar E_0$ is a mutation of $E_r,\ldots, E_0$ 
(no homological shifts).
\end{enumerate}
\end{lemma}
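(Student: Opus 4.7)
The plan is to exploit the explicit sequence of $N = r(r+1)/2$ right antiflips connecting $W^+$ to $W^-$ provided by Theorem \ref{nresflips}, matching each antiflip with a right mutation of the corresponding Hacking exceptional collection via Theorem \ref{main3}.

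The key observation is that every antiflip in this sequence is applied either to an extremal P-resolution (where $K \cdot \Gamma > 0$) or to a $\delta = 0$ curve (where $K \cdot \Gamma = 0$), placing every step in case~(a) of Theorem \ref{main3}. Each such step therefore corresponds to a right mutation $R_{E_{i-1}}(E_i)$ with homological shift $k = 0$, producing an honest sheaf rather than a shifted complex. Composing all $N$ such shift-free mutations gives claim~(4). Claim~(3) then follows from applying Theorem \ref{main3} case~(a) at the step where the descendant of $\Gamma_i \subset W^+$ is finally mutated, together with the identification $\bar\delta_{r-i+1} = \delta_i$ coming from property~(2) of Definition \ref{nres}.

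For the general dichotomy and claims (1), (2), I would proceed by induction on the antiflip step $t$, maintaining the invariant that at every intermediate $W_t$, each pair $(E_a, E_b)$ with $a > b$ satisfies $\Ext^2(E_a, E_b) = 0$ and lies in exactly one of two states: state~(i), where $\Ext^k(E_a,E_b) = 0$ for all $k > 0$ (Hom-only), or state~(ii), where $\Hom(E_a,E_b) = 0$ and $\Ext^k(E_a,E_b) = 0$ for $k > 1$ (Ext$^1$-only). The inductive step applies $\RHom(E_a, \bullet)$ or $\RHom(\bullet, E_b)$ to the defining triangle $E_{i-1}^{\oplus\delta_t} \to E'_{i-1} \to E_i$ of the mutation and reads off the LES. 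The mutated adjacent pair itself transitions from state~(ii) to state~(i) by Theorem \ref{main3} case~(a); for any other pair, when the two input pairs $(E_a, E_{i-1})$ and $(E_a, E_i)$ already lie in the same state, the output pair automatically inherits it by direct inspection of the LES. The base case $W_0 = W^+$ then gives claim~(2), and the terminal case $W_N = W^-$ gives claim~(1).

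The main obstacle will be the \emph{mixed-state} configuration, where the two input pairs for a mutation lie in different states and a priori the LES allows an output with both $\Hom$ and $\Ext^1$ nonzero, violating the dichotomy. Two points have to be secured: the base case itself at $W^+$, which I would handle via the s.o.d.\ vanishings of Theorem \ref{asfbsfbsf} combined with Serre duality on $Y$ and the positivity $K_{W^+} \cdot \Gamma_i \geq 0$ to force $\Ext^2(E_i, E_j) = 0$ for $i > j$; and a combinatorial check that the specific antiflip ordering produced by the M-to-N algorithm of Lemma \ref{existencenres} and Proposition \ref{partialCQS} avoids genuinely mixed configurations at every step, so that the induction propagates cleanly. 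In the few remaining places where mixedness appears (for arbitrary intermediate Wahl resolutions off the canonical M-to-N path), one must argue that the connecting map in the LES has extremal rank --- maximally injective or maximally surjective --- via the Yoneda interpretation of this map as composition with the extension class defining the mutation.
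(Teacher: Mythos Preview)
Your inductive invariant along the M-to-N path is natural, but the mixed-state obstacle you flag is real and you do not actually resolve it. A combinatorial check that mixedness never arises along the specific M-to-N sequence is not supplied, and the Yoneda argument forcing the connecting map to have extremal rank is only gestured at; neither is obvious.

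The paper avoids this difficulty by a different and much cleaner device. To establish the dichotomy for a fixed pair $(i,j)$ with $i>j$ on \emph{any} Wahl resolution $W$, one does not track $\RHom(E_i,E_j)$ through a long chain of mutations at all. Instead one applies right antiflips at $\Gamma_{j+1},\Gamma_{j+2},\ldots,\Gamma_{i-1}$ in order. By Theorem~\ref{main3}, each such antiflip leaves $E_i$ untouched and transports the bundle $E_j$ one position to the right (it becomes the new $E'_{j+1}$, then $E'_{j+2}$, etc.). After $i-j-1$ steps the \emph{original} bundles $E_i$ and $E_j$ sit in adjacent positions $i$ and $i-1$ of a new Wahl resolution, and Theorem~\ref{main3} applied once to that adjacent pair gives the dichotomy directly from the sign of $K\cdot\Gamma_i$ there. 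No long exact sequences, no mixed states.

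Part~(2) then falls out immediately: starting from $W^+$, Proposition~\ref{computo} ensures that during this ``bring together'' process every curve at position $>k$ keeps nonnegative intersection with $K$, in particular the final curve joining $E_i$ to $E_j$, so case~(a) applies and only $\Ext^1$ survives. Part~(3) and part~(4) are as you say. For part~(1) the paper again does not induct along the M-to-N path; it factors the braid element through a block decomposition $\langle A,B,C\rangle$ (with $B=\langle E_i,\ldots,E_j\rangle$), rearranges each block separately, and then mutates the blocks past one another. Mutating the whole subcategory $B$ past $C$ and $A$ induces an equivalence $B\to B'$ preserving all $\RHom$'s between its objects, so one only has to analyse what happens inside $B$, where the ``bring together'' argument has already shown the transition from $\Ext^1$-only to $\Hom$-only.

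The upshot: your approach could perhaps be pushed through, but the paper's ``make them adjacent without changing them'' trick is the missing idea that dissolves the obstacle you identified.
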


\begin{proof}
Let $W$ be an arbitrary Wahl resolution. 
We claim that we can bring any two singularities $P_i$, $P_j$, $i>j$, in $W$ together (without changing them) via a sequence of right antiflips. If $i=j+1$, then $P_i$, $P_j$ are already together via $\Gamma_{i}$. Otherwise we have a chain $\Gamma_{j+1}, \ldots, \Gamma_{i}$ connecting them, which we can antiflip from $k=j+1$ to $k=i-1$, after that the new singularities in positions $i-1$ and $i$ are $P_j$ and $P_i$.
By Th.~\ref{main3}, the corresponding mutations do not change Hacking bundles $E_i$ and $E_j$
and we have $\Ext^k(E_i, E_j)=0$ for $k\ne1$ 
if $K\cdot\Gamma_i\ge0$ and 
or $\Ext^k(E_i, E_j)=0$ for $k\ne0$
if $K\cdot\Gamma_i\le0$, where $\Gamma_i$ is the curve connecting the points after the antiflips.

Now suppose $E_r,\ldots, E_0$ 
(resp.~$\bar E_r,\ldots, \bar E_0$)
is the H.e.c.~ 
that corresponds to the M-resolution $W^+$
(resp.~the N-resolution $W^-$)
Part (3) follows from Th.~\ref{main3} and definition of the N-resolution. For part (2), we argue as follows.
By Prop.~\ref{computo}, for every  antiflip of $\Gamma_k$, $k=j+1,\ldots, k=i-1$ described above, 
the curves in positions $>k$ retain 
non-negative intersection with the canonical class,
including the last curve $\Gamma_i$ that will connect $P_i$ to~ $P_j$.
Thus (2) follows from Th.~\ref{main3}.

By Th.~\ref{main3}, the sequence of antiflips that connects $W^+$ to $W^-$
corresponds to the sequence of mutations that
takes an exceptional collection $E_r,\ldots, E_0$ literally 
to the exceptional collection~$\bar E_r,\ldots, \bar E_0$, i.e.~without homological shifts. This shows (4). 
Finally, we prove (1). 
We follow $E_i$ and $E_j$ through a sequence of mutations. To simplify notation, we will denote their images after mutations by the same letters. We can decompose this element of the braid group as follows: 
arrange bundles into three blocks: $A=\langle E_r,\ldots,E_{i+1}\rangle$, $B=\langle E_i,\ldots,E_{j}\rangle$, $C=\langle E_{j-1},\ldots,E_{0}\rangle$.
Rearrange the bundles in $A$ and $C$ in the opposite order (moving $E_{i+1}$, resp.~$E_0$ all the way to the left without changing it as in the definition of the N-resolution.)
This does not change $E_i$ or $E_j$, so the $\RHom$ between them stays the same. Next, rearrange bundles in  $B$ in the opposite order by moving $E_j$ all the way to the left.
By the analysis in the beginning of the proof of the lemma, this will change non-zero components of $\RHom$ from $\Ext^1(E_i,E_j)$ to $\Hom(E_j,E_i)$.
Finally, we  mutate s.o.d.'s:
$\langle A,B,C\rangle\to
\langle A,C,B'\rangle\to
\langle C,A',B'\rangle\to
\langle C,B',A''\rangle$.
This gives an equivalence $B\to B'$ which does not change $\RHom$'s between its objects,
for example between $E_i$ and $E_j$.
\end{proof}

\begin{lemma}\label{askbfhsbKFHjsghj}
Let $\pi:\,W\to\oW$ be a Wahl resolution 
satisfying Assumption~\ref{assume} and let $Y$
be a sufficiently general surface from the corresponding versal deformation space of $\oW$.
Let~$E_r,\ldots, E_0$ be the corresponding Hacking exceptional collection on $Y$. 
\begin{enumerate}
    \item 
The Kawamata vector bundle $\bar F$ on $\overline W$ deforms uniquely to a vector bundle $F$ on~$Y$.
\item For every $i=0,\ldots,r$,  $\Hom(F,E_i)=\C^{\rk E_i}$ and 
$\Ext^k(F,E_i)=0$ for $k>0$.
\item 
$\{ F \} = \langle F \rangle = \langle E_r,\ldots, E_0 \rangle\subset D^b(Y)$.
\end{enumerate}
\end{lemma}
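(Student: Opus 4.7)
The plan has three parts matching the three assertions.

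For Part (1), I would apply the deformation theory of Kawamata sheaves from Section~\ref{AEFefEEfe}. Let $\pi\colon \cW\to\bar{\cW}$ denote the blow-down morphism from the total space of the fixed $\Q$-Gorenstein smoothing $Y\rightsquigarrow W$ to the total space of the induced smoothing $Y\rightsquigarrow\bar W$. The Kawamata vector bundle $\bar F$ is a locally free Kawamata sheaf on $\bar W$ in the sense of Definition~\ref{kjhfkjgfk} applied to its single c.q.s., so Lemma-Definition~\ref{sFsefwetwet} produces a unique $B$-flat coherent sheaf $\bar\cF$ on $\bar{\cW}$ restricting to $\bar F$, and Lemma~\ref{qrgsrhr}(1) ensures $\bar\cF$ is locally free. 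The restriction $F:=\bar\cF|_Y$ is the required vector bundle; its uniqueness as a deformation of $\bar F$ is immediate from $\Ext^1(\bar F,\bar F)=0$.

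For the first equality of Part (3), I would apply Corollary~\ref{EFegwEWE} to the birational contraction $\pi$. This compares the sods of Theorem~\ref{asfbsfbsf} on $\cW$ and $\bar{\cW}$: one has $R\pi_*\langle\cA^{\cW}_r,\ldots,\cA^{\cW}_0\rangle=\{\bar\cF\}$ and $\cB^{\bar{\cW}}\simeq\cB^{\cW}$ via $(R\pi_*,L\pi^*)$. Since $\pi$ restricts to an isomorphism on the generic fiber $Y$, specializing gives $\{F\}=\langle E_r,\ldots,E_0\rangle$ in $D^b(Y)$, where the identification $\cA^{\cW}_i|_Y=\langle E_i\rangle$ is provided by Theorem~\ref{sgsGsrgsRHsrh}.

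For the second equality of Part (3) and for Part (2), I would exploit the sod structure $\{F\}=\langle E_r,\ldots,E_0\rangle$ applied to the object $F$ itself. Each projection functor sends $F$ to an object $A_i\in\langle E_i\rangle\simeq D^b(\C\text{-mod})$, i.e.~to a bounded direct sum of shifts of $E_i$. Since $F$ is a sheaf concentrated in degree zero and each $E_i$ is a locally free exceptional bundle also in degree zero, one verifies that no shifts occur, so each $A_i\simeq E_i^{m_i}$ for some $m_i\ge 0$. The idempotent decomposition of the flat $\Delta$-dimensional algebra $\End(F)$ (a deformation of the Kalck--Karmazyn algebra $\bar R$) then forces this filtration to split into an honest direct sum $F\simeq\bigoplus E_i^{m_i}$. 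Provided $m_i>0$ for each~$i$, each $E_i$ is a direct summand of $F$, giving $\langle F\rangle\supseteq\langle E_r,\ldots,E_0\rangle=\{F\}$ and, combined with the trivial inclusion $\langle F\rangle\subseteq\{F\}$, the equality $\langle F\rangle=\{F\}$. Part~(2) then reduces to computing $\RHom(F,E_i)=\bigoplus_j\RHom(E_j,E_i)^{m_j}$, using the sod vanishing $\Hom(E_j,E_i)=0$ for $j<i$ together with the higher Ext-vanishing of Lemma~\ref{ssgsGsrhSR}(1)--(2).

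The main obstacle is pinning down the multiplicities $m_i$ and verifying $\Hom(F,E_i)=\C^{n_i}$ precisely. A clean route is to reduce to $W=W^-$ via the braid-group compatibility of Theorem~\ref{main3}: mutations preserve both $\{F\}$ and the H.e.c.~up to mutations without homological shifts (Lemma~\ref{ssgsGsrhSR}(4)), so the statement of the lemma is stable under the braid action and it suffices to establish it for the N-resolution. For $W^-$, where the collection is strong and all pairs are of ``Hom-only'' type, the rank identity $\Delta=\sum\bar n_i n_{r-i}$ and the arrow-counts~\eqref{afbzdfbdfna} combine with the uniqueness of the idempotent decomposition of $\End(F)$ to force $m_i=n_{r-i}$, at which point Part~(2) becomes a direct numerical verification.
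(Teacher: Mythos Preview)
Your arguments for Part~(1) and for the equality $\{F\}=\langle E_r,\ldots,E_0\rangle$ in Part~(3) are correct and match the paper's proof.

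The approach to Part~(2) and to $\langle F\rangle=\{F\}$, however, has a genuine gap. You attempt to decompose $F$ as a direct sum $\bigoplus E_i^{m_i}$ using the s.o.d.\ filtration, but this decomposition is \emph{false} for a general Wahl resolution~$W$. Indeed, if $W=W^+$ is the M-resolution then by Lemma~\ref{ssgsGsrhSR}(2) one has $\Ext^1(E_j,E_i)\ne 0$ for $j>i$ whenever $\delta$ is positive, so a splitting $F\simeq\bigoplus E_i^{m_i}$ would force $\Ext^1(F,E_i)\ne 0$, contradicting~(2). The splitting only holds with respect to the \emph{N-resolution} bundles~$\bar E_i$, and that is precisely what the paper establishes separately in Lemma~\ref{qergawgwrg}. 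Your retreat to $W=W^-$ does not save the argument for~(2), because~(2) concerns the bundles $E_i$ of the \emph{given}~$W$, and transferring the statement back from $W^-$ would require the duality $\RHom(\bar E_j,E_{r-i})=\delta_{ij}\C$ of Lemma~\ref{qergawgwrg}(1), whose proof already uses Lemma~\ref{askbfhsbKFHjsghj}(2). The claim that the idempotent structure of $\End(F)$ determines the~$m_i$ is likewise circular: the structure of this deformed algebra is exactly what the paper is computing.

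The paper instead proves~(2) directly and uniformly for any Wahl resolution~$W$ by a geometric argument. Since $\cF_i|_Y\simeq E_i^{n_i}$, it suffices to show $\RHom_Y(F,\cF_i|_Y)=\C^{n_i^2}$; by semi-continuity this reduces to $\RHom_W(\pi^*\bar F,F_i)=\C^{n_i^2}$, and since $F_i$ is an iterated extension of $\cO_W(-D_i)$ of length~$n_i^2$ it reduces further to $\RHom_W(\pi^*\bar F,\cO_W(-D_i))=\C$. By adjunction this is $\RHom_{\oW}(\bar F,R\pi_*\cO_W(-D_i))$, and a short exact sequence computation gives $R\pi_*\cO_W(-D_i)=\cO_{\oW}(-\bar A)$, whence the result follows from Lemma~\ref{asfhsrHs}. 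Only \emph{after} establishing~(2) does the paper deduce the direct-sum decomposition $F\simeq\bigoplus\bar E_i^{n_{r-i}}$ (Lemma~\ref{qergawgwrg}) and hence $\langle F\rangle=\{F\}$.
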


\begin{proof}
Let $F=\bar\cF|_Y$ be the unique deformation of $\bar F$ given by Lemma~\ref{sFsefwetwet}, which is locally free by Lemma~\ref{qrgsrhr}, which proves (1).
By Corollary~\ref{EFegwEWE}, Remark~\ref{wEFegEWG}
the admissible subcategory $\{F\}\subset D^b(Y)$ is equal to the subcategory $\langle E_r,\ldots, E_0 \rangle$. By Theorem~\ref{sgsGsrgsRHsrh}, the Kawamata bundle $\cF_i|_Y$ on $Y$ associated with the singularity $P_i\in W$ is isomorphic to $E_i^{\rk E_i}$. So (2) will follow if we can show that $\Hom(F,\cF_i|_Y)=\C^{\rk \cF_i|_Y}$ and 
$\Ext^k(F,\cF_i|_Y)=0$ for $k>0$. Since $\cF_i|_Y$ is a deformation of the Kawamata sheaf $F_i$ on $W$, by semi-continuity it suffices to show that
$\RHom_W(\pi^*\bar F, F_i)=\C^{\rk F_i}$.
Since $F_i$ is the maximal iterated extension of $\cO_W(-D_i)$, it suffices to show that 
$\RHom_W(\pi^*\bar F,\cO_W(-D_i))=\C$.
This will follow at once from Lemma~\ref{asfhsrHs} and adjunction if we can show that
$R\pi_*\cO_W(-D_i)=\cO_\oW(-\bar A)$. In the exact sequence $0\to \cO_W(-D_i)\to\cO_W\to\cO_{D_i}\to 0$, the derived pushforward by $\pi$ of 
$\cO_{D_i}=\cO_{\Gamma_0\cup\ldots\cup\Gamma_i}$ is equal to $\cO_{\pi(\Gamma_0)}=\cO_{\bar A}$ and the derived pushforward of $\cO_W$ is $\cO_\oW$. So~ $R\pi_*\cO_W(-D_i)=\cO_\oW(-\bar A)$. In (3), we just need to prove that $\{ F \} = \langle F \rangle$, which will follow once we show that $F \simeq \bigoplus\limits_{i=0}^r \bar E_i^{n_{r-i}}$ in the next Lemma~\ref{qergawgwrg} because then
$\langle F \rangle$ contains $\langle \bar E_r,\ldots, \bar E_0 \rangle$, which is equal to $\{ F \}$.
\end{proof}

\begin{lemma}\label{qergawgwrg}
Let $\oW$ be a c.q.s. surface 
satisfying Assumption~\ref{assume} and let $Y$
be its sufficiently general smoothing from a fixed component 
of the versal deformation space.
Let $E_r,\ldots, E_0$ 
(resp.~$\bar E_r,\ldots, \bar E_0$)
be the H.e.c.~on $Y$ 
that corresponds to the M-resolution $W^+$ (resp.~the N-resolution $W^-$).
Then
\begin{enumerate}
\item These collections are dual: $\RHom(\bar E_j,E_{r-i})=\C$ if $i=j$ and $0$ otherwise.
\item 
The Kawamata bundle $F$ is isomorphic to $\bigoplus\limits_{i=0}^r \bar E_i^{n_{r-i}}$, 
where $n_{j}=\rk E_{j}$. The Kalck--Karmazyn algebra $R=\End(\bar F)$ deforms to the algebra $\End(F)$ Morita-equivalent to the endomorphism algebra
$\hat R=\End(\bar E_r\oplus\ldots\oplus\bar E_0)$ of a strong exceptional collection.
\item $\hat R$ is a path algebra of a quiver with $(r+1)$ vertices labeled by $\bar E_r,\ldots,\bar E_0$ and with arrows connecting $\bar E_i$ to $\bar E_j$ for $i>j$ so that the total number of paths connecting $\bar E_i$ to $\bar E_j$ is equal to $\hom(\bar E_i,\bar E_j)$.
In particular, the category $\langle F\rangle$ does not depend on~ $Y$.
\end{enumerate}
\end{lemma}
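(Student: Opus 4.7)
The plan is to prove part (1) first; parts (2) and (3) then follow from (1) by essentially bookkeeping.

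\textbf{Part (1).} The goal is to identify $\bar E_r, \ldots, \bar E_0$ as the right dual exceptional collection of $E_r, \ldots, E_0$ in $\cA = \langle E_r, \ldots, E_0 \rangle$. By Lemma~\ref{ssgsGsrhSR}(4), $\bar E_\bullet$ is obtained from $E_\bullet$ by a composite of right mutations (no homological shifts), which via Theorem~\ref{main3} corresponds to the $r(r+1)/2$ right antiflips of Theorem~\ref{nresflips}. This count matches the word length of the half-twist braid $\Delta_{r+1} \in B_{r+1}$. I will verify by induction on $r$ that the antiflip sequence realizes $\Delta_{r+1}$: under the correspondence $R_i \leftrightarrow \sigma_i$, the first pass antiflipping $\Gamma_r, \ldots, \Gamma_1$ in sequence reproduces the factor $\sigma_r \sigma_{r-1} \cdots \sigma_1$, and applying the induction hypothesis on the truncated chain supplies the half-twist on the remaining strands, recovering a standard factorization of $\Delta_{r+1}$. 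The base case $r=1$ is verified directly from Theorem~\ref{main3}: it gives $\bar E_1 = E_0$ and $\bar E_0 = R_{E_0}(E_1)$, and applying $\RHom(-, E_k)$ to the defining mutation triangle confirms $\RHom(\bar E_0, E_0) = 0$ and $\RHom(\bar E_0, E_1) = \C$. By a classical result on braid group actions on exceptional collections (Bondal), applying $\Delta_{r+1}$ produces the right dual collection, which is characterized precisely by $\RHom(\bar E_j, E_{r-i}) = \C \delta_{ij}$. The main obstacle is the bookkeeping needed to confirm that the specific antiflip composite realizes $\Delta_{r+1}$ (up to braid relations) rather than a braid-inequivalent word yielding a differently mutated collection.

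\textbf{Part (2).} Given (1), every $F \in \cA = \langle \bar E_r, \ldots, \bar E_0 \rangle$ admits a filtration whose cone at position $i$ lies in $\langle \bar E_{r-i} \rangle$, hence is of the form $\bar E_{r-i}^{m_i}[s_i]$ for some $m_i \ge 0$, $s_i \in \Z$. Applying $\RHom(-, E_k)$ to this filtration and using the duality from (1), only the cone at position $i = k$ contributes a nonzero term, giving
\[
\RHom(F, E_k) \;\simeq\; \C^{m_k}[-s_k].
\]
But Lemma~\ref{askbfhsbKFHjsghj}(2) yields $\RHom(F, E_k) = \C^{n_k}$ concentrated in degree zero, so $s_k = 0$ and $m_k = n_k$ for all $k$. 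With all shifts vanishing and $\Ext^1(\bar E_i, \bar E_j) = 0$ for any $i, j$ (by strong exceptionality and exceptionality), the filtration splits, yielding $F \simeq \bigoplus_i \bar E_{r-i}^{n_i} = \bigoplus_i \bar E_i^{n_{r-i}}$. Consequently, $\End(F)$ is Morita-equivalent to $\hat R = \End(\bigoplus \bar E_i)$ in the standard way.

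\textbf{Part (3).} The algebra $\hat R$ is finite-dimensional and basic. Under the equivalence $\cA \simeq D^b(\hat R\text{-mod})$ via $\RHom(\bigoplus \bar E_i, -)$, the duality from (1) identifies $E_{r-j}$ with the simple module $S_j$; Lemma~\ref{ssgsGsrhSR}(2) then gives $\Ext^{\ge 2}(S_i, S_j) = 0$, so $\hat R$ is hereditary. By Gabriel's theorem, $\hat R \cong \C Q$ for an acyclic quiver $Q$ whose vertices are the indecomposable projectives $\bar E_0, \ldots, \bar E_r$; the number of paths from $\bar E_i$ to $\bar E_j$ equals $\dim \Hom(\bar E_i, \bar E_j) = \hom(\bar E_i, \bar E_j)$, whose closed form~(\ref{afbzdfbdfna}) follows from Lemma~\ref{sdtheth}. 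Independence of $\langle F \rangle$ from $Y$ is immediate because $F$ is the unique deformation of the Kawamata bundle $\bar F$ on $\oW$, whose associated category is intrinsic to $\oW$.
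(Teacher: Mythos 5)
Your overall strategy coincides with the paper's: part (1) is the statement that $\bar E_\bullet$ is the right dual collection of $E_\bullet$, and (2), (3) then follow from the duality together with Lemma~\ref{askbfhsbKFHjsghj}(2), strong exceptionality, the $\Ext^{\geq 2}$-vanishing of Lemma~\ref{ssgsGsrhSR}(2), and Gabriel's theorem. Two points deserve comment.

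For part (1), the paper does not pass through the half-twist $\Delta_{r+1}$ and Bondal's theorem. Instead it uses directly that, by Theorem~\ref{main3} applied along the antiflip sequence of Theorem~\ref{nresflips}, the admissible subcategory $\langle\bar E_r,\ldots,\bar E_{j+1}\rangle$ coincides with $\langle E_{r-j-1},\ldots,E_0\rangle$ and that $\bar E_j$ is the right mutation of $E_{r-j}$ over this subcategory with no shift (Lemma~\ref{ssgsGsrhSR}(4), which rests on the fact that every step of Theorem~\ref{nresflips} is a case-(a) antiflip). The mutation triangle $\bar E_j\to E_{r-j}\to T$ with $T\in\langle E_{r-j-1},\ldots,E_0\rangle$ then gives $\RHom(\bar E_j,E_{r-i})=\C\,\delta_{ij}$ in degree $0$ immediately from semiorthogonality. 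Your route is equivalent, but the "bookkeeping" you defer — checking that the antiflip word of Theorem~\ref{nresflips} is the Garside element and that no shifts accumulate — is exactly where the content sits; it is not optional, and you should also say explicitly that the degree-$0$ normalization of the duality pairing comes from the "no homological shifts" statement, since for a general exceptional collection the dual objects carry shifts.

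For part (2), the assertion that the $i$-th cone of the filtration "is of the form $\bar E_{r-i}^{m_i}[s_i]$" is not what membership in $\langle\bar E_{r-i}\rangle$ gives you: an object of the category classically generated by an exceptional object is a direct sum of shifts, i.e.\ of the form $\bar E_{r-i}\otimes C_i^\bullet$ for a graded vector space $C_i^\bullet$. Your argument survives because applying $\RHom(-,E_i)$ and using (1) identifies $C_i^\bullet$ with $\RHom(F,E_i)=\C^{n_i}$ concentrated in degree $0$, which is what forces the single-degree form a posteriori; but the claim should be derived, not assumed. With that correction, the splitting via $\Ext^1(\bar E_i,\bar E_j)=0$ and the Morita statement go through, and part (3) is as in the paper.
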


\begin{proof}
We will prove by a down-ward induction on $j$ that if $X\in\langle\bar E_r,\ldots,\bar E_j\rangle$ and $\RHom(X,\bar E_i)=\C^{\bar n_i}$,
$\RHom(X,E_{r-i})=\C^{n_{r-i}}$
for $i\ge j$ then 
$X\simeq \bar E_j^{n_{r-j}}\oplus\ldots\oplus \bar E_r^{n_0}$.
If $j=r$ then this is clear because $\bar E_r\simeq E_0$ and  $\bar n_r=n_0$.
When $j=0$, this will give part (2) by Lemma~\ref{askbfhsbKFHjsghj}  (2). To prove the step of induction, let the claim be true for $j+1$. Take $X\in\langle\bar E_r,\ldots,\bar E_j\rangle$. By definition of s.o.d., we have a distinguished triangle \begin{equation}\label{asrgasrgarh}
    \bar E_j\otimes C^\bullet\to X\to X'\to,
\end{equation}
where $C^\bullet$ is a complex of vector spaces and $X'\in\cA$ , where $\cA$ is an admissible subcategory $\langle\bar E_r,\ldots,\bar E_{j+1}\rangle$, which is equal to $\langle E_{r-j-1},\ldots,E_0\rangle$ by Th.~\ref{main3}. 
Also, $\bar E_j$ is the right mutation of $E_{r-j}$ over $\cA$.
So we have a mutation triangle
\begin{equation}\label{SGsrghsrharh}
\bar E_j\to E_{r-j}\to T\to 0
\end{equation}
with $T\in\cA$. This triangle proves part (1).
Since $\RHom(\bar E_j,\bar E_i)=0$ for $i>j$,
we have $\RHom(X',\bar E_i)=\RHom(X,\bar E_i)=\C^{\bar n_i}$ by \eqref{asrgasrgarh}.
Since $\RHom(\bar E_j,E_{r-i})=0$ for $i>j$
by \eqref{SGsrghsrharh},
we have $\RHom(X',E_{r-i})=\RHom(X, E_{r-i})=\C^{n_{r-i}}$ by \eqref{asrgasrgarh}.
This implies that $X'\simeq  \bar E_{j+1}^{n_{r-j-1}}\oplus\ldots\oplus \bar E_r^{n_0}$ by inductive assumption. 
The triangle \eqref{asrgasrgarh} and part (1) imply
$C^\bullet=\RHom(X,E_{r-j})=\C^{n_{r-j}}$. Since $\Ext^1(\bar E_i,\bar E_j)=0$ for $i>j$, this implies that $X\simeq \bar E_j^{n_{r-j}}\oplus\ldots\oplus \bar E_r^{n_0}$. This proves (2).

Under the equivalence $\langle F\rangle \to D^b(\hat R-\mmod)$, $X\mapsto\RHom(\hat R, X)$, the vector bundles $E_r,\ldots,E_0$ go to simple modules of $\hat R$ by part (1). Since $\Ext^2(E_i,E_j)=0$ for all $i,j$ by Lemma~\ref{ssgsGsrhSR} (2), this implies that $\hat R$ is hereditary, and therefore $\hat R$ is isomorphic to a path algebra of a quiver by a well-known theorem of Gabriel.
\end{proof}

\begin{lemma}\label{sdtheth}
The formula \eqref{afbzdfbdfna} holds.
\end{lemma}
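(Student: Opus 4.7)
The formula to prove splits into two separate equalities
\[
\hom(\bar E_i,\bar E_j)=\bar n_j \bar a_i-\bar n_i \bar a_j, \qquad \bar n_j \bar a_i-\bar n_i \bar a_j=\bar n_i\bar n_j\sum_{k=j+1}^{i}\frac{\bar\delta_k}{\bar n_{k-1}\bar n_k}.
\]
The right-hand identity is purely combinatorial and the left-hand one is a Riemann--Roch computation. My plan is to handle them in that order.

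For the combinatorial identity, I would first record the extremal formula
$\bar\delta_k=\bar n_{k-1}\bar a_k-\bar n_k\bar a_{k-1}$
for every $k=1,\dots,r$. Each pair $(\bar P_{k-1},\bar\Gamma_k,\bar P_k)\subset W^-$ is (the thickening of) an extremal N-resolution, so this identity is exactly the formula derived in the proof of Lemma~\ref{extreNres} applied locally. Dividing by $\bar n_{k-1}\bar n_k$ gives
$\frac{\bar\delta_k}{\bar n_{k-1}\bar n_k}=\frac{\bar a_k}{\bar n_k}-\frac{\bar a_{k-1}}{\bar n_{k-1}}$,
and the sum telescopes to $\frac{\bar a_i}{\bar n_i}-\frac{\bar a_j}{\bar n_j}$; multiplying by $\bar n_i\bar n_j$ gives the right-hand equality.

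For the left-hand equality I would use that $\bar E_r,\dots,\bar E_0$ is a \emph{strong} exceptional collection by Lemma~\ref{ssgsGsrhSR}(1), so $\Ext^{k}(\bar E_i,\bar E_j)=0$ for all $k>0$ and $i>j$, hence $\hom(\bar E_i,\bar E_j)=\chi(\bar E_i,\bar E_j)$. I then compute $\chi(\bar E_i,\bar E_j)$ by Hirzebruch--Riemann--Roch on $Y$, using $\chi(\cO_Y)=1$ (Assumption~\ref{assume}(2)), the Chern class data $c_1(\bar E_i)=-\bar n_i\bar D_i$ and $c_2(\bar E_i)=\tfrac{\bar n_i-1}{2\bar n_i}(c_1(\bar E_i)^2+\bar n_i+1)$ from Theorem~\ref{sgsGsrgsRHsrh}, together with the intersection theory on $W^-$ underlying the proof of Lemma~\ref{compu}. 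The key input is a chain version of the self-intersection formula
$\bar G_{ij}^2=-\bar G_{ij}\cdot K_{W^-}-\tfrac{1}{\bar n_j^2}-\tfrac{1}{\bar n_i^2}$
for $\bar G_{ij}=\bar\Gamma_{j+1}+\dots+\bar\Gamma_i$, which follows after telescoping from the formulas $\bar\Gamma_k^2=-\bar\Gamma_k\cdot K_{W^-}-\tfrac{1}{\bar n_{k-1}^2}-\tfrac{1}{\bar n_k^2}$, $\bar\Gamma_k\cdot\bar\Gamma_{k+1}=\tfrac{1}{\bar n_k^2}$, and $\bar\Gamma_k\cdot K_{W^-}=-\bar\delta_k/(\bar n_{k-1}\bar n_k)$. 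Expanding HRR one arrives at
\[
\chi(\bar E_i,\bar E_j)=\tfrac{\bar n_i\bar n_j}{2}\bigl(\bar G_{ij}^2-\bar G_{ij}\cdot K_{W^-}\bigr)+\tfrac{\bar n_i}{2\bar n_j}+\tfrac{\bar n_j}{2\bar n_i},
\]
and substituting the chain formula the boundary fractions $\tfrac{\bar n_i}{2\bar n_j}+\tfrac{\bar n_j}{2\bar n_i}$ cancel precisely against $-\tfrac{\bar n_i\bar n_j}{2}(\tfrac{1}{\bar n_j^2}+\tfrac{1}{\bar n_i^2})$, leaving $\chi(\bar E_i,\bar E_j)=\bar n_i\bar n_j\sum_{k=j+1}^{i}\tfrac{\bar\delta_k}{\bar n_{k-1}\bar n_k}$, which is the telescoping sum computed above.

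The main obstacle is bookkeeping: one must verify that the rational corrections supported at the Wahl singularities cancel precisely, which requires the correct Wahl convention for the adjunction and toric intersection formulas on $W^-$ (these are set up so that $\bar n_i\bar D_i$ represents an integral class on $Y$). Once the $j=i-1$ case of Lemma~\ref{compu} is seen to generalize verbatim to the chain $\bar G_{ij}$ via the telescoping above, the rest of the computation is routine polynomial manipulation.
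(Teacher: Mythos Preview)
Your argument is correct, but it follows a genuinely different path from the paper's.

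The paper's proof is a one-line geometric reduction: take a $\Q$-Gorenstein deformation of $W^-$ that smooths all the singularities $\bar P_{j+1},\dots,\bar P_{i-1}$ strictly between $\bar P_j$ and $\bar P_i$, keeping those two equisingular. The resulting surface carries a single curve $\Gamma$ joining $\bar P_j$ to $\bar P_i$, and the Hacking bundles $\bar E_j,\bar E_i$ are unchanged under this deformation (Lemma~\ref{argadrhadh}). One is now in the extremal $K$-negative situation of Theorem~\ref{main3}, so $\hom(\bar E_i,\bar E_j)$ equals the $\delta$ of this new curve, which is simultaneously $\bar n_j\bar a_i-\bar n_i\bar a_j$ by the local toric computation of Lemma~\ref{extreNres} and $-\bar n_i\bar n_j K\cdot\Gamma=\bar n_i\bar n_j\sum_k\bar\delta_k/(\bar n_{k-1}\bar n_k)$ since $K\cdot\Gamma=\sum_k K_{W^-}\cdot\bar\Gamma_k$.

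Your route avoids the smoothing altogether: you compute $\chi(\bar E_i,\bar E_j)$ directly by Riemann--Roch on $Y$, exactly as in Lemma~\ref{compu} but with the single curve $\Gamma_i$ replaced by the chain $\bar G_{ij}=\bar\Gamma_{j+1}+\cdots+\bar\Gamma_i$. The telescoping adjunction identity $\bar G_{ij}^2=-\bar G_{ij}\cdot K_{W^-}-\tfrac{1}{\bar n_j^2}-\tfrac{1}{\bar n_i^2}$ (which you derive from $\bar\Gamma_k^2=-\bar\Gamma_k\cdot K_{W^-}-\tfrac{1}{\bar n_{k-1}^2}-\tfrac{1}{\bar n_k^2}$ and $\bar\Gamma_k\cdot\bar\Gamma_{k+1}=\tfrac{1}{\bar n_k^2}$) makes the Riemann--Roch expression collapse to $-\bar n_i\bar n_j\,\bar G_{ij}\cdot K_{W^-}$, and strongness of the collection (Lemma~\ref{ssgsGsrhSR}(1)) upgrades $\chi$ to $\hom$. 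The combinatorial telescoping $\bar\delta_k=\bar n_{k-1}\bar a_k-\bar n_k\bar a_{k-1}$ then yields the closed form. This is more hands-on but has the virtue of not invoking another deformation; it shows that Lemma~\ref{compu} already contains the full statement once one replaces $\Gamma_i$ by an arbitrary connected subchain.
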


\begin{proof}
One can compute this dimension by $\Q$-Gorenstein smoothing all singularities between $\bar P_i$ and $\bar P_j$ to obtain a curve $\Gamma$ through singularities $\bar P_i$ and $\bar P_j$ which are now consecutive. Now use Theorem~\ref{main3}.
\end{proof}

\begin{remark}
In Definition \ref{sRGsrgrH} we required Wahl resolutions $W\to\oW$ to have a $\Q$-Gorenstein smoothing $Y\rightsquigarrow W$ that blows down to a smoothing $Y\rightsquigarrow \oW$, because otherwise there seems to be no natural way to define the braid group action.
\begin{figure}[htbp]
\centering
\includegraphics[width=2.5cm]{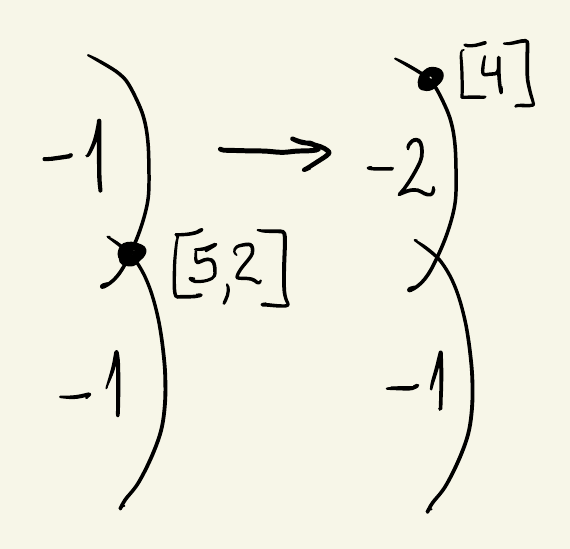}
\centering
\caption{Divisorial contraction}
\label{sfvasvasb}
\end{figure}
For example, consider the Wahl~resolution $W$ of $\oW={1\over 3}(1,1)$ singularity on the left side of Figure~\ref{sfvasvasb}. Its $\Q$-Gorenstein smoothing $Y\rightsquigarrow W$ blows down 
to a smoothing $\oY\rightsquigarrow \oW$ but $\oY\ne Y$, in fact $Y$ is a blow-up of $\oY$ at a smooth point. To see this, we can flip $W$ at the top curve producing a resolution $W'$ on the right side of Figure~\ref{sfvasvasb}, which 
contains a $(-1)$-curve $E$ which deforms to a nearby fiber producing a divisorial contraction of the threefold. Accordingly, an antiflip of $W'$ at $E$ is not defined. On the level of derived categories, $W$ gives a H.e.c.~$E_2,E_1,E_0$ on $Y$ where $E_0$ and $E_2$
are line bundles and $E_1$ has rank $3$. This is mutated to a H.e.c.~$E_2',E_1',E_0'$ on $W'$,
where $E'_0$ has rank $2$ and $E'_2=E'_1(-E)$ are line bundles. One can of course mutate this exceptional collection further but
this gives $E_1',E'_1|_E,E_0'$, where the sheaf in the middle is a torsion sheaf supported on $E$.
By contrast, mutating collections of vector bundles associated with Wahl resolutions always gives collections of vector bundles associated with other Wahl resolutions (up to a homological shift which is easy to compute, see Theorem~\ref{main3}). 
\end{remark}

We end this section by analyzing which quivers show up in Theorem \ref{main} when the M-resolution $W^+ \to \oW$ has two exceptional curves $\Gamma_1, \Gamma_2$ with associated $\delta_1=:a$ and $\delta_2=:b$. We have Wahl singularities $P_i$ of type $\frac{1}{n_i^2}(1,n_i a_i-1)$ for $i=0,1,2$. For the N-resolution we use the bar notation: Wahl singularities $\bar P_i$ of type $\frac{1}{\bar n_i^2}(1,\bar n_i \bar a_i-1)$ for $i=0,1,2$, etc. The associated H.e.c. is $\langle \bar E_2, \bar E_1, \bar E_0 \rangle$, and we have $\hom(\bar E_2,\bar E_0)= ab + c$, where $c:= \frac{a n_2+ b n_0}{n_1}$.
The algebra $\hat R$ is the path algebra of the quiver $Q_{a,b,c}$ with vertices $\bar E_2, \bar E_1, \bar E_0$ and $a$ arrows between $\bar E_2, \bar E_1$, $b$ arrows between $\bar E_1, \bar E_0$, and $c$ arrows between $\bar E_2, \bar E_0$, as in Figure \ref{triangle}.  

\begin{figure}[htbp]
\includegraphics[width=2.5in]{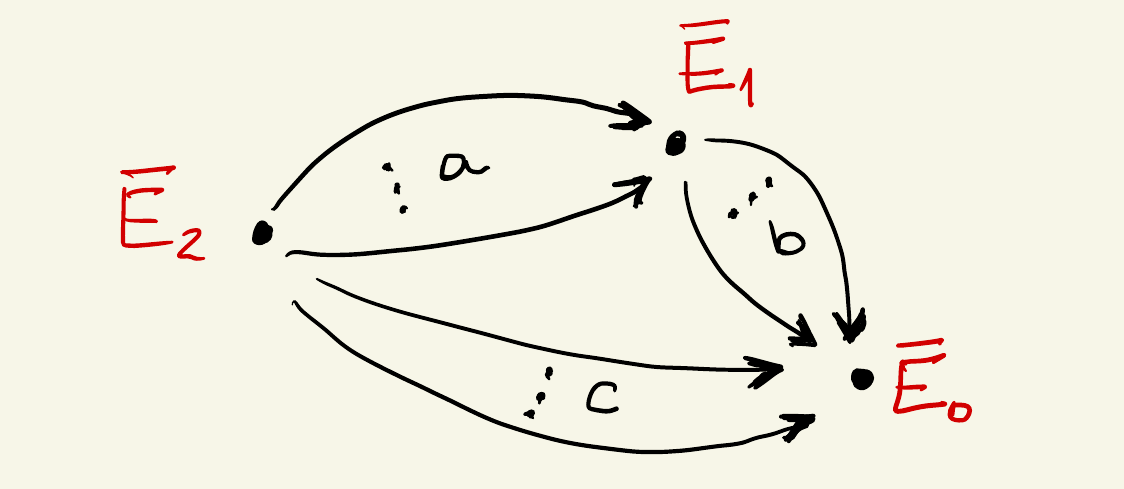}
\caption{The quiver $Q_{a,b,c}$ for an N-resolution with two curves}\label{triangle}
\end{figure}

\begin{proposition}\label{Q4answered}
The representation algebra of the quiver $Q_{a,b,c}$ can be realized by the categorified Milnor fiber, i.e.~ the algebra $\hat R$
of Theorem \ref{main}, if and only if there exists an extremal P-resolution with Wahl singularities of indices $a$ and $b$ and $\delta=c$ (see Definition~\ref{srbsbsr}).
See~Lemma~\ref{pun} for the list of possible $c$ for fixed values of $a,b$. 
\end{proposition}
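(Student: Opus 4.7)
The plan is to translate the realizability of $Q_{a,b,c}$ into an arithmetic condition on Wahl chains, and then match this to the existence of an extremal P-resolution using the explicit classification of Wahl chains via Hirzebruch--Jung continued fractions.

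First, I would rephrase realizability using Lemma~\ref{sdtheth} and the discussion immediately preceding the proposition: $Q_{a,b,c}$ is realizable by $\hat R$ iff there exists a 2-curve M-resolution $[{n_0 \choose a_0}]-(c_1)-[{n_1 \choose a_1}]-(c_2)-[{n_2 \choose a_2}]$ of some c.q.s.~with $\delta_1=a$, $\delta_2=b$, and $cn_1=an_2+bn_0$. The goal then becomes matching this data to an extremal P-resolution of the form $[{a \choose \alpha}]-(c')-[{b \choose \beta}]$ with $\delta=c$, which by \eqref{expres} contracts to the c.q.s.~$\frac{1}{a^2+b^2+abc}(1,\Omega^*)$ for some $\Omega^*$.

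For the ``only if'' direction, my approach is to use the braid group action of Theorem~\ref{braidrelations}. Starting from the 2-curve M-resolution $W^+$, I would apply the antiflip sequence from Prop.~\ref{r=2} (which produces the N-resolution in three steps). Tracking the invariants via Prop.~\ref{computo} along this sequence, one obtains an intermediate Wahl resolution whose central curve has two adjacent Wahl singularities of indices $a$ and $b$; the natural contraction of this central curve yields an extremal P-resolution with the required indices, and a direct computation using Prop.~\ref{computo} shows that its delta equals $(an_2+bn_0)/n_1 = c$.

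For the ``if'' direction, given an extremal P-resolution $[{a \choose \alpha}]-(c')-[{b \choose \beta}]$ with delta $c$, I would construct a 2-curve M-resolution whose N-resolution realizes $Q_{a,b,c}$. Using the algorithmic construction of M-resolutions from Section~\ref{s1}, one chooses a central Wahl singularity $[{n_1 \choose a_1}]$ together with endpoint singularities $[{n_0 \choose a_0}]$ and $[{n_2 \choose a_2}]$ so that the resulting chain is a valid M-resolution of some c.q.s.~with $\delta_1=a$ and $\delta_2=b$. The arithmetic identity $cn_1=an_2+bn_0$ is then forced by the extremal P-resolution structure through the continued fraction identities in Lemma~\ref{merken} and the formula \eqref{expres}.

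The main technical obstacle is the explicit arithmetic matching: verifying that the identity $cn_1=an_2+bn_0$ characterizing the 2-curve M-resolution is equivalent, at the level of Hirzebruch--Jung continued fractions, to the existence of $(\alpha,\beta,c')$ producing an extremal P-resolution with invariants $(a,b,c)$. This requires careful bookkeeping with the zero continued fractions of Section~\ref{s1}, use of Riemenschneider duality, and likely a short case analysis corresponding to the list in Lemma~\ref{pun}.
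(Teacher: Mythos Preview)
Your ``only if'' direction contains a genuine gap. The antiflip sequence of Proposition~\ref{r=2} does \emph{not} produce an intermediate Wahl resolution with adjacent singularities of indices $a$ and $b$. In that sequence the indices that appear are $n_0,n_1,n_2$ and the combinations $\delta_2 n_1+n_2$, $\delta_1(\delta_2 n_1+n_2)+\delta_2 n_0$, etc., dictated by Proposition~\ref{computo}; there is no step at which two adjacent Wahl singularities have indices equal to the deltas $a=\delta_1$ and $b=\delta_2$. So the extremal P-resolution you are after cannot be read off from the braid sequence.

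The paper's mechanism is different and works entirely through the dual continued fraction. Writing $\frac{\Delta}{\Delta-\Omega}=[b_1,\ldots,b_s]$ with $d_{i_1}=d_{i_2}=d_{i_3}=1$, the integers $a$ and $b$ are the numerators of the sub-fractions $[b_{i_2+1},\ldots,b_{i_3-1}]$ and $[b_{i_2-1},\ldots,b_{i_1+1}]$, and a $2\times 2$ matrix identity (Lemma~\ref{pun}) with $\lambda=b_{i_2}$ yields $c=(\lambda-1)ab-\epsilon_a b-\epsilon_b a$ (with obvious modifications when $a$ or $b$ equals $1$). This formula is \emph{exactly} the $\delta$ of the extremal P-resolution $[{a\choose a-\epsilon_a}]-(\lambda-1)-[{b\choose\epsilon_b}]$, which one checks directly. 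For the converse, given such an extremal P-resolution the paper builds infinitely many 2-curve M-resolutions by appending an arbitrary continued fraction $[x_1,\ldots,x_u]$ to one end and dualizing; a separate contraction lemma is needed to handle the boundary case $\lambda=2$. Your sketch of the ``if'' direction gestures at the right shape but misses both the explicit construction and this $\lambda=2$ subtlety.
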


A direct corollary is that if we realize $Q_{a,b,c}$ by our construction, then all permutations of $a,b,c$ are realizable by our construction. This is because we can reverse the orientation of the c.q.s. to obtain $Q_{b,a,c}$, and we can use the ``circular" zero continued fraction of the extremal P-resolution of $Q_{a,b,c}$ to realize zero continued fractions for $Q_{c,a,b}$ and $Q_{b,c,a}$ (see \cite[Section 4]{HTU17}). 

By Example \ref{rationalsmoothing}, 
$D^b(\hat R)$ is an admissible subcategory of the derived category of a smooth projective rational surface.
This partially answers question (Q5) in \cite{BR}.

\begin{proof}
If $a=0$ and $b>0$, then $b=c$; similarly for $b=0$ and $a>0$ we have $c=a$. 
If $a=b=0$, then there are no arrows.

Let us assume that $a,b>0$. Let $\frac{\Delta}{\Delta-\Omega}=[b_1,\ldots,b_s]$. As in Section \ref{s1}, we have that $d_{i_1}=d_{i_2}=d_{i_3}=1$, and the zero continued fraction corresponding to the N-resolution is $[b_s,\ldots,b_{i_3}-1,\ldots,b_{i_2}-1,\ldots,b_{i_1}-1, \ldots,b_1]=0$. We have that $$\frac{\bar n_{2-k}}{\bar n_{2-k} -\bar a_{2-k}}=[b_1\ldots,b_{i_{k+1}-1}],$$ for $k=0,1,2$. (If $i_1=1$, then we set $\bar n_2=\bar a_2=1$, and so $\bar P_2$ is a smooth point in that case.) We also have $$\frac{a}{\epsilon_a}=[b_{i_2+1},\ldots,b_{i_3-1}] \ \ \ \text{and} \ \ \ \ \frac{b}{\epsilon_b}=[b_{i_2-1},\ldots,b_{i_1+1}]$$ for some $\epsilon_a,\epsilon_b$ coprime to $a,b$ respectively. (If $i_3=i_2+1$ ($i_2=i_1+1$), then $a=1$ ($b=1$).) 

Let us consider the Hirzebruch-Jung continued fraction that results from contracting (possibly) $1$ and all new $1$'s from $[b_{i_3-1},\ldots,b_{i_1}-1, \ldots,b_1]$. This corresponds to a rational number $\frac{\alpha}{\beta}>1$. Then, if $\frac{\alpha}{\alpha-\beta}=[x_1,\ldots,x_t]$, we have $$[x_t,\ldots,x_1,1,b_{i_3-1},\ldots,b_{i_1}-1, \ldots,b_1]=0,$$ and so the $\Q$-Gorenstein smoothing of $\bar P_1$ which keeps the singularities at $\bar P_0$ and $\bar P_2$ and a curve between them corresponds to that extremal N-resolution. In particular, we have $$\frac{ab+c}{\epsilon_{ab+c}}=[b_{i_3-1},\ldots,b_{i_1+1}] $$ for some $\epsilon_{ab+c}$.

\begin{lemma} \label{pun}
Let $\lambda:=b_{i_2} \geq 2$.
\begin{itemize}
    \item[(0)] If $a=b=1$, then $c=\lambda-1$,
   
    \item[(1)] If $a=1$ and $b>1$, then $c=\lambda b-b-\epsilon_b$; If $b=1$ and $a>1$, then $c=\lambda a-a-\epsilon_a$,
   
    \item[(2)] If $a,b>1$, then $c=(\lambda-1)ab-\epsilon_a b - \epsilon_b a$. In particular gcd$(a,b)$ always divides $c$.
\end{itemize}
\end{lemma}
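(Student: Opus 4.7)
The plan is to read off $c$ directly from the continued fraction
\[[b_{i_3-1},\ldots,b_{i_2+1},\lambda,b_{i_2-1},\ldots,b_{i_1+1}]=\frac{ab+c}{\epsilon_{ab+c}}\]
identified above, by representing each Hirzebruch--Jung continued fraction as a product of the $2\times 2$ matrices $M(n):=\mat{n}$ defined in the preamble. Recall the basic identity: for any chain $[x_1,\ldots,x_k]=P/Q$, the product $M(x_1)\cdots M(x_k)$ has first column $(P,Q)^{T}$, and comparing it with its reversal one sees that its $(1,2)$-entry equals $-Q_{\ast}$, where $P/Q_{\ast}=[x_k,\ldots,x_1]$ is the reversed continued fraction.

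Applying this, I would set $A=M(b_{i_3-1})\cdots M(b_{i_2+1})$, $L=M(\lambda)$, and $B=M(b_{i_2-1})\cdots M(b_{i_1+1})$. From the data $\frac{a}{\epsilon_a}=[b_{i_2+1},\ldots,b_{i_3-1}]$, the reversal identity tells us that the $(1,2)$-entry of $A$ is $-\epsilon_a$, while its first column $(a,\epsilon_a^{\ast})^{T}$ records the reversed fraction; symmetrically, the first column of $B$ is $(b,\epsilon_b)^{T}$. A direct $2\times 2$ multiplication then shows that the $(1,1)$-entry of $ALB$ equals $\lambda ab-a\epsilon_b-b\epsilon_a$; identifying this with $ab+c$ yields the formula
\[c=(\lambda-1)ab-\epsilon_a b-\epsilon_b a\]
of case (2).

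The degenerate cases (0) and (1) correspond to one or both of the outer subchains being empty: in case (1) with $a=1$, the matrix $A$ disappears and the $(1,1)$-entry of $LB$ is $\lambda b-\epsilon_b=b+c$, giving $c=(\lambda-1)b-\epsilon_b$; in case (0), both $A$ and $B$ are trivial and the continued fraction reduces to $[\lambda]$, yielding $c=\lambda-1$. The divisibility $\gcd(a,b)\mid c$ in case (2) is then transparent from the formula, since each of the three summands is divisible by $\gcd(a,b)$. The main source of difficulty I anticipate is the initial bookkeeping of signs and the identification of which entry of $A$ records $\epsilon_a$ versus its multiplicative inverse $\epsilon_a^{\ast}$ modulo $a$; once that convention is pinned down, the rest is a single routine matrix multiplication.
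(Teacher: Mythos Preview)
Your proposal is correct and follows essentially the same approach as the paper: both compute the $(1,1)$-entry of the product $ALB$ of the $2\times 2$ continued-fraction matrices, with the paper writing this product explicitly as
\[
\begin{pmatrix} a & -\epsilon_a \\ \epsilon'_a & \frac{1-\epsilon_a \epsilon'_a}{a} \end{pmatrix}
\begin{pmatrix} \lambda & -1 \\ 1 & 0 \end{pmatrix}
\begin{pmatrix} b & -\epsilon'_b \\ \epsilon_b & \frac{1-\epsilon_b \epsilon'_b}{b} \end{pmatrix}
=
\begin{pmatrix} ab+c & -\epsilon'_{ab+c} \\ \epsilon_{ab+c} & \frac{1-\epsilon_{ab+c}\epsilon'_{ab+c}}{ab+c} \end{pmatrix},
\]
which is exactly your identification $A_{12}=-\epsilon_a$ and $B_{21}=\epsilon_b$ (with $\epsilon'_x$ the inverse of $\epsilon_x$ mod $x$). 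The degenerate cases and the divisibility remark are handled identically.
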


\begin{proof} Part (0) is trivial because $a=b=1$ implies $i_3=i_2+1=i_1+2$, and so $\frac{ab+c}{\epsilon_c}=[b_{i_2}]$. For (1) and (2) we use the identity (for (1) we eliminate the ``$a$ or $b$ matrix") $$\begin{pmatrix} a & -\epsilon_a \\ \epsilon'_a & \frac{1-\epsilon_a \epsilon'_a}{a} \end{pmatrix} \cdot \begin{pmatrix} \lambda & -1 \\ 1 & 0 \end{pmatrix} \cdot \begin{pmatrix} b & -\epsilon'_b \\ \epsilon_b & \frac{1-\epsilon_b \epsilon'_b}{b} \end{pmatrix} = \begin{pmatrix} ab+c & -\epsilon'_{ab+c} \\ \epsilon_{ab+c} & \frac{1-\epsilon_{ab+c} \epsilon'_{ab+c}}{ab+c} \end{pmatrix},$$ where $0<\epsilon'_x<x$ is the inverse modulo $x$ of $\epsilon_x$. 
\end{proof}

\begin{proposition}
Each of the cases $(t)$ in Lemma \ref{pun} corresponds to an extremal P-resolution with $t$ singularities and $\delta=c$, represented by the continued fraction (here $*$ is a smooth point): 
\begin{itemize}
    \item[(0)] $*-(\lambda+1)-*$,
    
    \item[(1)] $[{a \choose  a-\epsilon_a}]-(\lambda)-*$ or $*-(\lambda)-[{b \choose  \epsilon_b}] $,
    
    \item[(2)] $[{a \choose  a-\epsilon_a}]-(\lambda-1)-[{b \choose  \epsilon_b}]$.
    
\end{itemize}
On the other hand, each extremal P-resolution in those three cases (with $\delta=c$) produces infinitely many M-resolutions with two exceptional curves, so that $\delta_1=a$, $\delta_2=b$ and $n_1c=an_2+bn_0$. 
\end{proposition}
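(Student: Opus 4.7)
The proposition has two assertions: (i) in each case $(t)$ of Lemma~\ref{pun}, the displayed Hirzebruch--Jung chain is an extremal P-resolution with $t$ Wahl singularities and $\delta=c$; and (ii) every such extremal P-resolution gives rise to infinitely many M-resolutions with two exceptional curves and the stated numerical invariants.

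For~(i), the task is to compute $\delta = n_0 n_1 \, K\cdot\Gamma$ in each case and check it matches the value of $c$ from Lemma~\ref{pun}. The same toric calculation as in the proof of Lemma~\ref{extreNres} gives
\[
K \cdot \Gamma \;=\; -2 + c_1 + d_{\mathrm{left}} + d_{\mathrm{right}},
\]
where $c_1 \in \{\lambda+1,\lambda,\lambda-1\}$ is the self-intersection of the proper transform of the central curve in the minimal resolution, and the end log-discrepancies of the adjacent Wahl endpoints are $\frac{a_0}{n_0}$ on the left and $\frac{n_1-a_1}{n_1}$ on the right, with the convention that a smooth endpoint contributes $0$. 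Substituting the three cases (using $a_0 = a-\epsilon_a$ and $a_1 = \epsilon_b$) and the explicit expressions for $c$ in Lemma~\ref{pun} yields $K\cdot\Gamma = c/(n_0 n_1)$, hence $\delta = c$; positivity $K\cdot\Gamma>0$ is equivalent to $c>0$ and gives extremality. The fact that the chain contracts to a c.q.s.\ of the correct type is precisely the matrix identity already established in the proof of Lemma~\ref{pun}.

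For~(ii), given such an extremal P-resolution with data $(a,b,c)$, I construct an infinite family of M-resolutions as follows. First, produce a seed M-resolution with two curves directly: take any Wahl resolution obtained by inserting an additional Wahl (or smooth) singularity at one end of the extremal P-resolution using the algorithm of Lemma~\ref{existencenres} in reverse, arranged so that $\delta_1 = a$ and $\delta_2 = b$ (via Proposition~\ref{hotdog}); a convenient choice is a configuration where $n_0=1$ or $n_2=1$, reducing the relation $n_1 c = a n_2 + b n_0$ to a direct consequence of the formulas in Lemma~\ref{pun}. Second, generate infinitely many new examples by iteratively applying the antiflip $R_1$ (or $R_2$) at the middle curve via Theorem~\ref{braidrelations}. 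By Proposition~\ref{computo} in case $(-/-)$, each antiflip preserves $\delta_1 = a$, $\delta_2 = b$ while producing strictly increasing Wahl indices via the linear recurrence $n'_1 = \delta_1 \delta_2 n_1 + \delta_1 n_2 + \delta_2 n_0$ (and its analogues). The condition $n_1 c = a n_2 + b n_0$ is also preserved, since $c$ is an invariant of the quiver data of the N-resolution, which remains constant along braid-group orbits within a fixed irreducible component of $\Def_{P\in\oW}$ (Corollary~\ref{3foldantiflip}).

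The main obstacle is verifying that an infinite subfamily of the iterated antiflips produces genuine \emph{M-resolutions} (the condition $K\cdot\Gamma_i \geq 0$ for both curves) and not merely arbitrary Wahl resolutions. This is resolved by the sign analysis in Proposition~\ref{r=2}: the recurrence formulas of Proposition~\ref{computo} show that the signs of $K \cdot \Gamma_1$ and $K \cdot \Gamma_2$ cycle through a pattern that includes configurations with both non-negative, occurring infinitely often as indices grow. Each such occurrence gives an M-resolution with the required $(\delta_1, \delta_2, n_1 c - a n_2 - b n_0) = (a, b, 0)$, completing the proof.
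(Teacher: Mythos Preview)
Your argument for part~(i) is fine and essentially the same computation the paper leaves implicit. The problem is part~(ii).

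Your strategy of generating infinitely many M-resolutions by iterating antiflips from a single seed cannot work. Antiflips act within a fixed braid-group orbit of Wahl resolutions of a \emph{fixed} c.q.s.\ $\oW$, and the corollary after Theorem~\ref{braidrelations} states that every such orbit contains exactly one M-resolution. So once you have one M-resolution with two curves, applying $R_1$ or $R_2$ repeatedly will produce infinitely many Wahl resolutions, but never a second M-resolution; your claim that the signs of $K\cdot\Gamma_1$ and $K\cdot\Gamma_2$ ``cycle through a pattern that includes configurations with both non-negative, occurring infinitely often'' directly contradicts this uniqueness. Moreover, Proposition~\ref{computo} does not say both $\delta_1$ and $\delta_2$ are preserved under an antiflip: only $\delta'_2=\delta_2$ when antiflipping at $\Gamma_2$, while $\delta'_1$ changes.

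The paper's approach is entirely different and sidesteps this: it builds, for each choice of an arbitrary Hirzebruch--Jung continued fraction $[x_1,\ldots,x_u]$, a \emph{different} c.q.s.\ $\oW$ together with an M-resolution having the prescribed $(\delta_1,\delta_2)=(a,b)$. Concretely, one appends $[x_1,\ldots,x_u]$ to the chain $\frac{1}{a}(1,\epsilon'_a)-(\lambda-1)-\frac{1}{b}(1,\epsilon_b)$, takes the dual to force a zero continued fraction, and then reads off the dual fraction $[b_1,\ldots,b_s]$ with exactly three positions $i_1<i_2<i_3$ where $d_{i_k}=1$; by Proposition~\ref{hotdog} this is precisely an M-resolution with the right $\delta$'s. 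For $\lambda\ge 3$ the chain $\frac{1}{a}(1,\epsilon'_a)-(\lambda-1)-\frac{1}{b}(1,\epsilon_b)$ is automatically a c.q.s.\ and the construction is direct; for $\lambda=2$ one needs an extra lemma guaranteeing that $\frac{1}{a}(1,\epsilon'_a)-(1)-\frac{1}{b}(1,\epsilon_b)$ still contracts, which the paper proves separately. The infinitude comes from the free choice of $[x_1,\ldots,x_u]$, not from a braid-group action.
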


\begin{proof} 
First, it is easy to check that each (t) case in  Lemma \ref{pun} corresponds to the described extremal P-resolution, since $\frac{1}{a}(1,\epsilon'_a)-(\lambda-1)-\frac{1}{b} (1,\epsilon_b)$ is contractible. The opposite is trickier for the case $\lambda=2$, so we first show it for $\lambda \geq 3$. In this case we have that  $\frac{1}{a}(1,\epsilon'_a)-(\lambda-1)-\frac{1}{b} (1,\epsilon_b)$ contracts to a c.q.s. Let us choose arbitrarily a Hirzebruch-Jung continued fractions $[x_1,\ldots,x_u]$ where $x_i \geq 2$ for all $i$. Then $$\frac{\alpha}{\beta}= \frac{1}{a}(1,\epsilon'_a)-(\lambda-1)-\frac{1}{b} (1,\epsilon_b)-[x_1,\ldots,x_u]$$ is the continued fraction of some c.q.s., and so if $\frac{\alpha}{\alpha-\beta}=[y_1,\ldots,y_v]$ we have $$[y_v,\ldots,y_1]-(1)-\frac{1}{a}(1,\epsilon'_a)-(\lambda-1)-\frac{1}{b} (1,\epsilon_b)-[x_1,\ldots,x_u]=0.$$ Therefore we can consider $$[y_v,\ldots,y_1]-(b_{i_3}=2)-\frac{1}{a}(1,\epsilon'_a)-(b_{i_2}=\lambda)-\frac{1}{b} (1,\epsilon_b)-[b_{i_1}=x_1+1,\ldots,x_u]$$ as the continued fraction that defines an M-resolution with $d_{i_3}=d_{i_2}=d_{i_1}=1$.

The key for a similar construction in the case $\lambda=2$ is to prove that $\frac{1}{a}(1,\epsilon'_a)-(1)-\frac{1}{b} (1,\epsilon_b)$ contracts (to a c.q.s.~ or  a smooth point), which is proved in the next lemma.
\end{proof}

\begin{lemma}
Given an extremal P-resolution $[{a \choose  a-\epsilon_a}]-(1)-[{b \choose  \epsilon_b}]$, then $\frac{1}{a}(1,\epsilon'_a)-(1)-\frac{1}{b} (1,\epsilon_b)$ contracts.
\end{lemma}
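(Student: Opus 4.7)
My plan is to prove this by recasting the contraction criterion for $\frac{1}{a}(1, \epsilon'_a) - (1) - \frac{1}{b}(1, \epsilon_b)$ as positivity of the $(1, 1)$-entry of a $2\times 2$ matrix product, and then to identify that entry with $\delta_1$ of the given extremal P-resolution $[{a \choose a-\epsilon_a}]-(1)-[{b \choose \epsilon_b}]$.

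I will use the matrix formalism already employed in the proof of Lemma~\ref{pun}: to a c.q.s.~$\frac{1}{n}(1, q)$ one assigns $M_{n,q} = \begin{pmatrix} n & -q^* \\ q & (1-qq^*)/n \end{pmatrix}$ with $q q^* \equiv 1 \pmod n$; to a $(-e)$-curve one assigns $\mat{e}$; and the matrix of a chain $\frac{1}{n_1}(1, q_1) - (e) - \frac{1}{n_2}(1, q_2)$ is the product $M_{n_1, q_1} \cdot \mat{e} \cdot M_{n_2, q_2}$. By the blowdown identity $\mat{c_1}\mat{1}\mat{c_2} = \mat{c_1 - 1}\mat{c_2 - 1}$ the product is invariant under iterated blowdowns of $(-1)$-curves, so the chain contracts to a c.q.s.~or a smooth point precisely when the $(1, 1)$-entry of the product is a positive integer, and that entry is then the index of the resulting singularity.

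Step one is a direct multiplication showing that $M_{a, \epsilon'_a} \cdot \mat{1} \cdot M_{b, \epsilon_b}$ has $(1, 1)$-entry equal to $(a - \epsilon_a) b - a \epsilon_b = ab - \epsilon_a b - a \epsilon_b$. Step two is to derive $\delta_1$ directly from the hypothesized extremal P-resolution by the same matrix method applied to $[{a \choose a-\epsilon_a}]-(1)-[{b \choose \epsilon_b}]$. For this one needs the dual exponent of the Wahl singularity $\frac{1}{a^2}(1, a(a-\epsilon_a) - 1)$, which a congruence check modulo $a^2$ shows to be $a\epsilon_a - 1$ (and analogously $b(b-\epsilon_b)-1$ on the other side). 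A similar multiplication gives the $(1, 1)$-entry of the P-resolution chain's matrix as $\Delta = a^2 + b^2 + ab(ab - \epsilon_a b - a\epsilon_b)$, whence $\delta_1 = (\Delta - a^2 - b^2)/(ab) = ab - \epsilon_a b - a\epsilon_b$ via formula~\eqref{expres}. Thus the two $(1,1)$-entries agree.

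The conclusion follows at once from the P-resolution hypothesis: $K_{W^+} \cdot \Gamma_1 > 0$ by Definition~\ref{srbsbsr}, so $\delta_1 > 0$, so the $(1, 1)$-entry for $\frac{1}{a}(1, \epsilon'_a) - (1) - \frac{1}{b}(1, \epsilon_b)$ is a positive integer and the chain contracts. The main subtlety to watch for is circularity: since the preceding proposition's proof of the $\lambda = 2$ case delegates to this very lemma, I cannot cite the formula $c = ab - \epsilon_a b - a\epsilon_b$ from Lemma~\ref{pun} or its use in that proposition; instead I must read off $\delta_1$ independently, which is precisely what the direct matrix computation in step two accomplishes.
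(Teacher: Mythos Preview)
Your computation is correct and the overall strategy is sound, but there is a genuine gap in the contraction criterion. You assert that a chain contracts to a c.q.s.\ or a smooth point \emph{precisely when} the $(1,1)$-entry of the matrix product is positive, and justify this only by the blowdown invariance $\mat{c_1}\mat{1}\mat{c_2}=\mat{c_1-1}\mat{c_2-1}$. That invariance gives one direction: if the chain contracts, the final reduced chain has all entries $\ge 2$ and its matrix has positive $(1,1)$-entry. The converse is false in general (e.g.\ the chain of three $(+1)$-curves has matrix product $\mat{-1}^3 = I$ with $(1,1)$-entry $1$, yet is certainly not contractible). So blowdown invariance alone does not yield the biconditional.

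What saves you is the \emph{specific shape} of the chain: it is $[x_p,\dots,x_1,1,z_1,\dots,z_u]$ with all $x_i,z_j\ge 2$. Writing $N_k$ for the $k$-th leading principal minor of the associated tridiagonal matrix, one has $N_k$ strictly increasing for $k\le p$ (since each entry is $\ge 2$), then $N_{p+1}=N_p-N_{p-1}>0$, and thereafter $N_{p+1+j}=z_j N_{p+j}-N_{p+j-1}$ with $z_j\ge 2$. A short induction shows that once some $N_k\le 0$ occurs in this tail, $|N_{k+1}|>|N_k|$ and all subsequent $N$'s stay $\le 0$; hence positivity of the \emph{final} minor forces positivity of all of them, i.e.\ negative-definiteness, i.e.\ contractibility. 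You should include this argument; without it the step ``$(1,1)$-entry $=\delta_1>0\Rightarrow$ contracts'' is unjustified.

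By contrast, the paper does not use the matrix formalism at all. It writes the Wahl chains out explicitly via Lemma~\ref{merken}, and argues by contradiction: if $[x_p,\dots,x_1,1,z_1,\dots,z_u]$ failed to contract, then the Wahl-chain algorithm would force $[z_1,\dots,z_u]$ to begin with $[y_1,\dots,y_q]$ (the dual fraction of $a/\epsilon_a$), which translates into $\frac{a}{a-\epsilon_a}>\frac{b}{\epsilon_b}$; a direct toric computation of $K\cdot\Gamma$ on the given P-resolution then contradicts $K\cdot\Gamma>0$. Your route is cleaner once the missing lemma on leading minors is supplied, since it replaces the somewhat ad hoc Wahl-chain analysis by a single determinant identity and Sylvester's criterion; the paper's route, on the other hand, avoids having to prove any general matrix fact.
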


\begin{proof}
Let us write $\frac{a}{\epsilon_a}=[x_1,\ldots,x_p]$, $\frac{a}{a-\epsilon_a}=[y_1,\ldots,y_q]$, $\frac{b}{\epsilon_b}=[z_1,\ldots,z_u]$, and $\frac{b}{b-\epsilon_b}=[w_1,\ldots,w_v]$. Then by Lemma \ref{merken} we have
$$[{a \choose  a-\epsilon_a}]-(1)-[{b \choose  \epsilon_b}] = [y_1,\ldots,y_q+x_p, \ldots,x_1]-(1)-[z_1,\ldots,z_u+w_v, \ldots,w_1].$$ Hence, if $[x_p, \ldots,x_1]-(1)-[z_1,\ldots,z_u]$ does not contract, then $[z_1,\ldots,z_u]=[y_1,\ldots,y_q,t_1,\ldots,t_l]$, where $[t_1,\ldots,t_l]$ is the Hirzebruch-Jung continued fraction of some c.q.s. This is because of the algorithm that constructs Wahl chains, and we are assuming that $x_p$ is the first curve that becomes $0$ (so that it does not contract). But now we can compute that the intersection of the canonical class with the image of the $(-1)$-curve in the original extremal P-resolution is $-1+ \frac{a-\epsilon_a}{a} + \frac{b-\epsilon_b}{b} >0$. On the other hand, the existence of $[t_1,\ldots,t_l]$ gives $\frac{a}{a-\epsilon_a}>\frac{b}{\epsilon_b}$, which is a contradiction. Therefore we do have contraction.
\end{proof}
This finishes the proof of Proposition~\ref{Q4answered}
\end{proof}

\begin{example}
Let us quickly classify triangles, i.e. $a=b=c=1$. In this case $\lambda =2$, and we have that the zero continued fraction is $$[b_s,\ldots,b_{i_3}-1,1,b_{i_1}-1, \ldots,b_1]=0,$$ and so $\frac{\alpha}{\beta}=[b_{i_2}-1,\ldots,b_s]$ and $\frac{\alpha}{\alpha-\beta}=[b_{i_1}-1, \ldots,b_1]$. The smallest case is $\frac{12}{12-7}=[3,2,3]$ with M-resolution is $(2)-[4]-(2)$.
\end{example}

\begin{example}
Not any $a,b,c$ with gcd$(a,b)$ dividing $c$ is possible. For example, if $b=1$ and $a=2$, then $c=2\lambda-3$ and so $c$ must be odd.  
\end{example}

\section{N-resolution of the minimal resolution} \label{s5}

Let $0 < \Omega < \Delta$ be coprime integers, and let $P \in \overline{W}$ be a c.q.s. of type ${1\over \Delta}(1,\Omega)$. 
In~this section we specialize our results to the case when the M-resolution $W^+$ is the minimal resolution of $\oW$, i.e.~all points $P_i$'s are smooth points. The corresponding smoothing $Y\rightsquigarrow\oW$ is by definition a smoothing from the Artin component. 

\begin{example}
Suppose $W^+$ is an extremal  minimal resolution, i.e. $W^+\to\oW$ is a contraction of a single smooth rational curve $\Gamma$ of self-intersection $-\Delta\le -2$. A~concrete example of a surface $\oW$ satisfying Assumption~\ref{assume} 
is the projective cone $\P(1,1,\Delta)$ over a rational normal curve of degree $\Delta$. 
We  take a ruling of the cone as a divisor $\bar A$. The surface $W^+$ is  the Hirzebruch surface with a negative curve~$\Gamma$ and a ruling ~$A$.
Its general smoothing $Y$ is either $\P^1\times\P^1$ or $\Bl_p\P^2$ depending on parity of $\Delta$.
The derived category $D^b(W)$ contains an admissible subcategory $\langle \cA^{W^+}_1, \cA^{W^+}_0\rangle$, which in this case is just generated by an exceptional collection $\langle\cO_{W^+}(-A-\Gamma), \cO_{W^+}(-A)\rangle$, i.e.~Kawamata sheaves on $W^+$ are line bundles. This subcategory deforms to a subcategory $\langle E_1,E_0\rangle\subset D^b(Y)$, i.e.~in this case Hacking bundles are line bundles equal to the corresponding Kawamata bundles. While $\Ext^k_{W^+}(\cO(-A-\Gamma), \cO(-A))\ne0$ for $k=0,1$, we have
$\Hom_Y(E_1,E_0)=0$ because a general smoothing $Y$ does not contain a lift of a negative curve $\Gamma\subset W$. Thus  
$\Ext^k(E_1,E_0)=0$ for $k\ne1$ 
and $\Ext^1(E_1,E_0)=\C^\delta$
as predicted by Theorem~\ref{weFwetwET}. Here $\delta=\Delta-2$. 
The N-resolution $W^-$ is equal to $W^+$ if $\delta=0$, otherwise it is
$$[{\delta+1\choose 1}]-(1)-[*]=[\delta+3,2,\ldots,2]-(1)-[*],$$ where $[*]$ is a smooth point. The corresponding exceptional collection $\langle\bar E_1,\bar E_0\rangle$ on $Y$ is strong and contains a line bundle $\bar E_1=E_0$ and a vector bundle $\bar E_0$ of rank $\delta+1=\Delta-1$, which is the universal extension of $E_0$ by $E_1$. The Kawamata bundle $\bar F$ on $\oW$ is the maximal iterated extension of $\cO_{\oW}(-\bar A)$ by itself.
It~has rank $\Delta$.
The Kalck--Karmazyn algebra $\End(\bar F)$ is commutative and isomorphic to $\C[z_1,\ldots,z_{\delta+1}]/(z_1,\ldots,z_{\delta+1})^2$ \cite{KK}. By Theorem~\ref{weFwetwET},
$\bar F$ deforms to the Kawamata bundle 
$F=\bar E_0\oplus\bar E_1$ on $Y$.
$\End(\bar F)$ deforms to the algebra 
$\End(\bar E_0\oplus\bar E_1)$ of representations of the Kronecker quiver with $\delta$ arrows. The case $\Delta=4$ is especially interesting because this is the only case when $\oW=\P(1,1,4)$ has another component in the versal deformation space: a $\Q$-Gorenstein smoothing to $Y=\P^2$. In this case $W^+=W^-=\oW$ and the Hacking bundle $E$ has rank $2$ and is isomorphic to $\Omega^1_{\P^2}(1)$. The Kawamata bundle $\bar F$ of rank $4$ deforms to $E^{\oplus2}$ and the Kalck--Karmazyn algebra $\C[z_1,z_2,z_3]/(z_1,z_2,z_{3})^2$ to $\End(E^{\oplus2})=\Mat_2(\C).$
\end{example}

\begin{example}\label{generalminimal}
In general, we write the Hirzebruch-Jung continued fraction as $$\frac{\Delta}{\Omega} = [\underbrace{2,\ldots,2}_{y_1}, x_1, \underbrace{2,\ldots,2}_{y_2},x_2,\ldots,\underbrace{2,\ldots,2}_{y_{e-1}}, x_{e-1}, \underbrace{2,\ldots,2}_{y_e}],$$ where $y_i\geq 0$ and $x_i\geq 3$ for all $i$. This describes the minimal resolution $W^+$. 
We now compute the N-resolution $W^-$ of $W^+$ explicitly. 
The dual fraction is $$\frac{\Delta}{\Delta-\Omega} = [y_1+2,\underbrace{2,\ldots,2}_{x_1-3}, y_2+3, \underbrace{2,\ldots,2}_{x_2-3},y_3+3,\ldots,y_{e-1}+3,\underbrace{2,\ldots,2}_{x_{e-1}-3},y_e+2].$$ 
Using the notation of Section \ref{s1}, we have that the $d_i \neq 0$ are exactly in the positions of the $y_i$. In particular, if $d_{i_1},\ldots,d_{i_e}$ are the $d_i\neq 0$ with $i_1<\ldots<i_e$, then $i_1=1$, $i_e$ is the index of the last position, and $d_{i_k}=y_k+1$ for all $k$. Note that the non zero $\bar \delta$ are computed via $[\underbrace{2,\ldots,2}_{x_i-3}]$, and so they are equal to $x_i-2$. We have that the data $\bar n_{i_k}, \bar a_{i_k}$ for the distinct Wahl singularities in the N-resolution is $$\frac{\bar n_{i_k}}{\bar n_{i_k}- \bar a_{i_k}}=[y_1+2,\underbrace{2,\ldots,2}_{x_{1}-3},y_{2}+3,\ldots,y_{k-2}+3,\underbrace{2,\ldots,2}_{x_{k-2}-3},y_{k-1}+3,\underbrace{2,\ldots,2}_{x_{k-1}-3}]$$ for $k>1$, and smooth point for $k=1$. It follows that a general smoothing $Y$ of $\oW$
carries a strong exceptional collection of Hacking bundles
$$\langle
\bar E^1_1,\ldots,\bar E^{y_1+1}_1, \bar E_2^1,\ldots,\bar E_2^{y_2+1}, \ldots
\bar E_e^1,\ldots,\bar E_e^{y_e+1}\rangle. 
$$
Here $E_1^j$'s are line bundles and $\rk E_k^j=\bar n_{i_k}$ for $k>1$.
Furthermore, bundles $E_k^j$ with the same $k$ are orthogonal. 
Note that we have $$ \Delta=\bar n_e (y_e+1)+ \bar n_{e-1} (y_{e-1}+1)+ \ldots + \bar n_2 (y_2+1) + (y_1+1)$$
as predicted by Theorem~\ref{weFwetwET}.
Indeed, let us consider the matrix  $$M=\begin{bmatrix} y_e+2 & -1 &  &  & & & \\ -1 & 2 & -1 &  &  & & \\  & & \ddots  & & & & \\ &  -1 & 2 & -1 & & & \\  &  & - 1 & y_{e-1} +3  & -1 & &  \\    &  &    &  & \ddots & & \\ &  &  &  & -1 & 2 & -1 \\  &  &  &  &  & -1 & y_1+2 \end{bmatrix},$$ whose diagonal has the sequence $$ \{y_e+2, \underbrace{2,\ldots,2}_{x_{e-1}-3},y_{e-1}+3, \ldots, y_3+3,\underbrace{2,\ldots,2}_{x_2-3},y_2+3,\underbrace{2,\ldots,2}_{x_1-3}, y_1+2\}.$$ Its determinant is equal to $\Delta$. On the other hand, we can use the linearity of the determinant on its first row $(y_e+2,-1,0,\ldots,0)= (1,-1,0,\ldots,0) + (y_e+1,0,0,\ldots,0)$, via the sum $M=M_1+M_2$ where $M_1$ corresponds to the continued fraction $$[1,\underbrace{2,\ldots,2}_{x_{e-1}-3},y_{e-1}+3, \ldots, y_3+3,\underbrace{2,\ldots,2}_{x_2-3},y_2+3,\underbrace{2,\ldots,2}_{x_1-3}, y_1+2],$$ and det$(M_2)=(y_e+1) \bar n_e$. But then det$(M_1)$ is the numerator of the continued fraction $$[y_{e-1}+2, \ldots, y_3+3,\underbrace{2,\ldots,2}_{x_2-3},y_2+3,\underbrace{2,\ldots,2}_{x_1-3}, y_1+2],$$ by contracting the $1$ and the consecutive $2$'s in the diagonal of $M_1$. Now we use induction on $e$ to write the claimed formula.   
\end{example}

\begin{example}\label{rationalsmoothing}
Let us realize Example~\ref{generalminimal} in a projective surface, where we can apply Theorem \ref{main}. 
In fact the following construction from \cite[Section 3]{PPSU} works for any M-resolution $W^+$ of $P \in \overline{W}$ 
and gives 
a normal rational projective surface $W^+$ that satisfies Assumption \ref{assume}. 
Its $\Q$-Gorenstein smoothing is the compactified Milnor fiber of the corresponding smoothing of $P \in \overline{W}$. 
Let $\F_1$ be the blow-up of $\P^2$, with $(-1)$-curve $S_0$. We have the fibration $\F_1 \to \P^1$ where $S_0$ is a section. Let $F$ be a fiber. Choose another section $S_{\infty}$ disjoint from $S_0$. The configuration $S_0,F,S_{\infty}$ gives us a chain of rational curves with self-intersections $\{-1,0,+1\}$. 

Let us come back to the notation  $\frac{\Delta}{\Omega} = [e_1, \ldots ,e_{\ell}]$ and $ \frac{\Delta}{\Delta-\Omega} = [b_1,\ldots,b_s]$. Then, by doing blow-ups over $S_0 \cap F$, the chain $S_0,F,S_{\infty}$ can be transformed into a new chain of rational curves with self-intersections $$\{+1,1-b_s,-b_{s-1},\ldots,-b_1,-1, -e_1,-e_2,\ldots,-e_{\ell}\}$$ where the first curve on the left corresponds to the proper transform of $S_{\infty}$, and the last curve on the right to the proper transform of $S_{0}$. Let $\pi \colon X \to \F_1$ be the corresponding composition of blow-ups. Let $G$ be the reduced total pull-back by $\pi$ of $S_0+F+S_{\infty}$. Then  $H^2(X,T_X(-\log G))=0$ by \cite[Lemma 3.3]{PPSU}. The contraction of $[e_1,\ldots,e_s]$ defines our surface $\overline{W}$ where $P$ is the singularity. By blowing-up adequately $X$ over the chain $[e_1,\ldots,e_s]$, we obtain a surface $\tilde X$ and a contraction $\tilde X \to W^+$, giving the M-resolution $W^+ \to \overline{W}$ over $P$. Then $W^+$ satisfies Assumption \ref{assume} (we can use as $A$ the ``middle" $(-1)$-curve in the original configuration in $X$). A $\Q$-Gorenstein smoothing $Y$ of $W^+$ is the compactified Milnor fiber.  We now can apply Theorem \ref{main} to $Y$. 
\end{example}

\begin{example}\label{Q5answered}
In the notation of Example~\ref{generalminimal},
if $y_1=\ldots=y_e=0$ then the algebra $\hat R$ of Theorem~\ref{main} is the representation algebra of a quiver that contains a Dynkin subquiver of type $A_{e-1}$. It follows by Example \ref{rationalsmoothing} that 
the derived category of a smooth projective rational surface
can contain as an admissible subcategory the derived category of representations of a quiver with a path of an arbitrary length.
This answers the question (Q4) from \cite{BR}.
\end{example}

\section{S.o.d.'s for maximally degenerated Dolgachev surfaces} \label{s6}

In examples of Section~\ref{s5},  the surface $W$ and its $\Q$-Gorenstein smoothing $Y$ were rational. In this section, $W$ will be rational but $Y$ will be a Dolgachev surface.


We start with a topological fact which will allow us to guarantee that Assumption \ref{assume} part (3) is satisfied in our examples.

\begin{lemma}\label{topo}
Let $Z$ be a surface with only c.q.s.~ $\{Q_0,\ldots, Q_s\}$ of type $\frac{1}{m_i}(1,q_i)$, and with $H^1(Z,\O_Z)=H^2(Z,\O_Z)=0$. Let $Z^o:=Z \setminus \{Q_0,\ldots, Q_s\}$. If $H_1(Z^o,\Z)=0$, then there is a short exact sequence $$0\to \Pic(Z)\to\Cl(Z)\to \mathop{\oplus}\limits_{i=0}^s\Cl(Q_i\in Z)\to0,$$ where $\Cl(Q_i\in Z)\simeq \Z/m_i\Z$ is the local class group of $Q_i\in Z$.
\end{lemma}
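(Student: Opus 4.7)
The plan is to follow the same outline as in Lemma~\ref{wefeqf}: injectivity of $\Pic(Z) \hookrightarrow \Cl(Z)$ and identification of the kernel of the restriction map with $\Pic(Z)$ are formal (a Weil divisor on the normal surface $Z$ is Cartier iff it is locally principal at each $Q_i$), so the only real content is surjectivity of $\gamma \colon \Cl(Z) \to \bigoplus_i \Cl(Q_i \in Z)$. To attack this, I would first reinterpret $\gamma$ as a restriction map between Picard groups: since removing a codimension-$2$ subset does not change $\Cl$ and $\Cl = \Pic$ on a smooth variety, one has $\Cl(Z) = \Pic(Z^o)$, and similarly $\Cl(Q_i \in Z) = \Pic(U_i^*)$ for small punctured analytic neighborhoods $U_i^*$ of the $Q_i$.

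Next I would pass from line bundles to topological cohomology via the exponential sheaf sequence, using the hypotheses $H^1(Z, \O_Z) = H^2(Z, \O_Z) = 0$ and $H_1(Z^o, \Z) = 0$ (which gives $H^1(Z^o, \Z) = 0$ by universal coefficients). Since c.q.s.~are rational, the local cohomology long exact sequence
\[
H^j_{\{Q_i\}}(Z, \O_Z) \to H^j(Z, \O_Z) \to H^j(Z^o, \O_{Z^o}) \to H^{j+1}_{\{Q_i\}}(Z, \O_Z),
\]
combined with depth $2$ of $\O_{Z, Q_i}$, vanishing of $p_g$ at rational singularities, and Grothendieck vanishing, gives $H^1(Z^o, \O) = H^2(Z^o, \O) = 0$; the same analysis on the local models $U_i^*$ gives $H^1(U_i^*, \O) = H^2(U_i^*, \O) = 0$, while $H^1(U_i^*, \Z) = H^1(L(m_i, q_i), \Z) = 0$ for the lens-space link. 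The exponential sequence then yields isomorphisms $\Pic(Z^o) \simeq H^2(Z^o, \Z)$ and $\Pic(U_i^*) \simeq H^2(U_i^*, \Z) = \Z/m_i\Z$, turning $\gamma$ into restriction of integer cohomology classes.

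The main step, and the only place where the topological hypothesis really enters, will be a Poincar\'e--Lefschetz duality argument. The key observation is that $Z^o$ deformation-retracts onto $\overline{Z^o}$, the compact oriented topological $4$-manifold obtained by removing small open cone neighborhoods of each $Q_i$ from $Z$; its boundary is $\partial = \bigsqcup_i L(m_i, q_i)$, and each $U_i^*$ retracts to the corresponding boundary component. The restriction map then becomes $H^2(\overline{Z^o}, \Z) \to H^2(\partial, \Z)$, whose cokernel embeds into $H^3(\overline{Z^o}, \partial; \Z)$ by the long exact sequence of the pair, and Poincar\'e--Lefschetz duality identifies this last group with $H_1(\overline{Z^o}, \Z) \simeq H_1(Z^o, \Z) = 0$. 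Hence the restriction is surjective. The only subtleties along the way are routine comparisons between algebraic and analytic Picard groups and the naturality of the exponential sequence under the restrictions $Z^o \to U_i^*$; I do not expect any real obstacle beyond the Poincar\'e--Lefschetz step, which is the sole place where the topological hypothesis $H_1(Z^o, \Z) = 0$ is used.
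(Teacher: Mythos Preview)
Your overall strategy is sound, but there is a genuine gap: the claim that $H^1(Z^o,\cO)=0$ (and likewise $H^1(U_i^*,\cO)=0$) is false. From the local cohomology sequence you wrote, together with $H^1(Z,\cO)=H^2(Z,\cO)=0$, depth $2$, and Grothendieck vanishing, you correctly get $H^2(Z^o,\cO)=0$, but for $j=1$ you only obtain $H^1(Z^o,\cO)\cong \bigoplus_i H^2_{Q_i}(\cO_Z)$. The latter is the top local cohomology of a $2$-dimensional Cohen--Macaulay local ring and is never zero; concretely, already for a smooth point $H^1(\C^2\setminus\{0\},\cO)$ is infinite-dimensional, and for $\frac{1}{m}(1,q)$ one takes $\mu_m$-invariants. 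The phrase ``vanishing of $p_g$ at rational singularities'' refers to $R^1\pi_*\cO_{\tilde Z}=0$, which is unrelated to $H^2_{Q_i}(\cO_Z)$; you have conflated the two.

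The fix is easy and your Poincar\'e--Lefschetz step survives: you never needed $\Pic(Z^o)\cong H^2(Z^o,\Z)$, only that $c_1\colon\Pic(Z^o)\to H^2(Z^o,\Z)$ is \emph{surjective}, and that follows from $H^2(Z^o,\cO)=0$ alone. On the local side you need $c_1\colon\Pic(U_i^*)\to H^2(U_i^*,\Z)$ to be an isomorphism; surjectivity again comes from $H^2(U_i^*,\cO)=0$, and injectivity from the fact that the image of the vector space $H^1(U_i^*,\cO)$ under the exponential lands in a finite group, hence is trivial. For comparison, the paper avoids the exponential sequence entirely: it works in homology via the long exact sequence of the pair $(Z,Z^o)$, identifies $H_*(Z,Z^o)$ with reduced homology of the links by excision, and invokes \cite[Prop.~4.2, 4.11]{Ko05} directly for the identifications $H_2(Z^o,\Z)=\Pic(Z)$, $H_2(Z,\Z)=\Cl(Z)$, $H_1(L_i,\Z)=\Cl(Q_i\in Z)$. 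This is shorter and sidesteps the analytic subtleties you ran into.
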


\begin{proof}
We have the long exact sequence of the pair $(Z^o,\cup L_i)$ for integral homology groups, where $L_i$ is the link of $Q_i \in Z$: 
$$
\begin{CD}
\mathop{\oplus}\limits_{i=0}^s H_2(L_i,\Z)&\longrightarrow& &H_2(Z^o,\Z) &\longrightarrow& H_2(Z,\Z) &\longrightarrow& 
\mathop{\oplus}\limits_{i=0}^s H_1(L_i,\Z) &\longrightarrow& H_1(Z^o,\Z) &\longrightarrow& H_1(Z,\Z) 
\end{CD}
$$

Since $H^i(Z,\cO_Z)=0$ for $i=1,2$, we have
$H_2(Z^0,\Z)=\Pic(Z)$, $H_2(Z,\Z)=\Cl(Z)$, and
$H_1(L_i,\Z)$ is the local class group of $Q_i\in Z$,
see \cite[Prop.4.2 and 4.11]{Ko05}. The claim follows since $H_1(L_i,\Z)=\Z/m_i \Z$, $H_2(L_i,\Z)=0$, and $H_1(Z^o,\Z)=0$.
\end{proof}

\begin{corollary}\label{srgsHsr}
If 
$\oW^o$ is simply-connected,
then 
$\oW$ satisfies Assumption \ref{assume} (3).  
\end{corollary}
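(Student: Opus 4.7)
The proof is essentially a direct combination of Lemma \ref{topo} with Lemma \ref{goodA}. Here is the plan.

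First, I would apply Lemma \ref{topo} to $Z = \oW$. The hypotheses of that lemma are satisfied: parts (1) and (2) of Assumption \ref{assume} give that $\oW$ has only c.q.s.~singularities (just $P$ in this case, or more generally a finite set to which the lemma applies verbatim) and that $H^1(\oW,\cO_\oW)=H^2(\oW,\cO_\oW)=0$. Since $\oW^o$ is simply-connected by hypothesis, we have $H_1(\oW^o,\Z)=\pi_1(\oW^o)^{\mathrm{ab}}=0$. The lemma then yields the short exact sequence
$$0\to \Pic(\oW)\to\Cl(\oW)\mathop{\longrightarrow}^{\gamma} \Cl(P\in\oW)\to0,$$
so in particular $\gamma$ is surjective.

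Next, pick any Weil divisor $\bar A_0$ on $\oW$ whose class $\gamma(\bar A_0)$ is a generator of the cyclic group $\Cl(P\in\oW)\simeq\Z/\Delta\Z$. Such a divisor exists precisely because $\gamma$ is surjective. This gives the first sentence of Assumption \ref{assume} (3).

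Finally, I would invoke Lemma \ref{goodA}, whose hypotheses are precisely (1) together with the existence of such a class-group generator (the very thing we have just established). The lemma then produces effective smooth divisors $\bar A,\tilde{\bar A}\subset \oW$ with the desired \'etale-local model $0\in(x=0)\cup(y=0)\subset\C^2/\mu_\Delta$ at $P$, whose proper transforms in any c.q.s.~resolution $W\to\oW$ play the role of the toric boundary divisors $\Gamma_0$ and $\Gamma_{r+1}$. This verifies all of the content of Assumption \ref{assume} (3). There is no genuine obstacle here; the entire argument is a bookkeeping step whose only nontrivial input, surjectivity of the restriction $\gamma$, has already been packaged into Lemma \ref{topo} via the long exact sequence of the pair $(\oW^o,\bigcup L_i)$.
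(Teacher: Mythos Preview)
Your proof is correct and follows exactly the approach the paper intends: the corollary is stated immediately after Lemma~\ref{topo} without a separate proof, since surjectivity of $\gamma$ from that lemma (together with $\pi_1(\oW^o)=0\Rightarrow H_1(\oW^o,\Z)=0$) is precisely what gives a Weil divisor generating $\Cl(P\in\oW)$, and Lemma~\ref{goodA} then upgrades this to the full content of Assumption~\ref{assume}~(3).
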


Let $p,q\geq 2$ be coprime integers, where $q$ is not divisible by $3$. 
We first construct a resolution $X$ of  $\oW$ by blowing up a
rational elliptic fibration $S$ with a section $\sigma_0$ and a $I_{9}$ fiber (Kodaira notation). There is a unique such elliptic fibration \cite{P90}. 
We  further blow-up $X$ to get a  resolution $\tilde X$ of an $M$-resolution $W^+$ of $\oW$. Finally, 
$D_{p,q}$ is a $\Q$-Gorenstein smoothing of $\W^+$.
The diagram  summarizes the construction: 
$$ \xymatrix{ \mathbb P^2 & S \ar[d] \ar[l] & X  \ar[l] & \tilde X  \ar[l] \ar[r] & W^+ \ar[d] &  D_{p,q} \ar@{~>}[l]  \\ & \mathbb P^1 &  &  & \oW & }$$

Here $S \to \P^2$ is the resolution of the base points of the cubic pencil which defines the elliptic fibration $S \to \P^1$.
Let us choose one of the $I_1$ fibers of $S$, and let $\pi \colon X \to S$ be the blow-up of $S$ as indicated in Figure \ref{Xpq}, where $A_1^2=B_1^2=-1$, $A_i^2=B_i^2=-2$ for $i>1$. 
The curves $G_0,\ldots,G_9$ are proper transforms of irreducible components of $I_9$ and $I_1$ fibers.
The curves $\sigma_0,\sigma_1,\sigma_2$ in Figure \ref{Xpq} are the $3$ sections of $S \to \P^1$. 
\begin{figure}[htbp]
\includegraphics[width=3.5in]{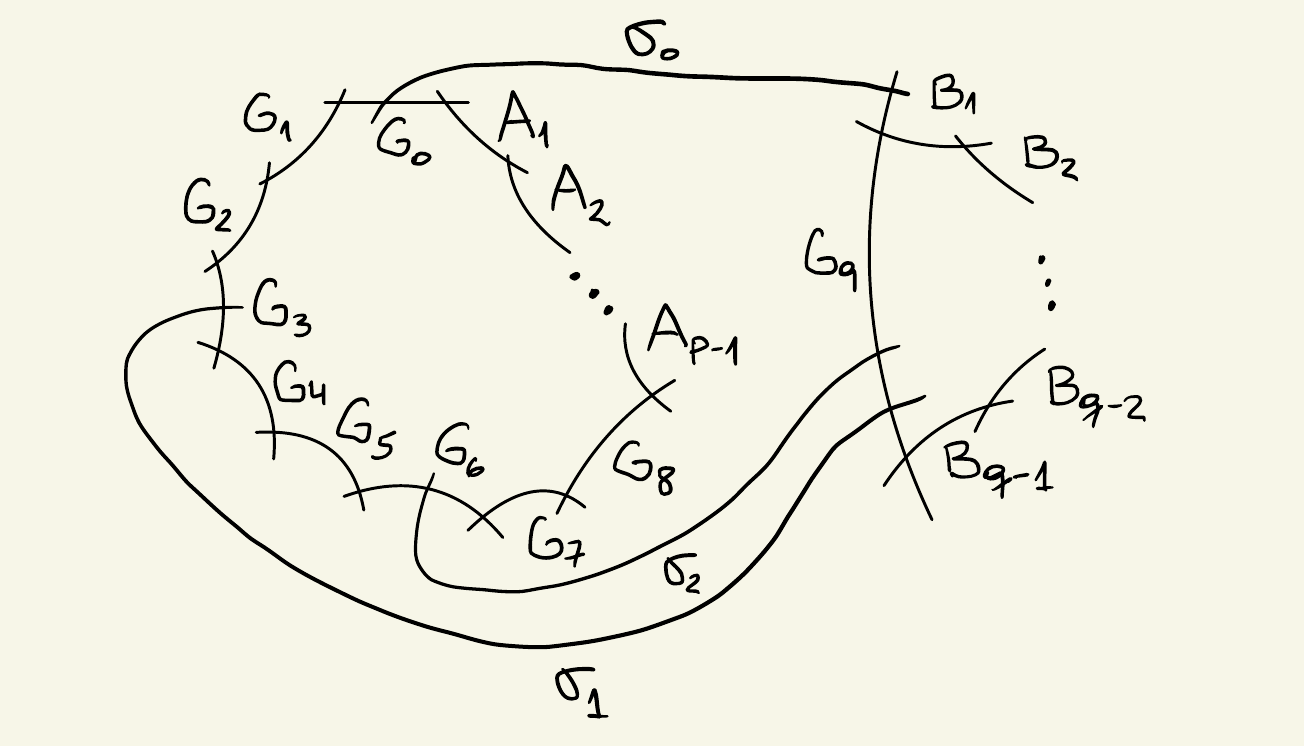}
\caption{$X$ as a blow-up of $S$ ($I_9+3I_1$)
}\label{Xpq}
\end{figure}
Note that $\rho(X)=10+p-1+q-1=p+q+8$.
The surface $\oW$ is obtained by contracting the chain
$B_2,\ldots,B_{q-1},G_q,\sigma_0,G_0,G_1,\ldots,G_8,A_{p-1},\ldots,A_2\subset X$, which corresponds to the continued fraction $$[\underbrace{2,\ldots,2}_{q-2},q+2,1,p+1,\underbrace{2,\ldots,2}_{7},3,\underbrace{2,\ldots,2}_{p-2}].$$ 
In~particular, $\oW$ has Picard number  $1$.

\begin{proposition}
The surface 
$\oW^o$ is simply-connected. 
\label{sc}
\end{proposition}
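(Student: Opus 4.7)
We identify $\oW^o$ with $X\setminus C$, where $C=B_2\cup\cdots\cup B_{q-1}\cup G_q\cup\sigma_0\cup G_0\cup\cdots\cup G_8\cup A_{p-1}\cup\cdots\cup A_2$ is the connected chain contracted by $X\to\oW$. Since $X$ is rational, $\pi_1(X)=1$. Taking $N(C)$ a regular tubular neighborhood of $C$ in $X$---so that $\partial N(C)\simeq L(\Delta,\Omega)$ is the lens-space link of $P\in\oW$ and $\pi_1(N(C))=\pi_1(C)=1$ since $C$ is a tree of rational curves---van~Kampen applied to $X=(X\setminus C)\cup N(C)$ shows that $\pi_1(\oW^o)=\pi_1(X\setminus C)$ is normally generated by the image of $\pi_1(L(\Delta,\Omega))=\Z/\Delta$, i.e.\ by the meridians $\mu_i$ around the components of~$C$. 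It thus suffices to show every~$\mu_i$ is trivial in $\pi_1(X\setminus C)$.

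For each $C_i\subset C$ the plan is to exhibit a smooth rational curve $D_i\subset X$, not contained in $C$, meeting $C$ transversally at a single point on $C_i$: then $D_i\setminus(D_i\cap C)\cong\C$ is a topological disk in $X\setminus C$ whose boundary circle is $\mu_i$, forcing $\mu_i=1$. The natural candidates are: (i) the two other $3$-torsion sections $\sigma_1,\sigma_2$ of the rational elliptic fibration $S\to\P^1$, which are pairwise disjoint, disjoint from $\sigma_0$, and disjoint from the blow-up centers used in constructing~$X$, and which in $S$ meet $I_9$ at the components $G_3$ and $G_6$ (the $\Z/3$-translates of $G_0$ under Mordell--Weil translation)---trivializing $\mu_{G_3}$ and $\mu_{G_6}$; (ii) the $(-1)$-curves $A_1$ and $B_1$ at the terminal ends of the two iterated blow-up chains, each outside $C$ and meeting $C$ only at $A_2$ and $B_2$ respectively---trivializing $\mu_{A_2}$ and $\mu_{B_2}$; (iii) the additional exceptional $(-1)$-divisors introduced by the blow-ups yielding $G_0^2=-(p+1)$, $G_8^2=-3$, and $G_q^2=-(q+2)$, each chosen to meet $C$ at a single transverse point on $G_0$, $G_8$, or~$G_q$; and (iv) the proper transform $G_9$ of the blown-up $I_1$-fiber together with the two remaining $I_1$-fibers of $S\to\P^1$, which handle the meridian of~$\sigma_0$ and the remaining interior $B_i$- and $G_i$-meridians.

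Since $\pi_1(\oW^o)$ is normally generated by the $\mu_i$ and each one is trivialized by such a disk, we conclude $\pi_1(\oW^o)=1$. The main obstacle is the combinatorial verification in steps (iii) and (iv): one must trace through the explicit iterated blow-up $X\to S\to\P^2$, enumerate every auxiliary exceptional divisor and fiber component, and check that the intersection pattern with $C$ is as claimed. Equivalently, the intersection map $\Pic(X)\to\Z^{p+q+7}$, $D\mapsto(D\cdot C_i)_i$, must be surjective---a concrete lattice computation for the iterated blow-up $X$, or equivalently the statement that $\Cl(P\in\oW)\cong\Z/\Delta$ is already hit by global Weil divisors on~$\oW$.
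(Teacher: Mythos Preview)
Your van Kampen setup is the same as the paper's, and you correctly observe that $\pi_1(\oW^o)=\pi_1(X\setminus C)$ is the normal closure of the image of the cyclic group $\pi_1(L(\Delta,\Omega))\cong\Z/\Delta$. But the plan to trivialize \emph{every} meridian $\mu_i$ by a transversal rational disk does not go through, and this is a genuine gap rather than a bookkeeping issue.

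First, step (ii) fails as stated. Counting blow-ups ($\rho(X)=\rho(S)+(p-1)+(q-1)$) forces the $A$-chain to arise by blowing up the node $G_0\cap G_8$ of the $I_9$ cycle and then iterating on the $G_0$ side; hence the terminal $(-1)$-curve $A_1$ meets both $A_2$ and $G_0$, so $A_1\setminus C\cong\C^\ast$, not a disk. The analogous thing happens for $B_1$: after resolving the node of the chosen $I_1$ fiber and iterating, $B_1$ meets both $B_2$ and $G_9$. Second, steps (iii)--(iv) cannot produce disks for the interior $I_9$-components $G_1,G_2,G_4,G_5,G_7$: the Mordell--Weil group of this $I_9+3I_1$ surface is $\Z/3$, so sections only hit $G_0,G_3,G_6$, and the remaining $I_1$ fibers meet $C$ only in $\sigma_0$. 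There is simply no candidate rational curve meeting $C$ transversally at a single point of, say, $G_1$.

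The paper avoids this entirely by exploiting the cyclicity you already noted. It trivializes \emph{one} meridian, $\gamma=\mu_{\sigma_0}$: another $I_1$ fiber (untouched by the blow-ups) meets $\sigma_0$ transversally at one smooth point, and a small loop around a smooth puncture of a nodal rational curve is null-homotopic. Then Mumford's plumbing relations in $\pi_1(L)$ give that $\gamma$ is conjugate to $\alpha^{9p^2}$ and to $\beta^{q^2}$, where $\alpha,\beta$ are the end-meridians around $A_2,B_2$ (each a generator of $\Z/\Delta$). So the order of the image of $\pi_1(L)$ in $\pi_1(X\setminus C)$ divides both $9p^2$ and $q^2$; since $\gcd(9p^2,q^2)=1$ (here one uses $3\nmid q$), the image is trivial and $\pi_1(\oW^o)=1$. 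Your ingredients (the extra $I_1$ fibers for $\mu_{\sigma_0}$) are exactly what is needed, but the reduction to a single meridian via the $\gcd$ trick is the missing step.
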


\begin{proof}

We are going to use Mumford's computation \cite[p.99]{Mum} on the resolution $X \to \oW$, which is not minimal. To show that $\pi_1(\oW^o)=\{1\}$,  we use Van-Kampen's Theorem  for a neighborhood of $P$ and its complement. As $\pi_1(\oW)=\{1\}$, it suffices to show that a generator loop around the exceptional divisor is homotopically trivial in the complement of the exceptional divisor. We note that loops $\alpha$ and $\beta$ around the ending $(-2)$-curves $A_2$ and $B_2$ respectively are (each) generators of the fundamental group of the link of $P \in \oW$. Let $\gamma$ be a loop around~$\sigma_0$. By Mumford's computation, we have that $\alpha^{9p^2}$ if conjugate to $\gamma$, as well as $\beta^{q^2}$. We claim that $\gamma$ is homotopically trivial in the complement of the exceptional divisor.
Given the claim, $\alpha^{9p^2}$ and $\beta^{q^2}$ are trivial. But because gcd$(9p^2,q^2)=1$, we have that both loops $\alpha$ and $\beta$ are actually trivial as had to be demonstrated. 

To prove the claim we use that  $\sigma_0$ intersects another $I_1$ fiber transversally at one point.
\begin{figure}[htbp]
\includegraphics[width=3.5in]{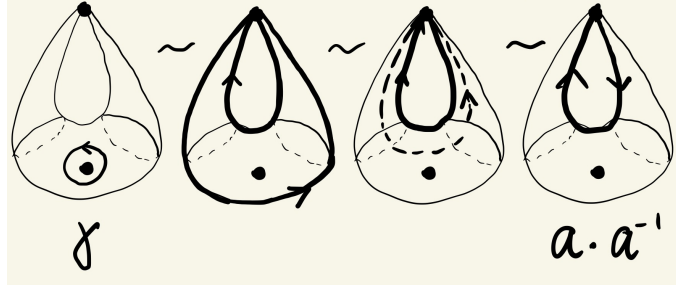}
\caption{Homotopy from $\gamma$ to identity
}\label{homotopy}
\end{figure}
It follows that $\gamma$ is a loop around a puncture in the nodal cubic punctured in one point. As Figure~\ref{homotopy} shows, this loop is homotopically trivial.
\end{proof}

We now consider a composition of blow-ups $\tilde{X} \to X$ to obtain $9$ Wahl chains of type $[p+2,2,\ldots,2]$ over the $I_9$ fiber together with the Wahl chain $[q+2,2,\ldots,2]$ from the chosen $I_1$ fiber. The Wahl chains $[p+2,2,\ldots,2]$ have the proper transform of $G_i$ as $-(p+2)$-curve for $i=0,\ldots,8$. The contraction of these Wahl chains defines the surface $W^+$ with nine $\frac{1}{p^2}(1,p-1)$ and one $\frac{1}{q^2}(1,q-1)$ singularities. The surface $W^+$ is an $M$-resolution of $\oW$ with curves $\Gamma_i$, $i=1,\ldots 9$ defined as follows: $\Gamma_1$ is the image of $\sigma_0$, the rest are the images of the $(-1)$-curves connecting the Wahl chains $[p+2,2,\ldots,2]$.
The construction of 
$D_{p,q}$  via $\Q$-Gorenstein smoothings of surfaces $W^+$ was considered in \cite[Cor.~4.3]{U16}. 
In particular,  $H^2(\overline{W},T_{\overline{W}})=0$. 

Now we can apply Theorem \ref{main} to prove the following.

\begin{theorem}
A Dolgachev surface $D_{p,q}$
has a H.e.c. $E_{9},\ldots,E_0$ associated with $W^+$ and a strong H.e.c. $\bar E_{9},\ldots,\bar E_0$ associated with the N-resolution $W^-$,
where

\begin{enumerate}
    \item $\delta_1=pq-p-q$ and $\delta_i=0$ for $i=2,\ldots,9$, in particular 
    $\End(\bar E_{9}\oplus\ldots\oplus\bar E_0)$ is the endomorphism algebra of the quiver with vertices $\bar P_0,\ldots,\bar P_9$ and with $pq-p-q$ arrows connecting each $\bar P_i$ to $\bar P_9$ for $i=0,\ldots,8$. 

    \item $\bar n_i=\rank \bar E_i$ and Wahl singularities in the N-resolution can be computed using continued fractions $\frac{\bar n_k^2}{\bar n_k \bar a_k-1}= [\underbrace{2,\ldots,2}_{q-2},q+1,p+1,\underbrace{2,\ldots,2}_{p-4},3,\underbrace{2,\ldots,2}_{q-2},q]$\break 
    for $k=0,\ldots8$ and $\frac{\bar n_9^2}{\bar n_9 \bar a_9-1}= [\underbrace{2,\ldots,2}_{q-2},q+2]$.
    
    \item The orthogonal complement of 
    $\langle\bar E_{9},\ldots,\bar E_0\rangle\subset D^b(Y)$ has Mukai lattice $\Z^2$ with Euler pairing given by the Gram matrix \begin{equation}\label{wrgsgSGwhg}\left(\begin{matrix}
-1 & 3(pq-p-q) \cr
0  & -1\cr
\end{matrix}\right).\end{equation}
    This lattice has a full numerical exceptional collection if and only if $p=3$, $q=2$.
    \end{enumerate}
\label{I93I1}
\end{theorem}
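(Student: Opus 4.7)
The plan is to deduce the theorem from Theorem~\ref{main} applied to the M-resolution $W^+$, supplemented by an explicit computation of the invariants $\delta_i$ and by invoking the results of Cho--Lee \cite{CL18} and Vial \cite{Vial} for the final assertion of part~(3).

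I first compute the $\delta_i$'s geometrically. For $i = 2, \ldots, 9$ the subconfiguration consisting of $\Gamma_i$ and its two adjacent Wahl chains $[p+2, 2, \ldots, 2]$ is the crepant Wahl resolution of the T-singularity $\frac{1}{2p^2}(1, 2p(p-1)-1)$, so $K_{W^+} \cdot \Gamma_i = 0$ and $\delta_i = 0$. For $i = 1$, contracting $\Gamma_1$ together with its adjacent Wahl chains $[q+2, 2, \ldots, 2]$ and $[p+2, 2, \ldots, 2]$ gives a c.q.s.\ whose $\Delta_1$ is read off from the continued fraction $[q+2, 2, \ldots, 2, 1, p+2, 2, \ldots, 2]$. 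Matching against the extremal P-resolution formula $\Delta_1 = q^2 + p^2 + \delta_1 pq$ of Definition~\ref{srbsbsr} yields $\delta_1 = pq - p - q$, and confirms that $K_{W^+}$ is nef, so $W^+$ is genuinely an M-resolution.

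Parts (1) and (2) then follow from Theorem~\ref{main}. The Wahl singularities of $W^-$ are computed via the algorithm of Lemma~\ref{existencenres} applied to the dual continued fraction of $\frac{\Delta}{\Omega}$, producing exactly the continued fractions listed in part~(2). For the quiver structure, formula~\eqref{afbzdfbdfna} combined with $\bar n_0 = \ldots = \bar n_8$ (these nine N-resolution singularities share the same Wahl chain) and the fact that $\bar \delta_9 = \delta_1 = pq - p - q$ is the only nonzero $\bar \delta$ gives $\hom(\bar E_9, \bar E_j) = pq - p - q$ for $j = 0, \ldots, 8$ and $\hom(\bar E_i, \bar E_j) = 0$ for the remaining $i > j$. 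Since morphisms from $\bar E_9$ to $\bar E_j$ cannot factor through intermediate vertices, they correspond to irreducible arrows, producing the quiver claimed in part~(1).

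For part~(3), the numerical K-group of $D_{p,q}$ has rank $1 + \rho(D_{p,q}) + 1 = 12$ (using $h^{0,2} = 0$ and $b_2 = 10$), so the orthogonal complement of the length-$10$ H.e.c.\ has rank~$2$. To determine the Gram matrix I apply Hirzebruch--Riemann--Roch with the Chern classes of the $\bar E_i$ from Theorem~\ref{sgsGsrgsRHsrh} and the formula $K_{D_{p,q}} \equiv \frac{pq-p-q}{pq} F$ expressing the canonical class as a rational multiple of the fiber class $F$; the stated form reflects a specific basis choice of the complement, with one generator tracking the fiber direction. For the final equivalence: when $(p,q) = (3,2)$, setting $v_1 = e_1 + e_2$ and $v_2 = e_1 + 2 e_2$ gives $\chi(v_i, v_i) = 1$, $\chi(v_2, v_1) = 0$, and $\det(v_1, v_2) = 1$, exhibiting a full numerical exceptional collection. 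Conversely, any such collection in the rank-$2$ complement combined with the length-$10$ H.e.c.\ would furnish a full numerical exceptional collection of length $12$ on $D_{p,q}$, which Vial~\cite{Vial} rules out for $(p,q) \neq (3,2)$.

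The main obstacle is the Gram matrix computation: the rank~$2$ is immediate, but pinning down the exact entries---in particular the factor $3(pq-p-q)$ and the vanishing of the other off-diagonal entry---requires careful Chern-class bookkeeping on $D_{p,q}$ together with explicit intersection data coming from the N-resolution $W^-$. A potentially cleaner alternative is to transfer the lattice computation via the equivalence $\cB^{D_{p,q}} \simeq \cB^{W^+}$ of Theorem~\ref{asfbsfbsf} and Corollary~\ref{EFegwEWE} to the rational surface $W^+$, where Euler characteristics reduce to combinatorics in the blow-up construction of Example~\ref{rationalsmoothing}.
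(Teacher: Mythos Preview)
Your arguments for parts~(1) and~(2) are sound; computing $\delta_1$ via the extremal P-resolution formula \eqref{expres} rather than via Proposition~\ref{hotdog} is a perfectly valid shortcut, and the rest follows from Theorem~\ref{main} and Lemma~\ref{existencenres} as you say. Your treatment of the final ``if and only if'' in part~(3) is also correct, including the explicit basis change in the $(p,q)=(3,2)$ case and the appeal to Vial~\cite{Vial} for the converse.

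The genuine gap is the Gram matrix itself. You correctly identify the rank as~$2$, but you do not actually compute the entries: the HRR approach on $D_{p,q}$ is only proposed, and your alternative---transferring to the construction of Example~\ref{rationalsmoothing}---is not the right target. The paper does transfer the computation to the rational side, but to the \emph{specific} resolution $X$ of $\oW$ built just before the theorem (the iterated blow-up of the $I_9+3I_1$ rational elliptic surface $S$), not to the generic compactified Milnor fiber of Example~\ref{rationalsmoothing}. The key tool is Lemma~\ref{K0X}, which identifies the Mukai lattice of $\cB^W\simeq\cB^{D_{p,q}}$ with the sublattice of $K_0(X)$ cut out by the explicit linear equations $\chi(\beta)=0$, $r(\beta)=-C_1\cdot c_1(\beta)$, $C_i\cdot c_1(\beta)=0$ for $i>1$, where the $C_i$ are the exceptional curves of $X\to\oW$. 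One then writes $c_1(\beta)$ in the basis $G_1,\ldots,G_7,\sigma_1-\sigma_2,\sigma_0+F,\sigma_0,\alpha_i,\beta_j$ of $\Pic X$ coming from the $\bE_8$ lattice of $S$ and the blow-up classes, solves the resulting linear system to obtain two explicit generators $v_1,v_2$, and reads off the Euler pairing from their intersection numbers and $K_X$-degrees. This is where the factor~$3$ (reflecting the three sections $\sigma_0,\sigma_1,\sigma_2$ and the $\bE_8$ arithmetic) emerges; it is not visible from the fiber-class heuristic you sketch.
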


\begin{example}
For $p=3$, $q=2$, 
we obtain a M-resolution $$[4]-(1)-[5,2]-(1)-[5,2]-(1)-\ldots-(1)-[5,2],$$ and a N-resolution $$[3,5,2]-(1)-[3,5,2]-(1)-\ldots-(1)-[3,5,2]-(1)-[4].$$
\end{example}

\begin{proof}

We first compute the numerical data.
Let us assume $p,q \geq 3$. Then the c.q.s. $P \in \overline{W}$ is  $\frac{\Delta}{\Omega}=[\underbrace{2,\ldots,2}_{q-2},q+1,p,\underbrace{2,\ldots,2}_{7},3,\underbrace{2,\ldots,2}_{p-2}]$, and so the dual fraction is $\frac{\Delta}{\Delta-\Omega}=[q,\underbrace{\overline{2},2,\ldots,2}_{q-2},3,\underbrace{2,\ldots,2}_{p-3},\overline{10},p]$, where $\overline{2}$ is the position of $d_{i_1}=1$, and $\overline{10}$ is the position of $d_{i_2}=9$. In this way $\frac{\bar n_k}{\bar n_k - \bar a_k}=[q,\underbrace{2,\ldots,2}_{q-2},3,\underbrace{2,\ldots,2}_{p-3}]$ for all $k=0,\ldots8$, and $\frac{\bar n_9}{\bar n_9 - \bar a_9}=[q]$. Therefore, the Wahl singularities in the N-resolution have continued fractions as in part (2). The only $\delta\ne0$  is $\delta_1=qp-p-q$, since it is the numerator of $[\underbrace{2,\ldots,2}_{q-3},3,\underbrace{2,\ldots,2}_{p-3}]$. It remains to compute the Mukai lattice. 

Recall that if $Z$
is a rational projective normal surface
then we have an isomorphism of abelian groups
$(r,c_1,\chi)\colon \,G_0(Z)\to\Z\oplus \Cl Z \oplus \Z$,
where $G_0(Z)$ is the Grothendieck group of $D^b(Z)$, see \cite[Lemma 4.2, Remark 4.3]{KKS}.


\begin{lemma} \label{K0X}
Let $X$ be a resolution of singularities (not necessarily minimal) with the exceptional divisor $C_1, C_2,\ldots, C_s$
of a c.q.s.~surface $\overline{W}$ satisfying Assumption~\ref{assume}. The Mukai lattice of $\cB^W$ is isomorphic to a sublattice in $K_0(X)$ formed by elements $\beta$ that satisfy the  equations
$\chi(\beta)=0$, 
$r(\beta)=-C_1\cdot c_1(\beta)$, and $ C_i\cdot c_1(\beta)   =0$ for $i>1$.
\end{lemma}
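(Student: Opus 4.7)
The argument begins with Corollary~\ref{asgasrgsarhasr}, applied both to the c.q.s.~resolution $W\to\overline W$ and to the given resolution $\pi\colon X\to\overline W$. These yield compatible equivalences $\cB^W \simeq \cB^{\overline W} \simeq \cB^X$ via the adjoint pair $(L\pi^*, R\pi_*)$, canonically identifying the Mukai lattice of $\cB^W$ with $K_0^{\mathrm{num}}(\cB^X)\subset K_0^{\mathrm{num}}(X)$. From the s.o.d.~$D^b(X)=\langle \cB^X, \cA^X\rangle$ of Proposition~\ref{kjshefjhksEFG}, extended to non-minimal $X$ by adjoining one generator $\cO_E(-1)$ per $(-1)$-curve $E$ in the chain (Orlov's blow-up formula), the sublattice $K_0^{\mathrm{num}}(\cB^X)$ is the left orthogonal of $K_0^{\mathrm{num}}(\cA^X)$ with respect to the Euler pairing.

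A classical generating set for $\cA^X$ consists of $\cO_X(-D^X)$ --- where $D^X$ is the proper transform of $\bar A$, meeting the exceptional chain transversely only at $C_1$ by the choice of ordering --- together with the sheaves $\cO_{C_i}(-1)$ for $i=1,\ldots,s$. By Hirzebruch--Riemann--Roch, the projection formula, and the adjunction identity $K_X\cdot C_i + C_i^2 = -2$ on $C_i\simeq\P^1$, a direct calculation yields
\[
\chi\bigl(\beta,\cO_{C_i}(-1)\bigr) \;=\; -C_i\cdot c_1(\beta), \qquad i = 1,\ldots,s,
\]
independently of $C_i^2$, and a parallel calculation expresses $\chi(\beta, \cO_X(-D^X))$ as a linear combination of $\chi(\beta)$, $r(\beta)$, $c_1(\beta)\cdot D^X$, and $c_1(\beta)\cdot K_X$. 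These $s+1$ vanishing conditions cut out $K_0^{\mathrm{num}}(\cB^X)$, yielding $C_i\cdot c_1(\beta)=0$ for all $i$ plus one further relation.

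To bring the system into the form stated in the lemma, the plan is to compose the natural identification with the auto-equivalence $(-)\otimes\cO_X(-D^X)$ of $D^b(X)$, which preserves the Euler pairing. The twist sends the Chern data $(r,c_1,\chi)$ to $(r,\, c_1 - r\,D^X,\, \chi')$; using $D^X\cdot C_1=1$ and $D^X\cdot C_i=0$ for $i>1$, the condition $c_1\cdot C_1=0$ transforms into $r+C_1\cdot c_1=0$, the conditions $c_1\cdot C_i=0$ for $i>1$ are preserved, and the residual relation from $\cO_X(-D^X)$ collapses, after substitution and one use of the projection formula (the pullback $\pi^*\bar A$ has trivial intersection with every $C_i$), to $\chi(\beta)=0$.

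The principal obstacle is verifying this last collapse explicitly: after the twist, $\chi(\beta,\cO_X(-D^X))=0$ a priori mixes in the quantities $(D^X)^2$, $D^X\cdot K_X$, and $c_1\cdot K_X$, all of which must cancel modulo the two conditions already imposed. The needed cancellations follow from the decompositions $K_X = \pi^* K_{\overline W} + \sum a_i C_i$ and $D^X = \pi^*\bar A - \sum b_i C_i$ (the latter relying on Assumption~\ref{assume}(3), which guarantees that $\Delta\cdot\bar A$ is Cartier so that $\pi^*\bar A$ is well defined rationally), together with the vanishing $c_1(\beta)\cdot\pi^*(\cdot) = 0$ forced by the orthogonality conditions on $c_1(\beta)$ combined with the projection formula.
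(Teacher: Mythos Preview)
Your overall strategy matches the paper's: identify the Mukai lattice of $\cB^W$ with an orthogonal complement in $K_0(X)$, compute via Riemann--Roch, and normalize by a line-bundle twist. Your choice of generators $\cO_{C_i}(-1)$ and $\cO_X(-D^X)$ (rather than the paper's line bundles $L_i=\cO_X(-C_1-\cdots-C_i)$) is also legitimate, and your derivation of the conditions $r(\beta)=-C_1\cdot c_1(\beta)$ and $C_i\cdot c_1(\beta)=0$ for $i>1$ after twisting is correct.

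The gap is in the last step. Your claim that the residual condition collapses to $\chi(\beta)=0$ rests on the assertion that ``$c_1(\beta)\cdot\pi^*(\cdot)=0$ is forced by the orthogonality conditions on $c_1(\beta)$ combined with the projection formula.'' This is false: the conditions $c_1(\beta)\cdot C_i=0$ say that $c_1(\beta)$ is \emph{orthogonal} to the exceptional lattice, hence (since that lattice is negative definite) $c_1(\beta)$ lies in $\pi^*\Pic(\overline W)_\Q$ --- the opposite of being exceptional. For such a class, $c_1(\beta)\cdot\pi^*D=\pi_*c_1(\beta)\cdot D$ has no reason to vanish. Concretely, after your twist the third condition reads $\chi(\beta',\cO_X(-2D^X))=0$, which differs from $\chi(\beta')=0$ by $c_1(\beta')\cdot K_X+2c_1(\beta')\cdot D^X+r(2(D^X)^2+D^X\cdot K_X)$; this expression does not vanish modulo the other two equations. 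There is also a related inconsistency: you invoke the s.o.d.\ $\langle\cB^X,\cA^X\rangle$ (giving $\chi(\alpha,\beta)=0$) but then compute $\chi(\beta,\cO_{C_i}(-1))$. Note that $\chi(\cO_{C_i}(-1),\beta)=-C_i\cdot c_1(\beta)-r(\beta)(C_i^2+2)$, so the clean formula you use is only valid in the other convention.

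The paper sidesteps all of this by a single observation: twist the generators by $\cO_X(C_0)=\cO_X(D^X)$ (equivalently, twist $\beta$), so that the generator $\cO_X(-D^X)$ becomes $\cO_X$. Working with the convention $\chi(\alpha,\beta)=0$, the condition from $\cO_X$ is literally $\chi(\cO_X,\beta)=\chi(\beta)=0$; the remaining line-bundle conditions, after subtracting consecutively and using $p_a(C_1+\cdots+C_i)=0$, yield exactly $r(\beta)=-C_1\cdot c_1(\beta)$ and $C_i\cdot c_1(\beta)=0$ for $i>1$. No decomposition of $K_X$ or $D^X$ in terms of pullbacks, and no cancellation argument, is needed.
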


\begin{proof}
We recall that the Euler pairing $\chi(\alpha,\beta)=\sum (-1)^i\ext^i(\alpha,\beta)$  has a form
$$\chi(\alpha,\beta)=r(\alpha) \ch_2(\beta) - c_1(\alpha)\cdot c_1(\beta) + r(\beta) \ch_2(\alpha)\qquad{}$$
$${}\qquad+{1\over 2}\left(r(\beta)c_1(\alpha) - r(\alpha)c_1(\beta)\right)\cdot K_X + r(\alpha)r(\beta)\chi(\cO_X).$$
In our case $\chi(\cO_X)=1$.
The subcategory $\cB^W$ is the orthogonal complement to the exceptional collection of line bundles $\O_X(-C_s-\ldots-C_1-C_0),\O_X(-C_{s-1}-\ldots-C_1-C_0),\ldots,\O_X(-C_1-C_0), \O_X(-C_0)$, where $C_0$ is the strict transform of $\bar A$ in $\oW$ (Assumption \ref{assume} (3)). As tensoring by $\O_X(C_0)$ is an autoequivalence of $D^b(X)$, for this computation we consider the orthogonal complement of $L_s:=\O_X(-C_s-\ldots-C_1),L_{s-1}:=\O_X(-C_{s-1}-\ldots-C_1),\ldots,L_1:=\O_X(-C_1), L_0:=\O_X$, which have classes
 $r(\alpha_i)=1$, $c_1(\alpha_i)=L_i$, $\ch_2(\alpha_i)={L_i^2\over 2}$.
It follows that
$$ \ch_2(\beta) - L_i\cdot c_1(\beta) + r(\beta){L_i^2\over 2}+{1\over 2}(r(\beta)L_i - c_1(\beta))\cdot K_X + r(\beta)=0$$
We start with $L_0=0$, which gives
$ \ch_2(\beta)  -{1\over 2}c_1(\beta)\cdot K_X + r(\beta)=0$.
The remaining equations then become
$ - 2L_i\cdot c_1(\beta) + r(\beta)(K+L_i)\cdot L_i  =0$.
Since $p_a(L_i)=0$ for every $i>0$, the equation in fact is simply
$ L_i\cdot c_1(\beta) +r(\beta)  =0$.
Finally, notice that $L_i-L_{i-1}=C_i$, which gives equations in the statement.
\end{proof}

For any rational elliptic fibration $S$ with a section $\sigma_0$,
$
\Pic(S)=-\bE_8\oplus\langle 1\rangle\oplus\langle -1\rangle,
$,
where $\langle -1\rangle$ is generated by $\sigma_0$ and $\langle 1\rangle$
is generated by $\sigma_0+F$, where $F\sim -K_S$ is a general fiber. Furthermore, $-\bE_8$ 
contains a sublattice $T$ generated by components of reducible fibers that do not intersect $\sigma_0$.

In our case we consider the basis $G_1, G_2, G_3, G_4, G_5, G_6, G_7, v:=\sigma_1-\sigma_2$ of $-\bE_8$. Let $\beta \in \cB$. By Lemma \ref{K0X}, we have that $(r(\beta),c_1(\beta),\chi(\beta))=(-B_2 \cdot B,B,0),$ where $B$ satisfies $C_i\cdot B=0$ for $i>1$ and $C_1:=B_2$. Let us write $$B= z_1 G_1+ z_2 G_2 + z_3 G_3 + z_4 G_4 + z_5 G_5 + z_6 G_6 + z_7 G_7 + \ \ \ \ \ \ $$ $$ \ \ \ \ \ \ \ x v + y (\sigma_0+F) + z \sigma_0 + \sum_{i=1}^{q-1} x_i \beta_i + \sum_{i=1}^{p-1} y_i \alpha_i,$$
where
$\alpha_i=A_1+\ldots+A_i$ and $\beta_j=B_1+\ldots+B_j$ for $i=1,\ldots,p-1$ and $j=1,\ldots, q-1$. 
By intersecting $B$ 
with $$C_2:=B_3, C_3:=B_4,\ldots, B_{q-1}, G_9, \sigma_0, G_0, G_1, G_2, \ldots, G_8,A_{p-1},\ldots,C_{p+q+7}:=A_2,$$
we obtain the linear system $$
0= x_2-x_3, \ \ \ 
0= x_3-x_4, 
\ldots, 
0=  x_{q-2}-x_{q-1}, $$ $$
0= y+z+x_1+x_2+\ldots+x_{q-2}+2 x_{q-1}, \ \ \ 
0=  -z $$ $$ 
0=  z_1+y+z+y_1+y_2+\ldots+y_{p-1}, \ \ \  
0=  -2 z_1+z_2, \ \ \  
0=  z_1 -2 z_2 + z_3, $$ $$
0=  z_2 - 2 z_3 + z_4 + x, \ \ \  
0=  z_3 - 2 z_4 + z_5, \ \ \ 
0=  z_4 - 2 z_5 + z_6, $$ $$ 
0=  z_5 - 2 z_6 + z_7 -x, \ \ \  
0=  z_6 - 2 z_7, \ \ \  
0=  z_7+ y_{p-1}, \ \ \  
0=  -y_{p-1}+y_{p-2}, 
\ldots, 
0=  -y_{2}+y_{1},$$ which has solutions  $y_1=\ldots=y_{p-1}=-3 z_5$, $x_2=\ldots=x_{q-1}$, $x=-8z_5$, $y=3p z_5$, $z=0$, $x_1= -3pz_5-(q-1)x_2$, $z_1=-3z_5$, $ z_2=-6z_5$, $z_3= -9z_5$, $z_4=-4 z_5$, $z_6=6 z_5$, and $z_7=3 z_5$. This solution can be expressed via the $\Z$-basis $$v_1:= -3 G_1 -6 G_2 - 9 G_3 -4 G_4 + G_5 + 6 G_6 + 3 G_7 - 8 v + 3p (\sigma_0+F) - 3p B_1 - 3 \sum_{i=1}^{p-1} \alpha_i,$$ and $v_2:= -(q-1)B_1+\sum_{i=2}^{q-1} \beta_i$. Using Lemma \ref{K0X}, these vectors can be considered as generators $\tilde v_1, \tilde v_2$ of $\cB$ by setting $\chi(\tilde v_i)=0$, rank$(\tilde v_1)=3p$, and rank$(\tilde v_2)=q$. We have the intersection numbers $v_1 \cdot K_X=3(p-1)$, $v_1\cdot v_1=-9p+1$, $v_2 \cdot K_X=1$, $v_2 \cdot v_2=-q^2+q+1$, and $v_1 \cdot v_2= -3p(q-1)$. Therefore $\chi(\tilde v_1, \tilde v_1)=-1$,  $\chi(\tilde v_2, \tilde v_2)=-1$, $\chi(\tilde v_1, \tilde v_2)=3(pq-p-q)$, and $\chi(\tilde v_2, \tilde v_1)=0$, and so the Gram matrix is \eqref{wrgsgSGwhg}.
\end{proof}


\end{document}